\newcommand{\N}{\mathbb{N}}
\newcommand{\R}{\mathbb{R}}
\newcommand{\C}{\mathbb{C}}
\newcommand{\sA}{\mathscr{A}}
\newcommand{\cB}{\mathcal{B}}
\newcommand{\sB}{\mathscr{B}}
\newcommand{\cC}{\mathcal{C}}
\newcommand{\sF}{\mathscr{F}}
\newcommand{\sG}{\mathscr{G}}
\newcommand{\sH}{\mathscr{H}}
\newcommand{\cK}{\mathcal{K}}
\newcommand{\cM}{\mathcal{M}}
\newcommand{\cN}{\mathcal{N}}
\newcommand{\cS}{\mathcal{S}}
\newcommand{\cT}{\mathcal{T}}
\newcommand{\e}{\varepsilon}
\newcommand{\Om}{\Omega} 
\newcommand{\om}{\omega}
\DeclareMathOperator{\ev}{ev}
\DeclareMathOperator{\Hom}{Hom}
\DeclareMathOperator{\Tr}{Tr}
\DeclareMathOperator{\spn}{span}
\DeclareMathOperator{\id}{id}
\DeclareMathOperator{\im}{im}
\newcommand{\la}{\langle}
\newcommand{\ra}{\rangle}
\newcommand{\sh}{\text{\smaller $\#$}}
\newcommand{\vertiii}[1]{{\left\vert\kern-0.25ex\left\vert\kern-0.25ex\left\vert #1 
    \right\vert\kern-0.25ex\right\vert\kern-0.25ex\right\vert}}
\DeclareMathOperator*{\esssup}{ess\,sup}
\newcommand{\hotimes}{\otimes_2}
\newcommand{\potimes}{\hat{\otimes}_{\pi}}
\newcommand{\wotimes}{\bar{\otimes}}
\newcommand{\iotimes}{\hat{\otimes}_i}
\newcommand\wch[1]{%
\savestack{\tmpbox}{\stretchto{%
  \scaleto{%
    \scalerel*[\widthof{\ensuremath{#1}}]{\kern-.6pt\bigwedge\kern-.6pt}%
    {\rule[-\textheight/2]{1ex}{\textheight}}%WIDTH-LIMITED BIG WEDGE
  }{\textheight}% 
}{0.5ex}}%
\stackon[1pt]{#1}{\scalebox{-1}{\tmpbox}}%
}
\numberwithin{equation}{section}
\newcommand\numberthis{\addtocounter{equation}{1}\tag{\theequation}}
\newcommand{\oset}[3][0ex]{%
  \mathrel{\mathop{#3}\limits^{
    \vbox to#1{\kern-2\ex@
    \hbox{$\scriptstyle#2$}\vss}}}}
\theoremstyle{plain}
\newtheorem{prop}{Proposition}[subsection]
\theoremstyle{plain}
\theoremstyle{plain}
\newtheorem{lem}[prop]{Lemma}
\theoremstyle{plain}
\theoremstyle{plain}
\newtheorem{thm}[prop]{Theorem}
\theoremstyle{plain}
\theoremstyle{plain}
\newtheorem{cor}[prop]{Corollary}
\theoremstyle{plain}
\theoremstyle{plain}
\theoremstyle{plain}
\theoremstyle{definition}
\newtheorem{defi}[prop]{Definition}
\theoremstyle{definition}
\theoremstyle{definition}
\newtheorem{nota}[prop]{Notation}
\theoremstyle{definition}
\theoremstyle{definition}
\newtheorem{conv}[prop]{Convention}
\theoremstyle{definition}
\theoremstyle{definition}
\newtheorem{ex}[prop]{Example}
\theoremstyle{definition}
\theoremstyle{definition}
\theoremstyle{definition}
\theoremstyle{definition}
\newtheorem{rem}[prop]{Remark}
\theoremstyle{definition}
\theoremstyle{definition}
\theoremstyle{definition}
\theoremstyle{definition}
\newtheorem*{ack}{Acknowledgements}
\renewenvironment{proof}[1][\proofname]{%
  \par\pushQED{\qed}\normalfont%
  \topsep6\p@\@plus6\p@\relax
  \trivlist\item[\hskip\labelsep\bfseries#1\@addpunct{.}]%
  \ignorespaces
}{%
  \popQED\endtrivlist\@endpefalse
}
\begin{document}

\title{Multiple operator integrals in \\
non-separable von Neumann algebras}
\author{Evangelos A. Nikitopoulos\thanks{Supported by NSF grant DGE 2038238 and partially supported by NSF grants DMS 1253402 and DMS 1800733}}
\affil{Department of Mathematics, University of California San Diego\protect\\
\noindent 9500 Gilman Drive, La Jolla, CA 92093-0112 (USA)\protect\\
Email: {\tt \href{mailto:enikitop@ucsd.edu}{enikitop@ucsd.edu}}}
\date{\vspace{-5ex}}

\maketitle

\begin{abstract}
A multiple operator integral (MOI) is an indispensable tool in several branches of noncommutative analysis.
However, there are substantial technical issues with the existing literature on the ``separation of variables" approach to defining MOIs, especially when the underlying Hilbert spaces are not separable.
In this paper, we provide a detailed development of this approach in a very general setting that resolves existing technical issues.
Along the way, we characterize several kinds of ``weak" operator valued integrals in terms of easily checkable conditions and prove a useful Minkowski-type integral inequality for maps with values in a semifinite von Neumann algebra.

$\,$

\noindent \textbf{Keywords:} multiple operator integral, operator valued integral, Gel'fand--Pettis integral, integral projective tensor product, Minkowski inequality for operator valued integrals, non-separable Hilbert space

$\,$

\noindent \textbf{MSC (2010):} 46G10, 47B49
\end{abstract}
\tableofcontents
\clearpage

\section{Introduction}\label{sec.intro}

Let $(\Om,\sF)$ be a measurable space and $H$ be a complex Hilbert space.
We write $B(H)$ for the space of bounded linear maps on $H$.
A function $P \colon \sF \to B(H)$ is called a \textbf{projection valued measure} if $P(\Om)=1 = \id_H$, $P(G)^2= P(G) = P(G)^*$ whenever $G \in \sF$, and $P$ is countably additive in the weak operator topology.
When $P \colon \sF \to B(H)$ is a projection valued measure, the quadruple $(\Om,\sF,H,P)$ is called a \textbf{projection valued measure space}. Fix such a space $(\Om,\sF,H,P)$.
If $h,k \in H$, then the assignment $\sF \ni G \mapsto P_{h,k}(G) \coloneqq \la P(G)h,k \ra \in \C$ is a complex measure with total variation norm at most $\|h\|\,\|k\|$.
Now, if $\varphi \colon \Om \to \C$ is bounded and measurable, then there exists unique $P(\varphi) = \int_{\Om} \varphi\,dP = \int_{\Om} \varphi(\om)\,P(d\om)$ in $B(H)$ such that $\la P(\varphi)h,k \ra = \int_{\Om} \varphi \,dP_{h,k}$, for all $h,k \in H$.
Unsurprisingly, $P(\varphi)$ is called the \textbf{integral} of $\varphi$ with respect to $P$.
It has the property that the assignment $\varphi \mapsto P(\varphi)$ is an algebra homomorphism that converts complex conjugation into the adjoint operation, i.e., it is a $\ast$-homomorphism from the space of bounded measurable functions $\Om \to \C$ to $B(H)$.
(Please see Chapter 5 of \cite{birmansolomyakBook} or Sections IX.1 and X.4 of \cite{conwayfunc} for projection valued measure and integration theory.)

We have just described the standard way to integrate \textit{scalar valued} functions with respect to $P$.
However, there are instances where it seems necessary to define some notion of $\int_{\Om} \Phi \,dP$ for \textit{operator valued} functions $\Phi \colon \Om \to B(H)$.
For example, when one studies Lipschitz/differentiability properties of scalar functions of operators \cite{aleksandrovOL,azamovetal,daletskiikrein,depagtersukochev,lemerdyskripka,nikitopoulosDiff,pellerMOIOpDer,pellerMOIPert} or spectral shift \cite{aleksandrovSmTrForm,dykemaskripka,potapovetal,skripkaTaylor}, one must consider integrals of the form
\[
\int_{\Om_{k+1}}\cdots\int_{\Om_1} \varphi(\om_1,\ldots,\om_{k+1}) \, P_1(d\om_1) \, b_1 \cdots P_k(d\om_k) \, b_k \,P_{k+1}(d\om_{k+1}), \numberthis\label{eq.formalMOI}
\]
where $(\Om_j,\sF_j,H,P_j)$ is a projection valued measure space, $\varphi \colon \Om_1 \times \cdots \times \Om_{k+1} \to \C$ is a scalar function, and $b_1, \ldots, b_k$ are bounded operators on $H$.
Note that the innermost integral $\int_{\Om_1}\varphi(\cdot,\om_2,\ldots,\om_{k+1})\,dP_1$ makes sense using the standard theory described in the previous paragraph, but it is already unclear how to integrate the map $\om_2 \mapsto \int_{\Om_1}\varphi(\cdot,\om_2,\ldots,\om_{k+1})\,dP_1 \,b_1$ with respect to $P_2$.
Yu. L. Daletskii and S. G. Krein made the first attempt at doing so in their seminal paper \cite{daletskiikrein}, wherein they used a Riemann--Stieltjes-type construction to define $\int_s^t \Phi(r)\,P(dr)$ for certain Borel projection valued measures on compact intervals $[s,t] \subseteq \R$ and maps $\Phi \colon [s,t] \to B(H)$.
This approach, which requires rather stringent regularity assumptions on $\Phi$, allows one to make sense of \eqref{eq.formalMOI} as an iterated integral for certain (highly regular) $\varphi$.

In general, an object that gives a rigorous meaning to \eqref{eq.formalMOI} is called a \textbf{multiple operator integral} (MOI).
Under the assumption that $H$ is separable, these have been studied and applied to various branches of noncommutative analysis extensively.
Please see A. Skripka and A. Tomskova's book \cite{skripka} for a comprehensive and well organized survey of the MOI literature and its applications.
In this paper, we shall concern ourselves with the ``separation of variables" approach to defining MOIs that is useful for differentiating operator functions (e.g., \cite{azamovetal,nikitopoulosDiff,pellerMOIOpDer}).
Loosely speaking, this means that one assumes $\varphi$ admits a decomposition
\[
\varphi(\om_1,\ldots,\om_{k+1}) = \int_{\Sigma} \varphi_1(\om_1,\sigma)\cdots \varphi_{k+1}(\om_{k+1},\sigma)\,\rho(d\sigma), \numberthis\label{eq.formalIPD}
\]
where $(\Sigma,\sH,\rho)$ is a measure space and $\varphi_j \colon \Om_j \times \Sigma \to \C$ is a (product) measurable function, and then one defines \eqref{eq.formalMOI} to be the ``weak" operator valued intergal
\[
\int_{\Sigma}P_1(\varphi_1(\cdot,\sigma))\,b_1\cdots  P_k(\varphi_k(\cdot,\sigma))\,b_k \,P_{k+1}( \varphi_{k+1}(\cdot,\sigma))\,\rho(d\sigma). \numberthis\label{eq.formalopint}
\]
When taking this approach, there are at least three questions to be addressed.
\begin{enumerate}[label=(Q.\arabic*), leftmargin=2\parindent]
    \item Exactly which decompositions \eqref{eq.formalIPD} does one allow?\label{q.IPD}
    \item Exactly what kind of operator valued integral is \eqref{eq.formalopint}?\label{q.opint}
    \item Assuming\hspace{-0.25mm} satisfactory \hspace{-0.25mm}answers\hspace{-0.25mm} to \hspace{-0.25mm}\ref{q.IPD}\hspace{-0.25mm} and \hspace{-0.25mm}\ref{q.opint},\hspace{-0.25mm} does \hspace{-0.25mm}\eqref{eq.formalopint}\hspace{-0.25mm} depend \hspace{-0.25mm}on\hspace{-0.25mm} the \hspace{-0.25mm}chosen\hspace{-0.25mm} decomposition \hspace{-0.25mm}\eqref{eq.formalIPD}?\label{q.welldef}
\end{enumerate}
There are various answers to these questions available in the literature, but existing answers are inadequate to cover the case when $H$ is not separable, and some of them have issues even when $H$ is separable.
(Please see, for instance, the comments in Section 4.6 of \cite{doddssub2} and Section 3.4 of \cite{nikitopoulosDiff}.)
In this paper, we provide detailed, very general answers to all three of the questions above without assuming that $H$ is separable.
\begin{enumerate}[label=(A.\arabic*), leftmargin=2\parindent]
    \item We consider \textit{integral projective decompositions} (Definition \ref{def.IPTPspecial}) of $\varphi$.
    In other words, we take $\varphi$ in the so called \textit{integral projective tensor product} $L^{\infty}(\Om_1,P_1) \iotimes \cdots \iotimes L^{\infty}(\Om_{k+1},P_{k+1})$, the idea for which is due to V. V. Peller \cite{pellerMOIOpDer}.
    There are substantial ``measurability issues," discussed in Remark \ref{rem.measissue}, with existing definitions of this object.
    We resolve these in Section \ref{sec.IPTP}.\label{a.IPD}
    \item Let $K$ be a complex Hilbert space, $B(H;K)$ be the space of bounded linear maps $H \to K$, and $V \subseteq B(H;K)$ be a linear subspace.
    In Theorem \ref{thm.wintBHK}, we characterize weak integrability (Definition \ref{def.GPint}) of maps $\Sigma \to V$ in the weak, strong, strong$^*$, $\sigma$-weak, $\sigma$-strong, and $\sigma$-strong$^*$ operator topologies (Section \ref{sec.optop}) on $V$.
    As an application of this (independently interesting) characterization, we prove in Section \ref{sec.welldef} that if $V=\cM \subseteq B(H)$ is a von Neumann algebra and $P_j(G) \in \cM$, for all $j \in \{1,\ldots,k+1\}$ and $G \in \sF_j$, then the integrand in \eqref{eq.formalopint} is weakly integrable in the $\sigma$-weak operator topology (also called the weak$^*$ topology) on $\cM$ whenever $b_1,\ldots,b_k \in \cM$.\label{a.opint}
    \item The independence of \eqref{eq.formalopint} of the chosen integral projective decomposition \eqref{eq.formalIPD} of $\varphi$ is highly nontrivial and has not yet been proven for non-separable $H$.
    In Section \ref{sec.welldef}, we present a robust new argument that proves this fact for general $H$.
    The two key ingredients to the argument, which we discuss in Section \ref{sec.keying}, are the multiplicative system theorem (Theorem \ref{thm.MST}, a basic fact from measure theory) and a Minkowski-type integral inequality (Theorem \ref{thm.Spinteg}) for Schatten $p$-norms of operator valued integrals that seems to be new in the non-separable case and is of independent interest.\label{a.welldef}
\end{enumerate}
We also prove in Section \ref{sec.otherdefs} that the above described approach to defining \eqref{eq.formalMOI} agrees with another commonly used approach, due to B. S. Pavlov \cite{pavlov}, when both apply. 
Finally, even with all of \ref{q.IPD}--\ref{q.welldef} answered, applications often demand answers to an additional question.
\begin{enumerate}[label=(Q.\arabic*), leftmargin=2\parindent]
\setcounter{enumi}{3}
    \item What kinds of quantitative norm estimates for \eqref{eq.formalMOI} are available?\label{q.estim}
\end{enumerate}
Our methods give us some answers to this question as well.
\begin{enumerate}[label=(A.\arabic*), leftmargin=2\parindent]
\setcounter{enumi}{3}
    \item The aforementioned Minkowski-type integral inequality has a generalization (Theorem \ref{thm.Lpinteg}) to noncommutative $L^p$-norms of semifinite von Neumann algebras that allows us to prove noncommutative $L^p$-norm estimates (Proposition \ref{prop.MOILpestim}) for \eqref{eq.formalMOI}.
\end{enumerate}
Actually, Theorem \ref{thm.Lpinteg} can be combined with the theory of symmetric operator spaces to give a \textit{much} more general answer to \ref{q.estim}. We carry this out in \cite{nikitopoulosDiff} and use it to prove new results about higher derivatives of operator functions in ideals of von Neumann algebras.

\subsection{Main results on multiple operator integrals}

In this section, we summarize our main results on MOIs.

\begin{nota}
Let $X$ be a topological space, $S$ and $T$ be sets, $(\Om,\sF)$ be a measurable space, $H$ be a complex Hilbert space, and $P \colon \sF \to B(H)$ be a projection valued measure on $(\Om,\sF)$. \begin{enumerate}[label=(\roman*),font=\normalfont,leftmargin=2\parindent]
    \item $\cB_X$ is the Borel $\sigma$-algebra on $X$.
    \item $T^S$ is the set of functions $S \to T$.
    If $\varphi \in \C^S$, then $\|\varphi\|_{\ell^{\infty}(S)} \coloneqq \sup_{s \in S}|\varphi(s)| \in [0,\infty]$.
    Also, we write $\ell^{\infty}(S)$ for the space of bounded functions $S \to \C$ and $\ell^{\infty}(\Om,\sF) \coloneqq \{\varphi \in \ell^{\infty}(\Om) : \varphi$ is $(\sF,\cB_{\C})$-measurable$\}$.
    \item $L^{\infty}(\Om,P) \coloneqq \ell^{\infty}(\Om,\sF)/{\sim_P}$, where $\sim_P$ denotes the $P$-a.e. equivalence relation.
    With the norm $\|\varphi\|_{L^{\infty}(P)} \coloneqq P\text{-}\esssup|\varphi| = \inf\{c \geq 0 : P(\{\om \in \Om : |\varphi(\om)| > c\}) = 0\}$, this space is a $C^*$-algebra under pointwise $P$-almost everywhere addition, multiplication, and complex conjugation.
\end{enumerate}
\end{nota}

If $\varphi \in \ell^{\infty}(\Om,\sF)$, then the operator norm of $P(\varphi) \in B(H)$ is equal to $\|\varphi\|_{L^{\infty}(P)}$.
Thus $P$-integration descends to a well-defined $\ast$-homomorphism $L^{\infty}(\Om,P) \to B(H)$, which we notate the same way.

For reasons explained in Remarks \ref{rem.normmeas} and \ref{rem.measissue}, we shall be forced to integrate non-measurable functions.
For this purpose, we use upper (and lower) integrals.
If $(\Sigma,\sH,\rho)$ is a measure space and $h \in [0,\infty]^{\Sigma}$, then
\[
\overline{\int_{\Sigma}} h(\sigma) \, \rho(d\sigma) = \overline{\int_{\Sigma}} h \, d\rho \coloneqq \inf\Bigg\{\int_{\Sigma} \tilde{h} \, d\rho : h \leq \tilde{h} \;\; \rho\text{-a.e.}, \; \tilde{h} \colon \Sigma \to [0,\infty] \text{ measurable}\Bigg\}
\]
is the \textbf{upper integral} of $h$.
Of course, if $h$ is measurable, then $\overline{\int_{\Sigma}} h \, d\rho = \int_{\Sigma} h \,d\rho$.
In Section \ref{sec.nonmeasint}, we prove the properties of this upper integral (and its lower counterpart) that are needed in this paper.

Next, we state the precise definition of $L^{\infty}(\Om_1,P_1) \iotimes \cdots \iotimes L^{\infty}(\Om_{k+1},P_{k+1})$.
To do so, we need the notion, due to M. S. Birman and M. Z. Solomyak \cite{birmansolomyakTensProd}, of the tensor product of projection valued measures.
(Please see Section \ref{sec.tensprod} for background about the Hilbert space tensor product $\hotimes$.)

\begin{thm}[Birman--Solomyak]\label{thm.tensprodPVM}
Let $(\Om_1,\sF_1,H_1,P_1),\ldots,(\Om_{k+1},\sF_{k+1},H_{k+1},P_{k+1})$ be projection valued measure spaces, and write $(\Om,\sF) \coloneqq (\Om_1 \times \cdots \times \Om_{k+1},\sF_1 \otimes \cdots \otimes \sF_{k+1})$. 
There exists unique projection valued measure $P \colon \sF \to B(H_1 \hotimes \cdots \hotimes H_{k+1})$ such that for all $G_1 \in \sF_1,\ldots,G_{k+1} \in \sF_{k+1}$,
\[
P(G_1 \times \cdots \times G_{k+1}) = P_1(G_1) \otimes \cdots \otimes P_{k+1}(G_{k+1}).
\]
We call $P$ the \textbf{tensor product} of $P_1,\ldots,P_{k+1}$ and write $P_1 \otimes \cdots \otimes P_{k+1} = P$.
\end{thm}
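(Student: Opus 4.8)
The plan is to construct the tensor-product projection-valued measure $P$ on $(\Om, \sF)$ by first defining it on the algebra of measurable rectangles via the prescribed formula $P(G_1 \times \cdots \times G_{k+1}) = P_1(G_1) \otimes \cdots \otimes P_{k+1}(G_{k+1})$, verifying it is well-behaved there, and then extending to all of $\sF = \sF_1 \otimes \cdots \otimes \sF_{k+1}$. The cleanest route exploits the standard correspondence between projection-valued measures and $*$-homomorphisms: a projection-valued measure on $(\Om,\sF)$ valued in $B(H_1 \hotimes \cdots \hotimes H_{k+1})$ corresponds to a suitable $*$-homomorphism from bounded measurable functions, and I'd prefer to build the measure directly and check countable additivity by testing against vectors.

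Let me think about the structure. First I would reduce to the case $k+1 = 2$; the general case follows by associativity and induction, since one can check that $(P_1 \otimes P_2) \otimes P_3$ and $P_1 \otimes (P_2 \otimes P_3)$ both satisfy the defining rectangle formula on the generating $\pi$-system of rectangles and hence agree by uniqueness. So the heart of the matter is two factors: given $(\Om_1,\sF_1,H_1,P_1)$ and $(\Om_2,\sF_2,H_2,P_2)$, produce $P$ on $(\Om_1 \times \Om_2, \sF_1 \otimes \sF_2)$ valued in $B(H_1 \hotimes H_2)$. The candidate is characterized weakly: for elementary tensors $h_1 \otimes h_2, k_1 \otimes k_2 \in H_1 \hotimes H_2$, one wants $P_{h_1 \otimes h_2, k_1 \otimes k_2}$ to be the product measure $(P_1)_{h_1,k_1} \times (P_2)_{h_2,k_2}$ on $\sF_1 \otimes \sF_2$. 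The plan is therefore: for each $E \in \sF_1 \otimes \sF_2$, define a sesquilinear form $Q_E$ on $(H_1 \hotimes H_2) \times (H_1 \hotimes H_2)$ by $Q_E(h_1 \otimes h_2, k_1 \otimes k_2) = \big((P_1)_{h_1,k_1} \times (P_2)_{h_2,k_2}\big)(E)$ on elementary tensors and extending bilinearly; show it is bounded, so it defines an operator $P(E) \in B(H_1 \hotimes H_2)$; and then verify $P(E)$ is a projection, that $P(\Om_1 \times \Om_2) = \id$, and that $P$ is countably additive in the weak operator topology.

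The main obstacle, and where I'd spend the most care, is boundedness of the form $Q_E$ for \emph{general} $E \in \sF_1 \otimes \sF_2$ (not just rectangles) together with the projection property $P(E)^2 = P(E) = P(E)^*$. On rectangles both are immediate from $P_j(G_j)^2 = P_j(G_j)$ and the tensor-product structure of $B(H_1 \hotimes H_2)$. To pass to all of $\sF_1 \otimes \sF_2$, I would fix vectors $h,k \in H_1 \hotimes H_2$ and consider the set of $E$ for which $E \mapsto Q_E(h,k)$ behaves measure-theoretically well; the collection of $E$ on which $Q_E$ defines a bounded form with the right norm bound is a $\lambda$-system containing the $\pi$-system of rectangles, so a Dynkin/monotone-class argument gives all of $\sF_1 \otimes \sF_2$. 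The genuinely delicate point is controlling $\|P(E)\|$ uniformly: one needs the total-variation bound $|P_{h,k}(E)| \le \|h\|\,\|k\|$ to survive the extension, which I would establish by realizing $P(E)$ as a weak operator limit of finite sums of products $P_1(G_1) \otimes P_2(G_2)$ over measurable partitions (the operators $P_1(G_1^{(i)}) \otimes P_2(G_2^{(j)})$ being mutually orthogonal projections summing appropriately) and using that such sums of orthogonal projections have operator norm at most $1$. Idempotency and self-adjointness then pass to the limit on each fixed pair of vectors, and countable additivity follows from countable additivity of the scalar product measures $(P_1)_{h,k} \times (P_2)_{h',k'}$ together with a uniform-boundedness (equicontinuity) argument to upgrade pointwise-in-vectors convergence to weak-operator convergence. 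Uniqueness is the easy part: two projection-valued measures agreeing on the $\pi$-system of rectangles, which generates $\sF$, agree on all of $\sF$ by the standard uniqueness theorem for the (finite, complex) measures $P_{h,k}$.
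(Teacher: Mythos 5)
Your overall architecture (define the matrix entries of $P(E)$ via product complex measures on elementary tensors, extend sesquilinearly, then pass from rectangles to all of $\sF$ by a generating-class argument) can in principle be carried out, but the two steps you yourself flag as the heart of the matter both fail as stated. First, idempotency does \emph{not} pass to weak operator limits: multiplication is not jointly WOT-continuous, and a WOT limit of orthogonal projections need not be a projection. For instance, on $\ell^2(\N)$ the rank-one projections onto $\C(e_1+e_n)/\sqrt{2}$ converge in the WOT to the operator $\frac{1}{2}\la \cdot,e_1 \ra e_1$, which is not idempotent. So realizing $P(E)$ as a WOT limit of finite sums of mutually orthogonal rectangle projections (even granting that such a realization exists for every $E \in \sF$, which is itself not automatic) only yields a self-adjoint contraction; the projection property must come from somewhere else. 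This is precisely why the paper routes the construction through a Carath\'{e}odory-type extension theorem for \emph{projection-valued} premeasures (Lemma \ref{lem.extendprotomeas} and Theorem \ref{thm.extendpremeas}, the latter quoted from Birman--Solomyak), whose conclusion includes that the extension is projection-valued. If you want to avoid that citation, the known repair is to build $P(E)$ for every $E$ merely as a positive contraction, prove WOT countable additivity of the extension, and only \emph{afterwards} recover multiplicativity $P(E \cap F) = P(E)\,P(F)$ (hence idempotency, taking $E = F$) by a two-step Dynkin argument with one set frozen at a time, using that multiplication by a \emph{fixed} operator is separately WOT-continuous; that argument needs countable additivity on all of $\sF$ as input, so it cannot be folded into the limiting step the way you propose.

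Second, your Dynkin class for boundedness does not work: the collection $\{E : \|Q_E\| \leq 1\}$ is \emph{not} a $\lambda$-system, because closure under proper differences fails --- from $E \subseteq F$ you only get $Q_{F \setminus E} = Q_F - Q_E$, hence a bound of $2$, and a difference of contractive forms need not be contractive. The workable substitute is the Monotone Class Theorem over the algebra $\alpha(\mathscr{E})$ of finite disjoint unions of rectangles: the classes $\{E : Q_E(h,h) \geq 0 \text{ for all algebraic } h\}$ and $\{E : \|Q_E\| \leq 1\}$ \emph{are} closed under increasing unions and decreasing intersections (the matrix entries are finite complex measures, hence continuous from below and from above), and they contain $\alpha(\mathscr{E})$, on which $P$ is an honest projection. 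The cleanest fix is to first get positivity $Q_E(h,h) \geq 0$ for every $E$ this way; then $Q_E(h,h) \leq Q_{\Om}(h,h) = \|h\|^2$ follows from positivity of $Q_{\Om \setminus E}$ plus finite additivity, and the Cauchy--Schwarz inequality for positive semidefinite forms gives $\|Q_E\| \leq 1$. With these two repairs, your construction can be completed (your equicontinuity upgrade of countable additivity is fine once the uniform bound is available, since partial sums telescope to $P$ of finite unions); as written, however, both pivotal steps are genuine gaps, and they are exactly the content of the projection-valued extension theorem the paper invokes.
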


For completeness, we supply a proof in Section \ref{sec.tensprodPVM}.
Now, retain the setup of Theorem \ref{thm.tensprodPVM}, write $P \coloneqq P_1 \otimes \cdots \otimes P_{k+1}$, and let $\varphi \colon \Om \to \C$ be a function.
A \textbf{$\boldsymbol{L_P^{\infty}}$-integral projective decomposition} of $\varphi$ is a choice $(\Sigma,\rho,\varphi_1,\ldots,\varphi_{k+1})$ of a $\sigma$-finite measure space $(\Sigma,\sH,\rho)$ and measurable functions $\varphi_j \colon \Om_j \times \Sigma \to \C$ such that $\varphi_j(\cdot,\sigma) \in L^{\infty}(\Om_j,P_j)$ for $\sigma \in \Sigma$,
\[
\overline{\int_{\Sigma}} \,\|\varphi_1(\cdot,\sigma)\|_{L^{\infty}(P_1)} \cdots \|\varphi_{k+1}(\cdot,\sigma)\|_{L^{\infty}(P_{k+1})} \, \rho(d\sigma) < \infty, \numberthis\label{eq.IPDint}
\]
and \eqref{eq.formalIPD} holds for $P$-almost every $(\om_1,\ldots,\om_{k+1}) \in \Om$.
(The integral on the right hand side of \eqref{eq.formalIPD} makes sense $P$-almost everywhere by Lemma \ref{lem.PVMink}.) 
Now, we define $\|\varphi\|_{L^{\infty}(P_1) \iotimes \cdots \iotimes L^{\infty}(P_{k+1})}$ to be the infimum of the set of numbers \eqref{eq.IPDint} as $(\Sigma,\rho,\varphi_1,\ldots,\varphi_{k+1})$ ranges over all $L_P^{\infty}$-integral projective decompositions of $\varphi$.
In Section \ref{sec.IPTP}, we prove that if $L^{\infty}(\Om_1,P_1) \iotimes \cdots \iotimes L^{\infty}(\Om_{k+1},P_{k+1})$ is the space of $P$-a.e. equivalence classes of functions $\varphi$ admitting $L_P^{\infty}$-integral projective decompositions, then this space is a Banach $\ast$-algebra under $P$-almost everywhere operations and the norm $\|\cdot\|_{L^{\infty}(P_1) \iotimes \cdots \iotimes L^{\infty}(P_{k+1})}$.

Now, let $V$ be a Hausdorff locally convex topological vector space with topological dual $V^*$.
We call $F \colon \Sigma \to V$ \textbf{weakly measurable} if $\ell \circ F \colon \Sigma \to \C$ is $(\sH,\cB_{\C})$-measurable whenever $\ell \in V^*$.
Suppose in addition that $\int_{\Sigma}|\ell \circ F|\,d\rho < \infty$, for all $\ell \in V^*$.
If for all $S \in \sH$, there exists (necessarily unique) $\int_S F(\sigma)\,\rho(d\sigma) \in V$ such that $\ell\big(\int_S F(\sigma)\,\rho(d\sigma) \big) = \int_S (\ell \circ F) \, d\rho$ whenever $\ell \in V^*$, then $F$ is called \textbf{weakly} or \textbf{Gel'fand--Pettis integrable}.
In this case, $\int_S F(\sigma)\,\rho(d\sigma)$ is called the \textbf{weak} or \textbf{Gel'fand--Pettis integral} (over $S$) of $F$ with respect to $\rho$.

Finally, \hspace{-0.45mm}recall\hspace{-0.45mm} that \hspace{-0.45mm}a\hspace{-0.45mm} \textbf{von \hspace{-0.45mm}Neumann\hspace{-0.45mm} algebra} \hspace{-0.45mm}is\hspace{-0.45mm} a \hspace{-0.45mm}weak\hspace{-0.45mm} operator \hspace{-0.45mm}topology\hspace{-0.45mm} closed \hspace{-0.45mm}unital\hspace{-0.45mm} $\ast$-subalgebra \hspace{-0.45mm}of\hspace{-0.45mm} $B(H)$.

\begin{thm}[Well-definition of MOIs]\label{thm.MOI}
Let $H$ be a complex Hilbert space and $\cM \subseteq B(H)$ be a von Neumann algebra.
Suppose, for $j \in \{1,\ldots,k+1\}$, that $(\Om_j,\sF_j,H,P_j)$ is a projection valued measure space such that $P_j(G) \in \cM$ whenever $G \in \sF_j$.
If $(\Sigma,\rho,\varphi_1,\ldots,\varphi_{k+1})$ is a $L_P^{\infty}$-integral projective decomposition of a function $\varphi\in L^{\infty}(\Om_1,P_1) \iotimes \cdots \iotimes L^{\infty}(\Om_{k+1},P_{k+1})$ and $b = (b_1,\ldots,b_k) \in \cM^k$, then the map
\[
\Sigma \ni \sigma \mapsto P_1(\varphi_1(\cdot,\sigma))\,b_1 \cdots P_k(\varphi_k(\cdot,\sigma))\,b_k \,P_{k+1}(\varphi_{k+1}(\cdot,\sigma)) \in \cM
\]
is Gel'fand--Pettis integrable in the $\sigma$-weak operator topology on $\cM$, and the weak integral
\[
\big(I^{P_1,\ldots,P_{k+1}}\varphi\big)[b] \coloneqq \int_{\Sigma}P_1(\varphi_1(\cdot,\sigma))\,b_1 \cdots P_k(\varphi_k(\cdot,\sigma))\,b_k \,P_{k+1}(\varphi_{k+1}(\cdot,\sigma))\,\rho(d\sigma) \in \cM
\]
is independent of the chosen decomposition $(\Sigma,\rho,\varphi_1,\ldots,\varphi_{k+1})$ and the representation of $\cM$.
\end{thm}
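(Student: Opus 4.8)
The plan is to establish the three assertions --- $\sigma$-weak Gel'fand-Pettis integrability, independence of the decomposition, and independence of the representation --- in that order, treating the middle one as the crux. For integrability I would apply the characterization in Theorem~\ref{thm.wintBHK} to the integrand $F(\sigma) \coloneqq P_1(\varphi_1(\cdot,\sigma))\,b_1\cdots b_k\,P_{k+1}(\varphi_{k+1}(\cdot,\sigma))$. Since each $P_j(G) \in \cM$ and $\cM$ is a norm-closed $\ast$-algebra, the spectral integral $P_j(\varphi_j(\cdot,\sigma))$ lies in $\cM$; hence $F$ takes values in the $\sigma$-weakly closed subspace $\cM$. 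Because $\|P_j(\varphi_j(\cdot,\sigma))\|_{B(H)} = \|\varphi_j(\cdot,\sigma)\|_{L^{\infty}(P_j)}$, one has the pointwise bound $\|F(\sigma)\|_{B(H)} \le \|b_1\|\cdots\|b_k\|\prod_{j}\|\varphi_j(\cdot,\sigma)\|_{L^{\infty}(P_j)}$, whose upper integral over $\Sigma$ is finite by \eqref{eq.IPDint}. Together with weak measurability of $F$ (measurability of $\sigma \mapsto \omega(F(\sigma))$ for $\omega \in \cM_*$, which reduces, via joint measurability of the $\varphi_j$, to iterated spectral integrals), these are exactly the hypotheses of Theorem~\ref{thm.wintBHK} that yield $\sigma$-weak Gel'fand-Pettis integrability into $\cM$.

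The heart of the matter is showing that $\omega\big((I^{P_1,\ldots,P_{k+1}}\varphi)[b]\big)$ depends only on the $P$-a.e.\ class of $\varphi$ for each $\omega \in \cM_*$; since $\cM_*$ separates points in the $\sigma$-weak topology, this pins down the integral. Fixing $\omega$ and $b$, I would study the $(k+1)$-linear functional $\Lambda(\psi_1,\ldots,\psi_{k+1}) \coloneqq \omega\big(P_1(\psi_1)\,b_1\cdots b_k\,P_{k+1}(\psi_{k+1})\big)$ on $\prod_j L^{\infty}(\Om_j,P_j)$ and construct a finite complex measure $\mu = \mu_{\omega}^b$ on $(\Om,\sF) = (\Om_1\times\cdots\times\Om_{k+1},\,\sF_1\otimes\cdots\otimes\sF_{k+1})$, vanishing on $P$-null sets, with $\Lambda(\psi_1,\ldots,\psi_{k+1}) = \int_{\Om}\psi_1(\om_1)\cdots\psi_{k+1}(\om_{k+1})\,\mu(d\om)$. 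The measure is assembled one coordinate at a time: for fixed $T \in \cM$, the set function $G \mapsto \omega(P_j(G)\,T)$ is a finite complex measure because $\omega$ is normal and $P_j$ is countably additive in the weak operator topology. This fixes the values of $\mu$ on measurable rectangles, and the Multiplicative System Theorem~\ref{thm.MST} --- applied to the system of indicators of measurable rectangles, which is closed under multiplication and generates $\sF$ --- upgrades the rectangle identity to the product formula for all bounded measurable $\psi_j$ while certifying that $\mu$ is genuinely countably additive; the coordinatewise null structure then gives $|\mu| \ll P$ in the sense that $P(E) = 0$ forces $|\mu|(E) = 0$.

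With $\mu$ in hand, I would unwind the definition of the weak integral, recognize the integrand as $\Lambda(\varphi_1(\cdot,\sigma),\ldots,\varphi_{k+1}(\cdot,\sigma))$, and interchange the $\rho$- and $\mu$-integrations to obtain
\[
\omega\big((I^{P_1,\ldots,P_{k+1}}\varphi)[b]\big) = \int_{\Sigma}\!\int_{\Om} \prod_{j}\varphi_j(\om_j,\sigma)\,\mu(d\om)\,\rho(d\sigma) = \int_{\Om}\Big(\int_{\Sigma}\prod_{j}\varphi_j(\om_j,\sigma)\,\rho(d\sigma)\Big)\mu(d\om) = \int_{\Om}\varphi\,d\mu,
\]
where the last equality invokes the defining relation \eqref{eq.formalIPD}, which holds $P$-a.e.\ and hence $|\mu|$-a.e.\ by $|\mu|\ll P$. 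The Fubini interchange is legitimate precisely because the Minkowski-type inequality of Theorem~\ref{thm.Spinteg} (through Lemma~\ref{lem.PVMink}) bounds the absolute double integral $\int_{\Sigma}\int_{\Om}\prod_{j}|\varphi_j(\om_j,\sigma)|\,|\mu|(d\om)\,\rho(d\sigma)$ by $|\mu|(\Om)$ times the finite quantity \eqref{eq.IPDint}. Since $\int_{\Om}\varphi\,d\mu$ refers only to $\varphi$ and to $\mu = \mu_{\omega}^b$ --- and not to the chosen $(\Sigma,\rho,\varphi_1,\ldots,\varphi_{k+1})$ --- independence of the decomposition follows.

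For independence of the representation, I would use that any two representations of $\cM$ are related by a normal $\ast$-isomorphism $\pi$ (indeed, the predual, hence the $\sigma$-weak topology, is intrinsic to $\cM$). Such a $\pi$ is $\sigma$-weakly continuous and, by normality, commutes with spectral integration, carrying each $P_j$ to the projection-valued measure $\pi\circ P_j$ and each $P_j(\varphi_j(\cdot,\sigma))$ to $(\pi\circ P_j)(\varphi_j(\cdot,\sigma))$. A $\sigma$-weakly continuous map intertwines $\sigma$-weak Gel'fand-Pettis integrals --- for a predual functional $\omega'$ of the target one has $\omega'\circ\pi \in \cM_*$ and $\omega'\big(\pi(\int_{\Sigma}F\,d\rho)\big) = \int_{\Sigma}(\omega'\circ\pi)(F)\,d\rho = \omega'\big(\int_{\Sigma}\pi(F)\,d\rho\big)$ --- so the integral transforms covariantly and the value is representation-independent. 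I expect the genuine obstacle to be the second and third paragraphs: producing the representing measure $\mu_{\omega}^b$ together with its absolute continuity $|\mu|\ll P$ in the absence of separability, for which Theorem~\ref{thm.MST} is the essential non-sequential replacement for monotone-class reasoning, and securing the uniform bound that licenses the Fubini step, which is exactly what the non-separable Minkowski-type inequality supplies.
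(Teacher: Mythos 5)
Your first paragraph (integrability via Theorem \ref{thm.wintBHK}, using that each $P_j(\varphi_j(\cdot,\sigma))\in\cM$ and the bound \eqref{eq.IPDint}) and your last paragraph (representation independence via $\sigma$-weak continuity of normal $\ast$-isomorphisms) are sound and essentially match the paper's Corollary \ref{cor.MOIintgood}, Lemma \ref{lem.PintinM}, and Theorems \ref{thm.wintBHK}.\ref{item.indepofrep}/\ref{thm.MOIsinM}. The fatal problem is the second paragraph: the finite complex measure $\mu_\omega^b$ you want to build does not exist for general $b\in\cM^k$ and normal $\omega$. The rectangle set function $\mu_0(G_1\times\cdots\times G_{k+1})=\omega\big(P_1(G_1)\,b_1\cdots b_k\,P_{k+1}(G_{k+1})\big)$ is indeed countably additive in each coordinate separately (your observation about $G\mapsto\omega(P_j(G)\,T)$ is correct), but it generally has \emph{infinite total variation} already over finite partitions of $\Om$ by rectangles, so it extends to no finite complex measure on $\sF$, and no application of Theorem \ref{thm.MST} can manufacture one: the Multiplicative System Theorem can only propagate an identity inside a space of functions closed under bounded convergence, and here there is no measure against which to write the identity in the first place. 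Concretely, take $k=1$, $H_n=\C^n$ with coordinate projection-valued measures $P^{(n)}$ on $\{1,\ldots,n\}$, $b^{(n)}$ the discrete Fourier unitary ($|\la b^{(n)}e_j,e_i\ra|=n^{-1/2}$), and $u_n=n^{-1/2}(1,\ldots,1)$; then, over the partition of $\{1,\ldots,n\}^2$ into singleton rectangles,
\[
\sum_{i,j=1}^n\big|\big\la P^{(n)}(\{i\})\,b^{(n)}\,P^{(n)}(\{j\})\,u_n,u_n\big\ra\big|=\frac{1}{n}\sum_{i,j=1}^n\big|\big\la b^{(n)}e_j,e_i\big\ra\big|=\sqrt{n},
\]
although $\|b^{(n)}\|=\|u_n\|=1$. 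Taking direct sums over $n$ (with weights $|c_n|^2\propto n^{-5/4}$) yields a single separable $H$, spectral measures $P_1=P_2$ on $\N$, a unitary $b$, and a vector state $\omega=\la\,\cdot\;u,u\ra$ whose rectangle set function has infinite variation. This obstruction is not incidental: it is equivalent to the classical fact that not every bounded matrix is a Schur multiplier, and it is exactly why the paper's Theorem \ref{thm.Pavlov} (Pavlov/Birman--Solomyak) constructs a representing vector measure only for $b_j\in\cS_2$, with the Hilbert--Schmidt structure driving the variation bounds. Since your third paragraph (the Fubini interchange) integrates against $\mu_\omega^b$, it collapses with it.

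For contrast, the paper deploys your representing-measure idea only where it is valid: when every $b_j$ has rank one, the measure exists and is literally the product complex measure $(P_1)_{k_1,h_0}\otimes(P_2)_{k_2,h_1}\otimes\cdots\otimes(P_{k+1})_{k_{k+1},h_k}$, and the computation in the proof of Theorem \ref{thm.MOIwelldef} is then close to the one you sketch. The passage from finite-rank $b$ to general $b$ is achieved \emph{not} by constructing a measure but by proving that $b\mapsto I^{\boldsymbol{P}}(\Sigma,\rho,\varphi_1,\ldots,\varphi_{k+1})[b]$ is ultraweakly continuous in each argument (Corollary \ref{cor.uwcont}) and invoking ultraweak density of finite-rank operators. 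That continuity is the genuinely hard, non-separable-safe step, and it is there -- in Theorems \ref{thm.traceofinteg} and \ref{thm.S1integrability} -- that the Multiplicative System Theorem and the Schatten--Minkowski inequality (Theorem \ref{thm.Spinteg}) actually do their work: one needs $\cS_1$-valued Gel'fand--Pettis integrability of the integrand when some $b_j$ is trace class, not a scalar representing measure. If you want to salvage your outline, replace the second and third paragraphs by (i) the rank-one computation and (ii) a proof of argumentwise ultraweak continuity; you will find that (ii) cannot be done by soft duality alone and forces essentially the paper's machinery.
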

\begin{proof}
Combine Corollary \ref{cor.MOIintgood}, Theorem \ref{thm.MOIwelldef}, and Theorem \ref{thm.MOIsinM}.
\end{proof}

We also prove in Proposition \ref{prop.linandmult} that the assignment $\varphi \mapsto I^{P_1,\ldots,P_{k+1}}\varphi$ is linear and multiplicative in a certain sense.
Finally, when $(\cM,\tau)$ is a semifinite von Neumann algebra (Definition \ref{def.trace}), we also prove (Proposition \ref{prop.MOILpestim}) that if $p,p_1,\ldots,p_k \in [1,\infty]$ are such that $\frac{1}{p_1}+\cdots +\frac{1}{p_k} = \frac{1}{p}$, then
\[
\big\|\big(I^{P_1,\ldots,P_{k+1}}\varphi\big)[b_1,\ldots,b_k]\big\|_{L^p(\tau)} \leq \|\varphi\|_{L^{\infty}(P_1) \iotimes \cdots \iotimes L^{\infty}(P_{k+1})}\|b_1\|_{L^{p_1}(\tau)}\cdots\|b_k\|_{L^{p_k}(\tau)},
\]
for all $b_1,\ldots,b_k \in \cM$, where $\|\cdot\|_{L^p(\tau)}$ is the noncommutative $L^p$-norm (Notation \ref{nota.ncLp}).
This allows for an ``extension" of the MOI $I^{P_1,\ldots,P_{k+1}}\varphi \colon \cM^k \to \cM$ to a bounded $k$-linear map $L^{p_1}(\tau) \times \cdots \times L^{p_k}(\tau) \to L^p(\tau)$.

\subsection{Discussion of the well-definition argument}\label{sec.keying}

Retain the setup of Theorem \ref{thm.MOI} with $\cM = B(H)$.
In this section, we discuss the key ingredients of the proof that the integral \eqref{eq.formalopint}, defined as described in the previous section, is independent of the chosen $L_P^{\infty}$-integral projective decomposition of $\varphi$ and why this argument is delicate when $H$ is not separable.
To maximize readability, we stick to the case of a \textit{double operator integral} (DOI), i.e., the case $k=1$.

Let $b \in B(H)$.
The goal is to show that if $(\Sigma,\rho,\varphi_1,\varphi_2)$ is a $L_{P_1 \otimes P_2}^{\infty}$-integral projective decomposition of $\varphi \in L^{\infty}(\Om_1,P_1) \iotimes L^{\infty}(\Om_2,P_2)$, then
\[
\int_{\Sigma} P_1(\varphi_1(\cdot,\sigma))\,b\,P_2(\varphi_2(\cdot,\sigma)) \, \rho(d\sigma)
\]
does not depend on $(\Sigma,\rho,\varphi_1,\varphi_2)$.
This is actually not very difficult to prove --- as in \cite{azamovetal,pellerMOIPert} --- when $b$ has finite rank, so the proof is complete if we can somehow reduce to this case.
In \cite{pellerMOIPert}, it is stated that this reduction is ``easy to see."
This is certainly not the case when $H$ is not separable.
Indeed, when $H$ is separable (as is assumed in \cite{azamovetal}), every $b \in B(H)$ is actually a strong operator limit of a \textit{sequence} of finite rank operators.
One can then use a vector valued dominated convergence theorem to finish the proof.
But this argument does not work when $H$ is not separable because, for instance, $\id_H$ is not a strong operator limit of a sequence of finite rank operators.

We opt instead to work with a different topology on $B(H)$ with respect to which finite rank operators are dense:
the ultraweak topology.
(This is another description of the $\sigma$-weak operator topology;
see Theorems \ref{thm.Schatten}\ref{item.finrk} and \ref{thm.optop}\ref{item.WOTsigmaWOT}.)
If we can show $b \mapsto I^{P_1,P_2}(\Sigma,\rho,\varphi_1,\varphi_2)[b] \coloneqq \int_{\Sigma} P_1(\varphi_1(\cdot,\sigma))\,b\,P_2(\varphi_2(\cdot,\sigma))\,\rho(d\sigma)$ is ultraweakly continuous, then the proof will be complete.
This ultraweak continuity is asserted in \cite{potapovsukochev} without proof or reference.
When $H$ is not separable, it is not at all obvious and, to the author's knowledge, has remained unproven until now.
To prove it, we must show that for all $a \in \cS_1(H)$ (trace class operators, Definition \ref{def.Schatten}), there exists $Ta \in \cS_1(H)$ such that
\[
\Tr\big(I^{P_1,P_2}(\Sigma,\rho,\varphi_1,\varphi_2)[b]\,a\big) = \Tr(b\,Ta), \; \text{ for all } b \in B(H). \numberthis\label{eq.uwcontgoal}
\]
To motivate what $Ta$ should be, fix $a,b \in \cS_1(H)$.
Then the maps $c \mapsto \Tr(ca)$ and $c \mapsto \Tr(bc)$ are $\sigma$-weakly/ultraweakly continuous.
Therefore, by definition of the Gel'fand--Pettis integral and basic properties (Theorem \ref{thm.Schatten}\ref{item.Trflip}) of $\Tr$, we have
\begin{align*}
    \Tr\big(I^{P_1,P_2}(\Sigma,\rho,\varphi_1,\varphi_2)[b]\,a\big) & = \int_{\Sigma} \underbrace{\Tr( P_1(\varphi_1(\cdot,\sigma))\,b\,P_2(\varphi_2(\cdot,\sigma))\,a)}_{=\Tr( b\,P_2(\varphi_2(\cdot,\sigma))\,a \, P_1(\varphi_1(\cdot,\sigma)))} \, \rho(d\sigma) \\
    & = \Tr\Bigg(b\int_{\Sigma} P_2(\varphi_2(\cdot,\sigma))\,a\,P_1(\varphi_1(\cdot,\sigma))\,\rho(d\sigma)\Bigg). \numberthis\label{eq.weakS1integ}
\end{align*}
We should therefore take $Ta = \int_{\Sigma} P_2(\varphi_2(\cdot,\sigma))\,a\,P_1(\varphi_1(\cdot,\sigma))\,\rho(d\sigma)$ in \eqref{eq.uwcontgoal}.
(Those familiar with the subject will recognize this as related to the Birman--Solomyak \cite{birmansolomyakDSOI1} definition of a DOI.
We elaborate on this in Section \ref{sec.otherdefs}.)
For this to have any chance at making sense, we need to know at the very least that
\[
a \in \cS_1(H) \implies \int_{\Sigma} P_2(\varphi_2(\cdot,\sigma))\,a\,P_1(\varphi_1(\cdot,\sigma)) \,\rho(d\sigma) \in \cS_1(H). \numberthis\label{eq.DOIinS1}
\]
Even \textit{this} is not obvious when $H$ is not separable!
It follows, however, from our first key ingredient:
Theorem \ref{thm.Spinteg}.
Assuming we know \eqref{eq.DOIinS1}, we must still verify \eqref{eq.weakS1integ} for \textit{all} $b \in B(H)$, not just for $b \in \cS_1(H)$.
If $b \in B(H)$ is arbitrary, then the map $\cS_1(H) \ni c \mapsto \Tr(bc) \in \C$ is bounded with respect to $\|\cdot\|_{\cS_1(H)}$.
Therefore, we could reverse the calculation that led to \eqref{eq.weakS1integ} if we knew that $\Sigma \ni \sigma \mapsto P_2(\varphi_2(\cdot,\sigma))\,a\,P_1(\varphi_1(\cdot,\sigma)) \in \cS_1(H)$ were Gel'fand--Pettis integrable as a map $\Sigma \to (\cS_1(H),\|\cdot\|_{\cS_1(H)})$, not just as a map $\Sigma \to (B(H),\sigma$-WOT$)$, whenever $a \in \cS_1(H)$. This is not automatic.
Moreover, if $H$ is not separable, then $\cS_1(H)$ is not separable, so Bochner integral techniques do not automatically apply either.
We tiptoe around these difficulties using the second key ingredient:
the multiplicative system theorem, which is a ``functional form" of the Dynkin system theorem.
To state it, we recall some additional notation and terminology.

\begin{nota}\label{nota.sigmaalggen}
Let $S$ be a set, $2^S$ be the power set of $S$, $(T,\mathscr{T})$ be a measurable space, and $\mathscr{S} \subseteq T^S$.
We write $\sigma_{\mathscr{T}}(\mathscr{S}) \subseteq 2^S$ for the smallest $\sigma$-algebra on $S$ with respect to which all members of $\mathscr{S}$ are measurable.
When we are in the case $(T,\mathscr{T}) = (\C,\cB_{\C})$, we shall write $\sigma_{\mathscr{T}}(\mathscr{S}) = \sigma(\mathscr{S})$.
\end{nota}
\pagebreak

Retain the above setting, and let $(\Xi,\sG)$ be another measurable space.
Note that a function $g \colon \Xi \to S$ is $(\sG,\sigma_{\mathscr{T}}(\mathscr{S}))$-measurable if and only if $s \circ g \colon \Xi \to T$ is $(\sG,\mathscr{T})$-measurable, for all $s \in \mathscr{S}$.
Finally, we recall the definition of \textbf{bounded convergence}.
A sequence $(\varphi_n)_{n \in \N}$ of functions $S \to \C$ is said to \textbf{converge boundedly} to $\varphi \in \C^S$ if $\varphi_n \to \varphi$ pointwise as $n \to \infty$ and $\sup_{n \in \N}\|\varphi_n\|_{\ell^{\infty}(S)} < \infty$.

\begin{thm}[Multiplicative system]\label{thm.MST}
Let $S$ be a set.
Suppose $\mathbb{H} \subseteq \C^S$ is a linear subspace containing the constant function $1$ that is closed under complex conjugation and sequential bounded convergence.
If $\mathbb{M} \subseteq \mathbb{H}$ is closed under multiplication and complex conjugation, then $\ell^{\infty}(S,\sigma(\mathbb{M})) \subseteq \mathbb{H}$.
\end{thm}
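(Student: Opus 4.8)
The plan is to run a $\pi$--$\lambda$ (Dynkin system) argument on the collection of sets whose indicators lie in $\mathbb{H}$, after first upgrading the merely additive hypotheses on $\mathbb{H}$ using the multiplicative structure of $\mathbb{M}$. First I would reduce to real scalars: since $\mathbb{H}$ is a complex-linear space closed under conjugation, $\cRe h,\cIm h \in \mathbb{H}$ for every $h \in \mathbb{H}$, and every $\varphi \in \ell^{\infty}(S,\sigma(\mathbb{M}))$ equals $\cRe\varphi + i\,\cIm\varphi$ with each part bounded and $\sigma(\mathbb{M})$-measurable; so it suffices to show that every bounded real $\sigma(\mathbb{M})$-measurable function lies in $\mathbb{H}$. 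Next I would replace $\mathbb{M}$ by the unital $\ast$-algebra $\mathbb{A} \coloneqq \spn_{\C}(\mathbb{M}\cup\{1\})$ it generates. Because $\mathbb{M}$ is closed under multiplication, $\mathbb{A}$ is genuinely an algebra; because $\mathbb{M}\cup\{1\}\subseteq\mathbb{H}$ and $\mathbb{H}$ is linear, $\mathbb{A}\subseteq\mathbb{H}$; and one checks, writing $\mathbb{A}_{\R}$ for the real-valued members of $\mathbb{A}$, that $\sigma(\mathbb{A}_{\R}) = \sigma(\mathbb{A}) = \sigma(\mathbb{M})$, using conjugation-closedness to split each element into real and imaginary parts.

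The engine of the proof is a functional-calculus step: for $f_1,\ldots,f_n\in\mathbb{A}_{\R}$ with joint range in a box $[-M,M]^n$ and any continuous $\psi\colon[-M,M]^n\to\R$, one has $\psi(f_1,\ldots,f_n)\in\mathbb{H}$. Indeed, by the multivariate Weierstrass theorem $\psi$ is a uniform limit of polynomials $q_j$; each $q_j(f_1,\ldots,f_n)$ lies in the algebra $\mathbb{A}\subseteq\mathbb{H}$, and $q_j(f_1,\ldots,f_n)\to\psi(f_1,\ldots,f_n)$ uniformly, hence boundedly, so the limit lies in $\mathbb{H}$ by closure under sequential bounded convergence. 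This is the only place where boundedness of the elements of $\mathbb{M}$ enters, and it is essential: a uniformly bounded sequence of polynomials on an unbounded range is forced to be constant, so the argument (and the conclusion) genuinely require the generators to be bounded.

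With this in hand I would apply the $\pi$--$\lambda$ theorem to $\mathcal{D} \coloneqq \{E\subseteq S : \mathbf{1}_E \in \mathbb{H}\}$. That $\mathcal{D}$ is a $\lambda$-system is immediate from the linear and bounded-convergence hypotheses: $S\in\mathcal{D}$ since $\mathbf{1}_S = 1 \in \mathbb{H}$; if $A\subseteq B$ lie in $\mathcal{D}$ then $\mathbf{1}_{B\setminus A} = \mathbf{1}_B - \mathbf{1}_A\in\mathbb{H}$; and if $A_m\uparrow A$ with $A_m\in\mathcal{D}$ then $\mathbf{1}_{A_m}\to\mathbf{1}_A$ boundedly, so $\mathbf{1}_A\in\mathbb{H}$. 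For the generating $\pi$-system I would take $\mathcal{P} \coloneqq \{\{f_1>c_1\}\cap\cdots\cap\{f_n>c_n\} : n\in\N,\ f_i\in\mathbb{A}_{\R},\ c_i\in\R\}$, which is closed under finite intersection and generates $\sigma(\mathbb{A}_{\R})=\sigma(\mathbb{M})$. To see $\mathcal{P}\subseteq\mathcal{D}$, I approximate $\prod_{i}\mathbf{1}_{(c_i,\infty)}$ by continuous functions $\psi_k\colon[-M,M]^n\to[0,1]$ built as products of ramp functions; then $\psi_k(f_1,\ldots,f_n)\in\mathbb{H}$ by the functional-calculus step and converges boundedly to the indicator of the intersection, which is therefore in $\mathbb{H}$. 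The $\pi$--$\lambda$ theorem then gives $\sigma(\mathbb{M})\subseteq\mathcal{D}$.

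Finally, all indicators of $\sigma(\mathbb{M})$-sets lie in $\mathbb{H}$, hence so do all simple $\sigma(\mathbb{M})$-measurable functions by linearity; an arbitrary bounded real $\sigma(\mathbb{M})$-measurable $\varphi$ is a uniform (in particular bounded) limit of such simple functions, so $\varphi\in\mathbb{H}$, and complexifying via real and imaginary parts completes the proof. I expect the main obstacle to be the third paragraph: since $\mathbb{H}$ is only assumed to be additive, there is no direct way to place indicators of the natural generating $\pi$-system into $\mathbb{H}$. The resolution is to route everything through the genuine algebra $\mathbb{A}$ generated by $\mathbb{M}$, where products are available, and then transfer first continuous functions and then indicators into $\mathbb{H}$ via Weierstrass approximation together with sequential bounded convergence.
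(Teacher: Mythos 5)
Your argument is sound, and since the paper offers no proof of Theorem \ref{thm.MST} itself---it defers to Section 12.1 of \cite{driver} and Appendix A of \cite{janson}---the right comparison is with those sources: your route (form the unital $\ast$-algebra $\mathbb{A}=\spn_{\C}(\mathbb{M}\cup\{1\})$, push continuous functions of finitely many elements of $\mathbb{A}_{\R}$ into $\mathbb{H}$ via Weierstrass approximation plus sequential bounded convergence, capture indicators of the $\pi$-system of sets $\{f_1>c_1\}\cap\cdots\cap\{f_n>c_n\}$ with ramp functions, invoke the $\pi$--$\lambda$ theorem, and finish with simple functions) is essentially the standard argument found there. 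If anything your organization is leaner: because $\mathbb{M}$ is multiplicatively closed, $\spn_{\C}(\mathbb{M}\cup\{1\})$ is an algebra outright, so you can work with two successive bounded limits (polynomials, then ramps) and never need the bootstrap, common in textbook treatments, showing that the bounded-convergence closure of $\spn_{\C}(\mathbb{M}\cup\{1\})$ is itself an algebra or lattice.

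There is, however, a real mismatch between what you prove and what the paper asserts, and you flag it yourself. Your Weierstrass step requires the elements of $\mathbb{M}$ (hence of $\mathbb{A}_{\R}$) to be bounded, and you call this hypothesis ``essential''; but the paper's statement never imposes it---$\mathbb{M}$ is merely a subset of $\mathbb{H}\subseteq\C^S$ closed under multiplication and conjugation. Your instinct is correct, and in fact the statement as printed is false without that hypothesis. Take $S=\R$, let $\mathbb{H}$ be the space of polynomial functions $\R\to\C$, and let $\mathbb{M}=\{x\mapsto x^n : n\in\N\}$. Then $\mathbb{H}$ is a conjugation-closed linear subspace containing $1$, and it is closed under sequential bounded convergence for a trivial reason: a polynomial bounded on $\R$ is constant, so any uniformly bounded sequence in $\mathbb{H}$ is a sequence of constants. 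Moreover $\mathbb{M}\subseteq\mathbb{H}$ is closed under multiplication and conjugation with $\sigma(\mathbb{M})=\cB_{\R}$, yet $1_{[0,1]}\in\ell^{\infty}(\R,\cB_{\R})\setminus\mathbb{H}$. The statements in \cite{driver} and \cite{janson} avoid this by requiring the members of $\mathbb{H}$ (hence of $\mathbb{M}$) to be bounded, and every application in this paper (e.g., Corollary \ref{cor.MST}, Lemma \ref{lem.PintinM}, and the argument in Theorem \ref{thm.MOIsinM}) takes $\mathbb{M}$ to be a family of indicator functions, so nothing downstream is affected. The correct repair is to state the theorem with $\mathbb{M}\subseteq\mathbb{H}\cap\ell^{\infty}(S)$; your proof establishes exactly that version---and note that it never needs $\mathbb{H}$ itself to consist of bounded functions, which is precisely the extra generality the paper's phrasing is after. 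With that one hypothesis made explicit, your proof is complete and correct.
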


For a proof, please see Section 12.1 of \cite{driver} or Appendix A of \cite{janson}.
The corollary most relevant to the argument presently under discussion is as follows.

\begin{cor}\label{cor.MST}
Let $(\Om,\sF)$, $(\Sigma,\sH)$ be measurable spaces and $\mathbb{H}$ be a linear subspace of $\ell^{\infty}(\Om \times \Sigma)$ that is closed under complex conjugation and sequential bounded convergence. 
If $\{1_{G \times S} : G \in\sF, S \in \sH\} \subseteq \mathbb{H}$, then $\ell^{\infty}(\Om \times \Sigma,\sF \otimes \sH) \subseteq \mathbb{H}$.
\end{cor}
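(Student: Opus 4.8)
The plan is to derive Corollary \ref{cor.MST} as a direct application of the Multiplicative System Theorem (Theorem \ref{thm.MST}) with the underlying set taken to be $S \coloneqq \Om \times \Sigma$. First I would identify the right multiplicative system $\mathbb{M}$. The natural choice is the set of products of indicators, but since an indicator $1_{G \times T}$ on a product is already a product $1_{G \times \Sigma} \cdot 1_{\Om \times T}$ of ``cylinder'' indicators, and since products of rectangle-indicators are again rectangle-indicators (because $1_{G_1 \times T_1} \cdot 1_{G_2 \times T_2} = 1_{(G_1 \cap G_2) \times (T_1 \cap T_2)}$), I would take
\[
\mathbb{M} \coloneqq \{1_{G \times T} : G \in \sF,\, T \in \sH\}.
\]
By hypothesis $\mathbb{M} \subseteq \mathbb{H}$. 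This $\mathbb{M}$ is closed under multiplication by the displayed identity and trivially closed under complex conjugation since each element is real-valued (indeed $\{0,1\}$-valued).

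Next I would check that $\mathbb{H}$ satisfies the hypotheses of Theorem \ref{thm.MST}. The statement already grants that $\mathbb{H}$ is a linear subspace of $\ell^{\infty}(\Om \times \Sigma)$ closed under complex conjugation and sequential bounded convergence. The one hypothesis of the theorem not stated verbatim is that $\mathbb{H}$ contain the constant function $1$; I would supply this by noting that $1 = 1_{\Om \times \Sigma}$ is the rectangle indicator with $G = \Om \in \sF$ and $T = \Sigma \in \sH$, hence lies in $\mathbb{M} \subseteq \mathbb{H}$. With all hypotheses verified, Theorem \ref{thm.MST} yields $\ell^{\infty}(\Om \times \Sigma,\sigma(\mathbb{M})) \subseteq \mathbb{H}$.

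The remaining step is to identify the $\sigma$-algebra $\sigma(\mathbb{M})$ with the product $\sigma$-algebra $\sF \otimes \sH$. By Notation \ref{nota.sigmaalggen}, $\sigma(\mathbb{M})$ is the smallest $\sigma$-algebra on $\Om \times \Sigma$ making every $1_{G \times T}$ measurable; equivalently, it is the $\sigma$-algebra generated by the preimages of Borel sets under these indicators, which is precisely the $\sigma$-algebra generated by the rectangles $\{G \times T : G \in \sF,\, T \in \sH\}$. But this is exactly the definition of the product $\sigma$-algebra $\sF \otimes \sH$, so $\sigma(\mathbb{M}) = \sF \otimes \sH$ and therefore $\ell^{\infty}(\Om \times \Sigma,\sF \otimes \sH) \subseteq \mathbb{H}$, as claimed.

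I do not anticipate any serious obstacle here; the corollary is essentially a matching of hypotheses followed by the standard identification of $\sigma(\mathbb{M})$ with $\sF \otimes \sH$. The only point requiring a word of care is the observation that a single rectangle-indicator already generates the full product $\sigma$-algebra under the $\sigma$-operations---one does not need to separately feed in the cylinder indicators $1_{G \times \Sigma}$ and $1_{\Om \times T}$---but this follows immediately from the definition of $\sF \otimes \sH$ as the $\sigma$-algebra generated by measurable rectangles.
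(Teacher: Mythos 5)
Your proof is correct and follows essentially the same route as the paper's: both take $\mathbb{M}$ to be the set of rectangle indicators, verify closure under multiplication and conjugation (the paper phrases this via the $\pi$-system property of measurable rectangles), note $1 \in \mathbb{M}$, identify $\sigma(\mathbb{M}) = \sF \otimes \sH$, and invoke Theorem \ref{thm.MST}. Your write-up is simply a more detailed rendering of the same argument.
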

\begin{proof}
If $\mathbb{M} \coloneqq \{1_{G \times S} : G \in \sF, S \in \sH\}$, then $\mathbb{M}$ is closed under complex conjugation and pointwise multiplication (because $\{G \times S : G \in \sF, S \in \sH\}$ is a $\pi$-system).
Since $1 \in \mathbb{M} \subseteq \mathbb{H}$ and $\sigma(\mathbb{M}) = \sF \otimes \sH$, the conclusion follows from the multiplicative system theorem.
\end{proof}

Using this consequence of the multiplicative system theorem and our operator valued integral development, we are able to prove (in Section \ref{sec.welldef}) the following key result.

\begin{thm}[Trace of integral of a product]\label{thm.traceofinteg}
Let $(\Xi,\sG,K,Q)$ be a projection valued measure space, $(\Sigma,\sH,\rho)$ be a finite measure space, and $\varphi \in \ell^{\infty}(\Xi \times \Sigma,\sG \otimes \sH)$.
If $A \colon \Sigma \to B(K)$ is  pointwise weakly measurable (Definition \ref{def.Pettint}) and $\sup_{\sigma \in \Sigma}\|A(\sigma)\|_{\cS_1} < \infty$, then
\[
\int_{\Sigma} A(\sigma) \, Q(\varphi(\cdot,\sigma)) \,\rho(d\sigma),\; \int_{\Sigma} Q(\varphi(\cdot,\sigma)) \,A(\sigma) \,\rho(d\sigma) \in \cS_1(K)
\]
and
\[
\Tr\Bigg(\int_{\Sigma} A(\sigma) \, Q(\varphi(\cdot,\sigma)) \,\rho(d\sigma)\Bigg) = \Tr\Bigg( \int_{\Sigma} Q(\varphi(\cdot,\sigma)) \,A(\sigma) \,\rho(d\sigma)\Bigg). \numberthis\label{eq.traceofinteg}
\]
\end{thm}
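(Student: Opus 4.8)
The plan is to use the Multiplicative System Theorem (via Corollary \ref{cor.MST}) to reduce the identity \eqref{eq.traceofinteg} from general $\varphi \in \ell^{\infty}(\Xi \times \Sigma, \sG \otimes \sH)$ to the simplest possible $\varphi$, namely indicators of product sets $G \times S$ with $G \in \sG$ and $S \in \sH$. First I would fix the trace-class-bounded, pointwise weakly measurable map $A$ and define two functionals on $\ell^{\infty}(\Xi \times \Sigma, \sG \otimes \sH)$ by
\[
L(\varphi) \coloneqq \Tr\Bigg(\int_{\Sigma} A(\sigma)\,Q(\varphi(\cdot,\sigma))\,\rho(d\sigma)\Bigg), \quad R(\varphi) \coloneqq \Tr\Bigg(\int_{\Sigma} Q(\varphi(\cdot,\sigma))\,A(\sigma)\,\rho(d\sigma)\Bigg),
\]
and let $\mathbb{H} \coloneqq \{\varphi : L(\varphi) \text{ and } R(\varphi) \text{ are both well-defined and equal}\}$. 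The goal is to show $\mathbb{H} = \ell^{\infty}(\Xi \times \Sigma, \sG \otimes \sH)$. Of course, before any of this is meaningful I must establish the trace-class integrability claim (that both integrands are Gel'fand--Pettis integrable into $(\cS_1(K), \|\cdot\|_{\cS_1})$ with values in $\cS_1(K)$); this is exactly the kind of statement Theorem \ref{thm.Spinteg} is designed to provide, given the uniform bound $\sup_{\sigma}\|A(\sigma)\|_{\cS_1} < \infty$, the finiteness of $\rho$, and the uniform operator-norm bound $\|Q(\varphi(\cdot,\sigma))\|_{B(K)} \le \|\varphi\|_{\ell^{\infty}(\Xi\times\Sigma)}$ coming from $Q$ being a $\ast$-homomorphism on bounded measurable functions.

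Next I would verify the two structural hypotheses of Corollary \ref{cor.MST} for $\mathbb{H}$. Linearity and closure under complex conjugation should be routine: $\varphi \mapsto Q(\varphi(\cdot,\sigma))$ is linear and $\ast$-preserving, the weak integral is linear, and $\Tr$ is linear and satisfies $\Tr(T^*) = \overline{\Tr(T)}$, so conjugating $\varphi$ conjugates both $L$ and $R$. Closure under sequential bounded convergence is the delicate analytic step: if $\varphi_n \to \varphi$ boundedly, then $Q(\varphi_n(\cdot,\sigma)) \to Q(\varphi(\cdot,\sigma))$ in a suitable operator topology pointwise in $\sigma$ with a uniform norm bound, and I must pass this convergence through both the $\cS_1$-valued weak integral and the trace. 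I expect to justify this with a Dominated-Convergence-type argument: the integrands $A(\sigma)\,Q(\varphi_n(\cdot,\sigma))$ are uniformly bounded in $\cS_1$-norm by $\rho$-integrable (here, constant, since $\rho$ is finite) dominating functions, and $\Tr$ against the integral is continuous in the relevant sense. The fact that $\rho$ is \emph{finite} (not merely $\sigma$-finite) is what makes the constant bound $\|\varphi\|_{\ell^\infty}$ directly $\rho$-integrable and keeps this clean.

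Finally, for the multiplicative system $\mathbb{M} = \{1_{G \times S} : G \in \sG, S \in \sH\}$, I would check the identity \eqref{eq.traceofinteg} directly for $\varphi = 1_{G \times S}$. Here $\varphi(\cdot,\sigma) = 1_S(\sigma)\,1_G$, so $Q(\varphi(\cdot,\sigma)) = 1_S(\sigma)\,Q(G)$, and both integrals collapse:
\[
\int_{\Sigma} A(\sigma)\,1_S(\sigma)\,Q(G)\,\rho(d\sigma) = \Bigg(\int_{S} A(\sigma)\,\rho(d\sigma)\Bigg) Q(G), \quad \int_{\Sigma} 1_S(\sigma)\,Q(G)\,A(\sigma)\,\rho(d\sigma) = Q(G)\Bigg(\int_{S} A(\sigma)\,\rho(d\sigma)\Bigg),
\]
using that the $\cS_1$-valued integral commutes with left/right multiplication by the fixed bounded operator $Q(G)$. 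Writing $A_S \coloneqq \int_S A(\sigma)\,\rho(d\sigma) \in \cS_1(K)$, the desired equality becomes $\Tr(A_S\,Q(G)) = \Tr(Q(G)\,A_S)$, which is the cyclicity/flip property of the trace on $\cS_1(K)$ (Theorem \ref{thm.Schatten}.\ref{item.Trflip}). Since $\{G \times S\}$ is a $\pi$-system generating $\sG \otimes \sH$ and $1 = 1_{\Xi \times \Sigma} \in \mathbb{M}$, Corollary \ref{cor.MST} then forces $\mathbb{H} \supseteq \ell^{\infty}(\Xi \times \Sigma, \sG \otimes \sH)$, completing the proof. The main obstacle is the closure of $\mathbb{H}$ under bounded convergence, since this is where non-separability bites: I cannot invoke sequential density of finite-rank operators or standard Bochner machinery, and must instead lean on Theorem \ref{thm.Spinteg} to control the $\cS_1$-valued integrals uniformly and commute the limit with both the integral and the trace.
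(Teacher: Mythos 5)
Your plan coincides in structure with the paper's own proof: $\cS_1$-membership of the integrals via the Minkowski inequality (Theorem \ref{thm.Spinteg}), a multiplicative-system argument over $\mathbb{M} = \{1_{G\times S} : G \in \sG,\, S \in \sH\}$, closure under sequential bounded convergence via a Schatten dominated-convergence argument (in the paper this is Lemma \ref{lem.Pintcont}, Lemma \ref{lem.SOTS1}, and Corollary \ref{cor.SchattenDCT}), and a base case that reduces to $\Tr(A_S\,Q(G)) = \Tr(Q(G)\,A_S)$. One small inaccuracy first: the pointwise weak measurability of $\sigma \mapsto A(\sigma)\,Q(\varphi(\cdot,\sigma))$ is not part of Theorem \ref{thm.Spinteg} and is not automatic; it is Proposition \ref{prop.opinteggood}, whose proof is itself a multiplicative-system argument (via complex Markov kernels, Lemma \ref{lem.randcompmeas}). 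The genuine gap, however, is in the step you call routine: closure of $\mathbb{H}$ under complex conjugation.

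Conjugating $\varphi$ does \emph{not} conjugate $L$ and $R$; it conjugates them \emph{and} replaces $A$ by $A^*$. Writing $L_A, R_A$ for your functionals to display their dependence on $A$, and using $Q\big(\overline{\varphi(\cdot,\sigma)}\big) = Q(\varphi(\cdot,\sigma))^*$, $\big(\int_{\Sigma} F\,d\rho\big)^* = \int_{\Sigma} F(\cdot)^*\,d\rho$, and $\Tr(T^*) = \overline{\Tr(T)}$, one finds
\[
\overline{L_A(\varphi)} = \Tr\Bigg(\int_{\Sigma} Q\Big(\overline{\varphi(\cdot,\sigma)}\Big)\,A(\sigma)^*\,\rho(d\sigma)\Bigg) = R_{A^*}\big(\overline{\varphi}\big) \qquad \text{and} \qquad \overline{R_A(\varphi)} = L_{A^*}\big(\overline{\varphi}\big).
\]
Thus from $L_A(\varphi) = R_A(\varphi)$ you may conclude only that $L_{A^*}(\overline{\varphi}) = R_{A^*}(\overline{\varphi})$, i.e., $\overline{\varphi}$ lies in the class associated to $A^*$, not in your $\mathbb{H}$ (which is defined using $A$ alone). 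The equality $L_A(\overline{\varphi}) = \overline{L_A(\varphi)}$ that your argument needs is not a formal property of the trace; it is itself (equivalent to) an instance of \eqref{eq.traceofinteg}, so the step as written is circular. The fix is precisely the paper's definition: take $\mathbb{H}$ to be the set of $\varphi$ for which \eqref{eq.traceofinteg} holds \emph{for both} $A$ \emph{and} $A^*$ (note that $A^*$ satisfies the same hypotheses as $A$, since $\|A^*(\sigma)\|_{\cS_1} = \|A(\sigma)\|_{\cS_1}$ and pointwise weak measurability is preserved under adjoints). With this symmetrized class, the computation above shows closure under conjugation, and your remaining steps --- linearity, bounded convergence, and the base case, all of which apply to $A$ and $A^*$ alike --- go through unchanged.
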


By combining the formula \eqref{eq.traceofinteg} with truncation arguments on $\varphi_1$ and $\varphi_2$, we are able to prove the desired $\cS_1(H)$-valued weak integrability of $\Sigma \ni \sigma \mapsto P_2(\varphi_2(\cdot,\sigma))\,a\,P_1(\varphi_1(\cdot,\sigma)) \in \cS_1(H)$ when $a \in \cS_1(H)$.
The relevant results are Theorem \ref{thm.S1integrability} and Corollary \ref{cor.uwcont}.
Please see Remark \ref{rem.semifinitecase} as well.

\section{Background}\label{sec.bg}

For this section, fix complex Hilbert spaces $(H,\la \cdot, \cdot \ra_H)$ and $(K,\la \cdot,\cdot \ra_K)$.

\subsection{Operator topologies}\label{sec.optop}

In this section, we record facts that we shall need about some standard locally convex topologies on $B(H;K)$.
We assume the reader is quite familiar with these in the case $H=K$, which is covered in Chapter II of \cite{takesaki} and Chapter I.3 of \cite{dixmier}.
When $H \neq K$, all the basic properties still hold with essentially the same proofs.

The \textbf{weak operator topology} (WOT for short) on $B(H;K)$ is the one generated by the seminorms $B(H;K) \ni A \mapsto |\la Ah,k \ra_K| \in \R_+ \coloneqq [0,\infty)$, where $h \in H$ and $k \in K$.
The \textbf{strong operator topology} (SOT) is generated by the seminorms $B(H;K) \ni A \mapsto \|Ah\|_K \in \R_+$, where $h \in H$.
The \textbf{strong$\boldsymbol{^*}$ operator topology} (S$^*$OT) is generated by the seminorms $B(H;K) \ni A \mapsto \|Ah\|_K+\|A^*k\|_H \in \R_+$, where $h \in H$ and $k \in K$.
Next, write
\[
\ell^2(\N;H) \coloneqq \Bigg\{(h_n)_{n \in \N} \in H^{\N} : \sum_{n=1}^{\infty}\|h_n\|_H^2 < \infty\Bigg\}
\]
with inner product
\[
\la (h_n)_{n \in \N},(k_n)_{n \in \N} \ra_{\ell^2(\N;H)} \coloneqq \sum_{n=1}^{\infty} \la h_n,k_n \ra_H.\pagebreak
\]
The \textbf{$\boldsymbol{\sigma}$-weak operator topology} ($\sigma$-WOT) is generated by the seminorms
\[
B(H;K) \ni A \mapsto |\la (Ah_n)_{n \in \N},(k_n)_{n \in \N} \ra_{\ell^2(\N;K)}| \in \R_+,
\]
where $(h_n)_{n \in \N} \in \ell^2(\N;H)$ and $(k_n)_{n \in \N} \in \ell^2(\N;K)$.
The \textbf{$\boldsymbol{\sigma}$-strong operator topology} ($\sigma$-SOT) is generated by the seminorms
\[
B(H;K) \ni A \mapsto \|(Ah_n)_{n \in \N}\|_{\ell^2(\N;K)} \in \R_+,
\]
where \hspace{-0.4mm}$(h_n)_{n \in \N} \hspace{-0.4mm}\in\hspace{-0.4mm} \ell^2(\N;\hspace{-0.4mm}H)$.\hspace{-0.2mm}
Finally, \hspace{-0.4mm}the\hspace{-0.2mm} \textbf{$\boldsymbol{\sigma}$\hspace{-0.2mm}-strong$\boldsymbol{^*}$ \hspace{-0.4mm}operator\hspace{-0.4mm} topology} \hspace{-0.4mm}($\sigma$\hspace{-0.2mm}-S$^*$OT)\hspace{-0.4mm} is \hspace{-0.2mm}generated\hspace{-0.4mm} by \hspace{-0.4mm}the\hspace{-0.2mm} seminorms
\[
B(H;K) \ni A \mapsto \|(Ah_n)_{n \in \N}\|_{\ell^2(\N;K)}+\|(A^*k_n)_{n \in \N}\|_{\ell^2(\N;H)} \in \R_+,
\]
where $(h_n)_{n \in \N} \in \ell^2(\N;H)$ and $(k_n)_{n \in \N} \in \ell^2(\N;K)$.

When referring to these topologies, we shall often omit the term ``operator."
Also, if $V \subseteq B(H;K)$, then the subspace topologies inherited by $V$ from the aforementioned topologies on $B(H;K)$ are given the same names as above.
For example, the $\sigma$-weak topology ($\sigma$-WOT) on $V$ is the subspace topology $V$ inherits from the $\sigma$-WOT on $B(H;K)$.
Here are all the facts we shall need about these topologies.

\begin{thm}[Properties of operator topologies]\label{thm.optop}
Let $V \subseteq B(H;K)$ be a linear subspace, $\ell \colon V \to \C$ be a linear functional, and $\cT \in \{\mathrm{WOT}, \,\mathrm{SOT}, \,\mathrm{S^*OT}\}$.
\begin{enumerate}[label=(\roman*),font=\normalfont,leftmargin=2\parindent]
    \item $\cT$ agrees with the $\sigma$-$\cT$ on norm-bounded subsets of $B(H;K)$.
    In particular --- since the net of finite rank orthogonal projections on $K$ converges in the WOT to the identity on $K$ --- finite rank linear operators $H \to K$ are $\sigma$-weakly dense in $B(H;K)$. \label{item.WOTsigmaWOT}
    \item $\ell$ is $\cT$-continuous if and only if there exist $h_1,\ldots,h_n \in H$ and $k_1,\ldots,k_n \in K$ such that, for all $A \in V$,\label{item.contchar}
    \[
    \ell(A) = \sum_{j=1}^n\la Ah_j,k_j \ra_K.
    \]
    \item $\ell$ is $\sigma$-$\cT$-continuous if and only if there exist elements $(h_n)_{n \in \N} \in \ell^2(\N;H)$ and $(k_n)_{n \in \N} \in \ell^2(\N;K)$ such that, for all $A \in V$,\label{item.sigmacontchar}
    \[
    \ell(A) = \la (Ah_n)_{n \in \N},(k_n)_{n \in \N} \ra _{\ell^2(\N;K)} = \sum_{n=1}^{\infty}\la Ah_n,k_n \ra_K.
    \]
\end{enumerate}
Suppose now that $V \subseteq B(H;K)$ is also $\sigma$-weakly closed.
\begin{enumerate}[label=(\roman*),font=\normalfont,leftmargin=2\parindent]
\setcounter{enumi}{3}
    \item If $V_* \coloneqq \{\sigma$-weakly continuous linear functionals $V \to \C\} = (V,\sigma\text{-}\mathrm{WOT})^*$, then $V_* \subseteq V^*$ is a (norm) closed linear subspace, and the map $\ev_V \colon V \to V_*^{\;*}$ defined by $V \ni A \mapsto (\ell \mapsto \ell(A)) \in V_*^{\;*}$ is an isometric isomorphism.
    We therefore call $V_*$ the \textbf{predual} of $V$.\label{item.predual}
    \item The map $\ev_V$ from the previous part is a homeomorphism with respect to the $\sigma$-weak topology on $V$ and the weak$^*$ topology on $V_*^{\;*}$.
    The $\sigma$-weak topology on $V$ is therefore also called the \textbf{weak$\boldsymbol{^*}$ topology}.\label{item.weakstar}
\end{enumerate}
Finally, suppose $\cM \subseteq B(H)$ and $\cN \subseteq B(K)$ are von Neumann algebras.
\begin{enumerate}[label=(\roman*),font=\normalfont,leftmargin=2\parindent]
\setcounter{enumi}{5}
    \item If $\pi \colon \cM \to \cN$ is a $\ast$-isomorphism in the algebraic sense, then $\pi$ is a homeomorphism with respect to the $\sigma$-weak topologies on $\cM$ and $\cN$.\label{item.starisom}
\end{enumerate}
\end{thm}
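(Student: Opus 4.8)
The plan is to deduce part \ref{item.starisom} from the duality description of the $\sigma$-weak topology in parts \ref{item.predual}--\ref{item.weakstar}. Since a von Neumann algebra is WOT-closed and the $\sigma$-WOT is finer than the WOT (its generating seminorms include those of the WOT), both $\cM$ and $\cN$ are $\sigma$-weakly closed; hence they admit preduals $\cM_*$ and $\cN_*$, and by \ref{item.weakstar} their $\sigma$-weak topologies coincide with the weak$^*$ topologies $\sigma(\cM,\cM_*)$ and $\sigma(\cN,\cN_*)$. Because $\pi^{-1}$ is again a $\ast$-isomorphism, by symmetry it suffices to prove that $\pi$ is $\sigma$-weakly continuous, and by the standard duality criterion for continuity between weak topologies this amounts to showing $\omega \circ \pi \in \cM_*$ for every $\omega \in \cN_*$, i.e.\ $\pi^*(\cN_*) \subseteq \cM_*$ for the Banach-space adjoint $\pi^*$. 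I note that the tempting shortcut of reducing to the unit ball, where $\sigma$-WOT and WOT agree by \ref{item.WOTsigmaWOT}, does not work directly: the WOT is defined through the ambient representation and so does not transfer across $\pi$, whereas the $\sigma$-weak topology is intrinsic via the predual, which is exactly what the duality approach exploits.

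To verify $\pi^*(\cN_*) \subseteq \cM_*$, I would first record the elementary structure of $\pi$: as an injective $\ast$-homomorphism of $C^*$-algebras it is isometric, hence bounded and positive, and being bijective with a positive inverse it is an order isomorphism. Consequently $\pi$ preserves suprema of bounded increasing nets of positive elements: if $(a_i) \subseteq \cM$ is such a net with $a = \sup_i a_i$, then $\pi(a)$ is an upper bound of $(\pi(a_i))_i$, while any upper bound $b$ of $(\pi(a_i))_i$ gives $\pi^{-1}(b) \geq a_i$ for all $i$, hence $\pi^{-1}(b) \geq a$ and $b \geq \pi(a)$; thus $\pi(a) = \sup_i \pi(a_i)$. (This uses that bounded increasing nets of positive elements in a von Neumann algebra have suprema, the standard monotone-completeness fact.)

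The substantive input is the characterization of the predual through normality: a positive linear functional on a von Neumann algebra is $\sigma$-weakly continuous if and only if it is normal, meaning it preserves suprema of bounded increasing nets of positive elements, and every element of the predual is a complex linear combination of normal positive functionals. Granting this --- it is part of the standard theory covered in \cite{takesaki} --- the argument closes quickly. Given $\omega \in \cN_*$, decompose it into finitely many normal positive functionals, so by linearity it suffices to treat $\omega$ positive and normal. Then $\omega \circ \pi$ is positive, and for a bounded increasing net $(a_i)$ with $a = \sup_i a_i$ in the positive part of $\cM$ the previous paragraph gives $\pi(a_i) \uparrow \pi(a)$; normality of $\omega$ then yields $\omega(\pi(a_i)) \uparrow \omega(\pi(a))$. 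Hence $\omega \circ \pi$ is normal, therefore $\sigma$-weakly continuous, i.e.\ $\omega \circ \pi \in \cM_*$. This gives $\pi^*(\cN_*) \subseteq \cM_*$ and completes the proof.

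The main obstacle is exactly this normality characterization --- especially the nontrivial direction that a normal state is $\sigma$-weakly continuous, which ultimately rests on representing a normal positive functional in ``density-matrix'' form $\omega(\,\cdot\,) = \sum_n \langle (\,\cdot\,)\xi_n, \xi_n \rangle$ with $\sum_n \|\xi_n\|^2 < \infty$. Everything else is routine once this is available: the duality reduction is formal, the order-isomorphism computation is a two-line lattice argument, and the decomposition of predual elements is the normal analogue of the Jordan decomposition. Since the paper already presumes familiarity with \cite{takesaki,dixmier}, I would cite the normality characterization rather than reprove it, and spell out in full only the duality reduction of the first paragraph and the normality-preservation computation of the third.
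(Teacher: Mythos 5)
Your argument is correct, but note that the paper itself offers no proof of this theorem: the remark immediately following the statement defers items (i)--(v) to Section I.3.1 of \cite{dixmier} and Section II.2 of \cite{takesaki}, and item (vi) to Corollary 1 in Section I.4.3 of \cite{dixmier}. Your proof of item (vi) is essentially the standard argument behind that last citation: reduce by symmetry and predual duality to showing $\omega \circ \pi \in \cM_*$ for every $\omega \in \cN_*$; observe that a $\ast$-isomorphism is an order isomorphism and hence carries suprema of bounded increasing nets to suprema (using Vigier's Theorem, which the paper recalls in Section 2.2); and invoke the characterization of the predual as the span of normal positive functionals, so that normality of $\omega$ pulls back to normality of $\omega \circ \pi$. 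Treating items (i)--(v) and the normality characterization as citable background is consistent with the paper's own treatment of the entire theorem as background. Your remark that the tempting bounded-set reduction via item (i) fails---because the WOT depends on the ambient representation and so does not transfer across an abstract $\ast$-isomorphism, whereas normality is intrinsic---is exactly the right diagnosis of why the normality route is the standard one.
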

\begin{rem}
If $H = K$, then Theorem \ref{thm.optop}\ref{item.WOTsigmaWOT}--\ref{item.weakstar} is contained in Section I.3.1 of \cite{dixmier} and Section II.2 of \cite{takesaki}.
The proofs of these statements when $H \neq K$ are slight notational modifications of the proofs in the aforementioned references.
Item \ref{item.starisom} is part of Corollary 1 in Section I.4.3 of \cite{dixmier}.
\end{rem}

\subsection{Schatten classes and noncommutative \texorpdfstring{$L^p$}{}-spaces}\label{sec.Schatten}

We now record some standard facts about Schatten $p$-class operators $H \to K$ that will be of use to us.
Please see Chapter 2 of \cite{ringrose} for the proofs of these basics (and more) in the case $H=K$.
For just the cases $p \in \{1,2,\infty\}$, please see also Sections 18--20 of \cite{conwayop}.
As with the material in Section \ref{sec.optop}, all the basic properties in the case $H \neq K$ have essentially the same proofs;
the main tools this time are the singular value and polar decompositions.
\pagebreak

\begin{defi}[Schatten classes]\label{def.Schatten}
If $p \in [1,\infty)$, $\mathcal{E} \subseteq H$ is an orthonormal basis, and $A \in B(H;K)$, then we define
\[
\|A\|_{\cS_p(H;K)} = \|A\|_{\cS_p} \coloneqq \Bigg(\sum_{e \in \mathcal{E}} \la |A|^pe,e\ra_H\Bigg)^{\frac{1}{p}} \in [0,\infty],
\]
where $|A|^p \coloneqq (A^*A)^{\frac{p}{2}} \in B(H)$, and $\cS_p(H;K) \coloneqq \{A \in B(H;K) : \|A\|_{\cS_p} < \infty\}$.
Also, we define $\cS_{\infty}(H;K) \coloneqq B(H;K)$ with the operator norm
\[
\|A\|_{\cS_{\infty}(H;K)} = \|A\|_{\cS_{\infty}} \coloneqq \|A\| = \|A\|_{H\to K}.
\]
For $p \in [1,\infty]$, $\cS_p(H;K)$ is called the set of \textbf{Schatten} (or \textbf{Schatten--von Neumann}) $\boldsymbol{p}$\textbf{-class operators} from $H$ to $K$.
Also, we write
\[
\cK(H;K) \coloneqq\{\text{compact linear operators } H \to K\},
\]
$\cK(H) \coloneqq \cK(H;H)$, and $\cS_p(H) \coloneqq \cS_p(H;H)$.
\end{defi}
\begin{rem}
The operator $(A^*A)^{\frac{p}{2}}$ above is defined via continuous functional calculus (Section VIII.2 of \cite{conwayfunc}).
Also, we caution the reader that sometimes $\cS_{\infty}(H;K)$ is taken to be the space of compact operators $H \to K$, and often the letter $\cC$ is used instead of $\cS$.
\end{rem}

\begin{thm}[Properties of Schatten classes]\label{thm.Schatten}
Let $p \in [1,\infty]$.
\begin{enumerate}[label=(\roman*),font=\normalfont,leftmargin=2\parindent]
    \item $(\cS_p(H;K),\|\cdot\|_{\cS_p})$ is a Banach space, $\|\cdot\|_{\cS_p}$ is independent of the chosen orthonormal basis, and, when $p < \infty$, the set of finite rank linear operators $H \to K$ is dense in $\cS_p(H;K)$.
    Also, $(\cK(H;K),\|\cdot\|)$ is a Banach space with the set of finite rank linear operators $H \to K$ as a dense linear subspace.
    Finally, if $1 \leq p \leq q < \infty$, then $\cS_p(H;K) \subseteq \cS_q(H;K) \subseteq \cK(H;K)$, and the inclusions $\cS_p \hookrightarrow \cS_q \hookrightarrow \cK$ each have operator norm at most one.
    \item If $A \in B(H)$ and $\mathcal{E} \subseteq H$ is an orthonormal basis, then
    \[
    \sum_{e \in \mathcal{E}} |\la Ae,e\ra_H| \leq \|A\|_{\cS_1}.
    \]
    If $A \in \cS_1(H)$, then $\Tr(A) \coloneqq \sum_{e \in \mathcal{E}} \la Ae,e \ra_H \in \C$ is called the \textbf{trace} of $A$ and is independent of the choice of $\mathcal{E}$.
    Moreover, we have $\|A^*\|_{\cS_1} = \|A\|_{\cS_1}$ and $\Tr(A^*) = \overline{\Tr(A)}$, for all $A \in \cS_1(H)$.
    \item {\rm (H\"{o}lder's inequality)} Let $H_1,\ldots,H_{k+1}$ be complex Hilbert spaces.
    Suppose $p_1,\ldots,p_k \in [1,\infty]$ are such that $\frac{1}{p_1}+\cdots+\frac{1}{p_k} = \frac{1}{p}$. Then
    \[
    \|A_1\cdots A_k\|_{\cS_p(H_{k+1};H_1)} \leq \|A_1\|_{\cS_{p_1}(H_2;H_1)} \cdots \|A_k\|_{\cS_{p_k}(H_{k+1};H_k)},
    \]
    for all $A_1 \in B(H_2;H_1),\ldots, A_k \in B(H_{k+1};H_k)$.
    (As usual, $0\cdot \infty \coloneqq 0$.)\label{item.SchattenHolder}
    \item If $q \in [1,\infty]$ is such that $\frac{1}{p}+\frac{1}{q} = 1$ and $A \in \cS_p(H;K)$, $B \in \cS_q(K;H)$, then $\Tr(AB) = \Tr(BA)$.\label{item.Trflip}
    \item If $p \in [1,\infty),q \in (1,\infty]$ are such that $\frac{1}{p}+\frac{1}{q}=1$, then $\cS_q(H;K) \cong \cS_p(K;H)^*$ isometrically via the map $A \mapsto (B \mapsto \Tr(AB))$.
    Also, $\cS_1(H;K) \cong \cK(K;H)^*$ isometrically via the same map.\label{item.Schattendual}
    \item The weak$^*$ topology on $B(H;K)$ induced by the identification
    \[
    B(H;K) = \cS_{\infty}(H;K) \cong \cS_1(K;H)^*
    \]
    is called the \textbf{ultraweak topology}, and it agrees with the $\sigma$-WOT.
    In particular, finite rank linear operators $H \to K$ are ultraweakly dense in $B(H;K)$.\label{item.finrk}
\end{enumerate}
\end{thm}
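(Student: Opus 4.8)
The plan is to reduce every part to the already-standard case $H=K$ of Chapter 2 of \cite{ringrose} by means of a ``doubling'' device, and then to treat separately parts (v) and (vi), whose passage to the non-separable $H\neq K$ regime carries the real content. Set $\mathcal H\coloneqq H\oplus K$, let $e=\begin{pmatrix}1_H&0\\0&0\end{pmatrix}$ and $f=\begin{pmatrix}0&0\\0&1_K\end{pmatrix}$ be the orthogonal projections in $B(\mathcal H)$ onto the two summands, and define the linear embedding $\iota\colon B(H;K)\to B(\mathcal H)$ by $\iota(A)=\begin{pmatrix}0&0\\A&0\end{pmatrix}=f\,\iota(A)\,e$. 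A one-line computation gives $\iota(A)^*\iota(A)=\begin{pmatrix}A^*A&0\\0&0\end{pmatrix}$, hence $|\iota(A)|=\begin{pmatrix}|A|&0\\0&0\end{pmatrix}$, so the nonzero singular values of $\iota(A)$ in $B(\mathcal H)$ are exactly those of $A$. Consequently $\iota$ is isometric for every $\|\cdot\|_{\cS_p}$, carries finite-rank to finite-rank and compact to compact operators, and has range the corner $f\,\cS_p(\mathcal H)\,e$.

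First I would dispatch (i)--(iii). Applying the $H=K$ statements of \cite{ringrose} on $\mathcal H$ and restricting to the corner $f\,\cS_p(\mathcal H)\,e$ yields (i): basis-independence of $\|\cdot\|_{\cS_p}$ (note $\|A\|_{\cS_p}^p=\Tr(|A|^p)$), completeness of $\cS_p(H;K)$, density of the finite-rank operators for $p<\infty$, the inclusion chain $\cS_p\subseteq\cS_q\subseteq\cK$ with norm at most one, and the analogous facts for $\cK(H;K)$. Part (ii) is already a statement with $H=K$ and is taken directly from \cite{ringrose}: the inequality $\sum_{x\in\mathcal E}|\la Ax,x\ra_H|\le\|A\|_{\cS_1}$ comes from the polar decomposition $A=U|A|$ and Cauchy--Schwarz, after which basis-independence of $\Tr$ follows, while $\|A^*\|_{\cS_1}=\|A\|_{\cS_1}$ is the equality of singular values of $A$ and $A^*$ and $\Tr(A^*)=\overline{\Tr(A)}$ is immediate from $\la A^*x,x\ra_H=\overline{\la Ax,x\ra_H}$. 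For (iii) I would deduce Hölder from the singular-value submultiplicativity $s_{m+n-1}(AB)\le s_m(A)\,s_n(B)$ together with the scalar Hölder inequality, reducing the $k$-fold product to the two-factor case by induction.

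For (iv) I would verify $\Tr(AB)=\Tr(BA)$ first for finite-rank $A,B$ by a direct rank-one computation and then extend by continuity: both sides are bounded by $\|A\|_{\cS_p}\|B\|_{\cS_q}$ via (iii), so one approximates in $\|\cdot\|_{\cS_r}$ whichever factor lies in a space with $r<\infty$ (perturbing the $\cS_1$ factor when $p=\infty$, and symmetrically). Part (v) is the analytic heart. The map $A\mapsto(B\mapsto\Tr(AB))$ is isometric: Hölder gives ``$\le$'', and a test operator built from the singular value decomposition $A=\sum_n s_n\la\,\cdot\,,e_n\ra_H f_n$ gives ``$\ge$'' (pair against a normalized $B=\sum_n s_n^{\,q-1}\la\,\cdot\,,f_n\ra_K e_n$ for $q<\infty$, against truncated rank-one operators for $q=\infty$). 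The substantive point is surjectivity. I would embed $\cS_p(K;H)$ as the opposite corner via $\iota'(B)=\begin{pmatrix}0&B\\0&0\end{pmatrix}\in e\,\cS_p(\mathcal H)\,f$, observing that the pairing is intertwined because $\iota(A)\iota'(B)=\begin{pmatrix}0&0\\0&AB\end{pmatrix}$, whence $\Tr_{\mathcal H}(\iota(A)\iota'(B))=\Tr_K(AB)$. A bounded functional $\ell$ on $\cS_p(K;H)$ transports to a functional on the corner $e\,\cS_p(\mathcal H)\,f$, extends (Hahn--Banach) to $\cS_p(\mathcal H)$, is represented by some $C\in\cS_q(\mathcal H)$ through the $H=K$ duality of \cite{ringrose}, and compresses to $fCe\in f\,\cS_q(\mathcal H)\,e$, i.e.\ to $\iota(A)$ for a unique $A\in\cS_q(H;K)$; evaluating on $\iota'(B)$ recovers $\ell(B)=\Tr(AB)$. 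The case $p=1$, with $\cK(K;H)$ in place of $\cS_p(K;H)$, is identical using the $H=K$ predual $\cK(\mathcal H)^*\cong\cS_1(\mathcal H)$.

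Finally, (vi) follows from (v) and Theorem \ref{thm.optop}.\ref{item.sigmacontchar}. A weak$^*$-continuous functional on $B(H;K)=\cS_\infty(H;K)\cong\cS_1(K;H)^*$ has the form $A\mapsto\Tr(AB)$ for some $B\in\cS_1(K;H)$; writing $B=\sum_n s_n\la\,\cdot\,,f_n\ra_K e_n$ by its singular value decomposition and setting $h_n\coloneqq\sqrt{s_n}\,e_n$, $k_n\coloneqq\sqrt{s_n}\,f_n$ gives $(h_n)\in\ell^2(\N;H)$, $(k_n)\in\ell^2(\N;K)$ (since $\sum_n s_n=\|B\|_{\cS_1}<\infty$) and $\Tr(AB)=\sum_n\la Ah_n,k_n\ra_K$. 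This is exactly the form in Theorem \ref{thm.optop}.\ref{item.sigmacontchar}, and the computation reverses, so the weak$^*$-continuous and $\sigma$-weakly continuous functionals coincide as subspaces of the algebraic dual of $B(H;K)$. Since each of the two topologies is the weak topology generated by this common family of functionals, they agree, and ultraweak density of the finite-rank operators is then Theorem \ref{thm.optop}.\ref{item.WOTsigmaWOT}. The main obstacle throughout is the surjectivity in (v): the corner-compression must go through without any separability hypothesis, so no sequential approximation of the identity is available, and it is essential that $\iota$ be a genuine isometric corner-valued embedding in order for the $H=K$ duality of \cite{ringrose} to transfer verbatim.
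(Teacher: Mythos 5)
Your proposal is correct in substance but takes a genuinely different route from the paper, which gives no proof at all: it cites Chapter 2 of \cite{ringrose} for the case $H=K$ and asserts that the case $H \neq K$ has ``essentially the same proofs,'' with the singular value and polar decompositions as the main tools. You instead make the transfer to $H \neq K$ formal via the off-diagonal corner embedding $\iota(A) = \bigl(\begin{smallmatrix} 0 & 0 \\ A & 0\end{smallmatrix}\bigr) \in B(H \oplus K)$, which is isometric for every $\|\cdot\|_{\cS_p}$ because $|\iota(A)| = \bigl(\begin{smallmatrix}|A| & 0 \\ 0 & 0 \end{smallmatrix}\bigr)$. This buys something concrete: parts (i), (ii), (iv), (v), (vi) then follow from the $H=K$ statements by restriction and compression to the corner rather than by re-running proofs, and in (v) the Hahn--Banach extension of a functional from the corner $e\,\cS_p(H\oplus K)\,f$ to all of $\cS_p(H\oplus K)$, followed by representation by some $C \in \cS_q(H \oplus K)$ and compression to $fCe = \iota(A)$, is a clean way to get surjectivity with no separability hypothesis -- exactly the point where a careless ``same proof'' claim could hide trouble. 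Your treatment of (vi) -- matching the functionals $\Tr(\cdot\,B)$, $B \in \cS_1(K;H)$, with those of Theorem \ref{thm.optop}.\ref{item.sigmacontchar} via the singular value decomposition of $B$ and Cauchy--Schwarz in both directions -- correctly identifies the two topologies as the initial topologies of a common family of functionals.

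The one step that does not work as written is (iii). From $s_{m+n-1}(AB) \le s_m(A)\,s_n(B)$ alone you can only conclude $s_{2k-1}(AB),\, s_{2k}(AB) \le s_k(A)\,s_k(B)$, and summing $p$-th powers then gives $\|AB\|_{\cS_p} \le 2^{1/p}\|A\|_{\cS_{p_1}}\|B\|_{\cS_{p_2}}$; the constant $2^{1/p}$ cannot be removed by this bookkeeping, so the inequality as stated (with constant one) is not reached. To repair it, use either Horn's weak log-majorization $\prod_{j\le n}s_j(AB) \le \prod_{j \le n}s_j(A)\,s_j(B)$ together with Weyl's lemma, or -- more in keeping with your own strategy -- the block-matrix trick again: place each $A_j \in B(H_{j+1};H_j)$ in the $(j,j+1)$ block of an operator on $H_1 \oplus \cdots \oplus H_{k+1}$; the product of these block operators is $A_1\cdots A_k$ sitting in the $(1,k+1)$ block, each block operator has the same nonzero singular values as the corresponding $A_j$, and the $H=K$ H\"{o}lder inequality of \cite{ringrose} applied on the direct sum gives exactly the stated estimate, $k$ factors and all.
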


Next, we review some basics of semifinite von Neumann algebras and noncommutative $L^p$-spaces.
(The reader who is uninterested in semifinite von Neumann algebras may skip at this time to the next section.)
If $a,b \in B(H)$, then we write $a \leq b$ or $b \geq a$ to mean that $b-a$ is a positive operator, i.e., $\la (b-a)h,h \ra_H \geq 0$ for $h \in H$.
If $\cM \subseteq B(H)$ is a von Neumann algebra, then we write
\[
\cM_+ \coloneqq \{a \in \cM : a \geq 0\}.\pagebreak
\]
It is easy to see that $\cM_+$ is closed in the WOT.
Also, if $(a_j)_{j \in J}$ is a net in $\cM_+$ that is bounded (there exists $b \in B(H)$ such that $a_j \leq b$, for all $j \in J$) and increasing ($j_1 \leq j_2 \Rightarrow a_{j_1}\leq a_{j_2}$), then $\sup_{j \in J} a_j$ exists in $B(H)_+$ and belongs to $\cM_+$.
(Please see Proposition 43.1 in \cite{conwayop}.)
This is often known as Vigier's theorem.

\begin{defi}[Trace]\label{def.trace}
Let $\cM \subseteq B(H)$ be a von Neumann algebra and $\tau \colon \cM_+ \to [0,\infty]$.
We call $\tau$ a \textbf{trace} if for all $a,b \in \cM_+$, $c \in \cM$, and $\lambda \in \R_+$,
\begin{enumerate}[label=(\roman*),font=\normalfont,leftmargin=2\parindent]
    \item $\tau(a+b) = \tau(a)+\tau(b)$;\label{item.add}
    \item $\tau(\lambda a) = \lambda\,\tau(a)$;\label{item.poshom}
    \item $\tau(c^*c) = \tau(cc^*)$.\label{item.trace}
\end{enumerate}
A trace $\tau$ on $\cM$ is called
\begin{enumerate}[label=(\roman*),font=\normalfont,leftmargin=2\parindent]
    \item \textbf{normal} if $\tau(\sup_{j \in J}a_j)  = \sup_{j \in J}\tau(a_j)$ whenever $(a_j)_{j \in J}$ is a bounded and increasing net in $\cM_+$;
    \item \textbf{faithful} if $a \in \cM_+$ and $\tau(a) = 0$ imply $a=0$;
    \item \textbf{semifinite} if $\tau(a) = \sup\{\tau(b) : a \geq b \in \cM_+, \, \tau(b) < \infty\}$ for $a \in \cM_+$.
\end{enumerate}
If $\tau$ is a normal, faithful, semifinite trace on $\cM$, then $(\cM,\tau)$ is called a \textbf{semifinite von Neumann algebra}.
\end{defi}
\begin{rem}
In the presence of \ref{item.add} and \ref{item.poshom}, condition \ref{item.trace} is equivalent to $\tau(u^*au) = \tau(a)$ for all $a \in \cM_+$ and all unitaries $u$ belonging to $\cM$.
This is Corollary 1 in Section I.6.1 of \cite{dixmier}. 
\end{rem}

For basic properties of traces on von Neumann algebras, please see Chapter I.6 of \cite{dixmier} or Section V.2 of \cite{takesaki}.
Motivating examples of semifinite von Neumann algebras are $(B(H),\Tr)$ and $(L^{\infty}(\Om,\mu),\int_{\Om} \boldsymbol{\cdot}\,d\mu)$, where $(\Om,\sF,\mu)$ is a $\sigma$-finite measure space and $L^{\infty}(\Om,\mu)$ is represented as multiplication operators on $L^2(\Om,\mu)$.
We now record some basics of $L^p$-spaces associated to a normal, faithful, semifinite trace.
We shall mostly draw results from \cite{dasilva,dixmierLp}.
For more information and/or different perspectives, please see \cite{fackkosaki,nelson,terp,yeadon}.

\begin{nota}\label{nota.ncLp}
Let $(\cM,\tau)$ be a semifinite von Neumann algebra, and fix $p \in [1,\infty)$.
Write
\[
\|a\|_{L^p(\tau)} \coloneqq \tau(|a|^p)^{\frac{1}{p}} \in [0,\infty],
\]
for all $a \in \cM$, and
\[
\mathcal{L}^p(\tau) \coloneqq \{a \in \cM : \|a\|_{L^p(\tau)}^p = \tau(|a|^p) < \infty\}.
\]
For the $p=\infty$ case, we take $\mathcal{L}^{\infty}(\tau) \coloneqq \cM$ with $\|\cdot\|_{L^{\infty}(\tau)} \coloneqq \|\cdot\|$.
\end{nota}

It turns out that $\mathcal{L}^1(\tau) \subseteq \cM$ is a two sided ideal of $\cM$ that is spanned by $\mathcal{L}^1(\tau)_+ = \mathcal{L}^1(\tau) \cap \cM_+$.
Moreover, there exists a unique linear extension of $\tau|_{\mathcal{L}_1(\tau)_+} \colon \mathcal{L}_1(\tau)_+ \to \C$ to $\mathcal{L}^1(\tau)$, which we notate the same way, and this extension satisfies
\[
\tau(ab) = \tau(ba) \; \text{ for } a \in \cM, \, b \in \mathcal{L}^1(\tau).
\]
Finally, if $b \in \mathcal{L}^1(\tau)$, then the map $\cM \ni a \mapsto \tau(ab) \in \C$ is $\sigma$-weakly continuous.
(These facts are proven as Proposition 1 in Section I.6.1, together with the sentence before Proposition 9 in Section I.1.6, of \cite{dixmier}.)

\begin{thm}[Properties of $\mathcal{L}^p(\tau)$]\label{thm.Lp}
Let $(\cM,\tau)$ be a semifinite von Neumann algebra and $p \in [1,\infty]$.
\begin{enumerate}[label=(\roman*),font=\normalfont,leftmargin=2\parindent]
    \item $(\mathcal{L}^p(\tau),\|\cdot\|_{L^p(\tau)})$ is a normed vector space. Its completion is denoted $(L^p(\tau),\|\cdot\|_{L^p(\tau)})$ and is called the \textbf{noncommutative $\boldsymbol{L^p}$-space} associated to $(\cM,\tau)$.
    We therefore also write $\mathcal{L}^p(\tau) = L^p(\tau) \cap \cM$.
    \item If $a \in \mathcal{L}^1(\tau)$, then $|\tau(a)| \leq \tau(|a|) = \|a\|_{L^1(\tau)}$.
    Thus $\tau \colon \mathcal{L}^1(\tau) \to \C$ extends uniquely to a bounded linear map, notated the same way, $L^1(\tau) \to \C$.\label{item.tauext}
    \item {\rm (Nonommutative H\"{o}lder's inequality)} Suppose $p_1,\ldots,p_k \in [1,\infty]$ are such that $\frac{1}{p_1}+\cdots+\frac{1}{p_k} = \frac{1}{p}$.
    Then, for all $a_1,\ldots,a_k \in \cM$:\label{item.LpHolder}
    \[
    \|a_1\cdots a_k\|_{L^p(\tau)} \leq \|a_1\|_{L^{p_1}(\tau)} \cdots \|a_k\|_{L^{p_k}(\tau)}.
    \]
    \item If $q \in [1,\infty]$ is such that $\frac{1}{p}+\frac{1}{q}=1$, then for all $a \in \cM$,\label{item.Lpdual}
    \[
    \|a\|_{L^p(\tau)} = \sup\{\|ab\|_{L^1(\tau)} : b \in \mathcal{L}^q(\tau),\,\|b\|_{L^q(\tau)} \leq 1\}.
    \]
\end{enumerate}
\end{thm}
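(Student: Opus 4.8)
The plan is to organize the entire theorem around the noncommutative H\"older inequality (iii), which is the one substantive analytic input; parts (i), (ii), and (iv) then follow by soft arguments built on H\"older, the polar decomposition, and the trace identities for $\mathcal{L}^1(\tau)$ recorded just before the statement. All four assertions are classical, and I would ultimately cite \cite{dixmierLp,dasilva} (and \cite{fackkosaki}) for the fine details; what follows is the skeleton I would verify. I emphasize at the outset that non-separability of $H$ is irrelevant here: every quantity is expressed through $\tau$ on $\cM$, so the arguments are identical to the standard semifinite theory.

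For (iii) I would pass to generalized singular value functions. For $a \in \cM$ let $\mu_{\bullet}(a) \colon (0,\infty) \to [0,\infty)$ be the decreasing rearrangement of $a$ relative to $\tau$, defined by $\mu_t(a) = \inf\{\|a(1-e)\| : e \in \cM \text{ a projection}, \, \tau(e) \le t\}$. Two facts drive the argument: first, $\mu_t(|a|^p) = \mu_t(a)^p$ and $\tau(|a|^p) = \int_0^{\infty} \mu_t(a)^p\,dt$, so that $\|a\|_{L^p(\tau)} = \|\mu_{\bullet}(a)\|_{L^p(0,\infty)}$; second, the Fack--Kosaki submajorization $\mu_{\bullet}(a_1 a_2) \pprec \mu_{\bullet}(a_1)\,\mu_{\bullet}(a_2)$, which, since $\|\cdot\|_{L^p(0,\infty)}$ is rearrangement invariant, gives $\|\mu_{\bullet}(a_1 a_2)\|_{L^p} \le \|\mu_{\bullet}(a_1)\,\mu_{\bullet}(a_2)\|_{L^p}$. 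Granting these, the two-factor case of (iii) reduces to the \emph{commutative} H\"older inequality on $((0,\infty),dt)$ applied to the scalar functions $\mu_{\bullet}(a_j)$, and the $k$-factor case follows from the two-factor case by the obvious induction on the exponents $1/r = 1/p_1 + \cdots + 1/p_{k-1}$. The genuine obstacle is exactly this singular value machinery in the general semifinite setting; I would cite Fack--Kosaki \cite{fackkosaki} rather than reprove it.

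Part (ii) is a trace Cauchy--Schwarz computation. Writing the polar decomposition $a = u|a|$ and setting $X = |a|^{1/2}$, $Y = |a|^{1/2}u^*$, one has $Y^*X = a$, $X^*X = |a|$, and $Y^*Y = u|a|u^*$, so that $X, Y \in \mathcal{L}^2(\tau)$ because $\tau(X^*X) = \tau(|a|) < \infty$ and $\tau(Y^*Y) = \tau(u|a|u^*) \le \tau(|a|) < \infty$ (using the trace property and $u^*u \le 1$). The Cauchy--Schwarz inequality for the positive semidefinite form $(X,Y) \mapsto \tau(Y^*X)$ on $\mathcal{L}^2(\tau)$ then gives $|\tau(a)| = |\tau(Y^*X)| \le \tau(X^*X)^{1/2}\,\tau(Y^*Y)^{1/2} \le \tau(|a|) = \|a\|_{L^1(\tau)}$. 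Since $\mathcal{L}^1(\tau)$ is dense in its completion $L^1(\tau)$ and $\tau$ is $\|\cdot\|_{L^1(\tau)}$-bounded by this inequality, $\tau$ extends uniquely and continuously to $L^1(\tau)$.

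For (iv) and the triangle inequality in (i) I would use a single explicit near-optimizer. Given $a \in \cM$ with polar decomposition $a = u|a|$ and $p \in (1,\infty)$, set $b = c^{-1}|a|^{p-1}u^*$ with $c = \|a\|_{L^p(\tau)}^{p/q}$ (first truncating $a$ by a finite-trace projection to reduce to $a \in \mathcal{L}^p(\tau)$, which is possible by semifiniteness and normality of $\tau$, and removing the truncation at the end). A direct computation using $(p-1)q = p$ and the support projection identities for $u$ gives $\|b\|_{L^q(\tau)} = 1$, $\tau(ab) = \|a\|_{L^p(\tau)}$, and $\|ab\|_{L^1(\tau)} = \|a\|_{L^p(\tau)}$; combined with the upper bound $\|ab\|_{L^1(\tau)} \le \|a\|_{L^p(\tau)}\,\|b\|_{L^q(\tau)}$ from (iii), this proves (iv) for $p \in (1,\infty)$. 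The case $p = 1$ is immediate by taking $b = u^*$, and $p = \infty$ follows from the standard $L^1$--$\cM$ duality (treated in \cite{dixmierLp}) together with the $\sigma$-weak continuity of $a \mapsto \tau(ab)$ recorded before the statement. Finally, for (i), homogeneity is clear and definiteness follows from faithfulness of $\tau$ (Definition \ref{def.trace}); the triangle inequality I would \emph{not} prove directly but instead deduce from the companion variational formula $\|a\|_{L^p(\tau)} = \sup\{|\tau(ab)| : b \in \mathcal{L}^q(\tau), \, \|b\|_{L^q(\tau)} \le 1\}$, established by the same optimizer. As a supremum of the moduli of the linear functionals $a \mapsto \tau(ab)$, this expression is automatically subadditive, yielding $\|a_1 + a_2\|_{L^p(\tau)} \le \|a_1\|_{L^p(\tau)} + \|a_2\|_{L^p(\tau)}$ and in particular closure of $\mathcal{L}^p(\tau)$ under addition; completeness requires nothing further, since $L^p(\tau)$ is \emph{defined} as the completion.
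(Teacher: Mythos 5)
Your sketch is correct, but note that the paper does not actually prove Theorem \ref{thm.Lp}: it is stated as background, with the proofs deferred to \cite{dixmierLp,dasilva} (and \cite{fackkosaki} listed only as an alternative perspective). So there is no in-paper argument to match; what you have written is a sound outline of the standard semifinite theory, organized somewhat differently from the cited sources. Your route runs everything through the Fack--Kosaki generalized singular values: submajorization $\int_0^t \mu_s(a_1a_2)\,ds \leq \int_0^t \mu_s(a_1)\mu_s(a_2)\,ds$ plus the trace formula $\tau(|a|^p) = \int_0^{\infty}\mu_t(a)^p\,dt$ reduce H\"older to the commutative case, and then (ii), (iv), and the triangle inequality in (i) follow by soft duality arguments -- this is closer in spirit to \cite{fackkosaki} than to Dixmier's original development. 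Two details deserve explicit mention. First, in your $k$-factor induction the intermediate exponent $r$ with $\frac{1}{r} = \frac{1}{p_1}+\cdots+\frac{1}{p_{k-1}}$ stays in $[1,\infty]$ only because the partial sums of the $\frac{1}{p_j}$ are bounded by $\frac{1}{p} \leq 1$; this is what keeps the induction inside the scope of the statement and should be said. Second, your truncation step in (iv) -- reducing to $\tau(|a|^p) < \infty$ via $\tau$-finite projections, using semifiniteness and normality -- is exactly the device the paper itself uses in its one genuinely proved result in this circle, Lemma \ref{lem.L1rechar}, which establishes the companion formula $\|a\|_{L^1(\tau)} = \sup\{|\tau(ab)| : b \in \mathcal{L}^1(\tau),\, \|b\| \leq 1\}$ (the $p=1$ pairing with the roles of the two norms swapped, including the argument that $\tau$-finite projections increase to $1$); for the case $\tau(|a|^p) = \infty$ you must also check that these truncated witnesses drive the supremum to $\infty$, which the spectral-projection computation does. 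Your choices of optimizer ($b = c^{-1}|a|^{p-1}u^*$ with $(p-1)q = p$, $b = u^*$ for $p=1$) and the Cauchy--Schwarz argument for (ii) via $X = |a|^{1/2}$, $Y = |a|^{1/2}u^*$ are all correct, as is deducing subadditivity of $\|\cdot\|_{L^p(\tau)}$ from the variational formula rather than proving a noncommutative Minkowski inequality directly; the latter is a clean economy, since a supremum of moduli of linear functionals is automatically subadditive. What the paper's citation-based treatment buys is brevity; what your outline buys is a self-contained path whose only black box is the singular-value machinery of \cite{fackkosaki}, which is a reasonable place to draw the line.
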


When $(\cM,\tau) = (B(H),\Tr)$, we have that $\mathcal{L}^p(\tau) = L^p(\tau) = \cS_p(H)$ and $\|\cdot\|_{L^p(\tau)} = \|\cdot\|_{\cS_p}$.
Therefore, Theorem \ref{thm.Lp} generalizes parts of Theorem \ref{thm.Schatten} in the case $H=K$.

\subsection{Tensor products}\label{sec.tensprod}

\begin{nota}
Write $\otimes$ for the algebraic tensor product, $\otimes_2$ for the Hilbert space tensor product, and $\potimes$ for the Banach space projective tensor product.
\end{nota}

Please see Section 3.2 of \cite{brownozawa} or Section 2.6 of \cite{kadisonringrose1} for information about $\hotimes$ and Chapter 2 of \cite{ryan} for information about $\potimes$.
We assume the reader has some familiarity with these tensor products;
nevertheless, we recall the definitions/constructions thereof (at least when there are only two tensorands).
First of all, there is a unique inner product $\la \cdot, \cdot \ra_{H \otimes K}$ on $H \otimes K$ such that
\[
\la h_1 \otimes k_1, h_2 \otimes k_2 \ra_{H \otimes K} = \la h_1,h_2 \ra_H\la k_1,k_2 \ra_K,
\]
for all $h_1,h_2 \in H$, $k_1,k_2 \in K$. The Hilbert space tensor product $H \hotimes K$ is defined to be the completion of $H \otimes K$ with respect to $\la \cdot, \cdot \ra_{H \otimes K}$.
Also, if $A \in B(H)$ and $B \in B(K)$, then there exists a unique bounded linear map $A \hotimes B \in B(H \hotimes K)$ such that $(A \hotimes B)(h \otimes k) = Ah \otimes Bk$, for all $h \in H$ and $k \in K$.
Moreover, $\|A \hotimes B\|_{B(H \hotimes K)} = \|A\|_{B(H)}\|B\|_{B(K)}$, and the linear map $B(H) \otimes B(K) \to B(H \hotimes K)$ determined by
\[
A \otimes B \mapsto A \hotimes B
\]
is an injective $\ast$-homomorphism when $B(H) \otimes B(K)$ is given the tensor product $\ast$-algebra structure.
This allows us to view $B(H) \otimes B(K)$ as a $\ast$-subalgebra of $B(H \hotimes K)$ and justifies writing, as we shall, $A \otimes B$ instead of $A \hotimes B$.

We shall only use the projective tensor product to motivate the \textbf{integral projective tensor products} described in Section \ref{sec.IPTP}.
Here is what is useful to know for this purpose.
For Banach spaces $V$ and $W$ and $u \in V \otimes W$, define
\[
\pi(u) \coloneqq \inf\Bigg\{\sum_{j=1}^n\|v_j\|_V\|w_j\|_W : u = \sum_{j=1}^n v_j\otimes w_j\Bigg\}.
\]
The projective tensor product $V \potimes W$ is defined to be the completion of $V \otimes W$ with respect to the norm $\pi$, and it satisfies the type of universal property that the algebraic tensor product satisfies:
it bounded linearizes bounded bilinear maps.
As for a more concrete description of $V \potimes W$, it can be shown that every element $u \in V \potimes W$ admits a decomposition
\[
u = \sum_{n=1}^{\infty} v_n \otimes w_n \;\text{ with }\; \sum_{n=1}^{\infty}\underbrace{\|v_n\|_V\|w_n\|_W}_{\pi(v_n \otimes w_n)} < \infty \numberthis\label{eq.projdecomp}
\]
and that
\[
\pi(u) = \inf\Bigg\{\sum_{n=1}^{\infty}\|v_n\|_V\|w_n\|_W : u = \sum_{n=1}^{\infty} v_n \otimes w_n \text{ as in \eqref{eq.projdecomp}}\Bigg\}.
\]
Please see Chapter 2 of \cite{ryan} for a proper development.

\section{Vector and operator valued integrals}\label{sec.vopinteg}

For this section, fix a measure space $(\Sigma,\sH,\rho)$.

\subsection{Upper and lower integrals}\label{sec.nonmeasint}

\begin{defi}[Upper and lower integrals]\label{def.nonmeasint}
For an arbitrary (not necessarily measurable) function $h \colon \Sigma \to [0,\infty]$, we define
\begin{align*}
    \overline{\int_{\Sigma}} h(\sigma) \, \rho(d\sigma) & = \overline{\int_{\Sigma}} h \, d\rho \coloneqq \inf\Bigg\{\int_{\Sigma} \tilde{h} \, d\rho : h \leq \tilde{h} \;\; \rho\text{-a.e.}, \; \tilde{h} \colon \Sigma \to [0,\infty] \text{ measurable}\Bigg\} \text{ and} \\
    \underline{\int_{\Sigma}} h(\sigma) \, \rho(d\sigma) & = \underline{\int_{\Sigma}} h \, d\rho \coloneqq \sup\Bigg\{\int_{\Sigma} \tilde{h} \, d\rho : \tilde{h} \leq h \;\; \rho\text{-a.e.}, \; \tilde{h} \colon \Sigma \to [0,\infty] \text{ measurable}\Bigg\}
\end{align*}
to be, respectively, the \textbf{upper} and \textbf{lower integral} of $h$ with respect to $\rho$.
\end{defi}

Of course, if $h$ is $\big(\overline{\sH}^{\rho},\cB_{[0,\infty]}\big)$-measurable, where $\overline{\sH}^{\rho}$ is the $\rho$-completion of $\sH$, then $\underline{\int_{\Sigma}} h \, d\rho = \overline{\int_{\Sigma}} h \,d\rho$.
Here are the properties of upper and lower integrals relevant to this paper.

\begin{prop}[Properties of upper and lower integrals]\label{prop.nonmeasinteg}
Fix arbitrary functions $h,h_1,h_2 \colon \Sigma \to [0,\infty]$ and a nonnegative number $c \geq 0$.
\begin{enumerate}[label=(\roman*),font=\normalfont,leftmargin=2\parindent]
    \item $\underline{\int_{\Sigma}} h \, d\rho \leq \overline{\int_{\Sigma}} h \, d\rho$. Also, if $S \in \sH$, then $\underline{\int_S}h|_S\,d\rho = \underline{\int_{\Sigma}}1_Sh\,d\rho$ and $\overline{\int_S}h|_S\,d\rho = \overline{\int_{\Sigma}}1_Sh\,d\rho$.
    \item If $h_1 \leq h_2$ $\rho$-almost everywhere, then $\underline{\int_{\Sigma}} h_1 \, d\rho \leq \underline{\int_{\Sigma}} h_2 \, d\rho$ and $\overline{\int_{\Sigma}} h_1 \, d\rho \leq \overline{\int_{\Sigma}} h_2 \, d\rho$.
    \item $\overline{\int_{\Sigma}}c\, h \, d\rho = c \overline{\int_{\Sigma}} h \, d\rho$ and $\overline{\int_{\Sigma}} (h_1+h_2) \, d\rho \leq \overline{\int_{\Sigma}} h_1 \, d\rho + \overline{\int_{\Sigma}} h_2 \, d\rho$.\label{item.sublin}
    \item {\rm (Dominated convergence theorem)} Suppose $(h_n)_{n \in \N}$ is a sequence of functions $\Sigma \to [0,\infty]$ such that $h_n \hspace{-0.5mm}\to\hspace{-0.4mm} 0$ \hspace{-0.3mm}pointwise\hspace{-0.4mm} $\rho$-almost \hspace{-0.4mm}everywhere\hspace{-0.4mm} as $n \hspace{-0.5mm}\to\hspace{-0.4mm} \infty$.
    If $\overline{\int_{\Sigma}}(\sup_{n \in \N} \hspace{-0.4mm}h_n) d\rho \hspace{-0.4mm}<\hspace{-0.4mm} \infty$, then\hspace{-0.2mm} $\underline{\int_{\Sigma}}h_n d\rho \to 0$ as $n \hspace{-0.5mm}\to\hspace{-0.4mm} \infty$. \label{item.poorDCT}\vspace{-4.25mm}
    \item If $(\Sigma_n,\sH_n,\rho_n)_{n \in \N}$ is a sequence of measure spaces and
    \[
    (\Sigma,\sH,\rho) = \Bigg(\coprod_{n \in \N} \Sigma_n,\coprod_{n \in \N} \sH_n, \sum_{n =1}^{\infty}\rho_n\Bigg)
    \]
    is their disjoint union, then\label{item.disjunupint}
    \[
    \underline{\int_{\Sigma}} h \, d\rho = \sum_{n = 1}^{\infty}\underline{\int_{\Sigma_n}} h|_{\Sigma_n} \, d\rho_n \; \text{ and } \; \overline{\int_{\Sigma}} h \, d\rho = \sum_{n = 1}^{\infty}\overline{\int_{\Sigma_n}} h|_{\Sigma_n} \, d\rho_n.
    \]
\end{enumerate}
\end{prop}
\begin{proof}
The first three items are easy consequences of the definitions.
We take the remaining items in turn.

\ref{item.poorDCT} By definition of the upper integral, there is some measurable $h \colon \Sigma \to [0,\infty]$ such that $\int_{\Sigma} h \, d\rho < \infty$ and $\sup_{n \in \N}h_n \leq h$ $\rho$-almost everywhere.
By definition of the lower integral, if $n \in \N$, then there exists a measurable $\tilde{h}_n \colon \Sigma \to [0,\infty]$ such that $0 \leq \tilde{h}_n \leq h_n$ $\rho$-almost everywhere and
\[
\underline{\int_{\Sigma}}h_n \, d\rho - \frac{1}{n} < \int_{\Sigma} \tilde{h}_n \, d\rho.
\]
Since $h_n \to 0$ $\rho$-almost everywhere as $n \to \infty$, and $0 \leq \tilde{h}_n \leq h_n$ $\rho$-almost everywhere, we have that $\tilde{h}_n \to 0$ $\rho$-almost everywhere as $n \to \infty$.
Also, $\tilde{h}_n \leq h_n \leq h$ $\rho$-almost everywhere.
Therefore, by the dominated convergence theorem,
\[
\limsup_{n \to \infty}\underline{\int_{\Sigma}}h_n \, d\rho = \limsup_{n \to \infty}\Bigg(\underline{\int_{\Sigma}}h_n \, d\rho - \frac{1}{n}\Bigg) \leq \limsup_{n \to \infty}\int_{\Sigma} \tilde{h}_n \, d\rho = 0,
\]
as desired.

\ref{item.disjunupint} We prove the claimed identity for lower integrals and leave the proof of the identity for upper integrals to the reader.
First, the definition of the disjoint union measure space and a standard application of the monotone convergence theorem give the desired identity when $h$ is measurable.
Next, suppose $\tilde{h} \colon \Sigma \to [0,\infty]$ is measurable and $\tilde{h} \leq h$ $\rho$-almost everywhere.
Then $\tilde{h}|_{\Sigma_n} \colon \Sigma_n \to [0,\infty]$ is measurable and $\tilde{h}|_{\Sigma_n} \leq h|_{\Sigma_n}$ $\rho_n$-almost everywhere, for all $n \in \N$. Therefore, by definition of the lower integral and our initial observation,
\[
\int_{\Sigma} \tilde{h} \, d\rho = \sum_{n = 1}^{\infty}\int_{\Sigma_n} \tilde{h}|_{\Sigma_n} \, d\rho_n \leq \sum_{n = 1}^{\infty}\underline{\int_{\Sigma_n}} h|_{\Sigma_n} \, d\rho_n.
\]
Taking the supremum over $\tilde{h}$ then yields
\[
\underline{\int_{\Sigma}}h \,d\rho \leq \sum_{n = 1}^{\infty}\underline{\int_{\Sigma_n}} h|_{\Sigma_n} \, d\rho_n.
\]
Finally, fix $\e > 0$.
By definition of the lower integral, if $n \in \N$, then there exists a measurable $\tilde{h}_n \colon \Sigma_n \to [0,\infty]$ such that $\tilde{h}_n \leq h|_{\Sigma_n}$ $\rho_n$-almost everywhere and
\[
\underline{\int_{\Sigma_n}} h|_{\Sigma_n} \, d\rho_n \leq \int_{\Sigma_n} \tilde{h}_n \, d\rho_n + \frac{\e}{2^n}.\pagebreak
\]
Letting $\tilde{h} \colon \Sigma \to [0,\infty]$ be the unique measurable function such that $\tilde{h}|_{\Sigma_n} = \tilde{h}_n$, for all $n \in \N$, we have that $\tilde{h} \leq h$ $\rho$-almost everywhere and
\[
\sum_{n=1}^{\infty}\underline{\int_{\Sigma_n}} h|_{\Sigma_n} \, d\rho_n \leq \sum_{n=1}^{\infty}\int_{\Sigma_n} \tilde{h}_n \, d\rho_n + \e = \int_{\Sigma} \tilde{h} \, d\rho + \e \leq \underline{\int_{\Sigma}} h \, d\rho + \e.
\]
Since $\e > 0$ was arbitrary, we get that $\sum_{n = 1}^{\infty}\underline{\int_{\Sigma_n}} h|_{\Sigma_n} \, d\rho_n \leq \underline{\int_{\Sigma}}h \,d\rho$ as well.
\end{proof}

\subsection{Gel'fand--Pettis integrals}\label{sec.winteg}

In this section, we discuss a ``weak" notion of vector valued integration that --- in the next section --- we shall apply to maps $\Sigma \to V$, where $V \subseteq B(H;K)$ is a linear subspace.

\begin{defi}[Weak measurability and integrability]\label{def.GPint}
Let $V$ be a Hausdorff locally convex topological vector space (HLCTVS) with topological dual $V^*$, and let $f \colon \Sigma \to V$ be a map. We say that $f$ is \textbf{weakly measurable} if it is $(\sH,\sigma(V^*))$-measurable. We say that $f$ is \textbf{weakly} or \textbf{Gel'fand--Pettis integrable} if it is weakly measurable, $\int_{\Sigma} |\ell \circ f|\,d\rho < \infty$ whenever $\ell \in V^*$, and for every $S \in \sH$ there exists (necessarily unique) $\int_S f \, d\rho \in V$ such that
\[
\ell \Bigg(\int_S f \, d\rho \Bigg) = \int_S (\ell \circ f) \,d\rho, \numberthis\label{eq.wint}
\]
for all $\ell \in V^*$. In this case, we call $\int_S f \,d\rho = \int_S f(\sigma) \, \rho(d\sigma) \in V$ the \textbf{weak} or \textbf{Gel'fand--Pettis integral} (over $S$) of $f$ with respect to $\rho$.
\end{defi}

\begin{rem}
The uniqueness of $\int_S f \, d\rho$ is a consequence of the fact that $V^*$ separates points when $V$ is Hausdorff and locally convex, i.e., $\ell(v) = 0$ for all $\ell \in V^*$ implies $v = 0$.
\end{rem}

By the comments after Notation \ref{nota.sigmaalggen}, $f \colon \Sigma \to V$ is weakly measurable if and only if $\ell \circ f \colon \Sigma \to \C$ is $(\sH,\cB_{\C})$-measurable for all $\ell \in V^*$.

\begin{prop}[Basic properties of weak integrals]\label{prop.GelfPetint}
Let $V$ and $W$ be HLCTVSs.
\begin{enumerate}[label=(\roman*),font=\normalfont,leftmargin=2\parindent]
    \item If $f \colon \Sigma \to V$ is weakly integrable and $S \in \sH$, then $1_Sf$ is weakly integrable and $\int_{\Sigma} 1_Sf\,d\rho = \int_S f\, d\rho$.\label{item.wintrest}
    \item If $f,g \colon \Sigma \to V$ are weakly measurable (respectively, integrable) and $c \in \C$, then $f+c\,g$ is weakly measurable (respectively, integrable with $\int_S(f+c\,g)\,d\rho = \int_Sf\,d\rho + c\int_Sg \,d\rho$ whenever $S \in \sH$).\label{item.wintlin}
    \item If $T \colon V \to W$ is linear and continuous and $f \colon \Sigma \to V$ is weakly measurable (respectively, integrable), then $Tf = T(f(\cdot)) \colon \Sigma \to W$ is weakly measurable (respectively, integrable with $T\int_S f \, d\rho = \int_S Tf\,d\rho$ whenever $S \in \sH$).\label{item.wintcontlin}
    \item {\rm (Triangle inequality)} If $f \colon \Sigma \to V$ is weakly integrable and $\alpha$ is a continuous seminorm on $V$, then
    \[
    \alpha\Bigg(\int_{\Sigma} f \, d\rho \Bigg) \leq \underline{\int_{\Sigma}} \alpha(f) \,d\rho.
    \]
    In particular, if $V$ is normed, then $\big\|\int_{\Sigma} f \,d\rho \big\|_V \leq \underline{\int_{\Sigma}} \|f\|_V \, d\rho$.\label{item.winttriangle}
    \item {\rm (Dominated convergence theorem)} Suppose $V$ is sequentially complete and $\mathscr{S} \subseteq \R_+^V$ is a collection of seminorms generating the topology of $V$.
    If $(f_n)_{n \in \N}$ is a sequence of weakly measurable maps $\Sigma \to V$ converging pointwise to $f \colon \Sigma \to V$, then $f$ is weakly measurable.
    If $f_n \colon \Sigma \to V$ is weakly integrable, for all $n \in \N$, and
    \[
    \overline{\int_{\Sigma}} \sup_{n \in \N}\alpha (f_n) \,d\rho < \infty,
    \]
    for all $\alpha \in \mathscr{S}$, then $f$ is weakly integrable and $\int_{\Sigma} f_n \, d\rho \to \int_{\Sigma} f \,d\rho$ in $V$ as $n \to \infty$.\label{item.wDCT}
\end{enumerate}
\end{prop}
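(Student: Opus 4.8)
The unifying principle behind all five items is that a weak integral is uniquely determined by its pairings $\ell\big(\int_S f\,d\rho\big) = \int_S \ell\circ f\,d\rho$ against $\ell \in V^*$, together with the fact that $V^*$ separates points. So the plan for items \ref{item.wintrest}--\ref{item.wintcontlin} is to guess the correct value of the integral, pre- or post-compose with an arbitrary functional, and reduce to elementary scalar integration. For \ref{item.wintrest}, I would observe that $\ell\circ(1_S f) = 1_S(\ell\circ f)$ is measurable and $\rho$-integrable, and check that $\int_{S\cap S'}f\,d\rho$ (which exists because $f$ is weakly integrable) satisfies the defining identity for $\int_{S'}1_S f\,d\rho$; taking $S'=\Sigma$ yields the stated formula. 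For \ref{item.wintlin}, linearity of $\ell$ and of the scalar integral shows $\int_S f\,d\rho + c\int_S g\,d\rho$ is the weak integral of $f+c\,g$. For \ref{item.wintcontlin}, the key observation is that $m\circ T \in V^*$ for every $m\in W^*$, so $m\circ(Tf) = (m\circ T)\circ f$ is measurable and $T\int_S f\,d\rho$ satisfies the defining identity for $\int_S Tf\,d\rho$. In each case, uniqueness of the weak integral finishes the argument.

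Item \ref{item.winttriangle} is where Hahn--Banach enters. Writing $v=\int_\Sigma f\,d\rho$, I would use the dominated-extension form of Hahn--Banach to produce $\ell\in V^*$ with $|\ell|\le\alpha$ pointwise and $\ell(v)=\alpha(v)$ (continuity of $\ell$ follows from $|\ell|\le\alpha$ and continuity of $\alpha$). Then $\alpha(v) = \ell(v) = \int_\Sigma \ell\circ f\,d\rho \le \int_\Sigma |\ell\circ f|\,d\rho$. Since $|\ell\circ f|$ is measurable and satisfies $|\ell\circ f|\le\alpha(f)$ pointwise, it is one of the competitors in the supremum defining the lower integral, so $\int_\Sigma|\ell\circ f|\,d\rho \le \underline{\int_\Sigma}\alpha(f)\,d\rho$; this is precisely where the lower integral is needed, as $\alpha(f)$ may fail to be measurable. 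The normed case is the special case $\alpha=\|\cdot\|_V$.

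The real work is item \ref{item.wDCT}, which I expect to be the main obstacle. Weak measurability of $f$ is immediate, since $\ell\circ f = \lim_n \ell\circ f_n$ is a pointwise limit of measurable functions. The crux is producing $\int_S f\,d\rho$ without assuming full completeness of $V$. My plan is to show that $\big(\int_S f_n\,d\rho\big)_{n\in\N}$ is Cauchy in $V$ and then invoke sequential completeness. Fixing $\alpha\in\mathscr{S}$ and applying item \ref{item.winttriangle} to the weakly integrable map $f_n-f_m$ gives $\alpha\big(\int_S f_n\,d\rho - \int_S f_m\,d\rho\big) \le \underline{\int_\Sigma}\alpha(f_n-f_m)\,d\rho$, so it suffices to prove the double-indexed limit $\underline{\int_\Sigma}\alpha(f_n-f_m)\,d\rho \to 0$ as $n,m\to\infty$. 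I would argue by contradiction: if it fails, extract $n_j,m_j\to\infty$ with these lower integrals bounded below by some $\e>0$, note that $\alpha(f_{n_j}-f_{m_j})\to 0$ pointwise (triangle inequality for $\alpha$ together with $f_n\to f$) and that $\sup_j\alpha(f_{n_j}-f_{m_j}) \le 2\sup_k\alpha(f_k)$ has finite upper integral, and apply the Dominated Convergence Theorem for lower integrals (Proposition \ref{prop.nonmeasinteg}.\ref{item.poorDCT}) to reach a contradiction. Sequential completeness then yields $I_S \coloneqq \lim_n \int_S f_n\,d\rho \in V$.

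Finally I would identify $I_S$ as $\int_S f\,d\rho$. Given $\ell\in V^*$, continuity provides finitely many $\alpha_1,\dots,\alpha_r\in\mathscr{S}$ and $C>0$ with $|\ell|\le C\sum_{i=1}^r\alpha_i$, whence $|\ell\circ f_n|\le C\sum_{i=1}^r\sup_{m\in\N}\alpha_i(f_m)$ for every $n$; by subadditivity of the upper integral (Proposition \ref{prop.nonmeasinteg}.\ref{item.sublin}) together with the hypothesis, this bound has finite upper integral and hence admits a genuine $\rho$-integrable measurable majorant. The ordinary scalar Dominated Convergence Theorem then gives $\int_S\ell\circ f_n\,d\rho \to \int_S\ell\circ f\,d\rho$ and simultaneously $\int_\Sigma|\ell\circ f|\,d\rho<\infty$. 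Combining this with $\ell(I_S) = \lim_n\ell\big(\int_S f_n\,d\rho\big) = \lim_n\int_S\ell\circ f_n\,d\rho$ shows $\ell(I_S) = \int_S\ell\circ f\,d\rho$ for every $\ell\in V^*$, so $f$ is weakly integrable with $\int_S f\,d\rho = I_S$; taking $S=\Sigma$ delivers the asserted convergence $\int_\Sigma f_n\,d\rho \to \int_\Sigma f\,d\rho$.
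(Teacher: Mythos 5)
Your proposal is correct and follows essentially the same route as the paper's proof: items \ref{item.wintrest}--\ref{item.wintcontlin} by uniqueness of the weak integral, item \ref{item.winttriangle} by the dominated-extension form of Hahn--Banach together with the lower integral, and item \ref{item.wDCT} by showing $\big(\int_S f_n\,d\rho\big)_{n\in\N}$ is Cauchy via the triangle inequality and Proposition \ref{prop.nonmeasinteg}.\ref{item.poorDCT}, invoking sequential completeness, and identifying the limit through the scalar Dominated Convergence Theorem. Your subsequence-extraction argument for the doubly indexed limit $\underline{\int_{\Sigma}}\alpha(f_n-f_m)\,d\rho \to 0$ is in fact a slightly more careful justification than the paper's direct appeal to the single-sequence lemma, and your observation that convergence in the topology of $V$ is automatic (since $I_S$ is by construction the limit of $\int_S f_n\,d\rho$ in $V$) streamlines the paper's final step.
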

\begin{proof}
The first three items are easy consequences of the definitions and the fact that \eqref{eq.wint} (for all $\ell \in V^*$) uniquely characterizes $\int_S f \,d\rho$.
We take the remaining items one at a time.
\pagebreak

\ref{item.winttriangle} Let $v \coloneqq \int_{\Sigma} f \, d\rho$.
By the Hahn--Banach theorem, there is some linear $\ell \colon V \to \C$ such that $\ell(v) = \alpha(v)$ and $|\ell(w)| \leq \alpha(w)$, for all $w \in V$.
Since $\alpha$ is continuous, the latter inequality implies $\ell \in V^*$.
Therefore, we have the following by definition of the Gel'fand--Pettis and lower integrals:
\[
\alpha\Bigg(\int_{\Sigma} f\,d\rho\Bigg) = \ell\Bigg(\int_{\Sigma}f \,d\rho\Bigg) = \int_{\Sigma} (\ell \circ f) \, d\rho \leq \int_{\Sigma} |\ell \circ f| \, d\rho \leq \underline{\int_{\Sigma}} \alpha(f) \,d\rho.
\]

\ref{item.wDCT} The first statement is clear.
For the second, let $S \in \sH$ and $\alpha \in \mathscr{S}$.
Then
\[
\alpha\Bigg(\int_S f_n\,d\rho - \int_S f_m \, d\rho\Bigg) = \alpha\Bigg(\int_S(f_n-f_m) \, d\rho\Bigg) \leq \underline{\int_S}\alpha(f_n-f_m)\,d\rho \leq \underline{\int_{\Sigma}}\alpha(f_n-f_m) \,d\rho \to 0
\]
as $n,m \to \infty$ by the triangle inequality and Proposition \ref{prop.nonmeasinteg}\ref{item.poorDCT}, which applies because $\alpha(f_n-f_m) \to 0$ pointwise as $n,m \to \infty$ and
\[
\overline{\int_{\Sigma}} \sup_{n,m \in \N}\alpha(f_n-f_m) \,d\rho \leq 2\overline{\int_{\Sigma}}\sup_{n \in \N}\alpha(f_n)\,d\rho < \infty.
\]
In particular, the sequence $\big(\int_Sf_n\,d\rho\big)_{n \in \N}$ is Cauchy in $V$.
Letting $v_S\in V$ be its limit, which exists because $V$ is assumed to be sequentially complete, we claim $v_S = \int_S f \, d\rho$.
Indeed, if $\ell \in V^*$, then there exist $\alpha_1,\ldots,\alpha_m \in \mathscr{S}$ and $C \geq 0$ such that $|\ell(v)| \leq C\sum_{j=1}^m\alpha_j(v)$, for all $v \in V$.
Therefore,
\[
\ell(v_S) = \lim_{n \to \infty}\ell\Bigg(\int_S f_n\,d\rho\Bigg) = \lim_{n \to \infty}\int_S (\ell \circ f_n)\,d\rho = \int_S (\ell \circ f)\,d\rho
\]
by the standard dominated convergence theorem, which applies because
\[
\int_{\Sigma} \sup_{n \in \N}|\ell \circ f_n|\,d\rho \leq  C \overline{\int_{\Sigma}}\sum_{j=1}^m\sup_{n \in \N}\alpha_j(f_n) \,d\rho \leq C \sum_{j=1}^m\overline{\int_{\Sigma}}\sup_{n \in \N}\alpha_j(f_n) \,d\rho < \infty.
\]
Thus $f$ is weakly integrable, and the above shows at least that $\int_S f_n\,d\rho \to \int_S f\,d\rho$ weakly as $n \to \infty$.
Finally, whenever $\alpha \in \mathscr{S}$, we have
\[
\alpha\Bigg(\int_S f_n\,d\rho - \int_S f \, d\rho\Bigg) = \alpha\Bigg(\int_S(f_n-f) \, d\rho\Bigg) \leq \underline{\int_S}\alpha(f_n-f) \,d\rho \leq \underline{\int_{\Sigma}}\alpha(f_n-f) \,d\rho \to 0
\]
as $n \to \infty$ again by the triangle inequality and Proposition \ref{prop.nonmeasinteg}\ref{item.poorDCT}, which applies because $\alpha(f_n-f) \to 0$ pointwise as $n \to \infty$ and $\sup_{n \in \N}\alpha(f_n-f) \leq 2\sup_{n \in \N}\alpha(f_n)$, which has finite upper integral.
\end{proof}
\begin{rem}\label{rem.normmeas}
If $\alpha \colon V \to \R_+$ is a continuous seminorm, then weak measurability of $f \colon \Sigma \to V$ does not necessarily imply measurability of $\alpha(f) \colon \Sigma \to \R_+$ (even when $\alpha$ is a norm).
This is one reason we need upper and lower integrals above.
In a certain sense, this is what complicates the study of MOIs --- or, really, vector valued integrals --- in a general (not necessarily separable) setting.
\end{rem}

Here now are two situations in which the existence of weak integrals is always guaranteed.
We prove the following well known result using standard closed graph theorem arguments, as in Section II.3 of \cite{diesteluhl}.

\begin{prop}[Existence of weak integrals]\label{prop.wintext}
Let $V$ be a Banach space.
\begin{enumerate}[label=(\roman*),font=\normalfont,leftmargin=2\parindent]
    \item If $g \colon \Sigma \to V^*$ is such that, for all $v \in V$, $g(\cdot)(v) \colon \Sigma \to \C$ is measurable and $\int_{\Sigma} |g(\sigma)(v)|\,\rho(d\sigma) < \infty$, then $g$ is Gel'fand--Pettis integrable in the weak$^*$ topology on $V^*$ and $\big\|\int_{\Sigma} g \, d\rho \big\|_{V^*} \leq \underline{\int_{\Sigma}}\|g\|_{V^*}\,d\rho$.
    In this case, we say $g$ is \textbf{weak$\boldsymbol{^*}$ integrable} and $\int_{\Sigma} g \,d\rho$ is the \textbf{weak$\boldsymbol{^*}$ integral} of $g$. \label{item.weakstarintexist}
    \item Suppose $V$ is reflexive.
    If $f \colon \Sigma \to V$ is weakly measurable and
    $\int_{\Sigma} |\ell \circ f| \,d \rho < \infty$, for all $\ell \in V^*$, then
    $f$ is weakly integrable.\label{item.triangleconverse}
\end{enumerate}
\end{prop}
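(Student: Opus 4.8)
The plan is to prove (i) by the classical closed-graph argument (as in Section II.3 of \cite{diesteluhl}) and then to deduce (ii) from (i) using reflexivity. Throughout, I would keep in mind that the weak$^*$-continuous functionals on $V^*$ are exactly the evaluations at points of $V$, so that the hypothesis ``$g(\cdot)(v)$ is measurable for all $v \in V$'' is precisely weak measurability of $g$ in the weak$^*$ topology, and the candidate for the integral is forced.

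For (i), I would define, for each $S \in \sH$, the functional $\ell_S \colon V \to \C$ by $\ell_S(v) := \int_S g(\sigma)(v)\,\rho(d\sigma)$; this is well-defined (the integrand is measurable and integrable by hypothesis) and linear in $v$ because each $g(\sigma) \in V^*$ is linear. The crux is to show $\ell_S \in V^*$. First I would introduce the linear map $T \colon V \to L^1(\rho)$, $Tv := [\,g(\cdot)(v)\,]$, which is well-defined by hypothesis, and show it is bounded via the Closed Graph Theorem: if $v_n \to v$ in $V$ and $Tv_n \to h$ in $L^1(\rho)$, then along a subsequence $Tv_n \to h$ pointwise $\rho$-a.e., while $g(\sigma)(v_n) \to g(\sigma)(v)$ for every $\sigma$ because each $g(\sigma)$ is continuous; hence $h = Tv$ a.e., so the graph of $T$ is closed and $T$ is bounded. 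Then $|\ell_S(v)| \le \int_S |Tv|\,d\rho \le \|T\|\,\|v\|_V$, so $\ell_S \in V^*$, and setting $\int_S g\,d\rho := \ell_S$ verifies the defining identity of the weak$^*$ integral. For the triangle inequality I would take a supremum over the unit ball: for $\|v\|_V \le 1$ one has $|g(\sigma)(v)| \le \|g(\sigma)\|_{V^*}$ pointwise, so monotonicity of the lower integral (Proposition \ref{prop.nonmeasinteg}) gives $\int_\Sigma |g(\sigma)(v)|\,d\rho = \underline{\int_\Sigma} |g(\cdot)(v)|\,d\rho \le \underline{\int_\Sigma}\|g\|_{V^*}\,d\rho$, and taking the supremum over such $v$ bounds $\|\int_\Sigma g\,d\rho\|_{V^*}$. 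Using the lower integral is essential precisely because $\sigma \mapsto \|g(\sigma)\|_{V^*}$ need not be measurable (cf.\ Remark \ref{rem.normmeas}).

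For (ii), I would apply (i) with the spaces shifted: regard $f$ as a map into $V = (V^*)^*$ via reflexivity. For each $\ell \in V^*$ the function $\sigma \mapsto f(\sigma)(\ell) = (\ell \circ f)(\sigma)$ is measurable (weak measurability of $f$) and integrable (by hypothesis), so (i), applied to the Banach space $V^*$, yields a weak$^*$ integral $\int_S f\,d\rho \in (V^*)^* = V$ satisfying $\ell\big(\int_S f\,d\rho\big) = \int_S (\ell \circ f)\,d\rho$ for all $\ell \in V^*$. Since reflexivity identifies the weak$^*$ topology on $V^{**}$ with the weak topology on $V$ and the weak$^*$-continuous functionals on $V^{**}$ are exactly $V^*$, this identity is precisely the defining property of the Gel'fand-Pettis integral of $f$.

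The main obstacle is the boundedness of $T$ in part (i); everything else is bookkeeping. The delicate point there is that $L^1$-convergence yields only an a.e.-convergent subsequence, so one must combine the subsequence extraction with the pointwise weak$^*$ continuity of $g$ to identify the limit — this is exactly where the hypotheses (measurability and integrability for each fixed $v$) interact with completeness of $V$ and of $L^1(\rho)$.
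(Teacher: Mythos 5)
Your proof is correct and follows essentially the same route as the paper's: part (i) via the linear map $T \colon V \to L^1(\rho)$ and the Closed Graph Theorem (with the same sup-over-the-unit-ball, lower-integral argument for the triangle inequality), and part (ii) by embedding $f$ into $V^{**}$, applying (i) to obtain the Dunford integral, and pulling back through reflexivity. The only difference is cosmetic: you spell out the a.e.-convergent-subsequence step in verifying that the graph of $T$ is closed, which the paper leaves implicit.
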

\begin{proof}
We take both items one at a time.

\ref{item.weakstarintexist} First, we recall the evaluation map $V \ni v \mapsto (\ell \mapsto \ell(v)) \in (V^*,\,$weak$^*)^*$ is a linear isomorphism.
Therefore, we know $g \colon \Sigma \to V^*$ is weakly measurable in the weak$^*$ topology.
Second, define $T \colon V \to L^1(\Sigma,\rho)$ by $v \mapsto g(\cdot)(v)$.
Certainly, $T$ is linear.
Also, if $(v_n)_{n \in \N} \in V^{\N}$ converges to $v \in V$, then, for all $\sigma \in \Sigma$,
\[
(Tv_n)(\sigma) = g(\sigma)(v_n) \to g(\sigma)(v) = (Tv)(\sigma)\pagebreak
\]
as $n \to \infty$.
Therefore, if $(Tv_n)_{n \in \N}$ converges in $L^1(\Sigma,\rho)$, its limit must be $Tv$.
In other words, $T$ is closed.
By the closed graph theorem, $T$ is bounded.
Finally, fix $S \in \sH$, and define $I_S \colon V \to \C$ by $v \mapsto \int_S g(\sigma)(v)\,\rho(d\sigma) = \int_S (Tv)(\sigma)\,\rho(d\sigma)$.
Then $\|I_S\|_{V^*} \leq \|T\|_{V \to L^1} < \infty$, i.e., $I_S \in V^*$.
Unraveling the definitions and using the first sentence of this paragraph again, we conclude that $g$ is Gel'fand--Pettis integrable in the weak$^*$ topology and that $I_S = \int_S g\,d\rho$.

For the triangle inequality, we have
\begin{align*}
    \Bigg\|\int_{\Sigma}g\,d\rho \Bigg\|_{V^*} & = \sup\Bigg\{\Bigg|\Bigg(\int_{\Sigma} g\,d\rho \Bigg)(v)\Bigg| : v \in V,\,\|v\|_V \leq 1\Bigg\} = \sup\Bigg\{\Bigg|\int_{\Sigma} g(\sigma)(v)\,\rho(d\sigma)\Bigg|  : v \in V, \, \|v\|_V \leq 1\Bigg\} \\
    & \leq \sup\Bigg\{\int_{\Sigma} |g(\sigma)(v)|\,\rho(d\sigma)  : v \in V, \, \|v\|_V \leq 1\Bigg\} \leq \underline{\int_{\Sigma}} \|g\|_{V^*}\,d\rho,
\end{align*}
as desired.

\ref{item.triangleconverse} Suppose first that $V$ is a general Banach space, $\ev \colon V \hookrightarrow V^{**}$ is the natural inclusion, and we retain the same assumptions on $f$.
Since $((\ev \circ f)(\sigma))(\ell) = (\ell \circ f)(\sigma)$, for all $\ell \in V^*$ and $\sigma \in \Sigma$, the assumptions on $f$ and the previous part imply that $\ev \circ f \colon \Sigma \to V^{**} = (V^*)^*$ is weak$^*$ integrable.
(The weak$^*$ integrals of $\ev \circ f$ are called the \textit{Dunford integrals} of $f$.)
Now, if $V$ is reflexive and $S \in \sH$, then there exists unique $v_S \in V$ such that $\ev(v_S) = \int_S (\ev \circ f)\,d\rho$, where the latter is the weak$^*$ integral of $f$ over $S$.
Unraveling the definitions yields that $v_S = \int_S f \, d\rho$ and therefore that $f$ is weakly integrable.
\end{proof}

We end this section with a characterization of weak measurability and integrability in a Hilbert space, as this will be important for developing the ``operator valued" case in the following section.

\begin{ex}[Hilbert spaces]\label{ex.wmeasHilb}
Let $(H,\la\cdot,\cdot \ra_H)$ be a Hilbert space.
The Riesz representation theorem says that $\ell \in H^*$ if and only if there is some $k \in H$ such that $\ell(h) = \la h, k \ra_H$, for all $h \in H$.
Therefore, $f \colon \Sigma \to H$ is weakly measurable if and only if $\la f(\cdot), k \ra_H \colon \Sigma \to \C$ is $(\sH,\cB_{\C})$-measurable whenever $k \in H$.
Also, since $H$ is reflexive, $f \colon \Sigma \to H$ is weakly integrable if and only if $\la f(\cdot),k \ra_H \in L^1(\Sigma,\rho)$ whenever $k \in H$ (for example, if $f$ is weakly measurable and $\underline{\int_{\Sigma}}\|f\|_H\,d\rho < \infty$) by Proposition \ref{prop.wintext}\ref{item.triangleconverse}.
\end{ex}

\subsection{Pointwise Pettis integrals}\label{sec.pwpetwstarint}

At this time, we shall begin to use material from Section \ref{sec.optop} about various topologies on spaces of bounded operators.

\begin{conv}
For the duration of this and the following section, fix complex Hilbert spaces $(H,\la \cdot,\cdot \ra_H)$ and $(K,\la \cdot,\cdot\ra_K)$.
Also, write $\|\cdot\| = \|\cdot\|_{H \to K}$ for the operator norm on $B(H;K)$.
\end{conv}

In this paper, we shall need to integrate maps $\Sigma \to V$, where $V \subseteq B(H;K)$ is a linear subspace.
Given the number of topologies on $B(H;K)$ one might consider, there are possibly many notions of Gel'fand--Pettis integrability of a map $\Sigma \to V$.
It turns out that in most reasonable circumstances, the choice of topology (from Section \ref{sec.optop}) does not matter.
Indeed, we now introduce a notion of integrability --- \textbf{pointwise Pettis integrability} --- in this setting that is, in practice, quite easy to check.
Then we describe the relationship between pointwise Pettis integrability and Gel'fand--Pettis integrability in various operator topologies.

\begin{lem}\label{lem.pwPetexist}
Suppose $F \colon \Sigma \to B(H;K)$ is such that $\la F(\cdot)h,k\ra_K \colon \Sigma \to \C$ is $(\sH,\cB_{\C})$-measurable and $\int_{\Sigma}|\la F(\sigma)h,k \ra_K| \,\rho(d\sigma) < \infty$, for all $h \in H$ and $k \in K$.
Then $F(\cdot)h \colon \Sigma \to K$ is weakly integrable, for all $h \in H$, and the map $\rho_S(F) \colon H \to K$ defined by $h \mapsto \int_S F(\sigma)h\, \rho(d\sigma)$ is linear and bounded, for all $S \in \sH$.
\end{lem}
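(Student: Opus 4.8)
The plan is to verify weak integrability pointwise in $h$ using the Hilbert-space characterization, then deduce linearity from the linearity of the weak integral, and finally establish boundedness via a Closed-Graph/Uniform-Boundedness argument.

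First I would fix $h \in H$ and apply the characterization in Example \ref{ex.wmeasHilb} to the map $F(\cdot)h \colon \Sigma \to K$. By hypothesis $\la F(\cdot)h, k\ra_K$ is $(\sH,\cB_{\C})$-measurable and lies in $L^1(\Sigma,\rho)$ for every $k \in K$, so $F(\cdot)h$ is weakly integrable; this produces $\rho_S(F)h \coloneqq \int_S F(\sigma)h\,\rho(d\sigma) \in K$ for each $S \in \sH$, characterized by $\la \rho_S(F)h, k\ra_K = \int_S \la F(\sigma)h,k\ra_K\,\rho(d\sigma)$ for all $k$. Linearity of $\rho_S(F)$ is then immediate: since each $F(\sigma)$ is linear, $F(\cdot)(h_1 + c\,h_2) = F(\cdot)h_1 + c\,F(\cdot)h_2$ as maps $\Sigma \to K$, so Proposition \ref{prop.GelfPetint}.\ref{item.wintlin} gives $\rho_S(F)(h_1 + c\,h_2) = \rho_S(F)h_1 + c\,\rho_S(F)h_2$.

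The substantive point is boundedness, and I expect this to be the main obstacle precisely because nothing in the hypotheses controls $\|F(\sigma)\|$ uniformly in $\sigma$ (cf. Remark \ref{rem.normmeas}); a naive triangle-inequality estimate via Proposition \ref{prop.GelfPetint}.\ref{item.winttriangle} would require $\underline{\int_S}\|F(\sigma)h\|_K\,\rho(d\sigma)$ to be finite with a bound linear in $\|h\|_H$, which is not available. Instead I would fix $k \in K$ and consider the linear map $T_k \colon H \to L^1(\Sigma,\rho)$, $h \mapsto \la F(\cdot)h, k\ra_K$, which is well defined by hypothesis. I claim $T_k$ has closed graph: if $h_n \to h$ in $H$ and $T_k h_n \to g$ in $L^1(\Sigma,\rho)$, then passing to a subsequence $T_k h_n \to g$ $\rho$-almost everywhere, while for every $\sigma \in \Sigma$ one has $\la F(\sigma)h_n, k\ra_K \to \la F(\sigma)h,k\ra_K$ because $F(\sigma)$ is bounded, whence $g = T_k h$ $\rho$-almost everywhere. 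By the Closed Graph Theorem, $T_k$ is bounded.

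Finally I would combine this with the Uniform Boundedness Principle. Composing $T_k$ with the norm-one functional $g \mapsto \int_S g\,d\rho$ on $L^1(\Sigma,\rho)$ shows that $\phi_k \coloneqq \la \rho_S(F)(\cdot), k\ra_K = \int_S T_k(\cdot)\,d\rho$ is a bounded linear functional on $H$ with $\|\phi_k\|_{H^*} \leq \|T_k\|$. For each fixed $h \in H$, the existence of $\rho_S(F)h \in K$ gives $\sup_{\|k\|_K \leq 1}|\phi_k(h)| = \sup_{\|k\|_K \leq 1}|\la \rho_S(F)h, k\ra_K| = \|\rho_S(F)h\|_K < \infty$, so the family $\{\phi_k : \|k\|_K \leq 1\} \subseteq H^*$ is pointwise bounded. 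The Uniform Boundedness Principle then yields $C \coloneqq \sup_{\|k\|_K \leq 1}\|\phi_k\|_{H^*} < \infty$, and hence $\|\rho_S(F)h\|_K = \sup_{\|k\|_K \leq 1}|\phi_k(h)| \leq C\,\|h\|_H$ for all $h$, proving $\rho_S(F) \in B(H;K)$.
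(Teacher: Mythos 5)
Your proof is correct, but the boundedness step takes a genuinely different route from the paper's. The paper exploits the symmetry of the hypotheses under taking adjoints: since $\la F(\sigma)^*k,h\ra_H = \overline{\la F(\sigma)h,k\ra_K}$, Example \ref{ex.wmeasHilb} gives weak integrability of $F(\cdot)^*k$ as well as $F(\cdot)h$, so the sesquilinear form $B_S(h,k) = \int_S \la F(\sigma)h,k\ra_K\,\rho(d\sigma)$ satisfies $B_S(h,k) = \big\la \int_S F(\sigma)h\,\rho(d\sigma),k\big\ra_K = \big\la h, \int_S F(\sigma)^*k\,\rho(d\sigma)\big\ra_H$ and is therefore bounded in each argument separately; the Principle of Uniform Boundedness for separately bounded sesquilinear forms then finishes the job. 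You never touch $F(\cdot)^*$; instead you obtain the separate boundedness in $h$ (for fixed $k$) from the Closed Graph Theorem applied to $T_k \colon H \to L^1(\Sigma,\rho)$, and then run the Uniform Boundedness Principle over the unit ball $\{k : \|k\|_K \leq 1\}$ -- which is exactly the quantitative content the paper extracts from the adjoint integrals. Your closed-graph argument is sound (norm convergence in $L^1$ gives an a.e.\ convergent subsequence, while pointwise convergence of $\la F(\sigma)h_n,k\ra_K$ is automatic), and in fact it mirrors the paper's own proof of Proposition \ref{prop.wintext}.\ref{item.weakstarintexist}, so your route makes the lemma look like a special case of that mechanism rather than of the sesquilinear-form trick. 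The paper's version is marginally shorter and more symmetric; yours has the mild advantage of not needing the observation that the hypotheses are adjoint-stable. Both yield the same conclusion with essentially the same tools (CGT/UBP), so this is a stylistic fork rather than a substantive one.
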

\begin{proof}
Let $h \in H$ and $k \in K$.
Then the characterization given in Example \ref{ex.wmeasHilb} implies that $F(\cdot)h \colon \Sigma \to K$ and $F(\cdot)^*k \colon \Sigma \to H$ are both weakly integrable.
In particular, if $S \in \sH$ and we define a sesquilinear map $B_S \colon H \times K \to \C$ by $(h,k) \mapsto \int_S \la F(\sigma)h,k \ra_K \, \rho(d\sigma)$, then the identities
\[
\Bigg\la \int_S F(\sigma)h \, \rho(d\sigma), k \Bigg\ra_K = \int_S \la F(\sigma)h,k\ra_K \, \rho(d\sigma) = \int_S \la h,F(\sigma)^*k\ra_H \, \rho(d\sigma) = \Bigg\la h,\int_S F(\sigma)^*k \, \rho(d\sigma)\Bigg\ra_H
\]
imply that $B_S$ is a sesquilinear map that is bounded in each argument separately.
By the principle of uniform boundedness, $B_S$ is bounded.
Since $\la \rho_S(F)h,k \ra_K = B_S(h,k)$, we get $\rho_S(F) \in B(H;K)$.
\end{proof}

\begin{defi}[Pointwise Pettis integrability]\label{def.Pettint}
A map $F \colon \Sigma \to B(H;K)$ is called \textbf{pointwise weakly measurable} if, for all $h \in H$ and $k \in K$, the function $\la F(\cdot)h,k \ra_K \colon \Sigma \to \C$ is $(\sH,\cB_{\C})$-measurable.
If also $\int_{\Sigma} |\la F(\sigma)h,k\ra_K|\,\rho(d\sigma) < \infty$, for all $h \in H$ and $k \in K$, then we call $F$ \textbf{pointwise Pettis integrable}.
In this case, the operator $\rho_S(F) \in B(H;K)$ from Lemma \ref{lem.pwPetexist} is called the \textbf{pointwise Pettis integral} (over $S$) of $F$  with respect to $\rho$.
Finally, if in addition $V \subseteq B(H;K)$ is a linear subspace, $F(\Sigma) \subseteq V$, and $\rho_S(F) \in V$ whenever $S \in \sH$, then we call $F$ \textbf{pointwise Pettis integrable in $\boldsymbol{V}$}.
\end{defi}
\begin{rem}
The use of the term ``pointwise" is not at all standard;
we have chosen it in order to avoid overusing or abusing the terms ``weak" and ``strong."
The pointwise Pettis integral above is often called a ``weak integral" in contrast to the ``stronger" Bochner integral.
However, we shall see in Theorem \ref{thm.wintBHK} that the notion of pointwise Pettis integrability of $F \colon \Sigma \to B(H;K)$ is precisely the notion of Gel'fand--Pettis integrability of $F$ as a map with values in $(B(H;K),\,$WOT$)$ \textit{or} $(B(H;K),\,$SOT$)$.
It is therefore arguably just as appropriate to apply the term ``strong" to the pointwise Pettis integral.
\end{rem}

\begin{rem}\label{rem.pwPetinM}
Note that if $H=K$ and $V = \cM \subseteq B(H)$ is a von Neumann algebra, then any pointwise Pettis integrable map $F \colon \Sigma \to \cM \subseteq B(H)$ is actually pointwise Pettis integrable in $\cM$.
Indeed, suppose $a \in \cM'$.
If $S \in \sH$, then it is easy to see from the definition that $a\,\rho_S(F) = \rho_S(a\,F) = \rho_S(F\,a) = \rho_S(F)\,a$, i.e., $\rho_S(F) \in \cM'' = \cM$ by the bicommutant theorem.
\end{rem}

We now compare the notion of pointwise Pettis integrability to various notions of Gel'fand--Pettis integrability.
To this end, we recall (Theorem \ref{thm.optop}\ref{item.predual}--\ref{item.weakstar}) that if $V \subseteq B(H;K)$ is a $\sigma$-weakly closed linear subspace --- e.g., a von Neumann algebra --- then $V_* \coloneqq (V,\sigma$-WOT$)^*$ is the predual of $V$.
More precisely, $V_*$ is a Banach space with the operator norm and the map $V \ni A \mapsto (\ell \mapsto \ell(A)) \in V_*^{\;*}$ is an isometric isomorphism that is also a homeomorphism with respect to the $\sigma$-WOT on $V$ and weak$^*$ topology on $V_*^{\;*}$.

\begin{thm}[Integrals in $V \subseteq B(H;K)$]\label{thm.wintBHK}
Let $V \subseteq B(H;K)$ be a linear subspace, $F \colon \Sigma \to V$, and $\cT \in \{\mathrm{WOT}, \,\mathrm{SOT}, \,\mathrm{S^*OT}\}$.
(To be clear, we consider $\cT$ to be a topology on $V$.)
\begin{enumerate}[label=(\roman*),font=\normalfont,leftmargin=2\parindent]
    \item If
    \begin{align*}
        \sF & \coloneqq \sigma(\{V \ni A \mapsto \Tr(AB) \in \C : B \in \cS_1(K;H)\}) \subseteq 2^V \; \text{ and} \\
        \sG & \coloneqq \sigma(\{V \ni A \mapsto \la Ah,k \ra_K \in \C : h \in H, \, k \in K\}) \subseteq 2^V,
    \end{align*}
    then $\sF = \sG = \sigma\big((V,\,\sigma\text{-}\cT)^*\big) = \sigma\big((V,\cT)^*\big)$.
    In particular, $F \colon \Sigma \to V$ is pointwise weakly measurable if and only if it is weakly measurable in $\cT$, if and only if it is weakly measurable in $\sigma$-$\cT$.\label{item.wmeaschar}
    \item $F \colon \Sigma \to V$ is pointwise Pettis integrable in $V$ if and only if it is Gel'fand--Pettis integrable in $\cT$, and the pointwise Pettis and Gel'fand--Pettis integrals of $F$ agree in this case.
    In particular, we may write $\rho_S(F) = \int_S F \, d\rho$, for all $S \in \sH$, with no chance of confusion.
    Also, the following triangle inequality holds in this case:\label{item.wintchar}
    \[
    \Bigg\|\int_{\Sigma} F \, d\rho \Bigg\| \leq \underline{\int_{\Sigma}} \|F\| \, d\rho.
    \]
    \item Suppose $V \subseteq B(H;K)$ is $\sigma$-weakly closed.
    Then $F \colon \Sigma \to V$ is Gel'fand--Pettis integrable in $\sigma$-$\cT$ if and only if it is weak$^*$ integrable under the usual identification $V \cong V_*^{\;*}$, if and only if $F$ is pointwise weakly measurable and
    \[
    \int_{\Sigma}|\la (F(\sigma)h_n)_{n \in \N},(k_n)_{n \in \N} \ra_{\ell^2(\N;K)}|\,\rho(d\sigma) < \infty \numberthis\label{eq.sigmaWOTint}
    \]
    whenever $(h_n)_{n \in \N} \in \ell^2(\N;H)$ and $(k_n)_{n \in \N} \in \ell^2(\N;K)$.
    In this case, the Gel'fand--Pettis, weak$^*$, and pointwise Pettis integrals of $F$ all agree.\label{item.wintexist}
    \item If $H=K$ and $V = \cM \subseteq B(H)$ is a von Neumann algebra, then the notions of pointwise weak measurability and Gel'fand--Pettis integrability in $\sigma$-$\cT$ are independent of the representation of $\cM$.
    More precisely, if $\cN$ is another von Neumann algebra and $\pi \colon \cM \to \cN$ is a $\ast$-isomorphism in the algebraic sense, then $F$ is pointwise weakly measurable (respectively, Gel'fand--Pettis integrable in $\sigma$-$\cT$) if and only if $\pi \circ F$ is pointwise weakly measurable (respectively, Gel'fand--Pettis integrable in $\sigma$-$\cT$ with $\int_S (\pi \circ F) \, d\rho = \pi\big(\int_S F \, d\rho\big)$
    whenever $S \in \sH$).\label{item.indepofrep}
\end{enumerate}
\end{thm}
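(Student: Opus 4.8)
The plan is to push all of the topology-dependence into the computations of continuous duals supplied by Theorem~\ref{thm.optop}, after which parts~\ref{item.wintchar}--\ref{item.indepofrep} reduce to bookkeeping. For part~\ref{item.wmeaschar}, recall from Theorem~\ref{thm.optop}.\ref{item.contchar} that $(V,\cT)^*$ is, independently of $\cT$, the set of functionals $A \mapsto \sum_{j=1}^n \la Ah_j,k_j\ra_K$, and from Theorem~\ref{thm.optop}.\ref{item.sigmacontchar} that $(V,\sigma\text{-}\cT)^*$ is the set of $A \mapsto \sum_{n=1}^\infty \la Ah_n,k_n\ra_K$ with $(h_n) \in \ell^2(\N;H)$, $(k_n) \in \ell^2(\N;K)$. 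Since generating a $\sigma$-algebra is insensitive to taking finite linear combinations, $\sigma((V,\cT)^*) = \sG$ at once. For the $\sigma$-$\cT$ dual, each generator $A \mapsto \la Ah,k\ra_K$ of $\sG$ is $\sigma$-$\cT$-continuous, while conversely each $A \mapsto \sum_n \la Ah_n,k_n\ra_K$ is, for fixed $A$, an absolutely convergent (Cauchy--Schwarz) series of $\sG$-measurable functions, hence $\sG$-measurable; thus $\sigma((V,\sigma\text{-}\cT)^*) = \sG$. The equality $\sF = \sG$ is of the same flavor: a rank-one $B$ gives $\Tr(AB) = \la Ah,k\ra_K$, and for general $B \in \cS_1(K;H)$ the singular value decomposition yields $\Tr(AB) = \sum_n s_n \la Af_n,e_n\ra_K$ with $\sum_n s_n < \infty$, again a pointwise absolutely convergent series of $\sG$-measurable functions. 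The ``in particular'' then unwinds the definition of weak measurability through these identified $\sigma$-algebras. This part is the crux: three families of functionals living in genuinely different topological duals must generate one common $\sigma$-algebra, and what collapses the difference is that a pointwise absolutely convergent countable sum of measurable scalar functions stays measurable---so the $\ell^2$- and $\cS_1$-type functionals are $\sG$-measurable despite not being finite combinations of the generators of $\sG$.

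For part~\ref{item.wintchar}, pointwise weak measurability already equals weak measurability in $\cT$ by part~\ref{item.wmeaschar}. If $F$ is pointwise Pettis integrable in $V$, then for $\ell \in (V,\cT)^*$ the bound $|\ell \circ F| \le \sum_j |\la F(\cdot)h_j,k_j\ra_K|$ gives integrability, and the identity $\la \rho_S(F)h,k\ra_K = \int_S \la F(\sigma)h,k\ra_K\,\rho(d\sigma)$ from Lemma~\ref{lem.pwPetexist} shows $\rho_S(F) \in V$ is the Gel'fand--Pettis integral; conversely, since every $A \mapsto \la Ah,k\ra_K$ lies in $(V,\cT)^*$, any Gel'fand--Pettis integral in $\cT$ is forced to coincide with $\rho_S(F)$ and to lie in $V$. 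The triangle inequality I would establish directly---the operator norm is not $\cT$-continuous, so Proposition~\ref{prop.GelfPetint}.\ref{item.winttriangle} does not apply---by estimating $|\la(\int_{\Sigma} F\,d\rho)h,k\ra_K| \le \int_{\Sigma}|\la F(\sigma)h,k\ra_K|\,\rho(d\sigma) \le \underline{\int_{\Sigma}}\|F\|\,d\rho$ for $\|h\|,\|k\|\le 1$ and taking the supremum over such $h,k$.

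For part~\ref{item.wintexist}, the guiding principle is that Gel'fand--Pettis integrability sees the topology only through its continuous dual, and all three $\sigma$-$\cT$ share the predual $V_* = (V,\sigma\text{-WOT})^*$, with $\ev_V \colon V \to V_*^{\;*}$ an isometric isomorphism intertwining the $\sigma$-weak and weak$^*$ topologies (Theorem~\ref{thm.optop}.\ref{item.predual}--\ref{item.weakstar}). I would rewrite ``Gel'fand--Pettis integrable in $\sigma$-$\cT$'' as the pair of conditions that $\ell \circ F$ be measurable and $\rho$-integrable for every $\ell \in V_*$. By part~\ref{item.wmeaschar} the measurability clause is pointwise weak measurability, and expanding $\ell$ via Theorem~\ref{thm.optop}.\ref{item.sigmacontchar} turns the integrability clause into exactly \eqref{eq.sigmaWOTint}. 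Proposition~\ref{prop.wintext}.\ref{item.weakstarintexist}, applied to $\ev_V \circ F \colon \Sigma \to V_*^{\;*}$, then manufactures the weak$^*$ integral from these two conditions alone, and because its value lies in $V_*^{\;*} \cong V$ it is simultaneously the $\sigma$-$\cT$ Gel'fand--Pettis integral; pairing against the functionals $A \mapsto \la Ah,k\ra_K \in V_*$ identifies it with $\rho_S(F)$, so all the relevant integrals agree.

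For part~\ref{item.indepofrep}, since $\sigma$-$\cT$-integrability is a dual notion and the three duals coincide, it suffices to treat $\sigma$-WOT. By Theorem~\ref{thm.optop}.\ref{item.starisom} a $\ast$-isomorphism $\pi \colon \cM \to \cN$ is a $\sigma$-weak homeomorphism, so $\ell \mapsto \ell \circ \pi$ is a bijection $\cN_* \to \cM_*$; composing with $F$ shows $F$ and $\pi \circ F$ have the same weak measurability, which is pointwise weak measurability on each side by part~\ref{item.wmeaschar}. Finally, applying Proposition~\ref{prop.GelfPetint}.\ref{item.wintcontlin} to the $\sigma$-weakly continuous linear map $\pi$ (and to $\pi^{-1}$ for the reverse implication) transfers Gel'fand--Pettis integrability and yields $\int_S(\pi\circ F)\,d\rho = \pi\big(\int_S F\,d\rho\big)$, completing the argument.
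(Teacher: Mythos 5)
Your proposal is correct and follows essentially the same route as the paper's proof: every part is funneled through the dual computations of Theorem \ref{thm.optop} (with pointwise limits of partial sums giving the $\sigma$-algebra equalities), part \ref{item.wintexist} is manufactured from Proposition \ref{prop.wintext}.\ref{item.weakstarintexist} applied to $\ev_V \circ F$ together with Theorem \ref{thm.optop}.\ref{item.weakstar}, and part \ref{item.indepofrep} from Theorem \ref{thm.optop}.\ref{item.starisom} plus Proposition \ref{prop.GelfPetint}.\ref{item.wintcontlin}. Your two local deviations -- using the singular value decomposition of $B \in \cS_1(K;H)$ to see directly that $A \mapsto \Tr(AB)$ is $\sG$-measurable, where the paper instead invokes the Hahn--Banach/finite-rank-density identification of $(V,\sigma\text{-WOT})^*$, and proving the triangle inequality by a direct matrix-coefficient estimate rather than by applying the $K$-valued triangle inequality to $F(\cdot)h$ and taking a supremum over $\|h\|_H \leq 1$ -- are both valid and of the same difficulty as the paper's versions.
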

\begin{proof}
We take each item in turn.

\ref{item.wmeaschar} By Theorem \ref{thm.optop}\ref{item.contchar},
\[
(V,\cT)^* = \spn\{V \ni A \mapsto \la Ah,k\ra_K \in \C : h \in H,\, k \in K\}. \numberthis\label{eq.taudual}
\]
Thus $\sG = \sigma\big((V,\cT)^*\big)$. By Theorem \ref{thm.optop}\ref{item.sigmacontchar}, $(V,\sigma\text{-}\cT)^* = (V,\sigma\text{-}\mathrm{WOT})^*$.
By the Hahn--Banach theorem and Theorem \ref{thm.Schatten}\ref{item.finrk}, we have
\[
(V,\sigma\text{-}\mathrm{WOT})^* = \{V \ni A \mapsto \Tr(AB) \in \C : B \in \cS_1(K;H)\}. \numberthis\label{eq.sigmataudual}
\]
Thus $\sF = \sigma\big((V,\sigma$-$\cT)^*\big)$.
Since $(V,\cT)^* \subseteq (V,\sigma$-$\cT)^*$, we have
\[
\sF = \sigma\big((V,\sigma\text{-}\cT)^*\big) \subseteq \sigma\big((V,\cT)^*\big) = \sG.
\]
It therefore suffices to prove that any element of $(V,\sigma$-$\cT)^*$ is $\sG$-measurable.
To this end, let $\ell \in (V,\sigma$-$\cT)^*$.
By Theorem \ref{thm.optop}\ref{item.sigmacontchar}, there are $(h_n)_{n \in \N} \in \ell^2(\N;H)$ and $(k_n)_{n \in \N} \in \ell^2(\N;K)$ such that
\[
\ell(A) = \la (Ah_n)_{n \in \N},(k_n)_{n \in \N} \ra _{\ell^2(\N;K)}= \sum_{n=1}^{\infty}\la Ah_n,k_n\ra_K,
\]
for all $A \in V$.
This exhibits $\ell$ a pointwise limit of elements of
\[
\{V \ni A \mapsto \la Ah,k \ra_K \in \C : h \in H, k \in K\}.
\]
Thus $\ell$ is $\sG$-measurable.

\ref{item.wintchar} The equivalence of Gel'fand--Pettis integrability in $\cT$ and pointwise Pettis integrability in $V$ (with agreement of Gel'fand--Pettis and pointwise Pettis integrals) follows directly from the definitions and \eqref{eq.taudual}.
For the triangle inequality, note that if $F \colon \Sigma \to B(H;K)$ is pointwise Pettis integrable and $h \in H$, then the $K$-valued triangle inequality gives
\[
\Bigg\|\int_{\Sigma} F(\sigma)h\,\rho(d\sigma)\Bigg\|_K \leq \underline{\int_{\Sigma}}\|F(\sigma)h\|_K\,\rho(d\sigma) \leq \|h\|_H\underline{\int_{\Sigma}}\|F\| \, d\rho.
\]
Taking the supremum over $h \in H$ with $\|h\|_H \leq 1$ gives the desired result.

\ref{item.wintexist} Since $(V,\sigma$-$\cT)^* = (V,\sigma$-WOT$)^* = V_*$, we may assume $\cT = \mathrm{WOT}$.
Under the usual identification $V_*^{\;*} \cong V$, the weak$^*$ topology on $V_*^{\;*}$ corresponds to the $\sigma$-WOT on $V$.
This implies the first equivalence. By item \ref{item.wmeaschar}, pointwise weak measurability of $F$ is equivalent to weak measurability of $F$ in the $\sigma$-WOT on $V$ and therefore to weak measurability of $F$ in the weak$^*$ topology on $V_*^{\;*}$.
By Theorem \ref{thm.optop}\ref{item.sigmacontchar}, the requirement \eqref{eq.sigmaWOTint} is precisely the requirement that
\[
\int_{\Sigma} |\ell \circ F|\,d\rho < \infty,
\]
for all $\ell \in V_* = (V,\sigma$-WOT$)^*$.
Proposition \ref{prop.wintext}\ref{item.weakstarintexist} and the form of the identification $V \cong V_*^{\;*}$ then give the second equivalence.

\ref{item.indepofrep} Again, we may assume $\cT = \mathrm{WOT}$.
This item follows from the fact that $\ast$-isomorphisms are automatically $\sigma$-WOT-homeomorphisms (Theorem \ref{thm.optop}\ref{item.starisom}), pointwise weak measurability is equivalent to weak measurability in the $\sigma$-WOT (item \ref{item.wmeaschar}), and Proposition \ref{prop.GelfPetint}\ref{item.wintcontlin} applied to $\pi$ and $\pi^{-1}$.
\end{proof}

Let $V \subseteq B(H;K)$ be a $\sigma$-weakly closed linear subspace and $F \colon \Sigma \to V$.
In view of Theorem \ref{thm.wintBHK}\ref{item.wintexist} and its proof, we have the following.
First, weak measurability of $F$ in the $\sigma$-WOT is equivalent to weak measurability of $F$ in the weak$^*$ topology when we identify $V \cong V_*^{\;*}$ in the usual way, which in turn is equivalent to pointwise weak measurability of $F$.
We are therefore justified in using the term \textbf{weak$\boldsymbol{^*}$ measurable} in place of pointwise weakly measurable.
Second, Gel'fand--Pettis integrability of $F$ in the $\sigma$-WOT is equivalent to weak$^*$ integrability of $F$ when we identify $V \cong V_*^{\;*}$ in the usual way, which in turn is equivalent to weak$^*$ measurability of $F$ and the requirement \eqref{eq.sigmaWOTint}.
We are therefore justified in using the term \textbf{weak$\boldsymbol{^*}$ integrable} in place of (any of) the terms in the previous sentence.
We end this section by isolating an important takeaway from this possibly confusing development.

\begin{cor}[Criterion for weak$^*$ integrability]\label{cor.wstarint}
Let $V \subseteq B(H;K)$ be a $\sigma$-weakly closed linear subspace and $F \colon \Sigma \to V$ be a map.
If $F$ is pointwise weakly measurable and $\underline{\int_{\Sigma}}\|F\|\,d\rho < \infty$, then $F$ is weak$^*$ integrable, and, for $S \in \sH$, the weak$^*$ integral $\int_S F \,d\rho \in V$ is uniquely determined by
\[
\Bigg\la \Bigg(\int_S F \,d\rho \Bigg)h,k \Bigg\ra_K = \int_{\Sigma}\la F(\sigma)h,k \ra_K\,\rho(d\sigma), \text{ for all } h \in H \text{ and } k \in K,
\]
i.e., $\int_S F\,d\rho = \rho_S(F)$.
\end{cor}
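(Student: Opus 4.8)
The plan is to obtain the corollary as a direct specialization of Theorem~\ref{thm.wintBHK}.\ref{item.wintexist}. That result already equates, for $\sigma$-weakly closed $V$, weak$^*$ integrability of $F$ with the conjunction of pointwise weak measurability and the integrability condition~\eqref{eq.sigmaWOTint}, and it asserts that in that case the weak$^*$ and pointwise Pettis integrals coincide. Since pointwise weak measurability of $F$ is assumed outright, the whole task reduces to deducing~\eqref{eq.sigmaWOTint} from the single hypothesis $\underline{\int_{\Sigma}}\|F\|\,d\rho < \infty$.

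So fix $(h_n)_{n \in \N} \in \ell^2(\N;H)$ and $(k_n)_{n \in \N} \in \ell^2(\N;K)$, and abbreviate $C \coloneqq \|(h_n)_{n \in \N}\|_{\ell^2(\N;H)}\,\|(k_n)_{n \in \N}\|_{\ell^2(\N;K)}$. First I would record that
\[
\Sigma \ni \sigma \mapsto \la (F(\sigma)h_n)_{n \in \N},(k_n)_{n \in \N} \ra_{\ell^2(\N;K)} = \sum_{n=1}^{\infty} \la F(\sigma)h_n,k_n \ra_K
\]
is $(\sH,\cB_{\C})$-measurable: each summand is measurable by pointwise weak measurability, hence so is each partial sum, and the series converges pointwise (absolutely, by Cauchy--Schwarz in $\ell^2(\N)$ together with $\|F(\sigma)h_n\|_K \leq \|F(\sigma)\|\,\|h_n\|_H$), so the limit is measurable. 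The same two applications of Cauchy--Schwarz yield the pointwise bound
\[
\big|\la (F(\sigma)h_n)_{n \in \N},(k_n)_{n \in \N} \ra_{\ell^2(\N;K)}\big| \leq \|F(\sigma)\|\,C.
\]

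The crux -- and the step I expect to require the most care -- is the interaction of this bound with the \emph{lower} integral, which is forced on us because $\|F(\cdot)\|$ need not be measurable (Remark~\ref{rem.normmeas}). Set $g(\sigma) \coloneqq C^{-1}\big|\la (F(\sigma)h_n)_{n \in \N},(k_n)_{n \in \N} \ra_{\ell^2(\N;K)}\big|$. Then $g \colon \Sigma \to [0,\infty)$ is measurable and $g \leq \|F\|$ pointwise, so $g$ is one of the admissible measurable minorants of $\|F\|$ appearing in Definition~\ref{def.nonmeasint}; hence $\int_{\Sigma} g\,d\rho \leq \underline{\int_{\Sigma}}\|F\|\,d\rho$. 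Multiplying through by $C$ gives
\[
\int_{\Sigma}\big|\la (F(\sigma)h_n)_{n \in \N},(k_n)_{n \in \N} \ra_{\ell^2(\N;K)}\big|\,\rho(d\sigma) \leq C\,\underline{\int_{\Sigma}}\|F\|\,d\rho < \infty,
\]
which is precisely~\eqref{eq.sigmaWOTint}. This is exactly why the hypothesis is stated with the lower integral: the genuinely integrable quantity is a measurable minorant of the possibly non-measurable $\|F\|$, so no upper integral is needed.

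With~\eqref{eq.sigmaWOTint} in hand, Theorem~\ref{thm.wintBHK}.\ref{item.wintexist} shows that $F$ is weak$^*$ integrable and that, for each $S \in \sH$, its weak$^*$ integral over $S$ agrees with the pointwise Pettis integral $\rho_S(F) \in V$. The stated characterizing identity for $\int_S F\,d\rho$ is then exactly the defining property of $\rho_S(F)$ from Lemma~\ref{lem.pwPetexist} and Definition~\ref{def.Pettint}, and uniqueness is immediate since a bounded operator in $V \subseteq B(H;K)$ is determined by the matrix coefficients $\la (\cdot)h,k \ra_K$, $h \in H$, $k \in K$. This completes the plan.
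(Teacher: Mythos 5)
Your proposal is correct and follows essentially the same route as the paper: verify condition~\eqref{eq.sigmaWOTint} by two applications of the Cauchy--Schwarz inequality together with the fact that a measurable minorant's genuine integral is dominated by the lower integral of $\|F\|$, then invoke Theorem~\ref{thm.wintBHK}.\ref{item.wintexist} to get weak$^*$ integrability and agreement with the pointwise Pettis integral $\rho_S(F)$. Your explicit treatment of the integrand's measurability and the normalization by $C$ (which tacitly assumes $C>0$, the case $C=0$ being trivial) are just more detailed renderings of the same two-line estimate the paper gives.
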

\begin{proof}
If $(h_n)_{n \in \N} \in \ell^2(\N;H)$ and $(k_n)_{n \in \N} \in \ell^2(\N;K)$, then
\begin{align*}
    \int_{\Sigma}|\la (F(\sigma)h_n)_{n \in \N},(k_n)_{n \in \N} \ra _{\ell^2(\N;K)}|\,\rho(d\sigma) & \leq \underline{\int_{\Sigma}}\|(F(\sigma)h_n)_{n \in \N}\|_{\ell^2(\N;K)}\|(k_n)_{n \in \N}\|_{\ell^2(\N;K)}\,\rho(d\sigma) \\
    & \leq \|(h_n)_{n \in \N}\|_{\ell^2(\N;H)}\|(k_n)_{n \in \N}\|_{\ell^2(\N;K)}\underline{\int_{\Sigma}}\|F\|\,d\rho
\end{align*}
by the Cauchy--Schwarz inequality.
Therefore, if the right hand side is finite, then we conclude from Theorem \ref{thm.wintBHK}\ref{item.wintexist} that $F$ is weak$^*$ integrable with agreement of weak$^*$ and pointwise Pettis integrals of $F$.
\end{proof}

\subsection{Minkowski's inequality for operator valued integrals}\label{sec.SchattenLpestim}

Our last order of business concerning operator valued integrals is to prove a Schatten $p$-norm Minkowski inequality for pointwise Pettis integrals.
This will be absolutely crucial, at least in the case $p=1$, for our development of MOIs.
After doing so, we use a similar technique to prove a noncommutative $L^p$-norm Minkowski inequality for weak$^*$ integrals in a semifinite von Neumann algebra.
We begin by proving a well known and useful recharacterization of $\|\cdot\|_{\cS_1} = \|\cdot\|_{\cS_1(H;K)}$.

\begin{defi}[Orthonormal frames]\label{def.onframe}
For $n \in \N_0$, write
\[
O_n(H) \coloneqq \{\boldsymbol{e} = (e_1,\ldots,e_n) \in H^n : e_1,\ldots,e_n \text{ is orthonormal}\}
\]
for the set of \textbf{orthonormal frames of length} $\boldsymbol{n}$. Note $O_0(H) = \emptyset$.
\end{defi}

\begin{lem}\label{lem.S1rechar}
If $A \in B(H;K)$, then
\[
\|A\|_{\cS_1} = \sup \Bigg\{ \sum_{j=1}^n |\la A e_j,f_j \ra_K| : n \in \N_0,  \,\boldsymbol{e} \in O_n(H), \, \boldsymbol{f} \in O_n(K)\Bigg\},
\]
where empty sums are zero.
In particular, $A \in \cS_1(H;K)$ if and only if the right hand side is finite.
\end{lem}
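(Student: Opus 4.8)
Write $M(A)$ for the supremum on the right-hand side. The plan is to establish the two inequalities $M(A) \le \|A\|_{\cS_1}$ and $\|A\|_{\cS_1} \le M(A)$ separately, working throughout with the polar decomposition $A = U|A|$, where $U$ is the partial isometry with initial space $(\ker A)^{\perp} = (\ker|A|)^{\perp}$ and $P \coloneqq U^*U$ is the orthogonal projection onto that initial space (so $U^*U|A| = P|A| = |A|$ and $U$ restricts to an isometry on $(\ker A)^{\perp}$). A virtue of this approach is that neither direction will require $A$ to be trace class, so the ``in particular" claim is immediate from the norm identity: both sides of the displayed equation are simultaneously finite or infinite.

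For the upper bound, fix frames $\boldsymbol{e} \in O_n(H)$ and $\boldsymbol{f} \in O_n(K)$ and rewrite each term as $\la Ae_j,f_j\ra_K = \la U|A|e_j,f_j\ra_K = \la |A|^{1/2}e_j,\,|A|^{1/2}U^*f_j\ra_H$. Applying Cauchy–Schwarz in $H$ termwise and then the discrete Cauchy–Schwarz over $j$ bounds $\sum_{j=1}^n |\la Ae_j,f_j\ra_K|$ by the product of $\big(\sum_j \la |A|e_j,e_j\ra_H\big)^{1/2}$ and $\big(\sum_j \la |A|U^*f_j,U^*f_j\ra_H\big)^{1/2}$. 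Extending $\boldsymbol{e}$ to an orthonormal basis $\mathcal{E}$ of $H$ and using nonnegativity of $\la|A|\,\cdot\,,\cdot\,\ra_H$ shows the first factor is at most $\sum_{e \in \mathcal{E}}\la|A|e,e\ra_H = \|A\|_{\cS_1}$ by definition of the Schatten $1$-norm. For the second factor I would invoke the standard polar-decomposition identity $U|A|U^* = |A^*|$ to rewrite $\la|A|U^*f_j,U^*f_j\ra_H = \la|A^*|f_j,f_j\ra_K$; extending $\boldsymbol{f}$ to an orthonormal basis of $K$ then bounds it by $\|A^*\|_{\cS_1} = \|A\|_{\cS_1}$ (Theorem \ref{thm.Schatten}). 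Multiplying the two factors gives $\sum_j |\la Ae_j,f_j\ra_K| \le \|A\|_{\cS_1}$, whence $M(A) \le \|A\|_{\cS_1}$.

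For the lower bound, the key device—the one that makes the argument go through with no separability or compactness hypothesis—is to work with an orthonormal basis adapted to the splitting $H = \ker A \oplus (\ker A)^{\perp}$. Let $\mathcal{E}_0$ be an orthonormal basis of $(\ker A)^{\perp}$; since $|A|$ annihilates $\ker A$, computing the basis-independent norm against $\mathcal{E}_0$ together with any orthonormal basis of $\ker A$ gives $\|A\|_{\cS_1} = \sum_{e \in \mathcal{E}_0}\la|A|e,e\ra_H$. For $e \in \mathcal{E}_0$ put $f_e \coloneqq Ue$; because $U$ is isometric on $(\ker A)^{\perp}$, the family $\{f_e : e \in \mathcal{E}_0\}$ is orthonormal in $K$, and a short computation using $U^*U|A| = |A|$ gives $\la Ae,f_e\ra_K = \la U|A|e,Ue\ra_K = \la|A|e,e\ra_H \ge 0$. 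Hence for every finite subset $\{e_1,\ldots,e_n\} \subseteq \mathcal{E}_0$ the frames $(e_1,\ldots,e_n)$ and $(f_{e_1},\ldots,f_{e_n})$ witness $\sum_{j=1}^n \la|A|e_j,e_j\ra_H \le M(A)$. Taking the supremum over finite subsets—which is exactly how the (possibly uncountable) nonnegative sum $\sum_{e \in \mathcal{E}_0}\la|A|e,e\ra_H$ is defined—yields $\|A\|_{\cS_1} \le M(A)$.

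The main obstacle is precisely this lower bound in the general setting: the textbook proof diagonalizes $|A|$ through its singular-value decomposition, which is available only when $A$ is compact, and the customary reduction to a separable subspace is not at hand here. The adapted-basis construction above avoids both difficulties, since it only ever uses finite frames drawn from an orthonormal basis of $(\ker A)^{\perp}$ and the isometry property of $U$ there, requiring neither an eigenbasis of $|A|$ nor any countability. The one point I would verify with a little care is the identity $U|A|U^* = |A^*|$ used in the upper bound: it follows from $(U|A|U^*)^2 = U|A|^2U^* = AA^*$ (again via $P|A| = |A|$), positivity of $U|A|U^*$, and uniqueness of the positive square root.
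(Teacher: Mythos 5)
Your proof is correct, and it coincides with the paper's argument for exactly one of the two inequalities. The lower bound $\|A\|_{\cS_1} \le M(A)$ (writing $M(A)$ for the supremum) is essentially the paper's proof verbatim: both arguments take an orthonormal basis of $(\ker A)^{\perp}$, set $f_e = Ue$, use that $U$ is isometric there to get orthonormality, observe $\la Ae, Ue\ra_K = \la |A|e,e\ra_H \ge 0$, and pass to finite subsets. The upper bound is where you genuinely diverge. The paper encodes the pair of frames $(\boldsymbol{e},\boldsymbol{f})$ into a single partial isometry $V$ with $Ve_j = f_j$, rewrites $\la Ae_j,f_j\ra_K = \la V^*Ae_j,e_j\ra_H$, and then invokes two cited facts: the diagonal-sum bound $\sum_{e\in\mathcal{E}}|\la Be,e\ra_H|\le\|B\|_{\cS_1}$ applied to $B=V^*A$, and H\"older's inequality (Theorem \ref{thm.Schatten}.\ref{item.SchattenHolder}) to get $\|V^*A\|_{\cS_1}\le\|A\|_{\cS_1}$. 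You instead split $|A|=|A|^{1/2}|A|^{1/2}$ and apply Cauchy--Schwarz twice, arriving at the symmetric bound $\big(\sum_j\la|A|e_j,e_j\ra_H\big)^{1/2}\big(\sum_j\la|A^*|f_j,f_j\ra_K\big)^{1/2}\le\|A\|_{\cS_1}^{1/2}\,\|A^*\|_{\cS_1}^{1/2}$, using the identity $U|A|U^*=|A^*|$, which you justify correctly via $(U|A|U^*)^2=AA^*$, positivity, and uniqueness of positive square roots. Your route trades the trace-class machinery for elementary inner-product estimates; its only imported facts are basis-independence of $\|\cdot\|_{\cS_1}$ and $\|A^*\|_{\cS_1}=\|A\|_{\cS_1}$ (note the paper states the latter only for $A\in\cS_1(H)$ in the square case, so strictly you should either quote its $H\ne K$ analogue or observe that the upper bound is vacuous unless $\|A\|_{\cS_1}<\infty$, after which $A^*\in\cS_1(K;H)$ with equal norm). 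It also makes explicit that neither direction needs a finiteness hypothesis; the paper's proof does not truly need it either, but it does state the reduction. Both proofs are complete: the paper's is shorter given its background theorem, yours is more self-contained.
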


When $H=K$, this recharacterization is the $p=1$ case of Lemma 2.3.4 in \cite{ringrose}.

\begin{proof}
Let $A = U|A|$ be the polar decomposition of $A$ and $\mathcal{E}_1$ be an orthonormal basis of $\ker |A| = \ker A$.
We recall that the polar decomposition of $A$ is the (unique) product decomposition $A = U|A|$, where $U \in B(H;K)$ is a partial isometry with initial space $(\ker A)^{\perp} = (\ker |A|)^{\perp}$ and final space $\overline{\im A}$.
Note in this case that $|A| = U^*A$.

First, by definition, if $e \in \mathcal{E}_1$, then $|A|e = 0$ and therefore $\la |A|e,e \ra_H = 0$.
Next, complete $\mathcal{E}_1$ to an orthonormal basis $\mathcal{E} \supseteq \mathcal{E}_1$ of $H$.
Then
\[
\|A\|_{\cS_1} = \sum_{e \in \mathcal{E}} \la |A|e,e \ra_H = \sum_{e \in \mathcal{E} \setminus \mathcal{E}_1} \la |A|e,e\ra_H = \sum_{e \in \mathcal{E} \setminus \mathcal{E}_1} \la U^*Ae,e \ra_H = \sum_{e \in \mathcal{E} \setminus \mathcal{E}_1} \la Ae,Ue\ra_K.
\]
Of course, $\mathcal{E} \setminus \mathcal{E}_1$ is an orthonormal basis of $(\ker |A|)^{\perp}$, the initial space of $U$, on which $U$ is an isometry by definition.
Therefore, defining $f_e \coloneqq Ue \text{ for } e \in \mathcal{E} \setminus \mathcal{E}_1$, we see that $\la f_e, f_{\tilde{e}} \ra_K = \la Ue,U\tilde{e} \ra_K = \la e,\tilde{e} \ra_H = \delta_{e \tilde{e}}$ whenever $e,\tilde{e} \in \mathcal{E} \setminus \mathcal{E}_1$, i.e., $(f_e)_{e \in \mathcal{E} \setminus \mathcal{E}_1}$ is orthonormal.
It follows (by taking finite subsets $E \subseteq  \mathcal{E} \setminus \mathcal{E}_1$) that
\[
\|A\|_{\cS_1} \leq \sup \Bigg\{ \sum_{j=1}^n |\la A e_j,f_j \ra_K| : n \in \N_0,  \boldsymbol{e} \in O_n(H), \boldsymbol{f} \in O_n(K)\Bigg\}.
\]
For the other inequality, suppose $\|A\|_{\cS_1} < \infty$, and fix $n \in \N$, $\boldsymbol{e} \in O_n(H)$, and $\boldsymbol{f} \in O_n(K)$.
Let $V \colon H \to K$ be the unique partial isometry such that $Ve_j = f_j$ for $1 \leq j \leq n$ and $V \equiv 0$ on $(\spn\{e_1,\ldots,e_n\})^{\perp}$.
If we complete $\{e_1,\ldots,e_n\}$ to an orthonormal basis $\mathcal{E}$ of $H$, then
\[
\sum_{j=1}^n|\la Ae_j,f_j \ra_K| = \sum_{j=1}^n|\la Ae_j,Ve_j \ra_K | = \sum_{j=1}^n|\la V^*Ae_j,e_j \ra_H |  \leq \sum_{e \in \mathcal{E}} |\la V^*Ae,e \ra_H| \leq \|V^*A\|_{\cS_1} \leq \|A\|_{\cS_1}
\]
because $\|V^*\|_{K \to H} = \|V\|_{H \to K} = 1$. Thus
\[
\sup \Bigg\{ \sum_{j=1}^n |\la A e_j,f_j \ra_K| : n \in \N_0, \, \boldsymbol{e} \in O_n(H), \,\boldsymbol{f} \in O_n(K)\Bigg\} \leq \|A\|_{\cS_1},
\]
as desired.
\end{proof}

\begin{thm}[Schatten norm Minkowski integral inequality]\label{thm.Spinteg}
Suppose that $F \colon \Sigma \to B(H;K)$ is pointwise Pettis integrable.
Then
\[
\Bigg\|\int_{\Sigma} F \, d \rho \Bigg\|_{\cS_p} \leq \underline{\int_{\Sigma}}\|F\|_{\cS_p} \, d\rho,
\]
for all $p \in [1,\infty]$.
In particular, if the right hand side is finite, then $\int_{\Sigma} F \,d\rho \in \cS_p(H;K)$.
\end{thm}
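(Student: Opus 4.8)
The plan is to prove the inequality by dualizing the Schatten $p$-norm and reducing everything to an interchange of the trace with the weak integral. Throughout, write $A \coloneqq \int_{\Sigma} F \, d\rho = \rho_{\Sigma}(F) \in B(H;K)$, and let $q \in [1,\infty]$ denote the conjugate exponent, $\frac{1}{p}+\frac{1}{q} = 1$. The starting point is the duality characterization
\[
\|A\|_{\cS_p} = \sup\big\{|\Tr(AB)| : B \in \cS_q(K;H) \text{ finite rank}, \, \|B\|_{\cS_q} \leq 1\big\},
\]
valid in $[0,\infty]$ for every $A \in B(H;K)$. This follows from the isometric identifications of Theorem \ref{thm.Schatten}.\ref{item.Schattendual} (using $B(H;K) \cong \cS_1(K;H)^*$ for the endpoint $p=\infty$ and $\cS_1(H;K) \cong \cK(K;H)^*$ for $p=1$) together with the density of finite-rank operators in $\cS_q(K;H)$ when $q < \infty$ and in $\cK(K;H)$ when $q = \infty$ (Theorem \ref{thm.Schatten}). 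Restricting the supremum to finite-rank $B$ is the crucial point, for the reason explained next.

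Fix such a finite-rank $B$. Writing $B$ as a finite sum of rank-one operators shows that the functional $\ell_B \colon B(H;K) \to \C$, $\ell_B(c) \coloneqq \Tr(cB) = \Tr(Bc)$, has the form $\ell_B(c) = \sum_{i=1}^m \la c\,h_i, g_i \ra_K$ for suitable $h_i \in H$ and $g_i \in K$, and is therefore WOT-continuous by Theorem \ref{thm.optop}.\ref{item.contchar}. Since $F$ is pointwise Pettis integrable, it is Gel'fand-Pettis integrable in the WOT with $\int_{\Sigma} F\,d\rho = A$ (Theorem \ref{thm.wintBHK}.\ref{item.wintchar}), so the defining property of the weak integral applied to $\ell_B$ gives
\[
\Tr(AB) = \ell_B(A) = \int_{\Sigma} \ell_B(F(\sigma)) \, \rho(d\sigma) = \int_{\Sigma} \Tr(F(\sigma)B) \, \rho(d\sigma).
\]
By Hölder's inequality (Theorem \ref{thm.Schatten}.\ref{item.SchattenHolder}) and the trace estimate $|\Tr(C)| \leq \|C\|_{\cS_1}$ from Theorem \ref{thm.Schatten}, the integrand is controlled pointwise: $|\Tr(F(\sigma)B)| \leq \|F(\sigma)B\|_{\cS_1} \leq \|F(\sigma)\|_{\cS_p}\|B\|_{\cS_q} \leq \|F(\sigma)\|_{\cS_p}$. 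The map $\sigma \mapsto \Tr(F(\sigma)B) = \ell_B(F(\sigma))$ is measurable, being $\ell_B \circ F$ with $\ell_B \in (B(H;K),\mathrm{WOT})^*$ and $F$ weakly measurable, so
\[
|\Tr(AB)| \leq \int_{\Sigma} |\Tr(F(\sigma)B)| \, \rho(d\sigma) = \underline{\int_{\Sigma}} |\Tr(F(\sigma)B)| \, \rho(d\sigma) \leq \underline{\int_{\Sigma}} \|F\|_{\cS_p} \, d\rho,
\]
the last step by monotonicity of the lower integral (Proposition \ref{prop.nonmeasinteg}). Taking the supremum over all admissible $B$ and invoking the duality characterization yields $\|A\|_{\cS_p} \leq \underline{\int_{\Sigma}} \|F\|_{\cS_p}\,d\rho$; when the right-hand side is finite this also shows $A \in \cS_p(H;K)$.

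The subtle point — and the reason for organizing the argument around duality rather than a direct estimate — is precisely the one flagged in Remark \ref{rem.normmeas}: the function $\sigma \mapsto \|F(\sigma)\|_{\cS_p}$ need not be measurable, so it cannot be integrated directly. The trick is that we only ever integrate the genuinely measurable functions $\sigma \mapsto \Tr(F(\sigma)B)$ and then pass to the (possibly non-measurable) majorant $\|F\|_{\cS_p}$ through monotonicity, which is exactly what forces the \emph{lower} integral onto the right-hand side. The restriction to finite-rank $B$ is indispensable: for a general $B \in \cS_q$ the functional $\Tr(\,\cdot\,B)$ fails to be WOT-continuous, and the interchange of trace and weak integral would break down. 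I expect this interchange, carried out uniformly for every $p \in [1,\infty]$ via the finite-rank density, to be the main obstacle. For the crucial case $p = 1$ one can alternatively argue directly from the frame characterization of Lemma \ref{lem.S1rechar}: absorbing the phases of the numbers $\int_{\Sigma}\la F(\sigma)e_j,f_j\ra_K\,\rho(d\sigma)$ into an orthonormal frame of $K$ reduces $\sum_{j} |\la Ae_j, f_j\ra_K|$ to the integral of a single measurable function dominated pointwise by $\|F(\sigma)\|_{\cS_1}$, after which the same monotonicity step applies.
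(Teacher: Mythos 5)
Your proof is correct, but it takes a genuinely different route from the paper's. The paper proves the case $p=1$ first, ``from first principles,'' using the orthonormal-frame characterization of $\|\cdot\|_{\cS_1}$ (Lemma \ref{lem.S1rechar}) together with the definition of the pointwise Pettis integral, and then bootstraps to $1<p<\infty$ by applying the $p=1$ case to $F(\cdot)B$ for $B \in \cS_q(K;H)$ and invoking duality, with $p=\infty$ quoted from Theorem \ref{thm.wintBHK}.\ref{item.wintchar}. You instead run a single duality argument uniformly in $p \in [1,\infty]$, and your key observation --- that restricting the dual pairing to \emph{finite-rank} $B$ makes $\Tr(\cdot\,B)$ WOT-continuous, which is exactly the class of functionals for which pointwise Pettis integrability licenses the interchange $\Tr(AB) = \int_{\Sigma}\Tr(F(\sigma)B)\,\rho(d\sigma)$ --- is the right mechanism; it is, in effect, the Schatten-class analogue of how the paper proves the semifinite generalization (Theorem \ref{thm.Lpinteg}), where the dual characterization in Lemma \ref{lem.L1rechar} plays the role of your finite-rank duality. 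Your route buys uniformity (no case split, no bootstrap through $p=1$) and never requires modifying $F$ on a null set to arrange $F(\Sigma) \subseteq \cS_p$, which the paper's alternative ``second proof'' of the $p=1$ case (via $\cS_1 \cong \cK^*$ and Proposition \ref{prop.wintext}.\ref{item.weakstarintexist}) does need; the paper's route buys an elementary, self-contained engine (Lemma \ref{lem.S1rechar}) that it reuses elsewhere. One step you should spell out: your duality formula, asserted ``in $[0,\infty]$ for every $A \in B(H;K)$,'' requires an argument when $A \notin \cS_p(H;K)$, namely that if the supremum over finite-rank $B$ were finite, then $B \mapsto \Tr(AB)$ would extend boundedly from the dense subspace of finite-rank operators to $\cS_q(K;H)$ (to $\cK(K;H)$ when $q = \infty$), hence be represented by some $A' \in \cS_p(H;K)$, and testing against rank-one $B$ forces $A' = A$, a contradiction. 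This is routine --- and the paper's own treatment of $1<p<\infty$ tacitly relies on the same principle --- but since it is the pivot of your entire argument, it deserves the two sentences.
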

\begin{proof}
The case $p=\infty$ is contained in Theorem \ref{thm.wintBHK}\ref{item.wintchar}.
We first prove the case $p=1$, from which the rest of the cases will follow.
Define
\[
A \coloneqq \int_{\Sigma} F \, d\rho \in B(H;K),
\]
and fix $\boldsymbol{e} = (e_1,\ldots,e_n) \in O_n(H)$ and $\boldsymbol{f} = (f_1,\ldots,f_n) \in O_n(K)$.
By definition of the pointwise Pettis integral and Lemma \ref{lem.S1rechar}, we have
\begin{align*}
    \sum_{j=1}^n|\la Ae_j, f_j \ra_K| &= \sum_{j=1}^n\Bigg|\int_{\Sigma}\la F(\sigma)e_j, f_j \ra_K \, \rho(d\sigma) \Bigg| \leq \int_{\Sigma}\underbrace{\sum_{j=1}^n|\la F(\sigma)e_j, f_j \ra_K|}_{\leq \|F(\sigma)\|_{\cS_1}}\, \rho(d\sigma) \leq \underline{\int_{\Sigma}} \|F\|_{\cS_1} \, d\rho.
\end{align*}
Taking\hspace{-0.3mm} a \hspace{-0.3mm}supremum\hspace{-0.3mm} over $n \hspace{-0.3mm}\in\hspace{-0.3mm} \N_0$, $\boldsymbol{e} \hspace{-0.3mm}\in\hspace{-0.3mm} O_n\hspace{-0.3mm}(H)$, and $\boldsymbol{f} \hspace{-0.3mm}\in\hspace{-0.3mm} O_n\hspace{-0.3mm}(K)$ gives, \hspace{-0.3mm}again\hspace{-0.3mm} by \hspace{-0.3mm}Lemma\hspace{-0.3mm} \ref{lem.S1rechar}, $\|A\|_{\cS_1} \hspace{-0.3mm}\leq \underline{\int_{\Sigma}} \|F\|_{\cS_1} d\rho$.

Next, let $p,q \in (1,\infty)$ be such that $\frac{1}{p}+\frac{1}{q}=1$.
If $B \in B(K;H)$, then, by what we have just proven and H\"{o}lder's inequality for the Schatten norms,
\[
\|AB\|_{\cS_1} = \Bigg\|\int_{\Sigma} F(\sigma)B \, \rho(d\sigma) \Bigg\|_{\cS_1} \leq \underline{\int_{\Sigma}} \|F(\sigma)B\|_{\cS_1} \, \rho(d\sigma) \leq \|B\|_{\cS_q}\underline{\int_{\Sigma}}\|F\|_{\cS_p} \, d\rho.
\]
Therefore, if $\underline{\int_{\Sigma}}\|F\|_{\cS_p} \, d\rho < \infty$, then $A \in \cS_p(H;K)$ and
\[
\Bigg\|\int_{\Sigma} F\,d\rho\Bigg\|_{\cS_p} = \|A\|_{\cS_p} = \sup\{\underbrace{|\Tr(AB)|}_{\leq \|AB\|_{\cS_1}} : B \in B(K;H), \,\|B\|_{\cS_q} \leq 1\} \leq \underline{\int_{\Sigma}} \|F\|_{\cS_p} \, d \rho
\]
by duality for the Schatten classes (Theorem \ref{thm.Schatten}\ref{item.Schattendual}).
\end{proof}

As we just saw, the case $p=1$ is really the key to Theorem \ref{thm.Spinteg}.
Since this case is also the most important for our application of interest, we offer a few more words about it.
The proof we have just presented is ``from first principles" in the sense that we did not use any technology from the theory of vector valued integrals;
we only used Lemma \ref{lem.S1rechar} and the definition of the pointwise Pettis integral.
There is, however, an interesting alternative proof that uses Proposition \ref{prop.wintext}\ref{item.weakstarintexist} instead of Lemma \ref{lem.S1rechar}.

\begin{proof}[Second proof of Theorem \ref{thm.Spinteg} when $p=1$]
Since the conclusion is clear when $\underline{\int_{\Sigma}} \|F\|_{\cS_1}\,d\rho = \infty$, we assume $\underline{\int_{\Sigma}} \|F\|_{\cS_1}\,d\rho < \infty$.
In this case, $\|F\|_{\cS_1} < \infty$ almost everywhere (exercise). Since neither $\int_{\Sigma} F \, d\rho$ nor $\underline{\int_{\Sigma}} \|F\|_{\cS_1}\,d\rho$ changes if we modify $F$ on a set of measure zero, we may assume $\|F(\sigma)\|_{\cS_1} < \infty$, for all $\sigma \in \Sigma$;
i.e., $F(\Sigma) \subseteq \cS_1(H;K)$.
We claim in this case that $F \colon \Sigma \to \cS_1(H;K)$ is weak$^*$ integrable when we identify $\cS_1(H;K) \cong \cK(K;H)^*$ as in Theorem \ref{thm.Schatten}\ref{item.Schattendual}.
Indeed, if $B \colon K \to H$ is a finite rank linear operator, then\pagebreak\ $\Tr(F(\cdot)B) \colon \Sigma \to \C$ is measurable by Theorem \ref{thm.wintBHK}\ref{item.wmeaschar}.
Now, if $B \in \cK(K;H)$ is arbitrary, then there is a sequence $(B_n)_{n \in \N}$ of finite rank linear operators $K \to H$ such that $\|B-B_n\|_{K \to H} \to 0$ as $n \to \infty$.
If $\sigma \in \Sigma$, then this gives
\[
|\Tr(F(\sigma)B) - \Tr(F(\sigma)B_n)| = |\Tr(F(\sigma)(B-B_n))| \leq \|F(\sigma)\|_{\cS_1}\|B-B_n\|_{K \to H} \to 0
\]
as $n \to \infty$.
Thus $\Tr(F(\cdot)B) \colon \Sigma \to \C$ is measurable.
Also,
\[
\int_{\Sigma}|\Tr(F(\sigma)B)|\,\rho(d\sigma) \leq \underline{\int_{\Sigma}}\|F(\sigma)B\|_{\cS_1}\,\rho(d\sigma) \leq \|B\|_{K \to H}\underline{\int_{\Sigma}}\|F\|_{\cS_1}\,d\rho < \infty.
\]
Therefore, by Proposition \ref{prop.wintext}\ref{item.weakstarintexist}, $F \colon \Sigma \to \cK(K;H)^*$ is weak$^*$ integrable and
\[
\Bigg\|\int_{\Sigma} F\,d\rho \Bigg\|_{\cS_1} = \Bigg\|\int_{\Sigma} F \,d\rho \Bigg\|_{\cK(K;H)^*} \leq \underline{\int_{\Sigma}}\|F\|_{\cK(K;H)^*}\,d\rho = \underline{\int_{\Sigma}}\|F\|_{\cS_1}\,d\rho.
\]
Modulo the detail, which we leave to the reader, that the weak$^*$ integral of $F$ agrees with its pointwise Pettis integral, this completes the proof.
\end{proof}

\begin{rem}
It is worth mentioning that when $H$ and $K$ are separable, it is possible to prove Theorem \ref{thm.Spinteg} using the basic theory of the Bochner integral because in this case $\cS_p(H;K)$ is separable (when $p < \infty$).
Since we dealt with the general case, additional care --- in the form of either Lemma \ref{lem.S1rechar} or Proposition \ref{prop.wintext}\ref{item.weakstarintexist} --- was required.
\end{rem}

\begin{cor}[Schatten dominated convergence theorem]\label{cor.SchattenDCT}
Fix some $p \in [1,\infty]$.
Let $(F_n)_{n \in \N}$ be a sequence of pointwise Pettis integrable maps $\Sigma \to B(H;K)$ such that $F_n(\Sigma) \subseteq \cS_p(H;K)$, for all $n \in \N$.
Suppose $F \colon \Sigma \to \cS_p(H;K) \subseteq B(H;K)$ is such that $F_n \to F$ pointwise in $\cS_p(H;K)$ as $n \to \infty$.
If
\[
\overline{\int_{\Sigma}}\sup_{n \in \N}\|F_n\|_{\cS_p} \, d\rho < \infty, \numberthis\label{eq.Schattenbounded}
\]
then\hspace{-0.4mm} $F$ \hspace{-0.45mm}is\hspace{-0.45mm} pointwise \hspace{-0.45mm}Pettis\hspace{-0.45mm} (in \hspace{-0.45mm}fact,\hspace{-0.45mm} weak$^*$) \hspace{-0.45mm}integrable\hspace{-0.45mm} and \hspace{-0.45mm}we\hspace{-0.45mm} have \hspace{-0.45mm}the\hspace{-0.45mm} following \hspace{-0.45mm}convergence\hspace{-0.4mm} in \hspace{-0.4mm}$\cS_p\hspace{-0.45mm}(H;\hspace{-0.45mm}K)$ as $n \hspace{-0.45mm}\to\hspace{-0.45mm} \infty$:
\[
\int_{\Sigma} F_n \, d\rho \to \int_{\Sigma} F \, d\rho. \numberthis\label{eq.Schattenconv}
\]
\end{cor}
\begin{proof}
By a standard exercise using the principle of uniform boundedness, $(B(H;K),\sigma$-WOT$)$ is sequentially complete.
Since $\|\cdot\| \leq \|\cdot\|_{\cS_p}$, it follows from Proposition \ref{prop.GelfPetint}\ref{item.wDCT} (applied to the collection of seminorms from Section \ref{sec.optop} defining the $\sigma$-WOT) that $F \colon \Sigma \to B(H;K)$ is weak$^*$ integrable and the convergence \eqref{eq.Schattenconv} occurs in the $\sigma$-WOT.
But if $n \in \N$, then \eqref{eq.Schattenbounded} and Theorem \ref{thm.Spinteg} imply that $\int_{\Sigma} F_n \,d\rho \in \cS_p(H;K)$.
Finally,
\[
\Bigg\|\int_{\Sigma} F_n \, d\rho - \int_{\Sigma}F \, d\rho \Bigg\|_{\cS_p} = \Bigg\|\int_{\Sigma} (F_n-F) \,d\rho \Bigg\|_{\cS_p} \leq \underline{\int_{\Sigma}} \|F_n-F\|_{\cS_p} \, d\rho \to 0
\]
as $n \to \infty$ by Theorem \ref{thm.Spinteg} and Proposition \ref{prop.nonmeasinteg}\ref{item.poorDCT}, where the latter applies because of \eqref{eq.Schattenbounded}.
\end{proof}

Finally, we generalize Theorem \ref{thm.Spinteg} (with $H=K$) to noncommutative $L^p$-norms of a semifinite von Neumann algebra.
(The reader who is uninterested in semifinite von Neumann algebras may skip at this time to Section \ref{sec.MOI}.)
For this purpose, we first prove a version of Lemma \ref{lem.S1rechar} appropriate for this setting;
this is rather standard, but we supply a transparent proof for the reader's convenience.

\begin{lem}\label{lem.L1rechar}
Let $(\cM,\tau)$ be a semifinite von Neumann algebra.
If $a \in \cM$, then
\[
\|a\|_{L^1(\tau)} = \tau(|a|) = \sup\{|\tau(ab)| : b \in \mathcal{L}^1(\tau), \,\|b\| \leq 1\}.
\]
\end{lem}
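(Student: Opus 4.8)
The plan is to prove the two equalities separately. The first, $\|a\|_{L^1(\tau)} = \tau(|a|)$, is immediate from the definition of $\|\cdot\|_{L^1(\tau)}$ in Notation~\ref{nota.ncLp} (the case $p=1$), so the content lies in the second equality. Write $M \coloneqq \sup\{|\tau(ab)| : b \in \mathcal{L}^1(\tau),\,\|b\| \leq 1\}$; I must show $M = \tau(|a|)$, allowing both sides to be $+\infty$.

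For the bound $M \leq \tau(|a|)$, fix $b \in \mathcal{L}^1(\tau)$ with $\|b\| \leq 1$. Since $\mathcal{L}^1(\tau)$ is a two-sided ideal of $\cM$, we have $ab \in \mathcal{L}^1(\tau)$, so Theorem~\ref{thm.Lp}.\ref{item.tauext} gives $|\tau(ab)| \leq \|ab\|_{L^1(\tau)}$. The noncommutative H\"older inequality (Theorem~\ref{thm.Lp}.\ref{item.LpHolder}) with exponents $p_1 = 1$, $p_2 = \infty$ then yields $\|ab\|_{L^1(\tau)} \leq \|a\|_{L^1(\tau)}\|b\|_{L^{\infty}(\tau)} = \tau(|a|)\,\|b\| \leq \tau(|a|)$ (valid, with the convention $0\cdot\infty = 0$, even when $\tau(|a|) = \infty$). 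Taking the supremum over $b$ gives $M \leq \tau(|a|)$.

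The reverse bound $M \geq \tau(|a|)$ is the heart of the matter, and the difficulty is that the most natural candidate $b = u^*$, where $a = u|a|$ is the polar decomposition (with $u \in \cM$ a partial isometry and $u^*u$ the support projection of $|a|$), need not lie in $\mathcal{L}^1(\tau)$. I will therefore truncate by finite-trace projections. The key input is that, because $\tau$ is semifinite and faithful, the projections $p \in \cM$ with $\tau(p) < \infty$ form a net increasing to $1$: they are upward directed (since $\tau(p \vee q) \leq \tau(p) + \tau(q)$, via the Kaplansky formula and the trace identity for equivalent projections), and if their supremum were a projection $P \neq 1$, then faithfulness and semifiniteness applied to $1 - P \in \cM_+$ would produce a nonzero finite-trace projection below $1-P$, contradicting maximality; hence $\sup_i p_i = 1$. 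Setting $b_i \coloneqq p_i u^* \in \mathcal{L}^1(\tau)$, we have $\|b_i\| \leq 1$, and using $a = u|a|$, the identity $u^*u\,|a| = |a|$, and the cyclicity of $\tau$ recorded after Notation~\ref{nota.ncLp},
\[
\tau(ab_i) = \tau(u|a|p_iu^*) = \tau(u^*u|a|p_i) = \tau(|a|p_i) = \tau\big(|a|^{1/2}p_i|a|^{1/2}\big) \in [0,\infty).
\]
Because $p_i \nearrow 1$ strongly, the net $|a|^{1/2}p_i|a|^{1/2} \in \cM_+$ is bounded and increasing with supremum $|a|$, so normality of $\tau$ (Definition~\ref{def.trace}) gives $\tau(ab_i) \nearrow \tau(|a|)$. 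Since each $\tau(ab_i)$ is real and nonnegative, $|\tau(ab_i)| = \tau(ab_i)$, whence $M \geq \sup_i \tau(ab_i) = \tau(|a|)$.

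The step I expect to be the main obstacle is the construction of the net $(p_i)$ and, in particular, the verification that $\sup_i p_i = 1$: this is precisely where semifiniteness and faithfulness are used, and it is what makes the argument succeed uniformly whether $\tau(|a|)$ is finite or infinite (the naive choice $b=u^*$ fails exactly in the infinite case). The rewriting $\tau(|a|p_i) = \tau(|a|^{1/2}p_i|a|^{1/2})$, which guarantees that the approximating quantities are real, nonnegative, and governed by the normality axiom, is a small but essential maneuver.
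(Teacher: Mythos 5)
Your proof is correct and takes essentially the same route as the paper's: the H\"{o}lder bound for one inequality, then truncation of $u^*$ by a net of $\tau$-finite projections increasing to $1$, with normality of $\tau$ and the identity $\tau(|a|p) = \tau\big(|a|^{1/2}p|a|^{1/2}\big)$ completing the argument. The only step you compress is the claim that every nonzero projection in $\cM$ dominates a nonzero $\tau$-finite projection: semifiniteness as stated in Definition \ref{def.trace} produces finite-trace \emph{positive elements} rather than projections, so the paper still performs a spectral truncation (taking the spectral projection of such an element onto $[\e,\infty)$ for suitable $\e > 0$) to obtain the projection, and your appeal to ``faithfulness and semifiniteness'' at that point is the right idea but hides exactly this maneuver.
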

\begin{proof}
Let $a \in \cM$.
If $b \in \mathcal{L}^1(\tau)$, then, by Theorem \ref{thm.Lp}\ref{item.tauext}--\ref{item.LpHolder},
\[
|\tau(ab)| \leq \|ab\|_{L^1(\tau)} \leq \|a\|_{L^1(\tau)} \|b\|_{L^{\infty}(\tau)}= \tau(|a|)\,\|b\|.
\]
Thus $\tau(|a|) \geq \sup\{|\tau(ab)| : b \in \mathcal{L}^1(\tau), \|b\| \leq 1\}$.
\pagebreak

Now, let $a = u|a|$ be the polar decomposition of $a$.
Suppose $p \in \cM$ is a $\tau$-finite projection, i.e., $p \in \mathrm{Proj}(\cM) \coloneqq \{q \in \cM : q^2=q=q^*\}$ and $\tau(p) < \infty$.
If $b \coloneqq pu^*$, then $b \in \mathcal{L}^1(\tau)$, $\|b\| \leq 1$, and
\[
\tau(ab) = \tau(apu^*) = \tau(u^*ap) = \tau(|a|p) = \tau\big(|a|^{\frac{1}{2}}|a|^{\frac{1}{2}}p\big) = \tau\big(|a|^{\frac{1}{2}}p|a|^{\frac{1}{2}}\big).
\]
If we can show that the net of $\tau$-finite projections (directed by $\leq$) increases to the identity, then the normality of $\tau$ would give
\[
\tau(|a|) = \sup\{\tau\big(|a|^{\frac{1}{2}}p|a|^{\frac{1}{2}}\big) : p \in L^1(\tau) \cap \mathrm{Proj}(\cM)\}.
\]
Combining this with the identity above, we would conclude, as desired, that
\[
\tau(|a|) \leq \sup\{|\tau(ab)| : b \in \mathcal{L}^1(\tau), \,\|b\| \leq 1\}.
\]

To complete the proof, we show $\mathrm{Proj}(L^1(\tau)) \coloneqq L^1(\tau) \cap \mathrm{Proj}(\cM)$ increases to the identity --- in other words, $\sup \mathrm{Proj}(L^1(\tau)) = 1$.
(A priori, this supremum is a projection in $\cM$ by Proposition 1.1 in Chapter V of \cite{takesaki}.)
To this end, suppose $0 \neq q \in \mathrm{Proj}(\cM)$ is arbitrary.
We claim that there exists $0 \neq p \in \mathrm{Proj}(L^1(\tau))$ such that $p \leq q$.
Indeed, by faithfulness and semifiniteness of $\tau$, there is some $0 \leq x \in \cM$ such that $0 \neq x \leq p$ and $\tau(x) < \infty$.
Since $x$ is positive, it is selfadjoint and $\sigma(x) \subseteq [0,\infty)$.
Letting $P^x \colon \cB_{\sigma(x)} \to \cM$ be its projection valued spectral measure, we have that if $\e > 0$ and $G_{\e} \coloneqq \sigma(x) \cap [\e,\infty)$, then
\[
\e\,P^x(G_{\e}) = \int_{\sigma(x)}\e\,1_{G_{\e}} \,dP^x \leq \int_{\sigma(x)}\lambda\,P^x(d\lambda)  = x.
\]
Since $x \neq 0$ and $x$ is normal, $\sigma(x) \neq \{0\}$.
Therefore, there is some $\e > 0$ such that $P^x(G_{\e}) \neq 0$.
For this choice of $\e$, let $p \coloneqq P^x(G_{\e})$.
Then $0 \neq \e\,p \leq x$, so that $\tau(p) \leq \e^{-1}\tau(x) < \infty$, i.e., $p \in \mathrm{Proj}(L^1(\tau))$.
But also, $\e\,p \leq x \leq q$.
Since $p$ and $q$ are both projections, this implies $p \leq q$. This proves the claim.
But now, by definition of $\sup \mathrm{Proj}(L^1(\tau))$, there can be no nonzero $\tau$-finite projection $\leq 1-\sup \mathrm{Proj}(L^1(\tau))$, so $1-\sup \mathrm{Proj}(L^1(\tau)) = 0$ by what we have just proven.
\end{proof}

\begin{thm}[Noncommutative Minkowski inequality for integrals]\label{thm.Lpinteg}
Let $(\cM,\tau)$ be a semifinite von Neumann algebra.
If $F \colon \Sigma \to \cM$ is weak$^*$ integrable, then
\[
\Bigg\|\int_{\Sigma} F \,d\rho \Bigg\|_{L^p(\tau)} \leq \underline{\int_{\Sigma}}\|F\|_{L^p(\tau)}\,d\rho,
\]
for all $p \in [1,\infty]$.
In particular, if the right hand side is finite, then $\int_{\Sigma} F \,d\rho \in \mathcal{L}^p(\tau)$.
\end{thm}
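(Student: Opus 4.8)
The plan is to imitate the proof of Theorem~\ref{thm.Spinteg}: handle the endpoint $p=\infty$ by the operator-norm triangle inequality, prove the crucial case $p=1$ directly from the recharacterization in Lemma~\ref{lem.L1rechar}, and then bootstrap to $p\in(1,\infty)$ using noncommutative Hölder and $L^p$-$L^q$ duality. For $p=\infty$ observe that $\|\cdot\|_{L^\infty(\tau)}$ is the operator norm and that a weak$^*$ integrable $F$ is, by Theorem~\ref{thm.wintBHK}.\ref{item.wintexist}, Gel'fand-Pettis integrable in the $\sigma$-WOT with all the relevant integrals agreeing; the desired bound is then exactly the triangle inequality of Theorem~\ref{thm.wintBHK}.\ref{item.wintchar}.

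The heart of the argument is $p=1$. Put $A \coloneqq \int_\Sigma F\,d\rho \in \cM$. By Lemma~\ref{lem.L1rechar} it suffices to bound $|\tau(Ab)|$ uniformly over $b \in \mathcal{L}^1(\tau)$ with $\|b\| \le 1$. The key structural fact, recalled just before Theorem~\ref{thm.Lp}, is that for such $b$ the functional $\ell_b \coloneqq \tau(\,\cdot\,b)$ lies in the predual $\cM_*$, i.e. is $\sigma$-weakly continuous. Hence, by the defining property of the weak$^*$/Gel'fand-Pettis integral and the weak$^*$ integrability of $F$,
\[
\tau(Ab) = \ell_b\Big(\int_\Sigma F\,d\rho\Big) = \int_\Sigma \tau(F(\sigma)b)\,\rho(d\sigma),
\]
the scalar integrand being measurable and $\rho$-integrable precisely because $\ell_b \in \cM_*$. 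I would then estimate
\[
|\tau(Ab)| \le \int_\Sigma |\tau(F(\sigma)b)|\,\rho(d\sigma) \le \underline{\int_\Sigma}\|F\|_{L^1(\tau)}\,d\rho,
\]
where the last inequality holds because the measurable function $|\tau(F(\cdot)b)|$ is pointwise dominated by $\|F(\cdot)\|_{L^1(\tau)}$ (using $|\tau(ab)|\le\|a\|_{L^1(\tau)}\|b\|$ and $\|b\|\le1$, as in the first half of the proof of Lemma~\ref{lem.L1rechar}) and hence is an admissible competitor in the definition of the lower integral. Taking the supremum over $b$ yields $\|A\|_{L^1(\tau)} \le \underline{\int_\Sigma}\|F\|_{L^1(\tau)}\,d\rho$.

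For $p \in (1,\infty)$, let $q$ be the conjugate exponent and fix $b \in \mathcal{L}^q(\tau)$ with $\|b\|_{L^q(\tau)} \le 1$. Right multiplication $R_b \colon a \mapsto ab$ is $\sigma$-weakly continuous on $\cM$ (because $(bh_n)_n \in \ell^2(\N;H)$ whenever $(h_n)_n \in \ell^2(\N;H)$), so Proposition~\ref{prop.GelfPetint}.\ref{item.wintcontlin} shows $F(\cdot)b$ is again weak$^*$ integrable with $\int_\Sigma F(\sigma)b\,d\rho = Ab$. Applying the $p=1$ case to $F(\cdot)b$, followed by pointwise noncommutative Hölder (Theorem~\ref{thm.Lp}.\ref{item.LpHolder}) and monotonicity of the lower integral (Proposition~\ref{prop.nonmeasinteg}), gives
\[
\|Ab\|_{L^1(\tau)} \le \underline{\int_\Sigma}\|F(\sigma)b\|_{L^1(\tau)}\,\rho(d\sigma) \le \underline{\int_\Sigma}\|F\|_{L^p(\tau)}\|b\|_{L^q(\tau)}\,d\rho \le \underline{\int_\Sigma}\|F\|_{L^p(\tau)}\,d\rho.
\]
Taking the supremum over such $b$ and invoking the duality formula Theorem~\ref{thm.Lp}.\ref{item.Lpdual} finishes the inequality; the ``in particular'' clause is then immediate, since $\mathcal{L}^p(\tau) = \{a \in \cM : \|a\|_{L^p(\tau)} < \infty\}$ and $A \in \cM$.

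I expect the only genuine obstacle to be the identity $\tau(Ab) = \int_\Sigma \tau(F(\sigma)b)\,\rho(d\sigma)$ in the $p=1$ step: this is exactly where weak$^*$ (as opposed to merely pointwise weakly measurable) integrability is indispensable, since pulling $\ell_b$ through the integral requires $\ell_b$ to be a $\sigma$-weakly continuous functional. Everything else is a formal bootstrap paralleling Theorem~\ref{thm.Spinteg}, with the lower integral absorbing the possible non-measurability of $\sigma \mapsto \|F(\sigma)\|_{L^p(\tau)}$ flagged in Remark~\ref{rem.normmeas}.
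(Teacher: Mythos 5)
Your proposal is correct and follows essentially the same route as the paper: the $p=\infty$ case via the operator-norm triangle inequality of Theorem \ref{thm.wintBHK}.\ref{item.wintchar}, the $p=1$ case by pairing against $b \in \mathcal{L}^1(\tau)$ with $\|b\|\leq 1$ using the $\sigma$-weak continuity of $\tau(\,\cdot\,b)$ and Lemma \ref{lem.L1rechar} on both ends, and the bootstrap to $p\in(1,\infty)$ via noncommutative H\"{o}lder and the duality formula of Theorem \ref{thm.Lp}.\ref{item.Lpdual}. Your explicit justification that $F(\cdot)b$ is weak$^*$ integrable with integral $ab$ (via $\sigma$-weak continuity of right multiplication and Proposition \ref{prop.GelfPetint}.\ref{item.wintcontlin}) fills in a step the paper leaves implicit, but it is the same argument.
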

\begin{proof}
Write $a \coloneqq \int_{\Sigma} F \,d\rho$. The case $p=\infty$ is contained in Theorem \ref{thm.wintBHK}\ref{item.wintchar}.
We first prove the case $p=1$, from which the rest of the cases will follow.
Let $b \in \mathcal{L}^1(\tau)$.
Since the map $\cM \ni c \mapsto \tau(cb) \in \C$ is $\sigma$-weakly continuous and $F$ is weak$^*$ integrable (i.e., weakly integrable in the $\sigma$-weak topology), we have
\[
\tau(ab) = \int_{\Sigma}\tau(F(\sigma)\,b)\,\rho(d\sigma)
\]
by definition.
Then, by Lemma \ref{lem.L1rechar} (twice),
\begin{align*}
    \|a\|_{L^1(\tau)} & = \sup\Bigg\{|\tau(ab)| = \Bigg|\int_{\Sigma} \tau(F(\sigma)\,b)\,\rho(d\sigma)\Bigg| : b \in \mathcal{L}^1(\tau), \, \|b\| \leq 1\Bigg\} \\
    & \leq \sup\Bigg\{\int_{\Sigma}|\tau(F(\sigma)\,b)|\,\rho(d\sigma) : b \in \mathcal{L}^1(\tau), \, \|b\| \leq 1\Bigg\} \leq \underline{\int_{\Sigma}}\|F\|_{L^1(\tau)}\,d\rho,
\end{align*}
as desired.

Now, let $p,q \in (1,\infty)$ be such that $\frac{1}{p}+\frac{1}{q} = 1$. If $b \in \mathcal{L}^q(\tau)$, then, by what we have just proven and noncommutative H\"{o}lder's inequality,
\[
\Bigg\|\underbrace{\int_{\Sigma} F(\sigma)\,b \, \rho(d\sigma)}_{ab} \Bigg\|_{L^1(\tau)} \leq \underline{\int_{\Sigma}} \|F(\sigma)\,b\|_{L^1(\tau)} \, \rho(d\sigma) \leq \|b\|_{L^q(\tau)}\underline{\int_{\Sigma}}\|F\|_{L^p(\tau)} \, d\rho.
\]
Therefore, if $\underline{\int_{\Sigma}}\|F\|_{L^p(\tau)} \, d\rho < \infty$, then $\int_{\Sigma} F \, d\rho = a \in \mathcal{L}^p(\tau)$ and
\[
\Bigg\|\int_{\Sigma} F \, d\rho\Bigg\|_{L^p(\tau)} = \sup\{\|ab\|_{L^1(\tau)} : b \in \cM, \,\|b\|_{L^q(\tau)} \leq 1\} \leq \underline{\int_{\Sigma}} \|F\|_{L^p(\tau)} \, d \rho
\]
by the dual characterization of the noncommutative $L^p$-norm (Theorem \ref{thm.Lp}\ref{item.Lpdual}).
\end{proof}

The motivation for the name is, of course, the classical Minkowski inequality for integrals (e.g., 6.19 in \cite{folland}).
In view of \ref{prop.GelfPetint}\ref{item.winttriangle}, it would be just as reasonable to call Theorems \ref{thm.Spinteg} and \ref{thm.Lpinteg}, respectively, the Schatten $p$-norm and noncommutative $L^p$-norm ``(integral) triangle inequalities."

\section{Multiple operator integrals (MOIs)}\label{sec.MOI}

For the duration of Section \ref{sec.MOI}, fix $k \in \N$ and, for each $j \in \{1,\ldots,k+1\}$, a projection valued measure space $(\Om_j,\sF_j,H_j,P_j)$.
Also, write $(\Om,\sF,H,P)$ for their (tensor) product
\[
(\Om_1 \times \cdots \times \Om_{k+1},\sF_1 \otimes \cdots \otimes \sF_{k+1},H_1 \hotimes \cdots \hotimes H_{k+1}, P_1 \otimes \cdots \otimes P_{k+1}).
\]
Please see Theorem \ref{thm.tensprodPVM} for the definition of $P$.

\subsection{Integral projective tensor products of \texorpdfstring{$L^{\infty}$}{}-spaces}\label{sec.IPTP}

We now discuss integral projective tensor products of $L^{\infty}$-spaces.
Formally, the idea is to replace the countable sum in the decomposition \eqref{eq.projdecomp} of elements of the classical projective tensor product with an integral over a $\sigma$-finite measure space.
To make this rigorous, we first observe that Minkowski's inequality for integrals with $p=\infty$ holds for projection valued measures.

\begin{lem}\label{lem.PVMink}
Fix a projection valued measure space $(\Xi,\sG,K,Q)$ and a $\sigma$-finite measure space $(\Sigma,\sH,\rho)$. If $\Phi \colon \Xi \times \Sigma \to [0,\infty]$ is measurable, then
\[
\Bigg\|\int_{\Sigma} \Phi(\cdot,\sigma) \, \rho(d\sigma)\Bigg\|_{L^{\infty}(Q)} \leq \underline{\int_{\Sigma}}\|\Phi(\cdot,\sigma)\|_{L^{\infty}(Q)} \, \rho(d\sigma), \numberthis\label{eq.PVMink}
\]
i.e., $\int_{\Sigma} \Phi(\xi,\sigma) \, \rho(d\sigma) \leq \underline{\int_{\Sigma}}\|\Phi(\cdot,\sigma)\|_{L^{\infty}(Q)} \, \rho(d\sigma)$ for $Q$-almost every $\xi \in \Xi$.
\end{lem}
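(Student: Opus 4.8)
The plan is to reduce the projection-valued inequality to a family of ordinary scalar Minkowski inequalities --- one for each vector $h \in K$ --- and to prove each of these by averaging over measurable subsets of $\Xi$. Abbreviate $\Psi \coloneqq \int_{\Sigma} \Phi(\cdot,\sigma)\,\rho(d\sigma) \colon \Xi \to [0,\infty]$ (a genuine, $\sG$-measurable function by Tonelli's theorem, since $\Phi \geq 0$ is jointly measurable and $\rho$ is $\sigma$-finite) and $M \coloneqq \underline{\int_{\Sigma}}\|\Phi(\cdot,\sigma)\|_{L^{\infty}(Q)}\,\rho(d\sigma)$; we may assume $M < \infty$. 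For $h \in K$, let $\mu_h \coloneqq Q_{h,h}$ be the finite positive measure $G \mapsto \la Q(G)h,h\ra_K$ on $(\Xi,\sG)$. The first step I would record is that, because each $Q(G)$ is a positive operator, $Q(G) = 0$ if and only if $\mu_h(G) = 0$ for all $h$; applying this to the superlevel sets $\{\psi > c\}$ gives, for every measurable $\psi \colon \Xi \to [0,\infty]$,
\[
\|\psi\|_{L^{\infty}(Q)} = \sup_{h \in K}\|\psi\|_{L^{\infty}(\mu_h)}.
\]
Since every $Q$-null set is $\mu_h$-null, we also have $\|\Phi(\cdot,\sigma)\|_{L^{\infty}(\mu_h)} \leq \|\Phi(\cdot,\sigma)\|_{L^{\infty}(Q)}$ for each $\sigma$, hence $\underline{\int_{\Sigma}}\|\Phi(\cdot,\sigma)\|_{L^{\infty}(\mu_h)}\,\rho(d\sigma) \leq M$ by monotonicity of the lower integral (Proposition \ref{prop.nonmeasinteg}). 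In view of the displayed formula applied to $\psi = \Psi$, it then suffices to prove $\|\Psi\|_{L^{\infty}(\mu_h)} \leq M$ for each fixed $h$.

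So next I would fix $h$, write $\mu \coloneqq \mu_h$, and prove the scalar bound $\Psi \leq M$ $\mu$-almost everywhere by testing against sets. For any $E \in \sG$ with $0 < \mu(E) < \infty$, Tonelli's theorem (applied to the finite measure $1_E\,\mu$ and to $\rho$) gives
\[
\int_E \Psi\,d\mu = \int_{\Sigma}\Bigg(\int_E \Phi(\xi,\sigma)\,\mu(d\xi)\Bigg)\,\rho(d\sigma) = \mu(E)\int_{\Sigma} g_E\,d\rho,
\]
where $g_E(\sigma) \coloneqq \frac{1}{\mu(E)}\int_E \Phi(\xi,\sigma)\,\mu(d\xi)$ is $\sH$-measurable. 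Because $\Phi(\xi,\sigma) \leq \|\Phi(\cdot,\sigma)\|_{L^{\infty}(\mu)}$ for $\mu$-almost every $\xi$, the average satisfies $g_E(\sigma) \leq \|\Phi(\cdot,\sigma)\|_{L^{\infty}(\mu)}$ for every $\sigma$; thus $g_E$ is a measurable minorant of $\sigma \mapsto \|\Phi(\cdot,\sigma)\|_{L^{\infty}(\mu)}$, and the definition of the lower integral yields $\int_{\Sigma} g_E\,d\rho \leq \underline{\int_{\Sigma}}\|\Phi(\cdot,\sigma)\|_{L^{\infty}(\mu)}\,\rho(d\sigma) \leq M$. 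Consequently $\frac{1}{\mu(E)}\int_E \Psi\,d\mu \leq M$ for every such $E$.

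Finally, I would deduce $\Psi \leq M$ $\mu$-almost everywhere: if $A \coloneqq \{\xi : \Psi(\xi) > M\}$ had $\mu(A) > 0$, then taking $E = A$ (legitimate since $\mu$ is finite) would give $\int_E \Psi\,d\mu > M\,\mu(E)$, contradicting the averaging bound. Hence $\|\Psi\|_{L^{\infty}(\mu_h)} \leq M$ for every $h$, and taking the supremum over $h$ gives $\|\Psi\|_{L^{\infty}(Q)} \leq M$, which is \eqref{eq.PVMink}. The step I expect to be the crux is the very reason the lower integral appears: $\sigma \mapsto \|\Phi(\cdot,\sigma)\|_{L^{\infty}(Q)}$ need not be measurable, so there is no direct $L^{\infty}$-duality pairing to exploit as there would be for finite $p$; the set-averaging device is what manufactures, for each test set $E$, an honest measurable minorant $g_E$ against which the lower integral can be bounded.
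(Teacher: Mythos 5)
Your proof is correct and is essentially the paper's argument: both reduce \eqref{eq.PVMink} to the family of finite scalar measures $Q_{h,h}$, $h \in K$, bound $\int_{\Sigma}\Phi(\cdot,\sigma)\,\rho(d\sigma)$ in $L^{\infty}(Q_{h,h})$ by the lower integral $\underline{\int_{\Sigma}}\|\Phi(\cdot,\sigma)\|_{L^{\infty}(Q)}\,\rho(d\sigma)$, and conclude that the superlevel set has $Q$-measure zero because $\la Q(G)h,h\ra_K = 0$ for every $h$. The only divergence is that the paper simply invokes the classical Minkowski inequality for integrals (Folland 6.19, $p=\infty$ case) for each $Q_{h,h}$ -- measurability of $\sigma \mapsto \|\Phi(\cdot,\sigma)\|_{L^{\infty}(Q_{h,h})}$ included -- whereas you re-derive that scalar inequality from scratch via the set-averaging device, which makes your write-up self-contained but is not a structurally different proof.
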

\begin{proof}
If $\underline{\int_{\Sigma}}\|\Phi(\cdot,\sigma)\|_{L^{\infty}(Q)} \, \rho(d\sigma) = \infty$, then the conclusion is obvious.
We therefore suppose
\[
c \coloneqq \underline{\int_{\Sigma}}\|\Phi(\cdot,\sigma)\|_{L^{\infty}(Q)} \, \rho(d\sigma) < \infty.
\]
Next, note that by (the proof of) Tonelli's theorem, the function
\[
\Xi \ni \xi \mapsto \int_{\Sigma} \Phi(\xi,\sigma) \, \rho(d\sigma) \in [0,\infty]
\]
is measurable.
Thus
\[
G \coloneqq \Big\{\xi \in \Xi : \int_{\Sigma} \Phi(\xi,\sigma) \, \rho(d\sigma) > c \Big\} \in \sG.
\]
Now, let $h \in K$. Since $Q_{h,h} = \la Q(\cdot)h,h \ra_K$ is a finite measure, the classical Minkowski inequality for integrals (e.g., 6.19 in \cite{folland}) gives
\[
\Bigg\|\int_{\Sigma} \Phi(\cdot,\sigma) \, \rho(d\sigma) \Bigg\|_{L^{\infty}(Q_{h,h})} \leq \int_{\Sigma}\|\Phi(\cdot,\sigma)\|_{L^{\infty}(Q_{h,h})} \, \rho(d\sigma) \leq \underline{\int_{\Sigma}}\|\Phi(\cdot,\sigma)\|_{L^{\infty}(Q)} \, \rho(d\sigma) = c.
\]
(Part \hspace{-0.2mm}of what we are using from \hspace{-0.2mm}Minkowski's inequality \hspace{-0.2mm}for integrals is measurability \hspace{-0.2mm}of $\sigma \hspace{-0.2mm}\mapsto\hspace{-0.2mm} \|\Phi(\cdot,\hspace{-0.2mm}\sigma)\|_{\hspace{-0.2mm}L^{\infty}(Q_{h,h}\hspace{-0.2mm})}$.)
In particular, $\la Q(G)h,h \ra_K = Q_{h,h}(G) = 0$.
Since $h$ was arbitrary, we conclude $Q(G) = 0$.
\end{proof}

\begin{defi}[Integral projective tensor products]\label{def.IPTPspecial}
Let $\varphi \colon \Om \to \C$ be a function.
A $\boldsymbol{L_P^{\infty}}$\textbf{-integral projective decomposition} ($L_P^{\infty}$-IPD) of $\varphi$ is a choice $(\Sigma,\rho,\varphi_1,\ldots,\varphi_{k+1})$ of a $\sigma$-finite measure space $(\Sigma,\sH,\rho)$ and measurable functions $\varphi_1 \colon \Om_1 \times \Sigma \to \C,\ldots,\varphi_{k+1} \colon \Om_{k+1} \times \Sigma \to \C$ such that:
\begin{enumerate}[label=(\roman*),font=\normalfont,leftmargin=2\parindent]
    \item $\varphi_j(\cdot,\sigma) \in L^{\infty}(\Om_j,P_j)$, for all $j \in \{1,\ldots,k+1\}$ and $\sigma \in \Sigma$; \label{item.nullset}
    \item $\overline{\int_{\Sigma}} \,\|\varphi_1(\cdot,\sigma)\|_{L^{\infty}(P_1)} \cdots \|\varphi_{k+1}(\cdot,\sigma)\|_{L^{\infty}(P_{k+1})} \, \rho(d\sigma) < \infty$; and\label{item.upperintegofphis}
    \item for $P$-almost every $\boldsymbol{\om} = (\om_1,\ldots,\om_{k+1}) \in \Om$, we have\label{item.equality}
    \[
    \varphi(\boldsymbol{\om}) = \int_{\Sigma} \varphi_1(\om_1,\sigma) \cdots \varphi_{k+1}(\om_{k+1},\sigma) \, \rho(d\sigma).
    \]
    Note that the integral on the right hand side is defined for $P$-almost every $\boldsymbol{\om} = (\om_1,\ldots,\om_{k+1}) \in \Om$ by Lemma \ref{lem.PVMink} and requirement \ref{item.upperintegofphis}.
\end{enumerate}
Now, define $\|\varphi\|_{L^{\infty}(P_1) \iotimes \cdots \iotimes L^{\infty}(P_{k+1})}$ to be the infimum of the set of numbers
\[
\overline{\int_{\Sigma}} \|\varphi_1(\cdot,\sigma)\|_{L^{\infty}(P_1)} \cdots \|\varphi_{k+1}(\cdot,\sigma)\|_{L^{\infty}(P_{k+1})} \, \rho(d\sigma),
\]
where $(\Sigma,\rho,\varphi_1,\ldots,\varphi_{k+1})$ is a $L_P^{\infty}$-integral projective decomposition of $\varphi$.
(We take $\inf \emptyset \coloneqq \infty$.)
Noting that $\|\varphi\|_{L^{\infty}(P_1) \iotimes \cdots \iotimes L^{\infty}(P_{k+1})} = \|\psi\|_{L^{\infty}(P_1) \iotimes \cdots \iotimes L^{\infty}(P_{k+1})}$ if $\varphi=\psi$ $P$-a.e., so that $\|\cdot\|_{L^{\infty}(P_1) \iotimes \cdots \iotimes L^{\infty}(P_{k+1})}$ is well-defined on $L^{\infty}(\Om,P)$, we define $L^{\infty}(\Om_1,P_1) \iotimes \cdots \iotimes L^{\infty}(\Om_{k+1},P_{k+1})$ to be the set of $\varphi \in L^{\infty}(\Om,P)$ such that $\|\varphi\|_{L^{\infty}(P_1) \iotimes \cdots \iotimes L^{\infty}(P_{k+1})} < \infty$ and call it the \textbf{integral projective tensor product (IPTP) of} $\boldsymbol{L^{\infty}(\Om_1,P_1),\ldots,L^{\infty}(\Om_{k+1},P_{k+1})}$.
\end{defi}
\begin{rem}[Measurability issues]\label{rem.measissue}
The literature is rather cavalier with the definition of the IPTP above.
Indeed, if $(\Xi,\sG,K,Q)$ is a projection valued measure space, $(\Sigma,\sH,\rho)$ is a $\sigma$-finite measure space, and $\Phi \colon \Xi \times \Sigma \to \C$ is measurable, then the function $\Sigma \ni \sigma \mapsto \|\Phi(\cdot,\sigma)\|_{L^{\infty}(Q)} \in [0,\infty]$ is \textit{not} necessarily measurable.
In particular, it is important to specify in \eqref{eq.PVMink} and \ref{item.upperintegofphis} which integral (upper or lower) is being used.
This detail, which is important in arguments to come, has been side stepped in the existing literature.

It is worth discussing ``how non-measurable" $\sigma \mapsto \Phi_Q(\sigma) \coloneqq \|\Phi(\cdot,\sigma)\|_{L^{\infty}(Q)}$ can be in various circumstances.
We proceed from least to most pathological.
First, if $Q$ is equivalent to a $\sigma$-finite scalar measure --- as is always the case when $K$ is separable --- then $\Phi_Q$ is actually measurable.
In the next two examples, suppose $Q(G) = 0 \Leftrightarrow G = \emptyset$.
(Please see Example \ref{ex.countingPVM}.) For the second example, suppose $X$ is a complete, separable metric space and $(\Xi,\sG) = (X,\cB_X)$.
Then $\Phi_Q$ is ``almost measurable," i.e., $\Phi_Q$ is measurable with respect to the $\rho$-completion of $\sH$.
This fact follows from Corollary 2.13 in \cite{crauel}, which relies on the nontrivial ``measurable projection theorem" (Theorem III.23 in \cite{castaing}).
Because in this case $\Phi_Q$ is ``almost measurable," the upper and lower integrals of $\Phi_Q$ agree.
This is used implicitly --- and perhaps unknowingly --- in the literature (e.g., \cite{azamovetal,depagtersukochev,doddssub2}) but is never proven or cited as it should be.
Third, let $Y \subseteq [0,1]$ be a non-Lebesgue-measurable set, $(\Xi,\sG) = (Y,\cB_Y)$, and $(\Sigma,\sH,\rho) = ([0,1],\cB_{[0,1]},\,$Lebesgue$)$.
If $\Phi \coloneqq 1_{\Delta \cap (Y \times [0,1])}$, where $\Delta \coloneqq \{(x,x) : x \in [0,1]\}$ is the diagonal, then we have $\Phi_Q(\sigma) = \|\Phi(\cdot,\sigma)\|_{\ell^{\infty}(Y)} = 1_Y(\sigma)$, for all $\sigma \in [0,1]$.
Thus $\Phi_Q$ is not even Lebesgue measurable in this case.
\end{rem}

Here are the basic properties of $L^{\infty}(\Om_1,P_1) \iotimes \cdots \iotimes L^{\infty}(\Om_{k+1},P_{k+1})$.
Special cases of the following proposition have been stated in the literature (e.g., Lemma 4.6 in \cite{depagtersukochev}), but no proofs are written down.
For the sake of completeness --- especially in view of the measurability issues discussed in Remark \ref{rem.measissue} --- we provide a full proof here.
In the statement below, a \textbf{Banach $\boldsymbol{\ast}$-algebra} is a unital Banach algebra endowed with an isometric $\ast$-operation.

\begin{prop}[Basic properties of IPTPs]\label{prop.IPTPspecial}
If $\varphi \in L^{\infty}(\Om_1,P_1) \iotimes \cdots \iotimes L^{\infty}(\Om_{k+1},P_{k+1})$, then
\[
\|\varphi\|_{L^{\infty}(P)} \leq \|\varphi\|_{L^{\infty}(P_1) \iotimes \cdots \iotimes L^{\infty}(P_{k+1})}.
\]
Also, $L^{\infty}(\Om_1,P_1) \iotimes \cdots \iotimes L^{\infty}(\Om_{k+1},P_{k+1}) \subseteq L^{\infty}(\Om,P)$ is a $\ast$-subalgebra, and
\[
\big(L^{\infty}(\Om_1,P_1) \iotimes \cdots \iotimes L^{\infty}(\Om_{k+1},P_{k+1}), \|\cdot\|_{L^{\infty}(P_1) \iotimes \cdots \iotimes L^{\infty}(P_{k+1})}\big)
\]
is a unital Banach $\ast$-algebra.
\end{prop}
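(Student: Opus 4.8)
The plan is to reduce everything to two elementary facts and then exploit the disjoint-union and product structure of measure spaces; throughout I abbreviate $\|\cdot\|_{\iotimes} \coloneqq \|\cdot\|_{L^{\infty}(P_1) \iotimes \cdots \iotimes L^{\infty}(P_{k+1})}$.

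First I would isolate a \emph{factorization bound}. For $\psi_j \in L^{\infty}(\Om_j,P_j)$, let $\psi_j \circ \pi_j \in \ell^{\infty}(\Om,\sF)$ denote its pullback along the coordinate projection $\pi_j \colon \Om \to \Om_j$. By Theorem \ref{thm.tensprodPVM}, $P(\pi_j^{-1}(G)) = \id \otimes \cdots \otimes P_j(G) \otimes \cdots \otimes \id$, which vanishes if and only if $P_j(G) = 0$; hence $P_j$-null sets pull back to $P$-null sets, giving $\|\psi_j \circ \pi_j\|_{L^{\infty}(P)} \leq \|\psi_j\|_{L^{\infty}(P_j)}$. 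Since $L^{\infty}(\Om,P)$ is a $C^*$-algebra, its norm is submultiplicative, so $\|\varphi_1(\cdot,\sigma) \cdots \varphi_{k+1}(\cdot,\sigma)\|_{L^{\infty}(P)} \leq \prod_{j} \|\varphi_j(\cdot,\sigma)\|_{L^{\infty}(P_j)}$, where on the left each factor is viewed as a function on $\Om$. Second, I would prove an \emph{upper-integral Tonelli inequality}: for $\sigma$-finite $(\Sigma,\rho)$, $(\Sigma',\rho')$ and arbitrary $g \colon \Sigma \to [0,\infty]$, $h \colon \Sigma' \to [0,\infty]$, one has $\overline{\int_{\Sigma \times \Sigma'}} g(\sigma)\,h(\sigma') \, (\rho \times \rho')(d\sigma\,d\sigma') \leq \big(\overline{\int_{\Sigma}} g \, d\rho\big)\big(\overline{\int_{\Sigma'}} h \, d\rho'\big)$. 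This follows by dominating $g,h$ by measurable $\tilde g \geq g$, $\tilde h \geq h$ ($\rho$- resp.\ $\rho'$-a.e.), noting $\tilde g \otimes \tilde h \geq g \otimes h$ off a $(\rho \times \rho')$-null set, applying ordinary Tonelli to the measurable product, and taking infima (the case of two finite factors being all we need).

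With these in hand, the inequality $\|\varphi\|_{L^{\infty}(P)} \leq \|\varphi\|_{\iotimes}$ is immediate: for any $L_P^{\infty}$-IPD, Lemma \ref{lem.PVMink} applied to $\Phi = \prod_j |\varphi_j(\cdot,\sigma)|$ bounds $\|\varphi\|_{L^{\infty}(P)}$ by $\underline{\int_{\Sigma}}\|\Phi(\cdot,\sigma)\|_{L^{\infty}(P)}\,\rho(d\sigma)$, and the factorization bound together with $\underline{\int} \leq \overline{\int}$ dominates this by $\overline{\int_{\Sigma}}\prod_j\|\varphi_j(\cdot,\sigma)\|_{L^{\infty}(P_j)}\,\rho(d\sigma)$; taking the infimum over IPDs gives the claim, and hence definiteness of $\|\cdot\|_{\iotimes}$. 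Homogeneity is clear (rescale one $\varphi_j$), and the triangle inequality follows by placing IPDs of $\varphi$ and $\psi$ on a disjoint union $\Sigma \sqcup \Sigma'$ and invoking disjoint-union additivity of the upper integral (Proposition \ref{prop.nonmeasinteg}.\ref{item.disjunupint}), so $\|\cdot\|_{\iotimes}$ is a norm. For the $\ast$-subalgebra structure, conjugation is handled by replacing each $\varphi_j$ with $\overline{\varphi_j}$ (conjugation being $L^{\infty}(P_j)$-isometric, whence $\|\overline{\varphi}\|_{\iotimes} = \|\varphi\|_{\iotimes}$ and $\ast$ is isometric), and the unit $1$ has the one-point IPD, giving $\|1\|_{\iotimes} \leq 1$, hence $=1$ by the norm inequality. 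For products I would take $(\Sigma \times \Sigma', \rho \times \rho')$ and set $(\varphi\psi)_j(\om_j,(\sigma,\sigma')) \coloneqq \varphi_j(\om_j,\sigma)\,\psi_j(\om_j,\sigma')$: Fubini--Tonelli, valid for $P$-a.e.\ $\boldsymbol{\om}$ by the integrability built into each IPD, factors the iterated integral into $\varphi(\boldsymbol{\om})\,\psi(\boldsymbol{\om})$, while the factorization bound and the upper-integral Tonelli inequality yield $\|\varphi\psi\|_{\iotimes} \leq \|\varphi\|_{\iotimes}\|\psi\|_{\iotimes}$.

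Finally, for completeness I would verify that every absolutely convergent series in the IPTP converges. Given $\sum_n \varphi^{(n)}$ with $\sum_n \|\varphi^{(n)}\|_{\iotimes} < \infty$, choose IPDs on $(\Sigma_n,\rho_n)$ whose defining integrals are within $2^{-n}$ of $\|\varphi^{(n)}\|_{\iotimes}$, and assemble them into one IPD on the (still $\sigma$-finite) disjoint union $\Sigma = \coprod_n \Sigma_n$; disjoint-union additivity bounds the total defining integral by $\sum_n(\|\varphi^{(n)}\|_{\iotimes}+2^{-n}) < \infty$, which is condition \ref{item.upperintegofphis}. The main obstacle, requiring the most care, is condition \ref{item.equality}: I must show $\sigma \mapsto \prod_j \varphi_j(\om_j,\sigma)$ is $\rho$-integrable with integral $\sum_n \varphi^{(n)}(\boldsymbol{\om})$ for $P$-a.e.\ $\boldsymbol{\om}$. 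For this I would use Lemma \ref{lem.PVMink} and the factorization bound to control $\big\|\int_{\Sigma_n}\prod_j|\varphi_j^{(n)}(\cdot,\sigma)|\,\rho_n(d\sigma)\big\|_{L^{\infty}(P)}$ by $\|\varphi^{(n)}\|_{\iotimes}+2^{-n}$, whose sum over $n$ is finite; this forces the absolute integrand to be $\rho$-integrable for $P$-a.e.\ $\boldsymbol{\om}$, legitimizing the interchange of sum and integral over the disjoint union and identifying the value as $\sum_n \varphi^{(n)}(\boldsymbol{\om})$, which agrees $P$-a.e.\ with $\varphi \coloneqq \sum_n \varphi^{(n)}$ (the $L^{\infty}(P)$-limit, existing by the norm inequality). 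Applying the same disjoint-union construction to the tails $\sum_{n>N}\varphi^{(n)}$ then gives $\|\varphi - \sum_{n \leq N}\varphi^{(n)}\|_{\iotimes} \leq \sum_{n>N}(\|\varphi^{(n)}\|_{\iotimes}+2^{-n}) \to 0$, completing the proof.
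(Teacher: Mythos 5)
Your proposal is correct and follows essentially the same route as the paper's own proof: the norm inequality via Lemma \ref{lem.PVMink} together with the factorization bound, the triangle inequality and completeness via disjoint-union assembly of near-optimal decompositions (using Proposition \ref{prop.nonmeasinteg}.\ref{item.disjunupint}), and submultiplicativity via product measure spaces and Fubini's Theorem. The only differences are organizational -- you isolate as explicit lemmas the factorization bound and the upper-integral Tonelli inequality, which the paper invokes implicitly, and you verify the unit and the triangle inequality separately where the paper folds them into the absolutely-convergent-series argument.
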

\begin{proof}
Let $\sB \coloneqq L^{\infty}(\Om_1,P_1) \iotimes \cdots \iotimes L^{\infty}(\Om_{k+1},P_{k+1})$ and
\[
\|\cdot\|_{\sB} \coloneqq \|\cdot\|_{L^{\infty}(P_1) \iotimes \cdots \iotimes L^{\infty}(P_{k+1})}.
\]
If $\varphi \in \sB$ has $L_P^{\infty}$-IPD $(\Sigma,\rho,\varphi_1,\ldots,\varphi_{k+1})$ and
\[
\Phi(\boldsymbol{\om},\sigma) \coloneqq \varphi_1(\om_1,\sigma)\cdots\varphi_{k+1}(\om_{k+1},\sigma),
\]
then
\begin{align*}
    \|\varphi\|_{L^{\infty}(P)} & \leq \Bigg\|\int_{\Sigma}|\Phi(\cdot,\sigma)|\,\rho(d\sigma) \Bigg\|_{L^{\infty}(P)} \leq \underline{\int_{\Sigma}} \|\Phi(\cdot,\sigma)\|_{L^{\infty}(P)} \, \rho(d\sigma) \\
    & \leq \underline{\int_{\Sigma}} \|\varphi_1(\cdot,\sigma)\|_{L^{\infty}(P_1)}\cdots \|\varphi_{k+1}(\cdot,\sigma)\|_{L^{\infty}(P_{k+1})} \, \rho(d\sigma)
\end{align*}
by definition of $\Phi$, the third requirement in Definition \ref{def.IPTPspecial}, and Lemma \ref{lem.PVMink}.
Using the fact that $\underline{\int_{\Sigma}} \leq \overline{\int_{\Sigma}}$ and taking the infimum over the decompositions $(\Sigma,\rho,\varphi_1,\ldots,\varphi_{k+1})$ gives the first result.
In particular, $\|\varphi\|_{\sB} = 0 \Leftrightarrow \varphi \equiv 0$ $P$-a.e.
Now, we begin the proof that $\sB \subseteq L^{\infty}(\Om,P)$ is a $\ast$-subalgebra and that $(\sB,\|\cdot\|_{\sB})$ is a Banach $\ast$-algebra.

First, it is clear from the definition that $\sB \subseteq L^{\infty}(\Om,P)$ is closed under scalar multiplication and complex conjugation and that $\|\alpha \varphi\|_{\sB} = |\alpha| \, \|\varphi\|_{\sB} = |\alpha| \, \|\overline{\varphi}\|_{\sB}$, for all $\alpha \in \C$ and $\varphi \in \sB$.

Second, let $(\varphi_n)_{n \in \N}$ be a sequence in $\sB$ such that $\sum_{n=1}^{\infty}\|\varphi_n\|_{\sB} < \infty$.
Then
\[
\sum_{n=1}^{\infty}\|\varphi_n\|_{L^{\infty}(P)} \leq \sum_{n=1}^{\infty}\|\varphi_n\|_{\sB} < \infty,
\]
so that $\varphi \coloneqq \sum_{n=1}^{\infty}\varphi_n$ converges in $L^{\infty}(\Om,P)$.
We claim that $\|\varphi\|_{\sB} \leq \sum_{n=1}^{\infty}\|\varphi_n\|_{\sB}$, from which it follows that $\sB \subseteq L^{\infty}(\Om,P)$ is a linear subspace and $(\sB,\|\cdot\|_{\sB})$ is a Banach space.
To see this, fix $\e > 0$ and $n \in \N$.
By definition of $\|\cdot\|_{\sB}$, there exists a $L_P^{\infty}$-IPD $(\Sigma_n,\rho_n,\varphi_{1,n},\ldots,\varphi_{k+1,n})$ of $\varphi_n$ such that
\[
\overline{\int_{\Sigma_n}}\prod_{j=1}^{k+1} \|\varphi_{j,n}(\cdot,\sigma_n)\|_{L^{\infty}(P_j)} \, \rho_n(d\sigma_n) < \|\varphi_n\|_{\sB} + \frac{\e}{2^n}.
\]
This gives
\[
\sum_{n=1}^{\infty}\overline{\int_{\Sigma_n}} \prod_{j=1}^{k+1}\|\varphi_{j,n}(\cdot,\sigma_n)\|_{L^{\infty}(P_j)} \, \rho_n(d\sigma_n) \leq \sum_{n=1}^{\infty} \|\varphi_n\|_{\sB} + \e < \infty.
\]
If we redefine $(\Sigma,\sH,\rho)$ to be the disjoint union of the set of measure spaces $\{(\Sigma_n,\sH_n,\rho_n) : n \in \N\}$ and
\[
\chi_j(\om_j,\sigma) \coloneqq \varphi_{j,n}(\om_j,\sigma), \;\;\; \om_j \in \Om_j, \; \sigma \in \Sigma_n \subseteq \coprod_{\ell \in \N} \Sigma_{\ell} = \Sigma,
\]
for $j \in \{1,\ldots,k+1\}$, then we claim $(\Sigma,\rho,\chi_1,\ldots,\chi_{k+1})$ is a $L_P^{\infty}$-IPD of $\varphi$.
Indeed, item \ref{item.nullset} is easy to check.
Next, by Proposition \ref{prop.nonmeasinteg}\ref{item.disjunupint},
\[
\overline{\int_{\Sigma}}\prod_{j=1}^{k+1}\|\chi_j(\cdot,\sigma)\|_{L^{\infty}(P_j)}\,\rho(d\sigma) = \sum_{n=1}^{\infty}\overline{\int_{\Sigma_n}} \prod_{j=1}^{k+1}\|\varphi_{j,n}(\cdot,\sigma_n)\|_{L^{\infty}(P_j)} \, \rho_n(d\sigma_n) < \infty.
\]
Finally, for $P$-almost every $\boldsymbol{\om} = (\om_1,\ldots,\om_{k+1}) \in \Om$,
\[
\int_{\Sigma}\prod_{j=1}^{k+1}\chi_j(\om_j,\sigma) \, \rho(d\sigma) = \sum_{n=1}^{\infty}\int_{\Sigma_n} \prod_{j=1}^{k+1}\varphi_{j,n}(\om_j,\sigma_n) \, \rho_n(d\sigma_n) = \sum_{n=1}^{\infty}\varphi_n(\boldsymbol{\om}) = \varphi(\boldsymbol{\om}).
\]
From this, we conclude that $\varphi \in \sB$ and
\[
\|\varphi\|_{\sB} \leq \sum_{n=1}^{\infty}\overline{\int_{\Sigma_n}} \prod_{j=1}^{k+1}\|\varphi_{j,n}(\cdot,\sigma_n)\|_{L^{\infty}(P_j)} \, \rho_n(d\sigma_n) \leq \sum_{n=1}^{\infty} \|\varphi_n\|_{\sB} + \e.
\]
Taking $\e \searrow 0$ completes the proof of the claim.
\pagebreak

Third, we show that if $\varphi,\psi \in \sB$, then $\|\varphi\psi\|_{\sB} \leq \|\varphi\|_{\sB}\|\psi\|_{\sB}$, which will complete the proof of the proposition.
To this end, suppose $(\Sigma_1,\rho_1,\varphi_1,\ldots,\varphi_{k+1})$ and $(\Sigma_2,\rho_2,\psi_1,\ldots,\psi_{k+1})$ are $L_P^{\infty}$-IPDs of $\varphi$ and $\psi$, respectively.
Redefine
\begin{align*}
    (\Sigma,\sH,\rho) & \coloneqq (\Sigma_1\times \Sigma_2,\sH_1\otimes \sH_2,\rho_1 \otimes \rho_2) \; \text{ and} \\
    \chi_j(\om_j,\sigma) & \coloneqq \varphi_j(\om_j,\sigma_1) \, \psi_j(\om_j,\sigma_2), \;\;\; (\om_j,\sigma) = (\om_j,\sigma_1,\sigma_2) \in \Om_j \times \Sigma,
\end{align*}
for all $j \in \{1,\ldots,k+1\}$.
We claim $(\Sigma,\rho,\chi_1,\ldots,\chi_{k+1})$ is a $L_P^{\infty}$-IPD of (any representative of) $\varphi\psi$. Once again, item \ref{item.nullset} is clear.
Now, by Fubini's theorem and the definition of the upper integral,
\begin{align*}
     \overline{\int_{\Sigma}}\prod_{j=1}^{k+1}\|\chi_j(\cdot,\sigma)\|_{L^{\infty}(P_j)}\, \rho(d\sigma) & \leq  \overline{\int_{\Sigma_1}}\prod_{j=1}^{k+1}\|\varphi_j(\cdot,\sigma_1)\|_{L^{\infty}(P_j)} \,\rho_1(d\sigma_1) \overline{\int_{\Sigma_2}}\prod_{j=1}^{k+1}\|\psi_j(\cdot,\sigma_2)\|_{L^{\infty}(P_j)}\, \rho_2(d\sigma_2) < \infty.
\end{align*}
Finally, for $P$-almost every $\boldsymbol{\om} = (\om_1,\ldots,\om_{k+1}) \in \Om$,
\[
\varphi(\boldsymbol{\om})\, \psi(\boldsymbol{\om}) = \int_{\Sigma_1}\prod_{j=1}^{k+1}\varphi_j(\om_j,\sigma_1)\, \rho_1(d\sigma_1) \int_{\Sigma_2}\prod_{j=1}^{k+1}\psi_j(\om_j,\sigma_2) \, \rho_2(d\sigma_2) = \int_{\Sigma}\prod_{j=1}^{k+1}\chi_j(\om_j,\sigma)\, \rho(d\sigma)
\]
again by Fubini's theorem.
This proves $\varphi\psi \in \sB$ and, after taking infima, $\|\varphi\psi\|_{\sB} \leq \|\varphi\|_{\sB}\|\psi\|_{\sB}$ as well.
\end{proof}

\begin{ex}[$\ell^{\infty}$-IPTPs]\label{ex.countingPVM}
Let $(\Xi,\sG)$ be a measurable space, and write $\ell^2(\Xi) \coloneqq L^2(\Xi,2^{\Xi},\kappa)$, where $\kappa$ is the counting measure on $\Xi$.
For $G \in \sG$, let $Q(G) \in B(\ell^2(\Xi))$ be multiplication by $1_G$.
Then we call $Q \colon \sG \to B(\ell^2(\Xi))$ the \textbf{projection valued counting measure on} $\boldsymbol{(\Xi,\sG)}$.
Note that $L^{\infty}(\Xi,Q) = \ell^{\infty}(\Xi,\sG)$ with norm $\|\cdot\|_{L^{\infty}(Q)} = \|\cdot\|_{\ell^{\infty}(\Xi)}$ because $Q(G) = 0$ if and only if $G=\emptyset$.

We define
\[
\ell^{\infty}(\Om_1,\sF_1) \iotimes \cdots \iotimes \ell^{\infty}(\Om_{k+1},\sF_{k+1}) \coloneqq L^{\infty}(\Om_1,Q_1) \iotimes \cdots \iotimes L^{\infty}(\Om_{k+1},Q_{k+1}),
\]
where $Q_j$ is the projection valued counting measure on $(\Om_j,\sF_j)$, and
\[
\|\cdot\|_{\ell^{\infty}(\Om_1,\sF_1) \iotimes \cdots \iotimes \ell^{\infty}(\Om_{k+1},\sF_{k+1})} \coloneqq \|\cdot\|_{L^{\infty}(Q_1) \iotimes \cdots \iotimes L^{\infty}(Q_{k+1})}.
\]
It is easy to see that $Q \coloneqq Q_1 \otimes \cdots \otimes Q_{k+1}$ is the projection valued counting measure on $(\Om,\sF)$ when we identify $\ell^2(\Om_1) \hotimes \cdots \hotimes \ell^2(\Om_{k+1}) \cong \ell^2(\Om_1 \times \cdots \times \Om_{k+1}) = \ell^2(\Om)$.
Thus
\[
\ell^{\infty}(\Om_1,\sF_1) \iotimes \cdots \iotimes \ell^{\infty}(\Om_{k+1},\sF_{k+1}) \subseteq L^{\infty}(\Om,Q) = \ell^{\infty}(\Om,\sF).
\]
We call this space the \textbf{integral projective tensor product of} $\boldsymbol{\ell^{\infty}(\Om_1,\sF_1), \ldots , \ell^{\infty}(\Om_{k+1},\sF_{k+1})}$ and refer to $\boldsymbol{\ell^{\infty}}$\textbf{-integral projective decompositions} rather than $L^{\infty}_Q$-integral projective decompositions. 
\end{ex}

Variants of the $\ell^{\infty}$-integral projective tensor product are often used in the literature (e.g., \cite{azamovetal,depagtersukochev,doddssub2}).
As the above example shows, $\ell^{\infty}$-integral projective tensor products are special cases of $L^{\infty}$-integral projective tensor products if one allows non-separable Hilbert spaces.

\subsection{(Well-)Definition of MOIs}\label{sec.welldef}

The goal of this section is to show that if $\varphi \in L^{\infty}(\Om_1,P_1) \iotimes \cdots \iotimes L^{\infty}(\Om_{k+1},P_{k+1})$ and $(\Sigma,\rho,\varphi_1,\ldots,\varphi_{k+1})$ is a $L_P^{\infty}$-IPD of $\varphi$, then the object
\[
\int_{\Sigma} P_1(\varphi_1(\cdot,\sigma)) \, b_1 \cdots P_k(\varphi_k(\cdot,\sigma))\, b_k\,P_{k+1}(\varphi_{k+1}(\cdot,\sigma))\,\rho(d\sigma) \in B(H_{k+1};H_1)
\]
makes sense as a pointwise Pettis (in fact, weak$^*$) integral that is independent of the chosen decomposition $(\Sigma,\rho,\varphi_1,\ldots,\varphi_{k+1})$ of $\varphi$ whenever $b_j \in B(H_{j+1};H_j)$ for all $j \in \{1,\ldots,k\}$.

\begin{defi}[Complex Markov kernel]\label{def.randcompmeas}
Let $(\Xi,\sG)$ and $(\Sigma,\sH)$ be measurable spaces.
A \textbf{complex Markov kernel} (with source $\Sigma$ and target $\Xi$) is a map $\nu \colon \Sigma \to M(\Xi,\sG) = \{$complex measures on $(\Xi,\sG)\}$ such that the function $\Sigma \ni \sigma \mapsto \nu_{\sigma}(G) \coloneqq \nu(\sigma)(G) \in \C$ is $(\sH,\mathcal{B}_{\C})$-measurable, for every $G \in \sG$.
\end{defi}

\begin{lem}\label{lem.randcompmeas}
Fix measurable spaces $(\Xi,\sG)$ and $(\Sigma,\sH)$ and a complex Markov kernel $\nu \colon \Sigma \to M(\Xi,\sG)$.
If $\varphi \colon \Xi \times \Sigma \to \C$ is measurable and $\varphi(\cdot,\sigma) \in L^1(\Xi,|\nu_{\sigma}|)$, for all $\sigma \in \Sigma$, then the following function is $(\sH,\cB_{\C})$-measurable:
\[
\Sigma \ni \sigma \mapsto \int_{\Xi}\varphi(\xi,\sigma) \, \nu_{\sigma}(d\xi) \in \C.
\]
\end{lem}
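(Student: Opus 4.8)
The plan is to prove the statement first for bounded $\varphi$ via the Multiplicative System Theorem (in the form of Corollary \ref{cor.MST}), and then to pass to the general $L^1$ case by truncation. The enabling fact throughout is that a complex measure has finite total variation, so $|\nu_\sigma|(\Xi) = \|\nu_\sigma\| < \infty$; in particular bounded functions are automatically $|\nu_\sigma|$-integrable, a constant is a legitimate dominating function, and the bound $\big|\int_\Xi f\,d\nu_\sigma\big| \le \int_\Xi |f|\,d|\nu_\sigma|$ lets every instance of the Dominated Convergence Theorem for $\nu_\sigma$ be deduced from the ordinary one for the finite positive measure $|\nu_\sigma|$.

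For the bounded case, let $\overline{\nu}$ denote the conjugate kernel, $\overline{\nu}_\sigma(G) \coloneqq \overline{\nu_\sigma(G)}$, which is again a complex Markov kernel (Definition \ref{def.randcompmeas}) since conjugation preserves measurability. I would define
\[
\mathbb{H} \coloneqq \Big\{\varphi \in \ell^{\infty}(\Xi\times\Sigma,\sG\otimes\sH) : \sigma\mapsto \textstyle\int_\Xi\varphi(\xi,\sigma)\,\mu_\sigma(d\xi) \text{ is measurable for } \mu \in \{\nu,\overline{\nu}\}\Big\}
\]
and verify the hypotheses of Corollary \ref{cor.MST}. Linearity of $\mathbb{H}$ is immediate from linearity of the integral. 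For closure under complex conjugation I use $\overline{\int_\Xi \psi\,d\mu_\sigma} = \int_\Xi \overline{\psi}\,d\overline{\mu}_\sigma$: if $\varphi\in\mathbb{H}$, then $\int_\Xi \overline{\varphi}\,d\nu_\sigma = \overline{\int_\Xi\varphi\,d\overline{\nu}_\sigma}$ and $\int_\Xi\overline{\varphi}\,d\overline{\nu}_\sigma = \overline{\int_\Xi\varphi\,d\nu_\sigma}$ (using $\overline{\overline{\nu}}=\nu$) are both measurable as conjugates of measurable functions, so $\overline{\varphi}\in\mathbb{H}$. For closure under sequential bounded convergence, if $\varphi_n\to\varphi$ boundedly with $M\coloneqq\sup_n\|\varphi_n\|_{\ell^{\infty}(\Xi\times\Sigma)}<\infty$, then for each fixed $\sigma$ the constant $M$ is $|\mu_\sigma|$-integrable, so the Dominated Convergence Theorem gives $\int_\Xi\varphi_n\,d\mu_\sigma\to\int_\Xi\varphi\,d\mu_\sigma$ pointwise in $\sigma$ for $\mu\in\{\nu,\overline{\nu}\}$; a pointwise limit of measurable functions is measurable, whence $\varphi\in\mathbb{H}$. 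Finally, for a rectangle $G\times S$ with $G\in\sG$ and $S\in\sH$ one has $\int_\Xi 1_{G\times S}(\xi,\sigma)\,\mu_\sigma(d\xi) = 1_S(\sigma)\,\mu_\sigma(G)$, which is measurable, so $\{1_{G\times S} : G \in \sG,\, S \in \sH\}\subseteq\mathbb{H}$. Corollary \ref{cor.MST} then yields $\ell^{\infty}(\Xi\times\Sigma,\sG\otimes\sH)\subseteq\mathbb{H}$, giving the conclusion for every bounded measurable $\varphi$.

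To remove the boundedness assumption I would truncate. Setting $\varphi_n\coloneqq\varphi\,1_{\{|\varphi|\le n\}}$, each $\varphi_n$ is bounded and $\sG\otimes\sH$-measurable, so $\sigma\mapsto\int_\Xi\varphi_n\,d\nu_\sigma$ is measurable by the bounded case. Since $\varphi(\cdot,\sigma)\in L^1(\Xi,|\nu_\sigma|)$ and $|\varphi_n|\le|\varphi|$ with $\varphi_n\to\varphi$ pointwise, the Dominated Convergence Theorem gives $\int_\Xi\varphi_n\,d\nu_\sigma\to\int_\Xi\varphi\,d\nu_\sigma$ for every $\sigma$, so the target map is a pointwise limit of measurable functions and hence measurable.

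The only genuinely delicate point is the closure of $\mathbb{H}$ under complex conjugation: it fails for the naive single-kernel definition because $\int_\Xi\overline\varphi\,d\nu_\sigma\neq\overline{\int_\Xi\varphi\,d\nu_\sigma}$ when $\nu_\sigma$ is not real-valued. Bringing in the conjugate kernel $\overline\nu$ repairs this cleanly; equivalently, one could first reduce to the real signed kernels $\mathrm{Re}\,\nu$ and $\mathrm{Im}\,\nu$ via linearity and measurability of real and imaginary parts, for which conjugation closure is automatic. Everything else is a routine assembly of the Multiplicative System Theorem, countable additivity, and the Dominated Convergence Theorem, all available precisely because complex measures are finite.
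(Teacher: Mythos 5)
Your proof is correct, and its skeleton is exactly the paper's: truncate to the bounded case, then apply the Multiplicative System Theorem (Corollary \ref{cor.MST}) to a class $\mathbb{H}$ containing the indicators of rectangles, with the Dominated Convergence Theorem giving closure under sequential bounded convergence and the finiteness of complex measures making all integrals and dominations legitimate. The one place you deviate is a genuine refinement rather than a different route. The paper's sketch defines $\mathbb{H}$ using the single kernel $\nu$ and asserts that it is ``clearly'' closed under complex conjugation; but, as you observe, measurability of $\sigma \mapsto \int_{\Xi} \varphi(\xi,\sigma)\,\nu_{\sigma}(d\xi)$ does not by itself yield measurability of $\sigma \mapsto \int_{\Xi} \overline{\varphi(\xi,\sigma)}\,\nu_{\sigma}(d\xi)$, since
\[
\int_{\Xi} \overline{\varphi(\xi,\sigma)}\,\nu_{\sigma}(d\xi) = \overline{\int_{\Xi} \varphi(\xi,\sigma)\,\overline{\nu}_{\sigma}(d\xi)}
\]
involves the conjugate kernel $\overline{\nu}$, not $\nu$ itself. (A posteriori the paper's $\mathbb{H}$ \emph{is} conjugation-closed, because it ends up containing all of $\ell^{\infty}(\Xi \times \Sigma, \sG \otimes \sH)$, but one cannot invoke the lemma's conclusion while proving it.) Your device of requiring measurability with respect to both $\nu$ and $\overline{\nu}$ in the definition of $\mathbb{H}$ -- equivalently, working with the real and imaginary parts of the kernel -- makes the conjugation hypothesis of the Multiplicative System Theorem verifiable by symmetry, and it costs nothing elsewhere: rectangles, linearity, and bounded convergence behave identically for $\overline{\nu}$, and the truncation step is untouched. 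So your write-up is not merely correct; it fills in the one step of the paper's sketch that actually required an idea.
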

\begin{proof}[Sketch \hspace{-0.25mm}of\hspace{-0.25mm} proof]\hspace{-1.25mm}
By \hspace{-0.25mm}a\hspace{-0.25mm} truncation \hspace{-0.25mm}argument,\hspace{-0.25mm} it \hspace{-0.25mm}suffices\hspace{-0.25mm} to \hspace{-0.25mm}prove\hspace{-0.25mm} the \hspace{-0.25mm}claim\hspace{-0.25mm} when \hspace{-0.25mm}$\varphi$\hspace{-0.25mm} is \hspace{-0.25mm}bounded.\hspace{-0.25mm}
To \hspace{-0.25mm}this\hspace{-0.25mm} end, \hspace{-0.25mm}let
\[
\mathbb{H} \coloneqq \Big\{\varphi \in \ell^{\infty}(\Xi \times \Sigma,\sG \otimes \sH) : \sigma \mapsto \int_{\Xi} \varphi(\xi,\sigma) \, \nu_{\sigma}(d\xi) \text{ is measurable}\Big\}.
\]
Clearly, $\mathbb{H}$ is a vector space that is closed under complex conjugation.
It is closed under sequential bounded convergence by the dominated convergence theorem.
Now, if $G \in \sG$ and $S \in \sH$, then
\[
\int_{\Xi} 1_{G \times S}(\xi,\sigma) \, \nu_{\sigma}(d\xi) = 1_S(\sigma) \, \nu_{\sigma}(G),
\]
for all $\sigma \in \Sigma$. Thus $1_{G \times S} \in \mathbb{H}$ by definition of a complex Markov kernel.
By the multiplicative system theorem (in the form of Corollary \ref{cor.MST}), we conclude that $\ell^{\infty}(\Xi \times \Sigma,\sG \otimes \sH) \subseteq \mathbb{H}$, as desired.
\end{proof}

\begin{prop}\label{prop.opinteggood}
Fix a projection valued measure space $(\Xi,\sG,K,Q)$, another complex Hilbert space $(L,\la \cdot,\cdot\ra_L)$, a measure space $(\Sigma,\sH,\rho)$, and a measurable function $\varphi \colon \Xi \times \Sigma \to \C$.
If $\varphi(\cdot,\sigma) \in L^{\infty}(\Xi,Q)$, for all $\sigma \in \Sigma$, and $A \colon \Sigma \to B(K;L)$ and $B \colon \Sigma \to B(L;K)$ are pointwise weakly measurable, then the maps
\[
\Sigma \ni \sigma \mapsto A(\sigma)\,Q(\varphi(\cdot,\sigma)) \in B(K;L) \; \text{ and } \; \Sigma \ni \sigma \mapsto Q(\varphi(\cdot,\sigma))\,B(\sigma) \in B(L;K)
\]
are pointwise weakly measurable as well.
If in addition
\[
\underline{\int_{\Sigma}} \|A(\sigma)\|_{B(K;L)} \|\varphi(\cdot,\sigma)\|_{L^{\infty}(Q)} \, \rho(d\sigma) + \underline{\int_{\Sigma}} \|\varphi(\cdot,\sigma)\|_{L^{\infty}(Q)}\|B(\sigma)\|_{B(L;K)} \, \rho(d\sigma) < \infty,
\]
then the aforementioned maps are pointwise Pettis (in fact, weak$^*$) integrable.
\end{prop}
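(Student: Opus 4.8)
The plan is to treat the measurability claim first---this is the substance of the proposition---and then deduce weak$^*$ integrability from an operator-norm bound via Corollary \ref{cor.wstarint}. Throughout, the key structural fact is that for a fixed bounded measurable $\psi$ one has $\langle Q(\psi) h, v\rangle_K = \int_\Xi \psi \, dQ_{h,v}$, where $Q_{h,v} = \langle Q(\cdot) h, v\rangle_K$ is a complex measure on $(\Xi,\sG)$. The obstacle is that $\varphi(\cdot,\sigma)$ varies with $\sigma$, so one cannot simply fix a vector and invoke pointwise weak measurability of $A$ or $B$ directly; instead I would encode the $\sigma$-dependence in a complex Markov kernel and appeal to Lemma \ref{lem.randcompmeas}.

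For pointwise weak measurability of $\sigma \mapsto A(\sigma)\,Q(\varphi(\cdot,\sigma))$, fix $h \in K$ and $k \in L$ and define $\nu_\sigma(G) \coloneqq \langle A(\sigma)\,Q(G)\,h, k\rangle_L$ for $G \in \sG$. For each fixed $\sigma$, countable additivity of $Q$ in the weak operator topology shows $\nu_\sigma = Q_{h,\,A(\sigma)^* k}$ is a complex measure; for each fixed $G$, the vector $Q(G)h \in K$ is fixed, so pointwise weak measurability of $A$ makes $\sigma \mapsto \nu_\sigma(G) = \langle A(\sigma)(Q(G)h), k\rangle_L$ measurable. Thus $\nu$ is a complex Markov kernel. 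Since $\varphi(\cdot,\sigma) \in L^\infty(\Xi,Q)$ and $|\nu_\sigma|$ is a finite measure annihilating every $Q$-null set, $\varphi(\cdot,\sigma) \in L^1(\Xi,|\nu_\sigma|)$, so Lemma \ref{lem.randcompmeas} gives measurability of
\[
\sigma \mapsto \int_\Xi \varphi(\xi,\sigma)\,\nu_\sigma(d\xi) = \langle Q(\varphi(\cdot,\sigma))\,h,\, A(\sigma)^* k\rangle_K = \langle A(\sigma)\,Q(\varphi(\cdot,\sigma))\,h, k\rangle_L,
\]
which is exactly the required pointwise weak measurability. The second map is handled identically using the kernel $\mu_\sigma(G) \coloneqq \langle Q(G)\,B(\sigma)\,h, k\rangle_K = \langle B(\sigma)\,h,\, Q(G)k\rangle_K$ (exploiting $Q(G) = Q(G)^*$), whose $\sigma$-measurability comes from pointwise weak measurability of $B$.

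For the integrability claim, $B(K;L)$ and $B(L;K)$ are trivially $\sigma$-weakly closed, so Corollary \ref{cor.wstarint} applies once the operator norm is controlled. Using that the operator norm of $Q(\psi)$ equals $\|\psi\|_{L^\infty(Q)}$, I obtain pointwise the bounds $\|A(\sigma)\,Q(\varphi(\cdot,\sigma))\| \leq \|A(\sigma)\|\,\|\varphi(\cdot,\sigma)\|_{L^\infty(Q)}$ and $\|Q(\varphi(\cdot,\sigma))\,B(\sigma)\| \leq \|\varphi(\cdot,\sigma)\|_{L^\infty(Q)}\,\|B(\sigma)\|$. Monotonicity of the lower integral (Proposition \ref{prop.nonmeasinteg}) then bounds $\underline{\int_\Sigma}\|A(\sigma)\,Q(\varphi(\cdot,\sigma))\|\,\rho(d\sigma)$ and $\underline{\int_\Sigma}\|Q(\varphi(\cdot,\sigma))\,B(\sigma)\|\,\rho(d\sigma)$ by the two finite quantities in the hypothesis. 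With pointwise weak measurability already in hand, Corollary \ref{cor.wstarint} delivers weak$^*$ (hence pointwise Pettis) integrability of both maps. I expect the only genuine difficulty to be the kernel construction together with the verification that $\varphi(\cdot,\sigma)$ is $|\nu_\sigma|$-integrable, so that Lemma \ref{lem.randcompmeas} legitimately applies; everything downstream is routine bookkeeping.
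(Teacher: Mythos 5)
Your proposal is correct and follows essentially the same route as the paper's proof: the same complex Markov kernel $\nu_\sigma(G) = \la A(\sigma)\,Q(G)h,k\ra_L$ (and its analogue for $B$) fed into Lemma \ref{lem.randcompmeas} for measurability, followed by the operator-norm bounds and Corollary \ref{cor.wstarint} for weak$^*$ integrability. Your explicit check that $\varphi(\cdot,\sigma) \in L^1(\Xi,|\nu_\sigma|)$ (since $|\nu_\sigma|$ annihilates $Q$-null sets) is a small point the paper leaves implicit, but otherwise the arguments coincide.
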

\begin{proof}
Fix $\sigma \in \Sigma$, $k \in K$, and $l \in L$.
Then
\[
\big\la A(\sigma)\,Q(\varphi(\cdot,\sigma))\,k,l \big\ra_L = \big\la Q(\varphi(\cdot,\sigma))\,k,A(\sigma)^*l \big\ra_K = \int_{\Xi}\varphi(\xi,\sigma) \, Q_{k,A(\sigma)^*l}(d\xi).
\]
But
\[
\nu_{\sigma}^A(G) \coloneqq Q_{k,A(\sigma)^*l}(G) = \la Q(G)k,A(\sigma)^*l \ra_K = \la A(\sigma)Q(G)k,l \ra_L
\]
defines a complex Markov kernel $\nu^A \colon \Sigma \to M(\Xi,\sG)$ by the pointwise weak measurability of $A$.
We conclude from Lemma \ref{lem.randcompmeas} that $A(\cdot) \int_{\Xi}\varphi(\xi,\cdot) \, Q(d\xi)$ is pointwise weakly measurable.
A similar argument proves that $\int_{\Xi}\varphi(\xi,\cdot) \, Q(d\xi) \, B(\cdot)$ is pointwise weakly measurable.

The second part follows from Corollary \ref{cor.wstarint} because, for all $\sigma \in \Sigma$, we have
\begin{align*}
    \big\|A(\sigma)\,Q(\varphi(\cdot,\sigma))\big\|_{B(K;L)} & \leq \|A(\sigma)\|_{B(K;L)}\|Q(\varphi(\cdot,\sigma))\|_{B(K)} =  \|A(\sigma)\|_{B(K;L)}\|\varphi(\cdot,\sigma)\|_{L^{\infty}(Q)} \; \text{ and}\\
    \big\|Q(\varphi(\cdot,\sigma))\, B(\sigma)\big\|_{B(L;K)} & \leq \|Q(\varphi(\cdot,\sigma))\|_{B(K)}\|B(\sigma)\|_{B(L;K)} = \|\varphi(\cdot,\sigma)\|_{L^{\infty}(Q)}\|B(\sigma)\|_{B(L;K)}. \qedhere
\end{align*}
\end{proof}

\begin{cor}\label{cor.MOIintgood}
Fix $\varphi \in L^{\infty}(\Om_1,P_1) \iotimes \cdots \iotimes L^{\infty}(\Om_{k+1},P_{k+1})$, a $L_P^{\infty}$-IPD $(\Sigma,\rho,\varphi_1,\ldots,\varphi_{k+1})$ of $\varphi$, and $b_1 \in B(H_2;H_1),\ldots, b_k \in B(H_{k+1};H_k)$. If
\begin{align*}
    F(\sigma) & \coloneqq P_1(\varphi_1(\cdot,\sigma))\,b_1 \cdots P_k(\varphi_k(\cdot,\sigma)) \, b_k \, P_{k+1}(\varphi_{k+1}(\cdot,\sigma)) \in B(H_{k+1};H_1) \\
    & = \int_{\Om_1}\varphi_1(\cdot,\sigma) \, dP_1\, b_1 \cdots \int_{\Om_k}\varphi_k(\cdot,\sigma) \, dP_k \,b_k \int_{\Om_{k+1}}\varphi_{k+1}(\cdot,\sigma)\,dP_{k+1} 
\end{align*}
for $\sigma \in \Sigma$, then $F \colon \Sigma \to B(H_{k+1};H_1)$ is pointwise Pettis (in fact, weak$^*$) integrable and
\[
\Bigg\|\int_{\Sigma}F \,d\rho\Bigg\|_{B(H_{k+1};H_1)} \leq \Bigg(\prod_{j=1}^k\|b_j\|_{B(H_{j+1};H_j)}\Bigg)\underline{\int_{\Sigma}}\prod_{m=1}^{k+1}\|\varphi_m(\cdot,\sigma)\|_{L^{\infty}(P_m)}\,\rho(d\sigma). \numberthis\label{eq.firstbound}
\]
\end{cor}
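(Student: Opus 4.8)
The plan is to verify the two hypotheses of the weak$^*$-integrability criterion in Corollary \ref{cor.wstarint} for $F$ --- namely pointwise weak measurability and $\underline{\int_{\Sigma}}\|F\|\,d\rho < \infty$ --- and then read off \eqref{eq.firstbound} from the triangle inequality in Theorem \ref{thm.wintBHK}.\ref{item.wintchar}. The norm estimate is routine, so the real work is the measurability, which I would establish by peeling the product apart one factor at a time and invoking Proposition \ref{prop.opinteggood}.

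For the measurability, introduce the right partial products
\[
R_j(\sigma) \coloneqq P_j(\varphi_j(\cdot,\sigma))\,b_j\,P_{j+1}(\varphi_{j+1}(\cdot,\sigma))\,b_{j+1}\cdots b_k\,P_{k+1}(\varphi_{k+1}(\cdot,\sigma)) \in B(H_{k+1};H_j),
\]
so that $R_{k+1}(\sigma) = P_{k+1}(\varphi_{k+1}(\cdot,\sigma))$ and $R_1 = F$, and prove by downward induction on $j$ that each $R_j$ is pointwise weakly measurable. The base case $R_{k+1}$ is pointwise weakly measurable by Proposition \ref{prop.opinteggood}, applied with $(\Xi,\sG,K,Q) = (\Om_{k+1},\sF_{k+1},H_{k+1},P_{k+1})$ and the constant map $\id_{H_{k+1}}$ in the role of $B$. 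For the inductive step I would use the recursion $R_j(\sigma) = P_j(\varphi_j(\cdot,\sigma))\,\big(b_j\,R_{j+1}(\sigma)\big)$. Left multiplication by the fixed operator $b_j$ preserves pointwise weak measurability, since $\la b_j\,R_{j+1}(\sigma)h,k\ra_{H_j} = \la R_{j+1}(\sigma)h,b_j^*k\ra_{H_{j+1}}$ is measurable in $\sigma$ whenever $R_{j+1}$ is pointwise weakly measurable; hence $\sigma \mapsto b_j\,R_{j+1}(\sigma) \in B(H_{k+1};H_j)$ is pointwise weakly measurable. Applying Proposition \ref{prop.opinteggood} with $(\Xi,\sG,K,Q) = (\Om_j,\sF_j,H_j,P_j)$, $L = H_{k+1}$, $\varphi = \varphi_j$, and $B = b_j R_{j+1}$ --- legitimate because $\varphi_j(\cdot,\sigma) \in L^{\infty}(\Om_j,P_j)$ by requirement \ref{item.nullset} of the IPD --- shows $\sigma \mapsto P_j(\varphi_j(\cdot,\sigma))\,B(\sigma) = R_j(\sigma)$ is pointwise weakly measurable. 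This closes the induction and yields pointwise weak measurability of $F = R_1$.

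For the norm bound, submultiplicativity of the operator norm together with the identity $\|P_m(\varphi_m(\cdot,\sigma))\|_{B(H_m)} = \|\varphi_m(\cdot,\sigma)\|_{L^{\infty}(P_m)}$ gives
\[
\|F(\sigma)\|_{B(H_{k+1};H_1)} \leq \Bigg(\prod_{j=1}^k\|b_j\|_{B(H_{j+1};H_j)}\Bigg)\prod_{m=1}^{k+1}\|\varphi_m(\cdot,\sigma)\|_{L^{\infty}(P_m)}
\]
for every $\sigma \in \Sigma$. Taking lower integrals (using monotonicity and homogeneity from Proposition \ref{prop.nonmeasinteg}) and bounding $\underline{\int_{\Sigma}} \leq \overline{\int_{\Sigma}}$, requirement \ref{item.upperintegofphis} of the $L_P^{\infty}$-IPD shows $\underline{\int_{\Sigma}}\|F\|\,d\rho < \infty$. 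By Corollary \ref{cor.wstarint}, $F$ is then weak$^*$ (hence pointwise Pettis) integrable.

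Finally, the triangle inequality in Theorem \ref{thm.wintBHK}.\ref{item.wintchar} combined with the displayed norm bound yields
\[
\Bigg\|\int_{\Sigma}F\,d\rho\Bigg\|_{B(H_{k+1};H_1)} \leq \underline{\int_{\Sigma}}\|F\|\,d\rho \leq \Bigg(\prod_{j=1}^k\|b_j\|_{B(H_{j+1};H_j)}\Bigg)\underline{\int_{\Sigma}}\prod_{m=1}^{k+1}\|\varphi_m(\cdot,\sigma)\|_{L^{\infty}(P_m)}\,\rho(d\sigma),
\]
which is exactly \eqref{eq.firstbound}. The only genuinely delicate point is the measurability, and it is entirely absorbed into Proposition \ref{prop.opinteggood} (itself resting on the Multiplicative System Theorem through Lemma \ref{lem.randcompmeas}); everything else is bookkeeping of the Hilbert spaces $H_1,\ldots,H_{k+1}$ together with routine estimates.
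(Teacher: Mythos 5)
Your proposal is correct and takes essentially the same route as the paper's proof: an inductive application of the first part of Proposition \ref{prop.opinteggood} to get pointwise weak measurability, the pointwise operator norm bound fed into Corollary \ref{cor.wstarint} for weak$^*$ (hence pointwise Pettis) integrability, and the triangle inequality of Theorem \ref{thm.wintBHK}.\ref{item.wintchar} to deduce \eqref{eq.firstbound}. Your downward induction on the partial products $R_j$ merely spells out the induction that the paper leaves implicit, and it is carried out correctly.
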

\begin{proof}
An inductive application of the first part of Proposition \ref{prop.opinteggood} implies that $F$ is pointwise weakly measurable.
The fact that
\[
\|F(\sigma)\|_{B(H_{k+1};H_1)} \leq \Bigg(\prod_{j=1}^k\|b_j\|_{B(H_{j+1};H_j)}\Bigg)\prod_{m=1}^{k+1}\|\varphi_m(\cdot,\sigma)\|_{L^{\infty}(P_m)}
\]
allows us to conclude from Corollary \ref{cor.wstarint} that $F$ is pointwise Pettis (in fact, weak$^*$) integrable.
The bound \eqref{eq.firstbound} then follows from the triangle inequality in Theorem \ref{thm.wintBHK}\ref{item.wintchar}.
\end{proof}

\begin{nota}[MOI, take I]\label{nota.MOIdepondecomp}
Let $\varphi \in L^{\infty}(\Om_1,P_1) \iotimes \cdots \iotimes L^{\infty}(\Om_{k+1},P_{k+1})$ and $(\Sigma,\rho,\varphi_1,\ldots,\varphi_{k+1})$ be a $L_P^{\infty}$-IPD of $\varphi$.
Write $\boldsymbol{P} = (P_1,\ldots,P_{k+1})$ and, whenever $b = (b_1,\ldots,b_k) \in B(H_2;H_1) \times \cdots \times B(H_{k+1};H_k)$, define $I^{\boldsymbol{P}}(\Sigma,\rho,\varphi_1,\ldots,\varphi_{k+1})[b]$ to be the following pointwise Pettis (weak$^*$) integral, from Corollary \ref{cor.MOIintgood}:
\[
\int_{\Sigma} P_1(\varphi_1(\cdot,\sigma))\,b_1 \cdots P_k(\varphi_k(\cdot,\sigma)) \, b_k \, P_{k+1}(\varphi_{k+1}(\cdot,\sigma)) \, \rho(d\sigma) \in B(H_{k+1};H_1).
\]
\end{nota}

By linearity of the integral and \eqref{eq.firstbound}, the map
\[
I^{\boldsymbol{P}}(\Sigma,\rho,\varphi_1,\ldots,\varphi_{k+1}) \colon B(H_2;H_1) \times \cdots \times B(H_{k+1};H_k) \to B(H_{k+1};H_1)
\]
is $k$-linear and bounded.\hspace{-0.2mm}
Our next\hspace{0.4mm}---\hspace{0.4mm}and most important\hspace{0.4mm}---\hspace{0.4mm}task is to prove this map is ultraweakly continuous in each argument.
As described in Section \ref{sec.keying}, this is quite technical when $H_1,\ldots,H_{k+1}$ are not separable.

\begin{lem}\label{lem.Pintcont}
Suppose that $(\Xi,\sG,K,Q)$ is a projection valued measure space.
If $(\varphi_n)_{n \in \N} \in L^{\infty}(\Xi,Q)^{\N}$, $\sup_{n \in \N}\|\varphi_n\|_{L^{\infty}(Q)} < \infty$, and $\varphi_n \to \varphi \in L^{\infty}(\Xi,Q)$ pointwise $Q$-a.e. as $n \to \infty$, then $Q(\varphi_n) \to Q(\varphi)$ in the S$^*$OT as $n \to \infty$.
\end{lem}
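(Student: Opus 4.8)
The plan is to reduce the asserted strong$^*$ convergence to a scalar Dominated Convergence Theorem argument, exploiting that $Q$-integration is a $\ast$-homomorphism. First I would set $\psi_n \coloneqq \varphi_n - \varphi \in L^{\infty}(\Xi,Q)$ and record that $\psi_n \to 0$ pointwise $Q$-almost everywhere, that $M \coloneqq \sup_{n \in \N}\|\psi_n\|_{L^{\infty}(Q)} \leq \sup_{n \in \N}\|\varphi_n\|_{L^{\infty}(Q)} + \|\varphi\|_{L^{\infty}(Q)} < \infty$, and that $Q(\varphi_n) - Q(\varphi) = Q(\psi_n)$ by linearity. Since the S$^*$OT on $B(K)$ is generated by the seminorms $A \mapsto \|Ah\|_K + \|A^*k\|_K$ (for $h,k \in K$), it then suffices to show $\|Q(\psi_n)h\|_K \to 0$ and $\|Q(\psi_n)^*k\|_K \to 0$ for each $h,k \in K$.

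The key step is to use that $Q$ is a $\ast$-homomorphism, so $Q(\psi_n)^* = Q(\overline{\psi_n})$ and hence
\[
Q(\psi_n)^*Q(\psi_n) = Q(\overline{\psi_n}\,\psi_n) = Q(|\psi_n|^2) = Q(\psi_n\,\overline{\psi_n}) = Q(\psi_n)Q(\psi_n)^*.
\]
From this I would obtain, for each $h,k \in K$,
\[
\|Q(\psi_n)h\|_K^2 = \la Q(|\psi_n|^2)h,h\ra_K = \int_{\Xi}|\psi_n|^2\,dQ_{h,h}, \qquad \|Q(\psi_n)^*k\|_K^2 = \int_{\Xi}|\psi_n|^2\,dQ_{k,k},
\]
where $Q_{h,h} = \la Q(\cdot)h,h\ra_K$ is a finite positive measure of total mass $\|h\|_K^2$. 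Notice that this single computation handles both the strong and the adjoint parts at once, which is precisely why strong$^*$ (rather than merely strong) convergence comes out.

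Finally I would apply the classical Dominated Convergence Theorem to the finite measures $Q_{h,h}$ and $Q_{k,k}$: the $Q$-null set off which $\psi_n \to 0$ is also $Q_{h,h}$-null (because $Q(N) = 0$ forces $Q_{h,h}(N) = \la Q(N)h,h\ra_K = 0$), so $|\psi_n|^2 \to 0$ $Q_{h,h}$-almost everywhere, dominated by the constant $M^2$, which lies in $L^1(Q_{h,h})$ since $Q_{h,h}$ is finite; the same applies to $Q_{k,k}$. Both integrals therefore tend to $0$, as needed. The only delicate point -- the ``main obstacle,'' modest as it is -- is the bookkeeping that converts pointwise $Q$-almost-everywhere convergence and the essential-supremum bound into honest almost-everywhere statements and an honest $L^1$ dominating function with respect to each scalar measure $Q_{h,h}$; once that is in place the argument is entirely routine.
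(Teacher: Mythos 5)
Your proof is correct and takes essentially the same route as the paper's: reduce to $\varphi \equiv 0$ by linearity, use the $\ast$-homomorphism property of $Q$-integration to write $\|Q(\psi_n)h\|_K^2 = \int_{\Xi}|\psi_n|^2\,dQ_{h,h}$, and conclude with the scalar Dominated Convergence Theorem applied to the finite measures $Q_{h,h}$. The only cosmetic difference is that the paper disposes of the adjoint part by noting $Q(\psi)^* = Q(\overline{\psi})$ and applying the SOT argument to the conjugates, whereas you handle both the strong and adjoint parts simultaneously via normality of $Q(\psi_n)$.
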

\begin{proof}
Since $Q$ is linear and $Q(\psi)^* = Q(\overline{\psi})$, for all $\psi \in L^{\infty}(\Xi,Q)$, it suffices to assume $\varphi \equiv 0$ and, in this case, to prove $Q(\varphi_n) \to 0$ in the SOT as $n \to \infty$.
To this end, let $h \in K$.
Then the dominated convergence theorem gives, as $n \to \infty$,
\[
\|Q(\varphi_n)h\|_K^2 = \la Q(\varphi_n)^*Q(\varphi_n)h,h\ra_K = \la Q(|\varphi_n|^2)h,h \ra_K = \int_{\Xi} |\varphi_n|^2 \, dQ_{h,h} \to 0. \qedhere
\]
\end{proof}

\begin{lem}\label{lem.SOTS1}
Let $K$, $L$ be Hilbert spaces, $(Q_n)_{n \in \N} \in B(K;L)^{\N}$, $Q \in B(K;L)$, and $C \in \cS_1(L;K)$.
If $Q_n \to Q$ in the SOT, then $Q_nC \to QC$ in $\cS_1(L)$ as $n \to \infty$.
If $Q_n^* \to Q^*$ in the SOT, then $CQ_n \to CQ$ in $\cS_1(K)$ as $n \to \infty$.
In particular, if $Q_n \to Q$ in the S$^*$OT, then $Q_nC \to QC$ in $\cS_1(L)$ and $CQ_n \to CQ$ in $\cS_1(K)$ as $n \to \infty$.
\end{lem}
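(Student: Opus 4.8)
The plan is to leverage the fact that $\cS_1$ is an ideal together with the density of finite-rank operators in $\cS_1$, using these to upgrade the given strong convergence (which by itself is far too weak) into trace-norm convergence. The whole argument reduces to the first assertion, from which the second follows by adjoints and the third is immediate.

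First I would prove the first assertion. By the Principle of Uniform Boundedness, SOT convergence $Q_n \to Q$ guarantees $M \coloneqq \sup_{n \in \N}\|Q_n - Q\| < \infty$; writing $R_n \coloneqq Q_n - Q \in B(K;L)$, it suffices to show $\|R_nC\|_{\cS_1} \to 0$. Fix $\e > 0$. Since finite-rank operators are dense in $\cS_1(L;K)$ (the first item of Theorem~\ref{thm.Schatten}), I would pick a finite-rank $C_0 \colon L \to K$ with $\|C - C_0\|_{\cS_1} < \e$. H\"older's inequality for the Schatten norms (Theorem~\ref{thm.Schatten}.\ref{item.SchattenHolder}, with exponents $\infty$ and $1$) then furnishes the ideal bound $\|R_n(C-C_0)\|_{\cS_1} \leq \|R_n\|\,\|C-C_0\|_{\cS_1} \leq M\e$, so that
\[
\|R_nC\|_{\cS_1} \leq M\e + \|R_nC_0\|_{\cS_1}.
\]
For the finite-rank term I would write $C_0 = \sum_{j=1}^m \la \cdot, x_j \ra_L\, y_j$ with $x_j \in L$, $y_j \in K$, so that $R_nC_0 = \sum_{j=1}^m \la \cdot, x_j \ra_L\,(R_n y_j)$, and use the trace norm of a rank-one operator together with the triangle inequality in $\cS_1$ to get
\[
\|R_nC_0\|_{\cS_1} \leq \sum_{j=1}^m \|x_j\|_L\,\|R_n y_j\|_L.
\]
Each summand tends to $0$ because $R_n \to 0$ in the SOT, giving $\limsup_{n \to \infty}\|R_nC\|_{\cS_1} \leq M\e$; since $\e$ is arbitrary, $R_nC \to 0$, i.e.\ $Q_nC \to QC$ in $\cS_1(L)$.

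The second assertion follows by passing to adjoints. Since $C^* \in \cS_1(K;L)$ and $\|A^*\|_{\cS_1} = \|A\|_{\cS_1}$ (the second item of Theorem~\ref{thm.Schatten}), applying the first assertion with the roles of $K$ and $L$ interchanged to the operators $Q_n^*, Q^* \in B(L;K)$ (which converge in the SOT) and to $C^* \in \cS_1(K;L)$ yields $Q_n^*C^* \to Q^*C^*$ in $\cS_1(K)$. As $(CQ_n)^* = Q_n^*C^*$ and $\|\cdot\|_{\cS_1}$ is adjoint-invariant, this gives $CQ_n \to CQ$ in $\cS_1(K)$. Finally, the ``in particular'' clause is immediate: convergence in the S$^*$OT means precisely that both $Q_n \to Q$ and $Q_n^* \to Q^*$ in the SOT, so both previous assertions apply at once.

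The only genuinely delicate point — and the crux of the whole argument — is that strong convergence is much weaker than $\cS_1$-convergence, so it cannot be invoked directly; the finite-rank truncation of $C$, controlled via the ideal inequality, is exactly what bridges this gap. Everything else is routine.
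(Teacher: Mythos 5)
Your proof is correct, but it takes a genuinely different route from the paper's. You run the classical approximation argument: the Principle of Uniform Boundedness gives $\sup_n\|Q_n-Q\|<\infty$, finite-rank operators are $\|\cdot\|_{\cS_1}$-dense in $\cS_1(L;K)$ (Theorem \ref{thm.Schatten}, first item), the H\"{o}lder/ideal inequality controls the error term, and an explicit rank-one computation handles the finite-rank piece. The paper instead reduces to $Q=0$ and proves an intermediate \emph{Hilbert--Schmidt} statement: for $D\in\cS_2(L;K)$ one has $\|Q_nD\|_{\cS_2}^2=\sum_{e\in\mathcal{E}}\|Q_nDe\|_L^2\to 0$ by the Dominated Convergence Theorem over an orthonormal basis $\mathcal{E}$ (the dominating family $\|De\|_K^2\sup_j\|Q_j\|^2$ is summable precisely because $D\in\cS_2$, which is how non-separability is handled); it then factors $C=U|C|=\big(U|C|^{\frac{1}{2}}\big)|C|^{\frac{1}{2}}$ via the polar decomposition into a product of two $\cS_2$ operators and concludes with H\"{o}lder's inequality, $\|Q_nC\|_{\cS_1}\leq\big\|Q_nU|C|^{\frac{1}{2}}\big\|_{\cS_2}\big\||C|^{\frac{1}{2}}\big\|_{\cS_2}$. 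Both arguments invoke the uniform bound coming from uniform boundedness, and both are fully valid in the non-separable setting. What distinguishes them: your route leans on the density of finite-rank operators in $\cS_1$ (which ultimately rests on the singular value decomposition), whereas the paper's route needs only the definition of the $\cS_2$-norm, dominated convergence on the basis index set, and the polar decomposition -- a factorization trick that is reused elsewhere in the literature and keeps the proof self-contained at the level of the tools the paper has already set up. Your adjoint argument for the second assertion and the reading of S$^*$OT convergence as simultaneous SOT convergence of $Q_n$ and $Q_n^*$ match the paper's in substance.
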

\begin{proof}
Without loss of generality, we can take $Q=0$.
Assume $Q_n \to 0$ in the SOT as $n \to \infty$, and let $D \in \cS_2(L;K)$ and $\mathcal{E} \subseteq L$ be an orthonormal basis.
Then $\|Q_nD\|_{\cS_2}^2 = \sum_{e \in \mathcal{E}} \|Q_nDe\|_L^2 \to 0$ as $n \to \infty$ by the dominated convergence theorem.
More explicitly, we have $\|Q_nDe\|_L \to 0$ as $n \to \infty$, for all $e \in \mathcal{E}$; and $\|Q_nDe\|_L^2 \leq \|De\|_K^2 \sup_{j \in \N} \|Q_j\|_{B(K;L)}^2 \in L^1(\mathcal{E},$ counting$)$ because $D \in \cS_2(L;K)$.
Next, assume $Q_n^* \to 0$ in the SOT as $n \to \infty$.
Then $\|DQ_n\|_{\cS_2} = \|Q_n^*D^*\|_{\cS_2} \to 0$ as $n \to \infty$ by the previous argument.
Finally, let $C = U|C|$ be the polar decomposition of $C \in \cS_1(L;K)$.
Then $|C|^{\frac{1}{2}} \in \cS_2(L)$ and $U|C|^{\frac{1}{2}} \in \cS_2(L;K)$, so
\begin{align*}
    \|Q_nC\|_{\cS_1} & = \big\|Q_nU|C|^{\frac{1}{2}}|C|^{\frac{1}{2}}\big\|_{\cS_1} \leq \big\|Q_nU|C|^{\frac{1}{2}}\big\|_{\cS_2}\big\||C|^{\frac{1}{2}}\big\|_{\cS_2} \to 0 \; \text{ or} \\
    \|CQ_n\|_{\cS_1} & = \big\|U|C|^{\frac{1}{2}}|C|^{\frac{1}{2}}Q_n\big\|_{\cS_1} \leq \big\|U|C|^{\frac{1}{2}}\big\|_{\cS_2} \big\||C|^{\frac{1}{2}}Q_n\big\|_{\cS_2} \to 0
\end{align*}
if $Q_n \to 0$ or $Q_n^* \to 0$ in the SOT as $n \to \infty$, respectively, by H\"{o}lder's inequality for the Schatten norms and what we just proved.
\end{proof}

We are now finally prepared to prove Theorem \ref{thm.traceofinteg}.

\begin{proof}[Proof \hspace{-0.25mm}of\hspace{-0.25mm} Theorem \ref{thm.traceofinteg}]\hspace{-1.25mm}
The \hspace{-0.25mm}first\hspace{-0.25mm} conclusion \hspace{-0.25mm}is\hspace{-0.25mm} clear \hspace{-0.25mm}from\hspace{-0.25mm} Proposition \hspace{-0.25mm}\ref{prop.opinteggood}\hspace{-0.25mm} (to \hspace{-0.25mm}see\hspace{-0.25mm} that $A(\cdot)\int_{\Xi} \varphi(\xi,\cdot)\,Q(d\xi)$ and $\int_{\Xi} \varphi(\xi,\cdot)\,Q(d\xi)\,A(\cdot)$ are pointwise Pettis integrable) and Theorem \ref{thm.Spinteg} (to see that their pointwise Pettis integrals belong to $\cS_1(K)$).
For the second, we use the multiplicative system theorem.
Let
\[
\mathbb{H} \coloneqq \{\varphi \in \ell^{\infty}(\Xi \times \Sigma, \sG \otimes \sH) : \eqref{eq.traceofinteg} \text{ holds for } A \text{ and } A^*\}.
\]
(Note $A^*$ and $A$ satisfy the same hypotheses.)
Clearly $\mathbb{H}$ is a vector space.
To show $\mathbb{H}$ is closed under complex conjugation, note that if $\varphi \in \mathbb{H}$ and $B \in \{A,A^*\}$, then
\begin{align*}
    \Tr\Bigg(\int_{\Sigma} B(\sigma) \, Q\Big(\overline{\varphi(\cdot,\sigma)}\Big) \,\rho(d\sigma)\Bigg) & = \Tr\Bigg(\Bigg(\int_{\Sigma} Q(\varphi(\cdot,\sigma)) \, B(\sigma)^*  \,\rho(d\sigma)\Bigg)^*\Bigg) = \overline{\Tr\Bigg(\int_{\Sigma} Q(\varphi(\cdot,\sigma)) \, B(\sigma)^*  \,\rho(d\sigma)\Bigg)} \\
    & = \overline{\Tr\Bigg(\int_{\Sigma} B(\sigma)^* \, Q(\varphi(\cdot,\sigma)) \, \rho(d\sigma)\Bigg)} =  \Tr\Bigg(\Bigg(\int_{\Sigma} B(\sigma)^* \, Q(\varphi(\cdot,\sigma)) \, \rho(d\sigma)\Bigg)^*\Bigg) \\
    & = \Tr\Bigg(\int_{\Sigma} Q\Big(\overline{\varphi(\cdot,\sigma)}\Big)\,B(\sigma)\,\rho(d\sigma)  \Bigg).
\end{align*}
Therefore, $\overline{\varphi} \in \mathbb{H}$.
Next, we show $\mathbb{H}$ is closed under sequential bounded convergence.
Suppose $(\varphi_n)_{n \in \N} \in \mathbb{H}^{\N}$ converges boundedly to $\varphi \in \ell^{\infty}(\Xi \times \Sigma, \sG \otimes \sH)$. We claim that
\begin{align*}
    & \int_{\Sigma} B(\sigma) \, Q(\varphi_n(\cdot,\sigma)) \,\rho(d\sigma) \to \int_{\Sigma} B(\sigma)  \, Q(\varphi(\cdot,\sigma)) \,\rho(d\sigma) \; \text{ and}\\
    & \int_{\Sigma} Q(\varphi_n(\cdot,\sigma)) \, B(\sigma)  \,\rho(d\sigma) \to \int_{\Sigma} Q(\varphi(\cdot,\sigma)) \, B(\sigma)  \,\rho(d\sigma) 
\end{align*}
in $\cS_1(K)$ as $n \to \infty$ for $B \in \{A,A^*\}$.
Indeed, let $\sigma \in \Sigma$.
By Lemma \ref{lem.Pintcont}, $Q(\varphi_n(\cdot,\sigma)) \to Q(\varphi(\cdot,\sigma))$ in the S$^*$OT as $n \to \infty$.
By Lemma \ref{lem.SOTS1},
\begin{align*}
    & B(\sigma) \,Q(\varphi_n(\cdot,\sigma))  \to B(\sigma)  \,Q(\varphi(\cdot,\sigma)) \; \text{ and } \; Q(\varphi_n(\cdot,\sigma)) \, B(\sigma)  \to Q(\varphi(\cdot,\sigma)) \, B(\sigma) 
\end{align*}
in $\cS_1(K)$ as $n \to \infty$.
Finally, note that
\[
\overline{\int_{\Sigma}}\sup_{n \in \N}\big\|B(\sigma) \,Q(\varphi_n(\cdot,\sigma)) \big\|_{\cS_1}\,\rho(d\sigma)+\overline{\int_{\Sigma}}\sup_{n \in \N}\big\| Q(\varphi_n(\cdot,\sigma)) \,B(\sigma)\big\|_{\cS_1} \,\rho(d\sigma)
\]
is bounded above by $2\rho(\Sigma)\sup_{\sigma \in \Sigma}\|B(\sigma)\|_{\cS_1}\,\sup_{n \in \N}\|\varphi_n\|_{\ell^{\infty}(\Xi \times \Sigma)} < \infty$.
The claim then follows from Corollary \ref{cor.SchattenDCT}.
This establishes \eqref{eq.traceofinteg} for $\varphi$ and $B$ by allowing us to take $n \to \infty$ in \eqref{eq.traceofinteg} for $\varphi_n$ and $B$.
Thus $\varphi \in \mathbb{H}$, as desired.

Finally, if $G \in \sG$, $S \in \sH$, $\varphi \coloneqq 1_{G \times S}$, and $B \in \{A,A^*\}$, then
\begin{align*}
    \Tr\Bigg(\int_{\Sigma} B(\sigma)\, Q(\varphi(\cdot,\sigma))\,\rho(d\sigma)\Bigg) & =  \Tr\Bigg(\int_{\Sigma} B(\sigma) \,1_S(\sigma) \,Q(G) \,\rho(d\sigma)\Bigg)  = \Tr\Bigg(\int_{\Sigma} 1_S(\sigma) \,B(\sigma) \, \rho(d\sigma) \,Q(G) \Bigg) \\
    & = \Tr\Bigg(Q(G)\int_{\Sigma} 1_S(\sigma) \,B(\sigma) \, \rho(d\sigma) \Bigg) = \Tr\Bigg(\int_{\Sigma} Q(G)\,1_S(\sigma) \,B(\sigma) \, \rho(d\sigma) \Bigg) \\
    & = \Tr\Bigg( \int_{\Sigma} Q(\varphi(\cdot,\sigma)) \,B(\sigma) \,\rho(d\sigma)\Bigg)
\end{align*}
by Theorem \ref{thm.Schatten}\ref{item.Trflip}.
Thus $\varphi \in \mathbb{H}$.
We conclude from the multiplicative system theorem (Corollary \ref{cor.MST}) that $\ell^{\infty}(\Xi \times \Sigma, \sG \otimes \sH) \subseteq \mathbb{H}$, as desired.
\end{proof}
\begin{rem}\label{rem.easyTrsep}
There is an easy argument available when $K$ is separable.
Indeed, since any orthonormal basis is countable, one can show that if $(\Sigma,\sH,\rho)$ is any measure space and $A \colon \Sigma \to \cS_1(K) \subseteq B(K)$ is pointwise weakly measurable with $\underline{\int_{\Sigma}} \|A(\sigma)\|_{\cS_1}\,\rho(d\sigma) < \infty$, then $\Tr\big(\int_{\Sigma}A(\sigma) \, \rho(d\sigma)\big) = \int_{\Sigma}\Tr(A(\sigma)) \, \rho(d\sigma)$.
The result (actually, a more general result) therefore follows from pointwise application of Theorem \ref{thm.Schatten}\ref{item.Trflip}.
\end{rem}

\begin{thm}\label{thm.S1integrability}
Fix $\varphi \in L^{\infty}(\Om_1,P_1) \iotimes \cdots \iotimes L^{\infty}(\Om_{k+1},P_{k+1})$, a $L_P^{\infty}$-integral projective decomposition $(\Sigma,\rho,\varphi_1,\ldots,\varphi_{k+1})$ of $\varphi$, and
\[
b = (b_1,\ldots,b_k) \in B(H_2;H_1) \times \cdots \times B(H_{k+1};H_k).
\pagebreak
\]
If $b_j \in \cS_1(H_{j+1};H_j)$ for some $j \in \{1,\ldots,k\}$, then
\[
\Sigma \ni \sigma \mapsto P_1(\varphi_1(\cdot,\sigma))\,b_1\cdots P_k(\varphi_k(\cdot,\sigma))\,b_k\,P_{k+1}(\varphi_{k+1}(\cdot,\sigma)) \in \cS_1(H_{k+1};H_1)
\]
is Gel'fand--Pettis integrable as a map $\Sigma \to (\cS_1(H_{k+1};H_1),\|\cdot\|_{\cS_1})$, and its Gel'fand--Pettis integral (over $\Sigma$) is $I^{\boldsymbol{P}}(\Sigma,\rho,\varphi_1,\ldots,\varphi_{k+1})[b] \in \cS_1(H_{k+1};H_1)$.
\end{thm}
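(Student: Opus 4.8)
The plan is to establish three things in order: that $F$ takes values in $\cS_1(H_{k+1};H_1)$ and is weak$^*$ integrable there, that this integral coincides with the $B(H_{k+1};H_1)$-valued integral $I^{\boldsymbol{P}}(\Sigma,\rho,\varphi_1,\ldots,\varphi_{k+1})[b]$ already produced by Corollary \ref{cor.MOIintgood}, and---the real content---that $F$ is in fact Gel'fand-Pettis integrable for the full norm topology of $\cS_1$. First I would dispose of the easy parts. Since $b_j \in \cS_1$ while the remaining $b_i$ and all $P_m(\varphi_m(\cdot,\sigma))$ are bounded, H\"older's inequality (Theorem \ref{thm.Schatten}.\ref{item.SchattenHolder}) gives $F(\sigma) \in \cS_1(H_{k+1};H_1)$ with $\|F(\sigma)\|_{\cS_1} \le \big(\prod_{i \ne j}\|b_i\|\big)\|b_j\|_{\cS_1}\prod_m \|\varphi_m(\cdot,\sigma)\|_{L^{\infty}(P_m)}$, whose upper integral is finite by the defining property of the $L_P^{\infty}$-IPD. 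Using $\cS_1(H_{k+1};H_1) \cong \cK(H_1;H_{k+1})^*$ (Theorem \ref{thm.Schatten}.\ref{item.Schattendual}), the map $\sigma \mapsto \Tr(F(\sigma)B)$ is measurable for finite-rank $B$ (a finite sum of matrix entries, measurable by Theorem \ref{thm.wintBHK}.\ref{item.wmeaschar}), hence for all $B \in \cK(H_1;H_{k+1})$ via the bound $|\Tr(F(\sigma)(B-B_n))| \le \|F(\sigma)\|_{\cS_1}\|B-B_n\|$; Proposition \ref{prop.wintext}.\ref{item.weakstarintexist} then yields weak$^*$ integrability of $F$ in $\cS_1$. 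Pairing the resulting integral against rank-one operators recovers the matrix entries $\la (\int_{\Sigma} F\,d\rho)k,h\ra_{H_1}$, so it agrees with $I^{\boldsymbol{P}}(\Sigma,\rho,\varphi_1,\ldots,\varphi_{k+1})[b]$.

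The crux is to promote this weak$^*$-in-$\cK^*$ integrability to Gel'fand-Pettis integrability in $(\cS_1,\|\cdot\|_{\cS_1})$, whose dual is all of $B(H_1;H_{k+1})$ (Theorem \ref{thm.Schatten}.\ref{item.Schattendual}). Concretely, I must show $\Tr\big((\int_{\Sigma} F\,d\rho)\,b\big) = \int_{\Sigma}\Tr(F(\sigma)\,b)\,d\rho$ for \emph{every} $b \in B(H_1;H_{k+1})$, not merely for compact $b$. This does not follow formally, since $\Tr = \la\cdot,\id\ra$ is not weak$^*$ continuous on $\cS_1$; and the separable-case trick of approximating $b$ by finite-rank operators and invoking dominated convergence breaks down, because finite-rank operators are only $\sigma$-weakly dense via a \emph{net}, whereas the Dominated Convergence Theorem is a sequential statement. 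This is exactly the obstruction flagged in Section \ref{sec.keying}, and I expect it to be the main difficulty.

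To get around it I would first reduce, by a truncation argument, to the hypotheses of Theorem \ref{thm.traceofinteg}. Splitting the $\sigma$-finite $\Sigma$ into finite-measure pieces (Proposition \ref{prop.nonmeasinteg}.\ref{item.disjunupint}) and replacing each $\varphi_m$ by $\varphi_m\,1_{\{|\varphi_m| \le N\}}$ produces integrands with $\sup_{\sigma}\|F(\sigma)\|_{\cS_1} < \infty$; Lemmas \ref{lem.Pintcont} and \ref{lem.SOTS1} together with the Schatten Dominated Convergence Theorem (Corollary \ref{cor.SchattenDCT}) then let me recover the general case afterward, propagating both the identity and the measurability of $\sigma \mapsto \Tr(F(\sigma)b)$ through the limit and through the sum over the finite-measure pieces. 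In the reduced case the key manoeuvre is to cyclically rotate the single trace-class factor $b_j$ to an outermost position: by invariance of the trace under cyclic permutation, $\Tr(F(\sigma)b) = \Tr(b_j\,J(\sigma))$, where $J(\sigma)$ is the product of the remaining (bounded) factors together with $b$, a map that is weak$^*$ integrable in $B$ by Corollary \ref{cor.MOIintgood}. Since $b_j \in \cS_1$ lies in the predual of $B$, the functional $J \mapsto \Tr(b_j J)$ is $\sigma$-weakly continuous, so the interchange $\int_{\Sigma}\Tr(b_j J(\sigma))\,d\rho = \Tr\big(b_j\int_{\Sigma} J\,d\rho\big)$ is now automatic---this is precisely where placing $b_j$ outside pays off---and the same computation shows $\sigma \mapsto \Tr(F(\sigma)b)$ is measurable.

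It then remains to rotate the product back inside $\Tr\big(\int_{\Sigma}(\cdot)\,d\rho\big)$ in order to recover $\Tr\big((\int_{\Sigma} F\,d\rho)b\big)$. Each such step---moving a single $P_m(\varphi_m(\cdot,\sigma))$ factor from one end of the product to the other under $\Tr\big(\int_{\Sigma}(\cdot)\,d\rho\big)$---is justified by Theorem \ref{thm.traceofinteg}, applied with $Q = P_m$ and with the role of $A$ played by the product of the other factors (which contains $b_j$ and is therefore trace-class with uniformly bounded $\cS_1$-norm in the reduced setting); its multiplicative-system proof carries over to products of operators between the distinct spaces $H_1,\ldots,H_{k+1}$, since the ingredients---Lemmas \ref{lem.Pintcont} and \ref{lem.SOTS1} and cyclicity of the trace---are all available in that generality. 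The intervening constant factors $b$ and $b_i$ are moved by ordinary cyclicity of the trace of a fixed integral. Chaining these rotations closes the identity for all $b \in B(H_1;H_{k+1})$, and together with the first paragraph this gives Gel'fand-Pettis integrability of $F$ in $(\cS_1(H_{k+1};H_1),\|\cdot\|_{\cS_1})$ with integral $I^{\boldsymbol{P}}(\Sigma,\rho,\varphi_1,\ldots,\varphi_{k+1})[b]$.
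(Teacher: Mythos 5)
Your proposal is correct and takes essentially the same route as the paper's own proof: the same reductions (splitting $\Sigma$ into finite-measure pieces, truncating the $\varphi_m$'s and passing to the limit via Lemmas \ref{lem.Pintcont}, \ref{lem.SOTS1} and Corollary \ref{cor.SchattenDCT}), the same cyclic rotation of the product carried out by alternating Theorem \ref{thm.traceofinteg} with $\Tr(AB)=\Tr(BA)$, and the same final appeal to the $\sigma$-weak continuity of $c \mapsto \Tr(c\,b_j)$ once $b_j$ sits outside. The only cosmetic deviations are that you re-derive $\cS_1$-membership through the $\cK^*$-duality (in effect the paper's second proof of Theorem \ref{thm.Spinteg} for $p=1$) where the paper simply cites Theorem \ref{thm.Spinteg}, and that your worry about extending Theorem \ref{thm.traceofinteg} to operators between distinct spaces is unnecessary: in each rotation the companion map $A(\sigma)$ is an endomorphism of the single space $H_m$ on which $P_m$ acts, so the theorem applies as stated.
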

\begin{proof}
First, notice that $I^{\boldsymbol{P}}(\Sigma,\rho,\varphi_1,\ldots,\varphi_{k+1})[b] \in \cS_1(H_{k+1};H_1)$ by Theorem \ref{thm.Spinteg}.
Now, for $\sigma \in \Sigma$, write
\[
F(\sigma) \coloneqq P_1(\varphi_1(\cdot,\sigma))\,b_1\cdots P_k(\varphi_k(\cdot,\sigma))\,b_k\,P_{k+1}(\varphi_{k+1}(\cdot,\sigma)) \in \cS_1(H_{k+1};H_1).
\]
By definition of Gel'fand--Pettis integrals and the identification (Theorem \ref{thm.Schatten}\ref{item.Schattendual}) of $B(H_1;H_{k+1})$ as $\cS_1(H_{k+1};H_1)^*$, we must show that if $b_{k+1} \in B(H_1;H_{k+1})$, then:
\begin{enumerate}[label=(\arabic*), leftmargin=2\parindent]
    \item $\Sigma \ni \sigma \mapsto \Tr(F(\sigma)\,b_{k+1}) \in \C$ is measurable;\label{item.Trmeas}
    \item $\int_{\Sigma}|\Tr(F(\sigma)\,b_{k+1})|\,\rho(d\sigma) < \infty$; and\label{item.Trint}
    \item $\Tr\big(I^{\boldsymbol{P}}(\Sigma,\rho,\varphi_1,\ldots,\varphi_{k+1})[b]\,b_{k+1}\big) = \int_{\Sigma} \Tr(F(\sigma)\,b_{k+1})\,\rho(d\sigma)$.\label{item.Trwint}
\end{enumerate}
This suffices because if $S \in \sH$, then we can apply the above to
\[
(S,\rho|_S,\varphi_1|_{\Om_1 \times S},\ldots,\varphi_{k+1}|_{\Om_{k+1} \times S})
\]
to conclude that the $\cS_1$-weak integral over $S$ of $F$ is
\[
I^{\boldsymbol{P}}(S,\rho|_S,\varphi_1|_{\Om_1 \times S},\ldots,\varphi_{k+1}|_{\Om_{k+1} \times S})[b].
\]
We take each item in turn.

\ref{item.Trmeas} If $\sigma \in \Sigma$, then
\begin{align*}
    \Tr(F(\sigma)\,b_{k+1}) & = \Tr(P_1(\varphi_1(\cdot,\sigma))\,b_1\cdots P_k(\varphi_k(\cdot,\sigma))\,b_k\,P_{k+1}(\varphi_{k+1}(\cdot,\sigma))\,b_{k+1}) \\
    & = \Tr(\underbrace{P_{j+1}(\varphi_{j+1}(\cdot,\sigma))\,b_{j+1}\cdots P_{k+1}(\varphi_{k+1}(\cdot,\sigma))\,b_{k+1}\,P_1(\varphi_1(\cdot,\sigma))\,b_1\cdots P_j(\varphi_j(\cdot,\sigma))}_{\coloneqq F_j(\sigma)}\,b_j)
\end{align*}
by Theorem \ref{thm.Schatten}\ref{item.Trflip}. But $F_j \colon \Sigma \to B(H_{j+1};H_j)$ is pointwise Pettis integrable by Corollary \ref{cor.MOIintgood}, and $b_j \in \cS_1(H_{j+1};H_j)$.
Thus $\Tr(F(\cdot)\,b_{k+1}) = \Tr(F_j(\cdot)\,b_j)$ is measurable by Theorem \ref{thm.wintBHK}\ref{item.wmeaschar}.

\ref{item.Trint} We have
\begin{align*}
    \int_{\Sigma} |\Tr(F(\sigma)\,b_{k+1})|\,\rho(d\sigma) & \leq \underline{\int_{\Sigma}} \|F(\sigma)\,b_{k+1}\|_{\cS_1}\,\rho(d\sigma) \leq \|b_{k+1}\|\underline{\int_{\Sigma}} \|F\|_{\cS_1}\,d\rho \\
    & \leq \|b_j\|_{\cS_1}\prod_{p \neq j}\|b_p\| \underline{\int_{\Sigma}}\prod_{j_0=1}^{k+1} \|\varphi_{j_0}(\cdot,\sigma)\|_{L^{\infty}(P_{j_0})}\,\rho(d\sigma) < \infty,
\end{align*}
as desired.
(Note that we have been sloppy with our notation for different operator norms above.)

\ref{item.Trwint} This step is the most involved;
we reduce it to the result of Theorem \ref{thm.traceofinteg}.
First, let $(\Sigma_n)_{n \in \N} \in \sH^{\N}$ be a countable partition of $\Sigma$ by sets of finite $\rho$-measure.
Then
\[
\int_{\Sigma} \Tr(F(\sigma)\,b_{k+1})\,\rho(d\sigma) = \sum_{n=1}^{\infty}\int_{\Sigma_n} \Tr(F(\sigma)\,b_{k+1})\,\rho(d\sigma)
\]
by the last step.
Now, if $F_0 \colon \Sigma \to B(H_{k+1};H_1)$ is pointwise Pettis integrable, then 
\[
\int_{\Sigma} F_0 \, d\rho = \text{WOT-}\sum_{n=1}^{\infty}\int_{\Sigma_n}F_0 \, d\rho \; \text{ and } \; \sum_{n=1}^{\infty} \Bigg\|\int_{\Sigma_n}F_0 \, d\rho\Bigg\|_{\cS_1} \leq \sum_{n=1}^{\infty} \underline{\int_{\Sigma_n}} \|F_0\|_{\cS_1} \, d\rho = \underline{\int_{\Sigma}}\|F_0\|_{\cS_1}\,d\rho
\]
by definition of the pointwise Pettis integral, Theorem \ref{thm.Spinteg}, and Proposition \ref{prop.nonmeasinteg}\ref{item.disjunupint}.
This implies
\[
\Tr\big(I^{\boldsymbol{P}}(\Sigma,\rho,\varphi_1,\ldots,\varphi_{k+1})[b]\,b_{k+1}\big) = \sum_{n=1}^{\infty} \Tr\big(I^{\boldsymbol{P}}(\Sigma_n,\rho|_{\Sigma_n},\varphi_1|_{\Om_1 \times \Sigma_n},\ldots,\varphi_{k+1}|_{\Om_{k+1} \times \Sigma_n})[b]\,b_{k+1}\big).
\]
It therefore suffices to assume $\rho(\Sigma) < \infty$.

Second, let $m \in \{1,\ldots,k+1\}$, $n \in \N$,
\[
\varphi_{m,n} \coloneqq \varphi_m \,1_{\{(\om_m,\sigma)\in\Om_m \times \Sigma : |\varphi_m(\om_m,\sigma)| \leq n\}},\pagebreak
\]
and $\sigma \in \Sigma$.
Then $\varphi_{m,n}$ is bounded, $\varphi_{m,n}(\cdot,\sigma) \to \varphi_m(\cdot,\sigma)$ $P_m$-almost everywhere as $n \to \infty$, and
\[
\sup_{n \in \N}\|\varphi_{m,n}(\cdot,\sigma)\|_{L^{\infty}(P_m)} \leq \|\varphi_m(\cdot,\sigma)\|_{L^{\infty}(P_m)} < \infty.
\]
Thus $P_m(\varphi_{m,n}(\cdot,\sigma)) \to P_m(\varphi_m(\cdot,\sigma))$ in the S$^*$OT as $n \to \infty$ by Lemma \ref{lem.Pintcont}.
Now, let
\begin{align*}
    & (\psi_{1,n},\ldots,\psi_{k+1,n}) \coloneqq (\varphi_1,\ldots,\varphi_{m-1},\varphi_{m,n},\varphi_{m+1},\ldots,\varphi_{k+1}) \; \text{ and} \\
    & F_{m,n}(\sigma) \coloneqq P_1(\psi_{1,n}(\cdot,\sigma))\,b_1\cdots P_k(\psi_{k,n}(\cdot,\sigma))\,b_k\,P_{k+1}(\psi_{k+1,n}(\cdot,\sigma)) \in B(H_{k+1};H_k). 
\end{align*}
Since $b_j \in \cS_1(H_{j+1};H_j)$, Lemma \ref{lem.SOTS1} gives that $F_{m,n}(\sigma) \to F(\sigma)$ in $\cS_1(H_{k+1};H_1)$ as $n \to \infty$.
But
\[
\overline{\int_{\Sigma}}\sup_{n \in \N}\|F_{m,n}\|_{\cS_1}\,d\rho \leq \|b_j\|_{\cS_1}\prod_{p \not\in\{j,k+1\}}\|b_p\|\overline{\int_{\Sigma}}\prod_{j_0=1}^{k+1}\|\varphi_{j_0}(\cdot,\sigma)\|_{L^{\infty}(P_{j_0})}\,\rho(d\sigma) < \infty.
\]
Therefore, by Corollary \ref{cor.SchattenDCT},
\[
I^{\boldsymbol{P}}(\Sigma,\rho,\psi_{1,n},\ldots,\psi_{k+1,n})[b] = \int_{\Sigma} F_{m,n}\,d\rho \to \int_{\Sigma} F\,d\rho = I^{\boldsymbol{P}}(\Sigma,\rho,\varphi_1,\ldots,\varphi_{k+1})[b] \numberthis\label{eq.DCTapp}
\]
in $\cS_1(H_{k+1};H_1)$ as $n \to \infty$, and, by the dominated convergence theorem,
\[
\int_{\Sigma} \Tr(F_{m,n}(\sigma)\,b_{k+1})\,\rho(d\sigma) \to \int_{\Sigma}\Tr(F(\sigma)\,b_{k+1})\,\rho(d\sigma)
\]
as $n \to \infty$.
It therefore suffices to assume $\varphi_m$ is bounded.

Finally, assume $\rho(\Sigma) < \infty$ and $\varphi_1,\ldots,\varphi_{k+1}$ are all bounded.
Also, retain the definition of the map $F_j \colon \Sigma \to B(H_{j+1};H_j)$ from the first step of the proof.
Then
\begin{align*}
    \Tr\big(I^{\boldsymbol{P}}(\Sigma,\rho,\varphi_1,\ldots,\varphi_{k+1})[b]\, b_{k+1}\big) & = \Tr\Bigg(\int_{\Sigma}\prod_{j_0=1}^{k+1}P_{j_0}(\varphi_{j_0}(\cdot,\sigma))\,b_{j_0}\, \rho(d\sigma)\Bigg)\\
    & = \Tr\Bigg(\int_{\Sigma}\prod_{j_1=1}^j P_{j_1}(\varphi_{j_1}(\cdot,\sigma))  \, b_{j_1}\prod_{j_2=j+1}^{k+1} P_{j_2}(\varphi_{j_2}(\cdot,\sigma)) \, b_{j_2} \, \rho(d\sigma)\Bigg) \\
    & = \Tr\Bigg(\int_{\Sigma}\prod_{j_2=j+1}^{k+1} P_{j_2}(\varphi_{j_2}(\cdot,\sigma)) \,b_{j_2} \prod_{j_1=1}^j P_{j_1}(\varphi_{j_1}(\cdot,\sigma)) \, b_{j_1}\, \rho(d\sigma)\Bigg) \numberthis\label{eq.manip1} \\
    & = \Tr\Bigg(\Bigg(\int_{\Sigma}F_j\,d\rho\Bigg)\, b_j\Bigg) = \int_{\Sigma} \underbrace{\Tr(F_j(\sigma)\,b_j)}_{=\Tr(F(\sigma)\,b_{k+1})}\,\rho(d\sigma), \numberthis\label{eq.manip2}
\end{align*}
where \eqref{eq.manip1} follows from repeated, alternating applications of Theorems \ref{thm.traceofinteg} and \ref{thm.Schatten}\ref{item.Trflip};
and \eqref{eq.manip2} follows from the $\sigma$-weak continuity of $c \mapsto \Tr(cb_j)$ and the calculation from the first step.
\end{proof}
\begin{rem}
By inspection of the proof, justifying \eqref{eq.manip1} is where essentially all the difficulty lies.
Using the generalization of Theorem \ref{thm.traceofinteg} for separable $K$ alluded to in Remark \ref{rem.easyTrsep}, \eqref{eq.manip1} can be justified much more easily and directly (i.e., without need to go through all the reductions from the third step of the proof) when $H_1,\ldots,H_{k+1}$ are separable.
\end{rem}

It is interesting to note that the ``truncation argument" from the third step of the proof above is the only place in this section where we have actually used that the \textit{upper} (as opposed to the lower) $\rho$-integral of $\sigma \mapsto \prod_{j=1}^{k+1}\|\varphi_j(\cdot,\sigma)\|_{L^{\infty}(P_j)}$ is finite.
Specifically, we only used it to prove \eqref{eq.DCTapp}.
In all other arguments, finiteness of the lower integral sufficed.

\begin{cor}[Ultraweak continuity of MOI]\label{cor.uwcont}
Let $\varphi \,\in\, L^{\infty}(\Om_1,\,P_1) \,\iotimes\, \cdots \,\iotimes\, L^{\infty}(\Om_{k+1},\,P_{k+1})$ and $(\Sigma,\rho,\varphi_1,\ldots,\varphi_{k+1})$ be a $L_P^{\infty}$-IPD of $\varphi$.
If
\[
b = (b_1,\ldots,b_k) \in B(H_2;H_1) \times \cdots \times B(H_{k+1};H_k),
\]
$b_{k+1} \in \cS_1(H_1;H_{k+1})$, and $\gamma \in S_{k+1}$ is a cyclic permutation of $\{1,\ldots,k+1\}$, then
\[
\Tr\big(I^{\boldsymbol{P}}(\Sigma,\rho,\varphi_1,\ldots,\varphi_{k+1})[b]\,b_{k+1}\big) = \Tr\big(I^{\boldsymbol{P}_{\gamma}}(\Sigma,\rho,\varphi_{\gamma(1)},\ldots,\varphi_{\gamma(k+1)})[b_{\gamma}]\,b_{\gamma(k+1)}\big),\pagebreak
\]
where $\boldsymbol{P}_{\gamma} = (P_{\gamma(1)},\ldots,P_{\gamma(k+1)})$ and $b_{\gamma} = (b_{\gamma(1)},\ldots,b_{\gamma(k)})$.
In particular, the map
\[
I^{\boldsymbol{P}}(\Sigma,\rho,\varphi_1,\ldots,\varphi_{k+1}) \colon B(H_2;H_1) \times \cdots \times B(H_{k+1};H_k) \to B(H_{k+1};H_1)
\]
is ($k$-linear and) ultraweakly continuous in each argument. 
\end{cor}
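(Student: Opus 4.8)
The plan is to prove the displayed trace identity first and then read off the ultraweak continuity as a formal consequence. Throughout, abbreviate $F(\sigma) \coloneqq P_1(\varphi_1(\cdot,\sigma))\,b_1\cdots P_{k+1}(\varphi_{k+1}(\cdot,\sigma))$ and, for a cyclic permutation $\gamma$, write $F_{\gamma}(\sigma) \coloneqq P_{\gamma(1)}(\varphi_{\gamma(1)}(\cdot,\sigma))\,b_{\gamma(1)}\cdots P_{\gamma(k+1)}(\varphi_{\gamma(k+1)}(\cdot,\sigma))$, so that $I^{\boldsymbol{P}}(\Sigma,\rho,\varphi_1,\ldots,\varphi_{k+1})[b] = \int_{\Sigma} F\,d\rho$ and $I^{\boldsymbol{P}_{\gamma}}(\Sigma,\rho,\varphi_{\gamma(1)},\ldots,\varphi_{\gamma(k+1)})[b_{\gamma}] = \int_{\Sigma} F_{\gamma}\,d\rho$ as weak$^*$ integrals (Notation \ref{nota.MOIdepondecomp}, Corollary \ref{cor.MOIintgood}). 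The idea is to show that \emph{both} traces in the asserted identity equal $\int_{\Sigma}\Tr(F(\sigma)\,b_{k+1})\,\rho(d\sigma)$, the point being that at each fixed $\sigma$ the full-cycle products $F(\sigma)\,b_{k+1}$ and $F_{\gamma}(\sigma)\,b_{\gamma(k+1)}$ are cyclic rotations of one another and each contains exactly one trace-class factor.

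For the left-hand trace: since $b_{k+1} \in \cS_1(H_1;H_{k+1})$, the functional $c \mapsto \Tr(c\,b_{k+1})$ on $B(H_{k+1};H_1)$ is $\sigma$-weakly continuous (it is a predual element by \eqref{eq.sigmataudual}), so the defining property of the weak$^*$ integral $\int_{\Sigma} F\,d\rho$ gives $\Tr\big(I^{\boldsymbol{P}}(\ldots)[b]\,b_{k+1}\big) = \int_{\Sigma}\Tr(F(\sigma)\,b_{k+1})\,\rho(d\sigma)$. For the right-hand trace I would instead invoke Theorem \ref{thm.S1integrability}: when $\gamma$ is a nontrivial cyclic permutation it has no fixed point, so $k+1 = \gamma(m)$ for some $m \in \{1,\ldots,k\}$; after the relabelling $\tilde H_i \coloneqq H_{\gamma(i)}$, $\tilde P_i \coloneqq P_{\gamma(i)}$, etc.\ (the consecutive-index structure is preserved because $\gamma(i+1) = \gamma(i)+1$), the trace-class operator $b_{k+1}$ sits at an \emph{inner} slot of $I^{\boldsymbol{P}_{\gamma}}(\ldots)[b_{\gamma}]$. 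Theorem \ref{thm.S1integrability} then says $F_{\gamma} \colon \Sigma \to (\cS_1(H_{\gamma(k+1)};H_{\gamma(1)}),\|\cdot\|_{\cS_1})$ is Gel'fand-Pettis integrable with integral $I^{\boldsymbol{P}_{\gamma}}(\ldots)[b_{\gamma}]$; composing with the $\|\cdot\|_{\cS_1}$-continuous functional $c \mapsto \Tr(c\,b_{\gamma(k+1)})$ yields $\Tr\big(I^{\boldsymbol{P}_{\gamma}}(\ldots)[b_{\gamma}]\,b_{\gamma(k+1)}\big) = \int_{\Sigma}\Tr(F_{\gamma}(\sigma)\,b_{\gamma(k+1)})\,\rho(d\sigma)$. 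Finally, for each fixed $\sigma$ the product $F_{\gamma}(\sigma)\,b_{\gamma(k+1)}$ is a cyclic rotation of $F(\sigma)\,b_{k+1}$, so $\Tr(F_{\gamma}(\sigma)\,b_{\gamma(k+1)}) = \Tr(F(\sigma)\,b_{k+1})$ by repeated application of Theorem \ref{thm.Schatten}.\ref{item.Trflip} (at each single-factor shift one groups the factors so that the trace-class $b_{k+1}$ lies in the $\cS_1$ half, the complementary half being bounded). Integrating over $\Sigma$ proves the identity; the trivial permutation is immediate.

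For the ultraweak continuity, fix $j \in \{1,\ldots,k\}$ and all arguments but $b_j$. The $\sigma$-WOT on the target $B(H_{k+1};H_1) = \cS_1(H_1;H_{k+1})^*$ (Theorem \ref{thm.Schatten}.\ref{item.finrk}) is the weak$^*$ topology, so it suffices to show that $b_j \mapsto \Tr\big(I^{\boldsymbol{P}}(\ldots)[b]\,C\big)$ is $\sigma$-weakly continuous for each $C \in \cS_1(H_1;H_{k+1})$. Applying the identity just proved with $b_{k+1} \coloneqq C$ and with $\gamma$ the cyclic rotation determined by $\gamma(k+1) = j$ (whence $\gamma(1) = j+1$), the right-hand side becomes $\Tr\big(M\,b_j\big)$, where $M \coloneqq I^{\boldsymbol{P}_{\gamma}}(\ldots)[b_{\gamma}]$ does not involve $b_j$ (since $b_{\gamma} = (b_{\gamma(1)},\ldots,b_{\gamma(k)})$ omits $b_{\gamma(k+1)} = b_j$) and lies in $\cS_1(H_j;H_{j+1})$ because $C$ occupies an inner slot of $M$ (Theorem \ref{thm.S1integrability}, or directly Theorem \ref{thm.Spinteg}). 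Thus $b_j \mapsto \Tr\big(I^{\boldsymbol{P}}(\ldots)[b]\,C\big) = \Tr(b_j\,M)$ is a fixed predual functional of $b_j$ and hence $\sigma$-weakly continuous; letting $C$ range over $\cS_1(H_1;H_{k+1})$ gives $\sigma$-weak continuity of $b_j \mapsto I^{\boldsymbol{P}}(\ldots)[b]$, as desired.

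The main obstacle I anticipate is not any single estimate but the bookkeeping that makes Theorem \ref{thm.S1integrability} applicable: one must choose the cyclic permutation so that the unique trace-class factor is moved to an \emph{inner} position, and verify that the rotated data $(\tilde H_i, \tilde P_i, \tilde\varphi_i, \tilde b_i)$ still exhibits the consecutive index pattern demanded by that theorem. Once this alignment is arranged, the corollary is essentially a repackaging of Theorem \ref{thm.S1integrability} through cyclic invariance of the trace; the genuinely hard analytic work (the non-separable $\cS_1$-valued integrability) has already been carried out there.
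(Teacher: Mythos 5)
Your proposal is correct and takes essentially the same route as the paper's proof: both rest on the weak$^*$-integral defining property to pull the trace against $b_{k+1}$ inside the integral, pointwise cyclicity of the trace via Theorem \ref{thm.Schatten}.\ref{item.Trflip}, and Theorem \ref{thm.S1integrability} applied to the cyclically rotated product (where the trace-class factor occupies an inner slot), with ultraweak continuity then read off by expressing the functional as $b_j \mapsto \Tr(M\,b_j)$ for a fixed $M \in \cS_1(H_j;H_{j+1})$ not depending on $b_j$. The only difference is presentational: the paper fixes $j$, takes $\gamma(j_0) = j_0 + j$ mod $k+1$, and runs a single chain of equalities, whereas you show both sides equal the common quantity $\int_{\Sigma} \Tr(F(\sigma)\,b_{k+1})\,\rho(d\sigma)$.
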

\begin{proof}
Fix $b \in B(H_2;H_1) \times \cdots \times B(H_{k+1};H_k)$, $b_{k+1} \in \cS_1(H_1;H_{k+1})$, and $j \in \{1,\ldots,k\}$.
Let $\gamma \in S_{k+1}$ be the cyclic permutation $j_0 \mapsto j_0+j$ mod $k+1$, and define
\begin{align*}
    F(\sigma) & \coloneqq P_1(\varphi_1(\cdot,\sigma))\,b_1\cdots P_k(\varphi_k(\cdot,\sigma))\,b_k\,P_{k+1}(\varphi_{k+1}(\cdot,\sigma)) \in B(H_{k+1};H_1) \; \text{ and} \\
    F_j(\sigma) & \coloneqq P_{j+1}(\varphi_{j+1}(\cdot,\sigma))\,b_{j+1}\cdots P_{k+1}(\varphi_{k+1}(\cdot,\sigma))\,b_{k+1}\,P_1(\varphi_1(\cdot,\sigma))\,b_1\cdots P_{j-1}(\varphi_{j-1}(\cdot,\sigma))\,b_{j-1}\,P_j(\varphi_j(\cdot,\sigma)) \\
    & = P_{\gamma(1)}(\varphi_{\gamma(1)}(\cdot,\sigma))\,b_{\gamma(1)}\cdots P_{\gamma(k)}(\varphi_{\gamma(k)}(\cdot,\sigma))\,b_{\gamma(k)}\,P_{\gamma(k+1)}(\varphi_{\gamma(k+1)}(\cdot,\sigma)) \in B(H_j;H_{j+1}),
\end{align*}
for all $\sigma \in \Sigma$.
Then $I^{\boldsymbol{P}}(\Sigma,\rho,\varphi_1,\ldots,\varphi_{k+1})[b] = \int_{\Sigma} F\,d\rho$ and
\begin{align*}
    \Tr\Bigg(\Bigg(\int_{\Sigma} F\,d\rho\Bigg)b_{k+1}\Bigg) & = \int_{\Sigma} \Tr(F(\sigma)\,b_{k+1})\,\rho(d\sigma) \numberthis\label{eq.uwint} \\
    & = \int_{\Sigma} \Tr(F_j(\sigma)\,b_j)\,\rho(d\sigma) \numberthis\label{eq.symmobs} \\
    & = \Tr\Bigg(\Bigg(\int_{\Sigma} F_j\,d\rho\Bigg)b_j\Bigg) \numberthis\label{eq.S1wint} \\
    & = \Tr\big(I^{\boldsymbol{P}_{\gamma}}(\Sigma,\rho,\varphi_{\gamma(1)},\ldots,\varphi_{\gamma(k+1)})[b_{\gamma}]\,b_{\gamma(k+1)}\big),
\end{align*}
where \eqref{eq.uwint} holds because $c \mapsto \Tr(cb_{k+1})$ is $\sigma$-weakly continuous, \eqref{eq.symmobs} holds by Theorem \ref{thm.Schatten}\ref{item.Trflip}, and \eqref{eq.S1wint} holds by Theorem \ref{thm.S1integrability} (plus the fact that the map $\cS_1 \ni c \mapsto \Tr(cb_j) \in \C$ is bounded linear).
This completes the proof for the cyclic permutation $\gamma$.
Since $j \in \{1,\ldots,k\}$ was arbitrary, we are done.
\end{proof}

We now reap the benefits of this technical work:
the ultraweak continuity we have just proven will allow us to show that $I^{\boldsymbol{P}}(\Sigma,\rho,\varphi_1,\ldots,\varphi_{k+1})$ as defined in Notation \ref{nota.MOIdepondecomp} does not depend on the chosen $L^{\infty}_P$-IPD of $\varphi$ and is therefore a reasonable definition of \eqref{eq.formalMOI}.

\begin{thm}[Well-definition of MOI]\label{thm.MOIwelldef}
If $\varphi \in L^{\infty}(\Om_1,P_1) \iotimes \cdots \iotimes L^{\infty}(\Om_{k+1},P_{k+1})$, then
\[
I^{\boldsymbol{P}}(\Sigma,\rho,\varphi_1,\ldots,\varphi_{k+1}) = I^{\boldsymbol{P}}(\tilde{\Sigma},\tilde{\rho},\tilde{\varphi}_1,\ldots,\tilde{\varphi}_{k+1})
\]
whenever $(\Sigma,\rho,\varphi_1,\ldots,\varphi_{k+1})$ and $(\tilde{\Sigma},\tilde{\rho},\tilde{\varphi}_1,\ldots,\tilde{\varphi}_{k+1})$ are $L_P^{\infty}$-IPDs of $\varphi$.
\end{thm}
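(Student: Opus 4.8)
The plan is to use the ultraweak continuity supplied by Corollary \ref{cor.uwcont} to reduce to rank-one operators, where both candidate integrals can be evaluated by hand and seen to equal a single scalar integral against $\varphi$ that has nothing to do with the chosen decomposition.

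First I would reduce to finite-rank, then rank-one, arguments. By Corollary \ref{cor.uwcont}, each of the $k$-linear maps $I^{\boldsymbol{P}}(\Sigma,\rho,\varphi_1,\ldots,\varphi_{k+1})$ and $I^{\boldsymbol{P}}(\tilde{\Sigma},\tilde{\rho},\tilde{\varphi}_1,\ldots,\tilde{\varphi}_{k+1})$ is ultraweakly continuous in each argument separately, into the Hausdorff space $(B(H_{k+1};H_1),\sigma\text{-WOT})$. Since finite-rank operators are $\sigma$-weakly dense in each $B(H_{j+1};H_j)$ (Theorem \ref{thm.Schatten}.\ref{item.finrk}), a one-variable-at-a-time argument shows that two separately continuous maps that agree on the product of these dense sets agree everywhere; so it suffices to prove the equality on $b=(b_1,\ldots,b_k)$ with each $b_j$ finite-rank, and by multilinearity I may take each $b_j=h_j\la\,\cdot\,,g_j\ra$ rank one, with $h_j\in H_j$, $g_j\in H_{j+1}$. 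Finally, because $B(H_{k+1};H_1)\cong\cS_1(H_1;H_{k+1})^*$ via the trace pairing (Theorem \ref{thm.Schatten}.\ref{item.Schattendual}) and the rank-one operators $b_{k+1}=h_{k+1}\la\,\cdot\,,g_{k+1}\ra$ (with $h_{k+1}\in H_{k+1}$, $g_{k+1}\in H_1$) give a separating family of functionals $X\mapsto\Tr(Xb_{k+1})=\la Xh_{k+1},g_{k+1}\ra_{H_1}$, it suffices to show that $\Tr\big(I^{\boldsymbol{P}}(\Sigma,\rho,\varphi_1,\ldots,\varphi_{k+1})[b]\,b_{k+1}\big)$ depends only on $\varphi$ for all such rank-one data.

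Next I would compute this trace. Exactly as in the proof of Corollary \ref{cor.uwcont} (using $\sigma$-weak continuity of $c\mapsto\Tr(c\,b_{k+1})$, the definition of the weak$^*$ integral, and the rank-one bookkeeping, with the cyclic convention $g_0:=g_{k+1}$) one obtains
\[
\Tr\big(I^{\boldsymbol{P}}(\Sigma,\rho,\varphi_1,\ldots,\varphi_{k+1})[b]\,b_{k+1}\big)=\int_{\Sigma}\prod_{m=1}^{k+1}\la P_m(\varphi_m(\cdot,\sigma))h_m,g_{m-1}\ra_{H_m}\,\rho(d\sigma),
\]
where $\la P_m(\varphi_m(\cdot,\sigma))h_m,g_{m-1}\ra_{H_m}=\int_{\Om_m}\varphi_m(\om_m,\sigma)\,(P_m)_{h_m,g_{m-1}}(d\om_m)$. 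The bound $\int_{\Om_m}|\varphi_m(\cdot,\sigma)|\,d|(P_m)_{h_m,g_{m-1}}|\le\|\varphi_m(\cdot,\sigma)\|_{L^{\infty}(P_m)}\|h_m\|\,\|g_{m-1}\|$, together with finiteness of the upper integral in requirement \ref{item.upperintegofphis}, gives the $L^1$-domination needed to apply Fubini--Tonelli for the finite complex product measure $\mu:=(P_1)_{h_1,g_{k+1}}\otimes\cdots\otimes(P_{k+1})_{h_{k+1},g_k}$ against $\rho$. Swapping the order of integration rewrites the trace as $\int_{\Om}\varphi^{\sharp}\,d\mu$, where $\varphi^{\sharp}(\boldsymbol{\om}):=\int_{\Sigma}\varphi_1(\om_1,\sigma)\cdots\varphi_{k+1}(\om_{k+1},\sigma)\,\rho(d\sigma)$ is the (decomposition-dependent) measurable function furnished by Fubini.

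The genuinely delicate point, and what I expect to be the main obstacle once Corollary \ref{cor.uwcont} is in hand, is to show $\int_{\Om}\varphi^{\sharp}\,d\mu$ depends only on the $P$-class of $\varphi$. By requirement \ref{item.equality}, $\varphi^{\sharp}=\varphi$ $P$-almost everywhere, so I must upgrade this to $|\mu|$-almost everywhere, i.e. prove $P(N)=0\Rightarrow|\mu|(N)=0$; this is precisely where non-separability bites, since $P_m$ need not be equivalent to any scalar measure. I would argue as follows. Writing $\lambda_m:=|(P_m)_{h_m,g_{m-1}}|$, the identity $\|P_m(G)h_m\|^2=\la P_m(G)h_m,h_m\ra=(P_m)_{h_m,h_m}(G)$ shows that $(P_m)_{h_m,h_m}(G)=0$ forces $P_m(G)h_m=0$, hence $\la P_m(G')h_m,g_{m-1}\ra=0$ for every $G'\subseteq G$, so $\lambda_m\ll(P_m)_{h_m,h_m}$ in the null-set sense. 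Passing to Radon--Nikodym densities and comparing on the rectangle $\pi$-system, the product $|\mu|=\bigotimes_m\lambda_m$ is absolutely continuous with respect to the positive product measure $D:=\bigotimes_m(P_m)_{h_m,h_m}$. But $D$ agrees on rectangles with $\la P(\cdot)(h_1\otimes\cdots\otimes h_{k+1}),h_1\otimes\cdots\otimes h_{k+1}\ra$ (Theorem \ref{thm.tensprodPVM}), so $P(N)=0$ gives $D(N)=0$ and therefore $|\mu|(N)=0$. Consequently $\int_{\Om}\varphi^{\sharp}\,d\mu=\int_{\Om}\varphi\,d\mu$, which is manifestly independent of the decomposition; running the identical computation for $(\tilde{\Sigma},\tilde{\rho},\tilde{\varphi}_1,\ldots,\tilde{\varphi}_{k+1})$ yields the same value, and the reduction of the first paragraph then upgrades this equality of traces to equality of the two $k$-linear maps on all of $B(H_2;H_1)\times\cdots\times B(H_{k+1};H_k)$.
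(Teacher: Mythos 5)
Your proposal is correct and follows essentially the same path as the paper's proof: the reduction to rank-one $b_j$ via Corollary \ref{cor.uwcont} and $\sigma$-weak density of finite-rank operators, the evaluation of the trace against rank-one $b_{k+1}$ as an iterated scalar integral, the Fubini swap, and the reduction of well-definition to the absolute continuity $|\mu|\ll P$. The only difference is cosmetic: where you derive $|\mu|\ll P$ through factor-wise total variations and Radon--Nikodym densities against the positive product measure $D$, the paper simply observes that $\mu$ is itself the matrix element $P_{h_1\otimes\cdots\otimes h_{k+1},\,g_{k+1}\otimes g_1\otimes\cdots\otimes g_k}$ of the product projection-valued measure from Theorem \ref{thm.tensprodPVM}, so that $P(N)=0$ immediately forces $\mu(\tilde{N})=0$ for every measurable $\tilde{N}\subseteq N$.
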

\begin{proof}
By the argumentwise ultraweak continuity of $I^{\boldsymbol{P}}(\Sigma,\rho,\varphi_1,\ldots,\varphi_{k+1})$ and $I^{\boldsymbol{P}}(\tilde{\Sigma},\tilde{\rho},\tilde{\varphi}_1,\ldots,\tilde{\varphi}_{k+1})$ (i.e., Corollary \ref{cor.uwcont}) and the ultraweak density of finite rank operators, it suffices to prove
\[
I^{\boldsymbol{P}}(\Sigma,\rho,\varphi_1,\ldots,\varphi_{k+1})[b] = I^{\boldsymbol{P}}(\tilde{\Sigma},\tilde{\rho},\tilde{\varphi}_1,\ldots,\tilde{\varphi}_{k+1})[b] \numberthis\label{eq.welldef}
\]
whenever $b = (b_1,\ldots,b_k) \in B(H_2;H_1) \times \cdots \times B(H_{k+1};H_k)$ is such that $b_j$ has finite rank for all $j \in \{1,\ldots,k\}$.
By $k$-linearity, it therefore also suffices to prove \eqref{eq.welldef} when $b_j$ has rank at most one for all $j \in \{1,\ldots,k\}$.

To this end, write $m \coloneqq k+1$ and, for $1 \leq j \leq m-1$, let $b_j = \la \cdot , h_j \ra_{H_{j+1}} k_j$, where $k_j \in H_j$ and $h_j \in H_{j+1}$.
Then $P_j(\varphi_j(\cdot,\sigma)) \, b_j = \la \cdot, h_j \ra_{H_{j+1}} P_j(\varphi_j(\cdot,\sigma))\, k_j$, for all $\sigma \in \Sigma$ and $j \in \{1,\ldots,m-1\}$.
If also $k_m \in H_m$, then
\begin{align*}
    F(\sigma)k_m & \coloneqq P_1(\varphi_1(\cdot,\sigma)) \, b_1 \cdots P_{m-1}(\varphi_{m-1}(\cdot,\sigma)) \, b_{m-1} \, P_m(\varphi_m(\cdot,\sigma))k_m \\
    & = \prod_{j=2}^m \big\la P_j(\varphi_j(\cdot,\sigma)) k_j,h_{j-1} \big\ra_{H_j} P_1(\varphi_1(\cdot,\sigma)) k_1 \\
    & = \Bigg(\prod_{j=2}^m \int_{\Om_j}\varphi_j(\cdot,\sigma) \, d\big(P_j\big)_{k_j,h_{j-1}}\Bigg) P_1(\varphi_1(\cdot,\sigma)) k_1. \numberthis\label{eq.firstexp}
\end{align*}
Next, if $h_0 \in H_1$ and
\begin{align*}
    \nu & \coloneqq (P_1 \otimes \cdots \otimes P_m)_{k_1 \otimes \cdots \otimes k_m, h_0 \otimes \cdots \otimes h_{m-1}} \in M(\Om,\sF) \\
    & = (P_1)_{k_1,h_0} \otimes (P_2)_{k_2,h_1} \otimes \cdots \otimes (P_m)_{k_m,h_{m-1}},\displaybreak
\end{align*}
then
\[
\int_{\Om} \prod_{j=1}^m|\varphi_j(\om_j,\sigma)| \, |\nu|(d\boldsymbol{\om}) \leq \prod_{j=1}^m\|\varphi_j(\cdot,\sigma)\|_{L^{\infty}(P_j)}\|k_j\|_{H_j} \|h_{j-1}\|_{H_j} < \infty, \numberthis\label{eq.bound1}
\]
for all $\sigma \in \Sigma$.
Therefore, by \eqref{eq.firstexp} and Fubini's theorem,
\begin{align*}
    \la F(\sigma)k_m, h_0 \ra_{H_1} & = \Bigg(\prod_{j=2}^m \int_{\Om_j}\varphi_j(\cdot,\sigma) \, d\big(P_j\big)_{k_j,h_{j-1}}\Bigg) \big\la P_1(\varphi_1(\cdot,\sigma)) k_1, h_0 \big\ra_{H_1} \\
    & = \prod_{j=1}^m \int_{\Om_j}\varphi_j(\cdot,\sigma) \, d\big(P_j\big)_{k_j,h_{j-1}} = \int_{\Om} \varphi_1(\om_1,\sigma)\cdots \varphi_m(\om_m,\sigma) \, \nu(d\boldsymbol{\om}), \numberthis\label{eq.secondexp}
\end{align*}
for all $\sigma \in \Sigma$.
Now, by \eqref{eq.bound1},
\[
\int_{\Sigma}\int_{\Om} |\varphi_1(\om_1,\sigma)|\cdots |\varphi_m(\om_m,\sigma)| \, |\nu|(d\boldsymbol{\om}) \, \rho(d\sigma) \leq \Bigg(\prod_{j=1}^m\|k_j\|_{H_j} \|h_{j-1}\|_{H_j}\Bigg) \underline{\int_{\Sigma}}\prod_{j=1}^m\|\varphi_j(\cdot,\sigma)\|_{L^{\infty}(P_j)} \, \rho(d\sigma) < \infty.
\]
Thus, by definition of pointwise Pettis integrals, \eqref{eq.secondexp}, and Fubini's theorem,
\begin{align*}
    \big\la I^{\boldsymbol{P}}(\Sigma,\rho,\varphi_1,\ldots,\varphi_m)[b]k_m,h_0 \big\ra_{H_1} & = \int_{\Sigma}\int_{\Om} \varphi_1(\om_1,\sigma)\cdots \varphi_m(\om_m,\sigma) \, \nu(d\boldsymbol{\om}) \, \rho(d\sigma) \\
    & = \int_{\Om}\int_{\Sigma} \varphi_1(\om_1,\sigma)\cdots \varphi_m(\om_m,\sigma) \, \rho(d\sigma)\, \nu(d\boldsymbol{\om}) \numberthis\label{eq.thirdexp} 
\end{align*}
Finally, note that $\nu \ll P = P_1 \otimes \cdots \otimes P_m$ in the sense that
\[
\{G \in \sF : P(G) = 0\} \subseteq \{G \in \sF : \nu(\tilde{G}) = 0 \text{ when }\sF \ni \tilde{G} \subseteq G\}.
\]
Therefore, the definition of $L^{\infty}_P$-IPD implies
\[
\varphi(\boldsymbol{\om}) = \int_{\Sigma} \varphi_1(\om_1,\sigma)\cdots \varphi_m(\om_m,\sigma) \, \rho(d\sigma)
\]
for $\nu$-almost every (i.e., $|\nu|$-almost every) $\boldsymbol{\om} \in \Om$.
Thus \eqref{eq.thirdexp} becomes
\[
\big\la I^{\boldsymbol{P}}(\Sigma,\rho,\varphi_1,\ldots,\varphi_{k+1})[b]k_m,h_0 \big\ra_{H_1} = \int_{\Om} \varphi \, d\nu,
\]
and $\int_{\Om} \varphi \, d\nu = \la P(\varphi)(k_1 \otimes \cdots \otimes k_m), h_0 \otimes \cdots \otimes h_{m-1} \ra_{H_1 \hotimes \cdots \hotimes H_m}$.
Since the right hand side is independent of the chosen $L^{\infty}_P$-IPD, we are done.
\end{proof}

We are finally allowed to make the following long awaited definition.

\begin{defi}[MOI, take II]\label{def.MOI}
If $\varphi \in L^{\infty}(\Om_1,P_1) \iotimes \cdots \iotimes L^{\infty}(\Om_{k+1},P_{k+1})$, then we define
\begin{align*}
    \big(I^{\boldsymbol{P}}\varphi\big)[b] & = \int_{\Om_{k+1}}\cdots\int_{\Om_1} \varphi(\om_1,\ldots,\om_{k+1}) \, P_1(d\om_1) \, b_1 \cdots P_k(d\om_k) \, b_k \,P_{k+1}(d\om_{k+1}) \\
    & \coloneqq \int_{\Sigma}P_1(\varphi_1(\cdot,\sigma))\, b_1 \cdots P_k(\varphi_k(\cdot,\sigma))\, b_k \,P_{k+1}(\varphi_{k+1}(\cdot,\sigma))\, \rho(d\sigma),
\end{align*}
for all $b = (b_1,\ldots,b_k) \in B(H_2;H_1) \times \cdots \times B(H_{k+1};H_k)$ and any $L^{\infty}_P$-integral projective decomposition $(\Sigma,\rho,\varphi_1,\ldots,\varphi_{k+1})$ of $\varphi$.
We call $I^{\boldsymbol{P}}\varphi$ the \textbf{multiple operator integral} (MOI) of $\varphi$ with respect to $\boldsymbol{P} = (P_1,\ldots,P_{k+1})$.
We also write
\[
(P_1 \otimes \cdots \otimes P_{k+1})(\varphi)\sh[b_1,\ldots,b_k] = P(\varphi) \sh b \coloneqq \big(I^{\boldsymbol{P}}\varphi\big)[b].
\]
\end{defi}
\begin{rem}[$\#$ Operation]\label{rem.hash}
The $\sh$ in the definition above \textit{formally} stands for the algebraic action
\[
\# \colon B(H_1) \otimes \cdots \otimes B(H_{k+1}) \to \Hom(B(H_2;H_1) \otimes \cdots \otimes B(H_{k+1};H_k); B(H_{k+1};H_1))
\]
determined (linearly) by
\[
(a_1 \otimes \cdots \otimes a_{k+1})\sh [b_1 \otimes \cdots \otimes b_k] \coloneqq a_1b_1\cdots a_1b_ka_{k+1}\pagebreak
\]
for $a_1 \in B(H_1),\ldots,a_{k+1} \in B(H_{k+1})$ and $b_1 \in B(H_2;H_1),\ldots,b_k \in B(H_{k+1};H_k)$.
Now, the von Neumann algebra tensor product $B(H_1) \wotimes \cdots \wotimes B(H_{k+1})$ is naturally isomorphic to $B(H) = B(H_1 \hotimes \cdots \hotimes H_{k+1})$.
Therefore, morally speaking, ``the multiple operator integral $(I^{P_1,\ldots,P_{k+1}}\varphi)[b_1,\ldots,b_k]$ is
\[
P(\varphi) = \int_{\Om_1 \times \cdots \times \Om_{k+1}} \varphi \, d(P_1 \otimes \cdots \otimes P_{k+1}) \in B(H) = B(H_1) \wotimes \cdots \wotimes B(H_{k+1})
\]
acting on $b_1 \otimes \cdots \otimes b_k$ via $\#$," even though this does not actually make sense (i.e., $\#$ may not extend to the von Neumann algebra tensor product).
We continue this discussion in Remark \ref{rem.morehash}.
\end{rem}

We end this section by restricting the MOI we just defined to a von Neumann algebra.
Notice first that Theorem \ref{thm.MOIwelldef} and Corollary \ref{cor.MOIintgood} give
\[
\big\|\big(I^{\boldsymbol{P}}\varphi\big)[b]\big\|_{B(H_{k+1};H_1)} \leq \|\varphi\|_{L^{\infty}(P_1) \iotimes \cdots \iotimes L^{\infty}(P_{k+1})}\prod_{j=1}^k\|b_j\|_{B(H_{j+1};H_j)}, \numberthis\label{eq.opnormestim}
\]
for all $b = (b_1,\ldots,b_k) \in B(H_2;H_1) \times \cdots \times B(H_{k+1};H_k)$.

\begin{lem}\label{lem.PintinM}
Suppose $(\Xi,\sG,K,Q)$ is a projection valued measure space and $\cM \subseteq B(K)$ is a von Neumann algebra.
If $Q(G) \in \cM$, for all $G \in \sG$, then $Q(f) \in \cM$, for all $f \in L^{\infty}(\Xi,Q)$.
\end{lem}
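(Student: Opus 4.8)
The plan is to reduce to simple functions and then invoke the norm-closedness of $\cM$. First I would record that $\cM$, being WOT-closed, is automatically closed in the operator norm: norm convergence implies WOT convergence, so every operator-norm limit of elements of $\cM$ again lies in $\cM$. Next, for a $\sG$-measurable simple function $s = \sum_{i=1}^n c_i\,1_{G_i}$ with $G_1,\ldots,G_n \in \sG$ and $c_1,\ldots,c_n \in \C$, linearity of $Q$-integration gives $Q(s) = \sum_{i=1}^n c_i\,Q(G_i)$, which lies in $\cM$ because $\cM$ is a linear subspace containing each $Q(G_i)$ by hypothesis.

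The remaining step is approximation. Given $f \in L^{\infty}(\Xi,Q)$, I would choose a bounded $\sG$-measurable representative, still denoted $f$. By the standard fact that every bounded measurable function is a uniform limit of measurable simple functions, there is a sequence $(s_n)_{n \in \N}$ of $\sG$-measurable simple functions with $\|s_n - f\|_{\ell^{\infty}(\Xi)} \to 0$, and hence $\|s_n - f\|_{L^{\infty}(Q)} \to 0$. Since $Q$-integration descends to an isometric $\ast$-homomorphism $L^{\infty}(\Xi,Q) \to B(K)$ (recall that $\|Q(\varphi)\| = \|\varphi\|_{L^{\infty}(Q)}$), it follows that $\|Q(s_n) - Q(f)\| = \|s_n - f\|_{L^{\infty}(Q)} \to 0$. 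Thus $Q(f)$ is an operator-norm limit of the elements $Q(s_n) \in \cM$, and the norm-closedness of $\cM$ established in the first step yields $Q(f) \in \cM$, as desired.

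I do not anticipate a genuine obstacle here; the only point requiring a moment's care is that one must phrase the approximation in the $L^{\infty}(Q)$ norm rather than pointwise, so that the isometry property of $Q$ converts uniform approximation of functions into operator-norm approximation of operators. For completeness I note an alternative route via the Bicommutant Theorem that avoids approximation: for $a \in \cM'$ one shows $a$ commutes with $Q(f)$ by applying the Multiplicative System Theorem to the set $\mathbb{H} \coloneqq \{f \in \ell^{\infty}(\Xi,\sG) : a\,Q(f) = Q(f)\,a\}$, which contains all $1_G$, is a conjugation-closed subspace containing $1$, and is closed under bounded convergence (as $Q(\varphi_n) \to Q(\varphi)$ in the $\mathrm{S^*OT}$ by Lemma \ref{lem.Pintcont}, whence the commutation relation passes to limits); since $\sigma(\{1_G : G \in \sG\}) = \sG$, this gives $Q(f) \in \cM'' = \cM$. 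The direct norm-approximation argument is, however, shorter and entirely self-contained.
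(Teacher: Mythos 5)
Your main argument is correct, and it takes a genuinely different route from the paper's. The paper proves this lemma with the Multiplicative System Theorem: it sets $\mathbb{H} \coloneqq \{f \in \ell^{\infty}(\Xi,\sG) : Q(f) \in \cM\}$, observes that $\mathbb{H}$ is a conjugation-closed linear subspace containing all indicators $1_G$, and -- this is the only analytic input -- that $\mathbb{H}$ is closed under sequential bounded convergence, because $Q(\varphi_n) \to Q(\varphi)$ in the S$^*$OT along boundedly $Q$-a.e. convergent sequences (Lemma \ref{lem.Pintcont}) and $\cM$ is SOT-closed; Theorem \ref{thm.MST} then yields $\ell^{\infty}(\Xi,\sG) \subseteq \mathbb{H}$. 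You instead work in the norm topology: simple functions map into $\cM$ by linearity, bounded measurable functions are uniform limits of simple ones, the isometry $\|Q(\varphi)\| = \|\varphi\|_{L^{\infty}(Q)}$ converts uniform convergence of functions into operator-norm convergence of operators, and a von Neumann algebra is norm-closed. Your route is shorter and needs strictly less machinery: no Multiplicative System Theorem, no Lemma \ref{lem.Pintcont}, only the isometric $\ast$-homomorphism property of projection-valued integration that the paper records in its introduction. What the paper's route buys is mainly methodological uniformity -- the same MST pattern recurs in Lemma \ref{lem.randcompmeas}, Theorem \ref{thm.traceofinteg}, and Theorem \ref{thm.MOIsinM} -- and the fact that it uses only soft continuity of $Q$ under bounded convergence rather than the norm identity; for the statement at hand, both are equally legitimate.

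One caution about your closing aside: as stated, the commutant variant has a small gap. For a single fixed $a \in \cM'$, the set $\{f \in \ell^{\infty}(\Xi,\sG) : a\,Q(f) = Q(f)\,a\}$ is not obviously closed under complex conjugation -- taking adjoints of $a\,Q(f) = Q(f)\,a$ shows that $Q(\overline{f}) = Q(f)^*$ commutes with $a^*$, not with $a$. The repair is to demand commutation with all of $\cM'$ simultaneously, i.e., to apply Corollary \ref{cor.MST} (or Theorem \ref{thm.MST}) to $\{f : Q(f) \in \cM''\}$, which \emph{is} conjugation-closed because $\cM'$ is a $\ast$-algebra; but since $\cM'' = \cM$ by the Bicommutant Theorem, that set is exactly the paper's $\mathbb{H}$, and the detour through the commutant adds nothing. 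Your direct norm-approximation argument remains the cleanest of the three.
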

\begin{proof}
Of course, it suffices to prove $Q(f) \in \cM$, for all $f \in \ell^{\infty}(\Xi,\sG)$.
To this end, let $\mathbb{H} \coloneqq \{f \in \ell^{\infty}(\Xi,\sG) : Q(f) \in \cM\}$.
Since $\cM$ is a linear subspace closed under taking adjoints, $\mathbb{H}$ is a vector space that is closed under complex conjugation.
Since $\cM$ is closed in the SOT, Lemma \ref{lem.Pintcont} implies that $\mathbb{H}$ is closed under sequential bounded convergence.
By assumption $\mathbb{M} \coloneqq \{1_G : G \in \sG\} \subseteq \mathbb{H}$.
Since $\mathbb{M}$ is closed under multiplication and complex conjugation, the multiplicative system theorem gives $\ell^{\infty}(\Xi,\sG) = \ell^{\infty}(\Xi,\sigma(\mathbb{M})) \subseteq \mathbb{H}$, as desired.
\end{proof}

\begin{thm}[MOIs in $\cM$]\label{thm.MOIsinM}
Suppose $H_1 = \cdots = H_{k+1} = K$, $\cM \subseteq B(K)$ is a von Neumann algebra, and $P_j$ takes values in $\cM$ for all $j \in \{1,\ldots,k+1\}$.
If $\varphi \in L^{\infty}(\Om_1,P_1) \iotimes \cdots \iotimes L^{\infty}(\Om_{k+1},P_{k+1})$ and $(\Sigma,\rho,\varphi_1,\ldots,\varphi_{k+1})$ is a $L_P^{\infty}$-IPD of $\varphi$ (where $P \coloneqq P_1 \otimes \cdots \otimes P_{k+1}$), then
\[
\big(I^{\boldsymbol{P}}\varphi\big)[b] = \int_{\Sigma} P_1(\varphi_1(\cdot,\sigma))\,b_1\cdots P_k(\varphi_k(\cdot,\sigma))\,b_k \, P_{k+1}(\varphi_{k+1}(\cdot,\sigma)) \, \rho(d\sigma) \numberthis\label{eq.wstarintMOI}
\]
is a weak$^*$ integral in $\cM$ whenever $b = (b_1,\ldots,b_k) \in \cM^k$.
Moreover, $I^{\boldsymbol{P}}\varphi \colon B(K)^k \to B(K)$ restricts to a bounded $k$-linear map $\cM^k \to \cM$ with operator norm at most $\|\varphi\|_{L^{\infty}(P_1) \iotimes \cdots \iotimes L^{\infty}(P_{k+1})}$ that is $\sigma$-weakly continuous in each argument.
Finally, $I^{\boldsymbol{P}}\varphi$ is independent of the representation of $\cM$ in the sense that if $\cN$ is another von Neumann algebra, and $\pi \colon \cM \to \cN$ is an algebraic $\ast$-isomorphism, then
\[
\pi\big(\big(I^{P_1,\ldots,P_{k+1}}\varphi\big)[b_1,\ldots,b_k]\big) = \big(I^{\pi \circ P_1,\ldots,\pi \circ P_{k+1}}\varphi\big)[\pi(b_1),\ldots,\pi(b_k)],
\]
for all $b_1,\ldots,b_k \in \cM$.
\end{thm}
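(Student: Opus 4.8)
The plan is to prove the three assertions of the theorem in order — that the MOI lands in $\cM$ as a weak$^*$ integral, that its restriction to $\cM^k$ is bounded and separately $\sigma$-weakly continuous, and that it is representation-independent — leaning throughout on the machinery already assembled for $B(K)$. For the first assertion, set $F(\sigma) \coloneqq P_1(\varphi_1(\cdot,\sigma))\,b_1\cdots P_{k+1}(\varphi_{k+1}(\cdot,\sigma))$. Since each $P_j$ is $\cM$-valued, Lemma \ref{lem.PintinM} gives $P_j(\varphi_j(\cdot,\sigma)) \in \cM$, and as $b_j \in \cM$ and $\cM$ is an algebra, $F(\sigma) \in \cM$ for every $\sigma$. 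By Corollary \ref{cor.MOIintgood}, $F$ is pointwise weakly measurable with $\underline{\int_{\Sigma}}\|F\|\,d\rho < \infty$, so—because $\cM$ is $\sigma$-weakly closed—Corollary \ref{cor.wstarint} applied with $V = \cM$ shows that $F$ is weak$^*$ integrable in $\cM$ and that its weak$^*$ integral coincides with the pointwise Pettis integral $\rho_{\Sigma}(F) = (I^{\boldsymbol P}\varphi)[b]$. This yields \eqref{eq.wstarintMOI} and, in particular, $(I^{\boldsymbol P}\varphi)[b] \in \cM$.

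For the second assertion, $k$-linearity is immediate and the operator-norm bound is exactly \eqref{eq.opnormestim}. For separate $\sigma$-weak continuity I combine Corollary \ref{cor.uwcont}—which, via Theorem \ref{thm.MOIwelldef} and Definition \ref{def.MOI}, says $I^{\boldsymbol P}\varphi$ is ultraweakly continuous in each argument as a map into $B(K)$—with two identifications: the ultraweak topology on $B(K)$ is the $\sigma$-WOT (Theorem \ref{thm.Schatten}.\ref{item.finrk}), and the $\sigma$-weak topology on $\cM$ is the subspace topology inherited from $B(K)$. Thus a net converging $\sigma$-weakly in $\cM$ converges ultraweakly in $B(K)$; its image converges ultraweakly in $B(K)$ by Corollary \ref{cor.uwcont}; and since that image lies in $\cM$, the convergence is $\sigma$-weak in $\cM$.

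For the third (and most delicate) assertion, I first record that $\pi \circ P_j$ is a projection-valued measure valued in $\cN$: it preserves projections and the unit because $\pi$ is a unital $\ast$-isomorphism, and it is countably additive because the partial sums of $P_j$ converge $\sigma$-weakly while $\pi$ is a $\sigma$-weak homeomorphism (Theorem \ref{thm.optop}.\ref{item.starisom}). As $\pi$ is injective, $P_j$ and $\pi \circ P_j$ have identical null sets, so $L^{\infty}(\Om_j,P_j) = L^{\infty}(\Om_j,\pi\circ P_j)$ with equal norms. Next I show $\pi(P_j(f)) = (\pi\circ P_j)(f)$ for every $f \in \ell^{\infty}(\Om_j,\sF_j)$: the collection of such $f$ is a conjugation-closed subspace containing all indicators $1_G$, and it is closed under bounded sequential convergence, since both $P_j(\cdot)$ and $(\pi\circ P_j)(\cdot)$ carry boundedly convergent sequences to $\mathrm{S}^*$OT-convergent (hence, on bounded sets, $\sigma$-weakly convergent) ones by Lemma \ref{lem.Pintcont}, while $\pi$ is $\sigma$-weakly continuous; the Multiplicative System Theorem (Theorem \ref{thm.MST}) then delivers all of $\ell^{\infty}(\Om_j,\sF_j)$.

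I then argue that the given decomposition is also an $L^{\infty}_{\tilde P}$-IPD of $\varphi$, where $\tilde P \coloneqq (\pi\circ P_1)\otimes\cdots\otimes(\pi\circ P_{k+1})$: conditions \ref{item.nullset}--\ref{item.upperintegofphis} are immediate from the norm equalities, while condition \ref{item.equality} reduces to the statement that $P$ and $\tilde P$ have the same null sets. This null-set agreement is the crux of the argument and the precise point where non-separability bites, since a coordinatewise Fubini argument would require an uncountable union of null sets to be null. I would obtain it cleanly by transporting $P$ through the $\ast$-isomorphism $\Pi \coloneqq \pi\,\wotimes\cdots\wotimes\,\pi$ of the von Neumann tensor products $\cM\wotimes\cdots\wotimes\cM \to \cN\wotimes\cdots\wotimes\cN$ (the standard fact that a $\ast$-isomorphism tensors to one being the tool I must import): then $\Pi\circ P$ is a projection-valued measure—again because $\Pi$ is a $\sigma$-weak homeomorphism—agreeing with $\tilde P$ on every rectangle, so the uniqueness clause of Theorem \ref{thm.tensprodPVM} forces $\Pi\circ P = \tilde P$, and injectivity of $\Pi$ gives $P(G)=0 \iff \tilde P(G)=0$. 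Finally I close the loop: by the commutation identity and multiplicativity of $\pi$ one has $\pi\circ F(\sigma) = (\pi\circ P_1)(\varphi_1(\cdot,\sigma))\,\pi(b_1)\cdots(\pi\circ P_{k+1})(\varphi_{k+1}(\cdot,\sigma))$, which is exactly the integrand computing $(I^{\pi\circ P_1,\ldots,\pi\circ P_{k+1}}\varphi)[\pi(b_1),\ldots,\pi(b_k)]$ through the decomposition just shown to be valid. Theorem \ref{thm.wintBHK}.\ref{item.indepofrep} gives that $\pi\circ F$ is weak$^*$ integrable in $\cN$ with $\int_{\Sigma}\pi\circ F\,d\rho = \pi\big(\int_{\Sigma}F\,d\rho\big) = \pi\big((I^{\boldsymbol P}\varphi)[b]\big)$, and Definition \ref{def.MOI} together with well-definition for $\tilde P$ (Theorem \ref{thm.MOIwelldef}) identifies the left-hand side with $(I^{\pi\circ P_1,\ldots,\pi\circ P_{k+1}}\varphi)[\pi(b_1),\ldots,\pi(b_k)]$, finishing the proof.
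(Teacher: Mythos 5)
Your proof is correct, and for the first two assertions it coincides with the paper's own argument: Lemma \ref{lem.PintinM} plus Corollary \ref{cor.wstarint} for membership of \eqref{eq.wstarintMOI} in $\cM$ as a weak$^*$ integral, the bound \eqref{eq.opnormestim} for the norm estimate, and Corollary \ref{cor.uwcont} together with the identification of the ultraweak topology with the $\sigma$-WOT (and of the $\sigma$-weak topology on $\cM$ with the subspace topology) for separate $\sigma$-weak continuity. Where you genuinely diverge is the representation-independence claim. The paper disposes of it in one sentence: it invokes Theorem \ref{thm.wintBHK}.\ref{item.indepofrep} and the identity $\pi(P_j(f)) = (\pi \circ P_j)(f)$, whose Multiplicative System Theorem proof is ``left to the reader.'' You supply that MST argument (correctly: indicators generate, and bounded sequential convergence is handled by Lemma \ref{lem.Pintcont} plus $\sigma$-weak continuity of $\pi$ on norm-bounded sets), but you also isolate a point the paper's sketch passes over entirely: for the right-hand side $\big(I^{\pi\circ P_1,\ldots,\pi\circ P_{k+1}}\varphi\big)[\pi(b)]$ to be \emph{defined}, the given decomposition must be a valid $L^{\infty}_{\tilde{P}}$-IPD, which requires knowing that $P$-null sets are $\tilde{P} \coloneqq (\pi\circ P_1)\otimes\cdots\otimes(\pi\circ P_{k+1})$-null — not automatic when the Hilbert spaces are non-separable. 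Your resolution via the tensor-product $\ast$-isomorphism $\Pi = \pi\,\wotimes\cdots\wotimes\,\pi$, the uniqueness clause of Theorem \ref{thm.tensprodPVM}, and injectivity of $\Pi$ is sound (one should also note, via a Dynkin-system argument, that $P$ takes values in $\cM\,\wotimes\cdots\wotimes\,\cM$ so that $\Pi\circ P$ makes sense), though it imports a standard fact — that normal $\ast$-isomorphisms tensor to a normal $\ast$-isomorphism of von Neumann tensor products — that the paper never develops. So your write-up is, if anything, more complete than the paper's on this point: the paper's brevity buys concision at the cost of eliding exactly the subtlety you flagged, while your route makes the non-separable case airtight at the cost of one external citation.
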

\begin{proof}
By Lemma \ref{lem.PintinM}, the definition of $I^{\boldsymbol{P}}\varphi$, and Corollary \ref{cor.wstarint}, the expression \eqref{eq.wstarintMOI} defining $\big(I^{\boldsymbol{P}}\varphi\big)[b]$ is a weak$^*$ integral in $\cM$ whenever $b \in \cM^k$.
We know from \eqref{eq.opnormestim} that the restriction $I^{\boldsymbol{P}}\varphi \colon \cM^k \to \cM$ has the claimed operator norm bound.
Corollary \ref{cor.uwcont} implies the restriction $I^{\boldsymbol{P}}\varphi \colon \cM^k \to \cM$ is $\sigma$-weakly continuous in each argument because the $\sigma$-weak operator topology on $\cM$ is the subspace topology induced by the $\sigma$-weak topology on $B(K)$, and the latter is the same as the ultraweak topology.
The final claim follows from Theorem \ref{thm.wintBHK}\ref{item.indepofrep} and the fact that $\pi(P_j(f)) = (\pi \circ P_j)(f)$, for all $f \in L^{\infty}(\Om_j,P_j)$, by another multiplicative system theorem argument, which we leave to the reader.
\end{proof}
\begin{rem}[The general semifinite case]\label{rem.semifinitecase}
Let $(\cM,\tau)$ be a semifinite von Neumann algebra.
The arguments in this section are robust in the sense that they can be used to prove the following generalizations (in the case $H_j=K$ for all $j \in \{1,\ldots,k+1\}$) of Theorems \ref{thm.traceofinteg} and \ref{thm.S1integrability}:
\begin{enumerate}[label=(\roman*),font=\normalfont,leftmargin=2\parindent]
    \item Let $(\Xi,\sG,K,Q)$ be a projection valued measure space such that $Q$ takes values in $\cM$, $(\Sigma,\sH,\rho)$ be a finite measure space, $A \colon \Sigma \to \cM$ be weak$^*$ measurable, and $\varphi \in \ell^{\infty}(\Xi \times \Sigma, \sG \otimes \sH)$.
    If
    \[
    \sup_{\sigma \in \Sigma}\max\big\{\|A(\sigma)\|_{L^1(\tau)},\|A(\sigma)\|_{L^{\infty}(\tau)}\big\} < \infty,\pagebreak
    \]
    then
    \begin{align*}
        & \int_{\Sigma} A(\sigma)\,Q(\varphi(\cdot,\sigma))\,\rho(d\sigma),\int_{\Sigma} Q(\varphi(\cdot,\sigma))\,A(\sigma)\,\rho(d\sigma) \in \mathcal{L}^1(\tau) \; \text{ and} \\
        & \tau\Bigg(\int_{\Sigma} Q(\varphi(\cdot,\sigma))\,A(\sigma)\,\rho(d\sigma)\Bigg) = \tau\Bigg(\int_{\Sigma} A(\sigma)\,Q(\varphi(\cdot,\sigma))\,\rho(d\sigma)\Bigg).
    \end{align*}
    \item Suppose we are in the setup of Theorem \ref{thm.MOIsinM}.
    If $j \in \{1,\ldots,k\}$ and $b_j \in \mathcal{L}^1(\tau)$, then the map
    \[
    \Sigma \ni \sigma \mapsto P_1(\varphi_1(\cdot,\sigma))\,b_1\cdots P_k(\varphi_k(\cdot,\sigma))\,b_k\,P_{k+1}(\varphi_{k+1}(\cdot,\sigma)) \in L^1(\tau)
    \]
    is Gel'fand--Pettis integrable as a map $\Sigma \to (L^1(\tau),\|\cdot\|_{L^1(\tau)})$, and its Gel'fand--Pettis integral is the multiple operator integral \eqref{eq.wstarintMOI}.
\end{enumerate}
To prove these, one uses the same arguments but with Theorem \ref{thm.Lpinteg} in place of Theorem \ref{thm.Spinteg}, a replacement for Lemma \ref{lem.SOTS1} (e.g., Lemma 2.5 in \cite{azamovetal}), and duality/basic properties of $L^1$ instead of $\cS_1$.
Facts such as the two above are of use when proving trace formulas;
please see Section 5.5 of \cite{skripka} for a survey of some existing results on trace formulas.
\end{rem}

\subsection{Algebraic properties and noncommutative \texorpdfstring{$L^p$}{} estimates}\label{sec.genprop}

In this section, we prove linearity and multiplicativity properties of the MOI defined in the previous section.
Then we prove Schatten $p$-norm and --- in the case of a semifinite von Neumann algebra --- noncommutative $L^p$-norm estimates for MOIs.

\begin{prop}[Algebraic properties of MOIs]\label{prop.linandmult}
Suppose $1 \leq m \leq k$.
\begin{enumerate}[label=(\roman*),font=\normalfont,leftmargin=2\parindent]
    \item If $\varphi,\psi \in L^{\infty}(\Om_1,P_1) \iotimes \cdots \iotimes L^{\infty}(\Om_{k+1},P_{k+1})$ and $\alpha \in \C$, then $I^{\boldsymbol{P}}(\varphi+\alpha\,\psi) = I^{\boldsymbol{P}}\varphi+\alpha\,I^{\boldsymbol{P}}\psi$.\label{item.lin}
    \item If \[
    \psi_1 \in L^{\infty}(\Om_1,P_1) \iotimes \cdots \iotimes L^{\infty}(\Om_m,P_m) \; \text{ and } \; \psi_2 \in L^{\infty}(\Om_{m+1},P_{m+1}) \iotimes \cdots \iotimes L^{\infty}(\Om_{k+1},P_{k+1}),
    \]
    then (the $P$-almost everywhere equivalence class of) the function
    \[
    (\psi_1 \otimes \psi_2)(\om_1,\ldots,\om_{k+1}) \coloneqq \psi_1(\om_1,\ldots,\om_m)\,\psi_2(\om_{m+1},\ldots,\om_{k+1})
    \]
    belongs to $L^{\infty}(\Om_1,P_1) \iotimes \cdots \iotimes L^{\infty}(\Om_{k+1},P_{k+1})$ and
    \[
    \big(I^{\boldsymbol{P}}(\psi_1 \otimes \psi_2)\big)[b] = \big(I^{P_1,\ldots,P_m}\psi_1\big)[b_1,\ldots,b_{m-1}]\,b_m\,\big(I^{P_{m+1},\ldots,P_{k+1}}\psi_2\big)[b_{m+1},\ldots,b_k],
    \]
    for all $b = (b_1,\ldots,b_k) \in B(H_2;H_1) \times \cdots \times B(H_{k+1};H_k)$.\label{item.mult1}
    \item If $\psi \in L^{\infty}(\Om_m,P_m) \iotimes L^{\infty}(\Om_{m+1},P_{m+1})$,
    \[
    \tilde{\psi}(\boldsymbol{\om}) \coloneqq \psi(\om_m,\om_{m+1}) \; \text{ for } \; \boldsymbol{\om} = (\om_1,\ldots,\om_{k+1}) \in \Om,
    \]
    and $\varphi \in L^{\infty}(\Om_1,P_1) \iotimes \cdots \iotimes L^{\infty}(\Om_{k+1},P_{k+1})$, then
    \[
    \big(I^{\boldsymbol{P}}(\varphi\tilde{\psi})\big)[b] = \big(I^{\boldsymbol{P}}\varphi\big)\big[b_1,\ldots,b_{m-1},\big(I^{P_m,P_{m+1}}\psi\big)[b_m],b_{m+1},\ldots,b_k\big],
    \]
    for all $b = (b_1,\ldots,b_k) \in B(H_2;H_1) \times \cdots \times B(H_{k+1};H_k)$.\label{item.mult2}
\end{enumerate}
\end{prop}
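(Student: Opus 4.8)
The plan is to prove all three parts by selecting \emph{convenient} $L_P^{\infty}$-IPDs—which is legitimate because $I^{\boldsymbol{P}}$ is independent of the decomposition by Theorem \ref{thm.MOIwelldef}—and then reducing each operator identity to a scalar one by pairing the relevant integral against vectors $h \in H_{k+1}$ and $k \in H_1$ and invoking the vector-level defining property of the pointwise Pettis integral. For part \ref{item.lin}, given IPDs $(\Sigma,\rho,\varphi_1,\ldots,\varphi_{k+1})$ of $\varphi$ and $(\tilde{\Sigma},\tilde{\rho},\tilde{\varphi}_1,\ldots,\tilde{\varphi}_{k+1})$ of $\psi$, I would form the disjoint union $(\Sigma \sqcup \tilde{\Sigma}, \rho + \tilde{\rho})$, keeping the factors $\varphi_j$ on $\Sigma$ and placing $\alpha\tilde{\varphi}_1, \tilde{\varphi}_2, \ldots, \tilde{\varphi}_{k+1}$ on $\tilde{\Sigma}$ (folding the scalar $\alpha$ into the first factor). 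By Proposition \ref{prop.nonmeasinteg}.\ref{item.disjunupint} and requirement \ref{item.equality}, this is an $L_P^{\infty}$-IPD of $\varphi + \alpha\,\psi$, exactly as in the additivity step of Proposition \ref{prop.IPTPspecial}. Since the integral of $\langle F(\cdot)h,k\rangle$ over a disjoint union of measure spaces is the sum of the integrals over the pieces, and since $P_1(\alpha\tilde{\varphi}_1(\cdot,\sigma)) = \alpha\,P_1(\tilde{\varphi}_1(\cdot,\sigma))$, the defining integral splits as $(I^{\boldsymbol{P}}\varphi)[b] + \alpha\,(I^{\boldsymbol{P}}\psi)[b]$. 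This part is routine.

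For part \ref{item.mult1}, I would take IPDs $(\Sigma_1,\rho_1,\psi_{1,1},\ldots,\psi_{1,m})$ of $\psi_1$ and $(\Sigma_2,\rho_2,\psi_{2,m+1},\ldots,\psi_{2,k+1})$ of $\psi_2$, and build the product IPD on $(\Sigma_1 \times \Sigma_2, \rho_1 \otimes \rho_2)$ by setting $\chi_j(\om_j,\sigma_1,\sigma_2) \coloneqq \psi_{1,j}(\om_j,\sigma_1)$ for $j \leq m$ and $\chi_j(\om_j,\sigma_1,\sigma_2) \coloneqq \psi_{2,j}(\om_j,\sigma_2)$ for $j \geq m+1$; Fubini and requirement \ref{item.upperintegofphis} show this is an $L_P^{\infty}$-IPD of $\psi_1 \otimes \psi_2$ (so in particular $\psi_1 \otimes \psi_2$ lies in the full IPTP). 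With this IPD the MOI integrand factors as $G_1(\sigma_1)\,b_m\,G_2(\sigma_2)$, where $G_1(\sigma_1) = P_1(\psi_{1,1}(\cdot,\sigma_1))\,b_1 \cdots P_m(\psi_{1,m}(\cdot,\sigma_1))$ and $G_2(\sigma_2) = P_{m+1}(\psi_{2,m+1}(\cdot,\sigma_2))\,b_{m+1}\cdots P_{k+1}(\psi_{2,k+1}(\cdot,\sigma_2))$. Pairing against $h,k$ and applying the classical Fubini theorem to the scalar integrand, I would integrate out $\sigma_2$ first—using that $G_2(\cdot)h$ is weakly integrable with $\int_{\Sigma_2} G_2(\sigma_2)h\,\rho_2(d\sigma_2) = g\,h$ for $g \coloneqq (I^{P_{m+1},\ldots,P_{k+1}}\psi_2)[b_{m+1},\ldots,b_k]$—and then $\sigma_1$, recovering $\langle G_1'\,b_m\,g\,h,k\rangle$ with $G_1' \coloneqq (I^{P_1,\ldots,P_m}\psi_1)[b_1,\ldots,b_{m-1}]$. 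As $h,k$ are arbitrary, this gives the claimed factorization.

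Part \ref{item.mult2} follows the same template. Choosing an IPD $(\Sigma,\rho,\varphi_1,\ldots,\varphi_{k+1})$ of $\varphi$ and an IPD $(\Sigma',\rho',\psi_m,\psi_{m+1})$ of $\psi$, the function $\tilde{\psi}$ admits the $L_P^{\infty}$-IPD with factors $\psi_m,\psi_{m+1}$ in coordinates $m,m+1$ and the constant $1$ elsewhere, so $\varphi\tilde{\psi}$ admits the product IPD on $\Sigma \times \Sigma'$ with factors $\chi_j = \varphi_j$ for $j \notin \{m,m+1\}$, $\chi_m = \varphi_m\,\psi_m$, and $\chi_{m+1} = \varphi_{m+1}\,\psi_{m+1}$. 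The crucial point is that $P_m$-integration is a $\ast$-homomorphism, hence multiplicative, so $P_m(\chi_m(\cdot,\sigma,\sigma')) = P_m(\varphi_m(\cdot,\sigma))\,P_m(\psi_m(\cdot,\sigma'))$ and likewise at index $m+1$; thus the integrand over $\Sigma \times \Sigma'$ equals $L(\sigma)\,M(\sigma')\,R(\sigma)$, where $M(\sigma') = P_m(\psi_m(\cdot,\sigma'))\,b_m\,P_{m+1}(\psi_{m+1}(\cdot,\sigma'))$ is the DOI integrand for $\psi$ and $L(\sigma),R(\sigma)$ are the left/right blocks of the $\varphi$-integrand. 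Pairing against $h,k$ and applying scalar Fubini, I would integrate out $\sigma'$ (for fixed $\sigma$, $\int_{\Sigma'}\langle M(\sigma')R(\sigma)h,\,L(\sigma)^*k\rangle\,\rho'(d\sigma') = \langle c\,R(\sigma)h,\,L(\sigma)^*k\rangle$, where $c \coloneqq (I^{P_m,P_{m+1}}\psi)[b_m]$), leaving the integrand $L(\sigma)\,c\,R(\sigma)$—precisely the MOI integrand for $\varphi$ with $b_m$ replaced by $c$. Integrating over $\sigma$ then yields $(I^{\boldsymbol{P}}\varphi)[b_1,\ldots,b_{m-1},c,b_{m+1},\ldots,b_k]$.

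The main obstacle in parts \ref{item.mult1} and \ref{item.mult2} is justifying the interchange of the two integrals in the (generally non-separable) operator setting. I expect this to be handled cleanly not by any vector-valued Fubini theorem, but by passing to the scalar integrands $\langle F(\sigma)h,k\rangle$: these are genuinely $\sF \otimes \sH$-measurable (equivalently, the product IPD is pointwise weakly measurable, by Corollary \ref{cor.MOIintgood}) and are dominated by a product of $L^{\infty}$-norms whose double integral is finite by the IPD property, so the classical scalar Fubini theorem applies directly, and the vector-level definition of the pointwise Pettis integral lets me peel off one variable at a time. The only remaining point requiring care is verifying that the product IPDs I construct genuinely satisfy requirements \ref{item.nullset}--\ref{item.equality}, which is immediate from Fubini exactly as in the multiplicativity step of Proposition \ref{prop.IPTPspecial}.
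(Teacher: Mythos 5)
Your proposal is correct and follows essentially the same route as the paper's proof: a disjoint-union $L_P^{\infty}$-IPD for linearity, product IPDs (justified via the argument in Proposition \ref{prop.IPTPspecial}) for both multiplicativity claims, and a reduction to scalar identities by pairing against vectors, then invoking Fubini's Theorem, the defining property of the pointwise Pettis integral, and (in part \ref{item.mult2}) the multiplicativity of integration against a projection-valued measure. The only differences are cosmetic — you fold $\alpha$ into one factor rather than treating scalar multiples separately, and you expand the single integral over the product space into iterated integrals, whereas the paper runs the same computation starting from the factored side.
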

\begin{proof}
We take each item in turn.

\ref{item.lin} Let $\varphi, \psi \in L^{\infty}(\Om_1,P_1) \iotimes \cdots \iotimes L^{\infty}(\Om_{k+1},P_{k+1})$ and $\alpha \in \C$.
It is easy to see that $I^{\boldsymbol{P}}(\alpha\varphi) = \alpha \, I^{\boldsymbol{P}}\varphi$.
To prove additivity, let $(\Sigma_1,\rho_1,\varphi_1,\ldots,\varphi_{k+1})$ and $(\Sigma_2,\rho_2,\psi_1,\ldots,\psi_{k+1})$ be $L_P^{\infty}$-IPDs of $\varphi$ and $\psi$, respectively.
Then we take $(\Sigma,\sH,\rho)$ to be the disjoint union of the measure spaces $(\Sigma_1,\sH_1,\rho_1)$ and $(\Sigma_2,\sH_2,\rho_2)$ and, for $j \in \{1,\ldots,k+1\}$, define $\chi_j \colon \Om_j \times \Sigma \to \C$ by
\[
\chi_j(\om_j,\sigma) \coloneqq \begin{cases}
\varphi_j(\om_j,\sigma) & \text{if } \sigma \in \Sigma_1 \subseteq \Sigma,\\
\psi_j(\om_j,\sigma) & \text{if } \sigma \in \Sigma_2 \subseteq \Sigma,
\end{cases}\pagebreak
\]
for all $(\om_j,\sigma) \in \Om_j \times \Sigma$.
Then, as is argued in the proof of Proposition \ref{prop.IPTPspecial}, $(\Sigma,\rho,\chi_1,\ldots,\chi_{k+1})$ is a $L^{\infty}_P$-IPD of $\varphi+\psi$.
Thus, by definition of the disjoint union measure space and pointwise Pettis integrals,
\[
I^{\boldsymbol{P}}(\varphi+\psi) = I^{\boldsymbol{P}}(\Sigma,\rho,\chi_1,\ldots,\chi_{k+1}) = I^{\boldsymbol{P}}(\Sigma_1,\rho_1,\varphi_1,\ldots,\varphi_{k+1}) + I^{\boldsymbol{P}}(\Sigma_2,\rho_2,\psi_1,\ldots,\psi_{k+1}) = I^{\boldsymbol{P}}\varphi + I^{\boldsymbol{P}}\psi.
\]
Thus $\varphi \mapsto I^{\boldsymbol{P}}\varphi$ is linear.

\ref{item.mult1} Let $\psi_1 \in L^{\infty}(\Om_1,P_1) \iotimes \cdots \iotimes L^{\infty}(\Om_m,P_m)$ and $\psi_2 \in L^{\infty}(\Om_{m+1},P_{m+1}) \iotimes \cdots \iotimes L^{\infty}(\Om_{k+1},P_{k+1})$.
If $(\Sigma_1,\rho_1,\varphi_1,\ldots,\varphi_m)$ and $(\Sigma_2,\rho_2,\varphi_{m+1},\ldots,\varphi_{k+1})$ are, respectively, $L^{\infty}_{P_1 \otimes \cdots \otimes P_m}$- and $L^{\infty}_{P_{m+1}\otimes \cdots \otimes P_{k+1}}$-IPDs of $\psi_1$ and $\psi_2$, then
\[
(\Sigma_1,\rho_1,\varphi_1,\ldots,\varphi_m,\underbrace{1,\cdots, 1}_{k+1-m}) \; \text{ and } \; (\Sigma_2,\rho_2,\underbrace{1,\ldots,1}_{m},\varphi_{m+1},\ldots,\varphi_{k+1})
\]
are, respectively, $L^{\infty}_P$-IPDs of $\psi_1 \otimes 1$ and $1 \otimes \psi_2$.
But then
\[
\psi_1 \otimes \psi_2 = (\psi_1 \otimes 1)(1 \otimes \psi_2) \in L^{\infty}(\Om_1,P_1) \iotimes \cdots \iotimes L^{\infty}(\Om_{k+1},P_{k+1})
\]
because $L^{\infty}(\Om_1,P_1) \iotimes \cdots \iotimes L^{\infty}(\Om_{k+1},P_{k+1})$ is an algebra.
And actually, by the argument from the proof of Proposition \ref{prop.IPTPspecial}, if we redefine
\begin{align*}
    (\Sigma,\sH,\rho) & \coloneqq (\Sigma_1 \times \Sigma_2,\sH_1 \otimes \sH_2,\rho_1 \otimes \rho_2) \; \text{ and} \\
    \chi_j(\om_j,\sigma) & \coloneqq \begin{cases}
    \varphi_j(\om_j, \sigma_1) & \text{if } 1 \leq j \leq m, \\
    \varphi_j(\om_j, \sigma_2) & \text{if } m+1 \leq j \leq k+1,
    \end{cases}
\end{align*}
for $\om_j \in \Om_j$ and $\sigma = (\sigma_1,\sigma_2) \in \Sigma_1 \times \Sigma_2 = \Sigma$, then $(\Sigma,\rho,\chi_1,\ldots,\chi_{k+1})$ is a $L^{\infty}_P$-IPD of $\psi_1 \otimes \psi_2$.
This observation implies the result.
Indeed, let $h_1 \in H_{k+1}$, $h_2 \in H_1$,
\begin{align*}
    h_3 & \coloneqq (I^{P_{m+1},\ldots,P_{k+1}}\psi_2)[b_{m+1},\ldots,b_k]h_1, \; \text{ and} \\
    T & \coloneqq \big(I^{P_1,\ldots,P_m}\psi_1\big)[b_1,\ldots,b_{m-1}]\,b_m\,\big(I^{P_{m+1},\ldots,P_{k+1}}\psi_2\big)[b_{m+1},\ldots,b_k].
\end{align*}
Then
\begin{align*}
    \la Th_1,h_2 \ra_{H_1} & = \big\la \big(I^{P_1,\ldots,P_m}\psi_1\big)[b_1,\ldots,b_{m-1}]b_mh_3,h_2 \big\ra_{H_1} \\
    & = \int_{\Sigma_1}\Bigg\la\Bigg( \prod_{j=1}^m P_j(\varphi_j(\cdot,\sigma_1)) \, b_j\Bigg)h_3,h_2 \Bigg\ra_{H_1} \, \rho_1(d\sigma_1) \\
    & = \int_{\Sigma_1}\int_{\Sigma_2}\Bigg\la \Bigg( \prod_{j_1=1}^m P_{j_1}(\varphi_{j_1}(\cdot,\sigma_1)) \, b_{j_1}\Bigg)\Bigg( \prod_{j_2=m+1}^k P_{j_2}(\varphi_{j_2}(\cdot,\sigma_2)) \, b_{j_2}\Bigg) \\
    & \hspace{50mm} \times P_{k+1}(\varphi_{k+1}(\cdot,\sigma_2))h_1,h_2\Bigg\ra_{H_1}\rho_2(d\sigma_2) \,\rho_1(d\sigma_1)  \\
    & = \int_{\Sigma} \big\la P_1(\chi_1(\cdot,\sigma)) \,b_1 \cdots P_k(\chi_k(\cdot,\sigma))\,b_k\,P_{k+1}(\chi_{k+1}(\cdot,\sigma))h_1,h_2 \big\ra_{H_1} \, \rho(d\sigma) \\
    & = \big\la \big(I^{\boldsymbol{P}}(\psi_1 \otimes \psi_2)\big)[b_1,\ldots,b_k]h_1,h_2 \big\ra_{H_1}
\end{align*}
by definition and Fubini's theorem.
This completes the proof of the first multiplicativity claim.

\ref{item.mult2} Let $(\Sigma_1,\rho_1,\varphi_1,\ldots,\varphi_{k+1})$ be a $L_P^{\infty}$-IPD of $\varphi$ and $(\Sigma_2,\rho_2,\psi_m,\varphi_{m+1})$ be a $L_{P_m \otimes P_{m+1}}^{\infty}$-IPD of $\psi$.
Then
\[
(\Sigma_2,\rho_2,\underbrace{1,\ldots,1}_{m-1},\psi_m,\psi_{m+1},\underbrace{1,\cdots, 1}_{k-m})
\]
is a $L_P^{\infty}$-IPD of $\tilde{\psi}$ (as defined in the statement of this item).
Once again, by the argument from the proof of Proposition \ref{prop.IPTPspecial}, if we redefine
\begin{align*}
    (\Sigma,\sH,\rho) & \coloneqq (\Sigma_1 \times \Sigma_2,\sH_1 \otimes \sH_2, \rho_1 \otimes \rho_2) \; \text{ and} \\
    \chi_j(\om_j,\sigma) & \coloneqq \begin{cases}
    \varphi_j(\om_j, \sigma_1) & \text{if } 1 \leq j \leq m-1, \\
    \varphi_j(\om_j, \sigma_1)\,\psi_j(\om_j,\sigma_2) & \text{if } m \leq j \leq m+1, \\
    \varphi_j(\om_j, \sigma_1) & \text{if } m+2 \leq j \leq k+1,
    \end{cases}
\end{align*}
for $\om_j \in \Om_j$ and $\sigma = (\sigma_1,\sigma_2) \in \Sigma_1 \times \Sigma_2 = \Sigma$, then $(\Sigma,\rho,\chi_1,\ldots,\chi_{k+1})$ is a $L_P^{\infty}$-IPD of $\varphi \tilde{\psi}$.
Now, if $h_1 \in H_{k+1}$, $h_2 \in H_1$, $b_m^{\psi} \coloneqq \big(I^{P_m,P_{m+1}}\psi\big)[b_m]$, and
\[
T \coloneqq \big(I^{P_1,\ldots,P_{k+1}}\varphi\big)\big[b_1,\ldots,b_{m-1},b_m^{\psi},b_{m+1},\ldots,b_k\big],
\]
then
\begin{align*}
    \la Th_1,h_2 \ra_{H_1} & = \big\la \big(I^{P_1,\ldots,P_{k+1}}\varphi\big)\big[b_1,\ldots,b_{m-1},b_m^{\psi},b_{m+1},\ldots,b_k\big]h_1,h_2 \big\ra_{H_1} \\
    & = \int_{\Sigma_1}\Bigg\la\Bigg( \prod_{j_1=1}^{m-1}P_{j_1}(\varphi_{j_1}(\cdot,\sigma_1)) \, b_{j_1}\Bigg) P_m(\varphi_m(\cdot,\sigma_1))\,b_m^{\psi} \\
    & \hspace{15mm} \times \Bigg(\prod_{j_2=m+1}^k P_{j_2}(\varphi_{j_2}(\cdot,\sigma_1)) \, b_{j_2}\Bigg) P_{k+1}(\varphi_{k+1}(\cdot,\sigma_1))h_1,h_2 \Bigg\ra_{H_1} \,\rho_1(d\sigma_1) \\
    & = \int_{\Sigma_1}\int_{\Sigma_2}\Bigg\la\Bigg(\prod_{j_1=1}^{m-1} P_{j_1}(\varphi_{j_1}(\cdot,\sigma_1))\, b_{j_1}\Bigg)P_m(\varphi_m(\cdot,\sigma_1)) \\
    & \hspace{15mm}  \times P_m(\psi_m(\cdot,\sigma_2))\,b_m\,P_{m+1}(\psi_{m+1}(\cdot,\sigma_2))\, P_{m+1}(\varphi_{m+1}(\cdot,\sigma_1)) \,b_{m+1} \\
    & \hspace{15mm}  \times\Bigg(\prod_{j_2=m+2}^kP_{j_2}(\varphi_{j_2}(\cdot,\sigma_1))\, b_{j_2}\Bigg) P_{k+1}(\varphi_{k+1}(\cdot,\sigma_1))h_1,h_2 \Bigg\ra_{H_1} \rho_2(d\sigma_2)\,\rho_1(d\sigma_1) \\
    & = \int_{\Sigma} \big\la P_1(\chi_1(\cdot,\sigma)) \,b_1 \cdots P_k(\chi_k(\cdot,\sigma))\,b_k\,P_{k+1}(\chi_{k+1}(\cdot,\sigma))h_1,h_2 \big\ra_{H_1} \,\rho(d\sigma) \\
    & = \big\la \big(I^{\boldsymbol{P}}(\psi_1 \otimes \psi_2)\big)[b_1,\ldots,b_k]h_1,h_2 \big\ra_{H_1}
\end{align*}
by definition, multiplicativity of integration with respect to a projection valued measure, and Fubini's theorem.
This completes the proof of the second multiplicativity claim.
\end{proof}

\begin{prop}[Schatten estimates on MOIs]\label{prop.MOISchattenestim}
If $\varphi \,\in\, L^{\infty}(\Om_1,\,P_1) \,\iotimes\, \cdots \,\iotimes\, L^{\infty}(\Om_{k+1},\,P_{k+1})$ and $p,p_1,\ldots,p_k \in [1,\infty]$ are such that $\frac{1}{p} = \frac{1}{p_1}+\cdots+\frac{1}{p_k}$, then
\[
\big\|(I^{\boldsymbol{P}}\varphi)[b]\big\|_{\cS_p} \leq \|\varphi\|_{L^{\infty}(P_1) \iotimes \cdots \iotimes L^{\infty}(P_{k+1})}\|b_1\|_{\cS_{p_1}}\cdots\|b_k\|_{\cS_{p_k}},
\]
for all $b = (b_1,\ldots,b_k) \in B(H_2;H_1) \times \cdots \times B(H_{k+1};H_k)$.
(As usual, $0 \cdot \infty \coloneqq 0$.)
\end{prop}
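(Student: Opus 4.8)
The plan is to fix an arbitrary $L_P^{\infty}$-IPD $(\Sigma,\rho,\varphi_1,\ldots,\varphi_{k+1})$ of $\varphi$, bound the Schatten $p$-norm of the integrand pointwise by H\"older's inequality, push this bound through the integral using the Schatten Minkowski inequality (Theorem~\ref{thm.Spinteg}), and finally take an infimum over all IPDs, exploiting that the left-hand side does not depend on the chosen decomposition (Theorem~\ref{thm.MOIwelldef}).

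First, recall from Corollary~\ref{cor.MOIintgood} that
\[
F(\sigma) \coloneqq P_1(\varphi_1(\cdot,\sigma))\,b_1 \cdots P_k(\varphi_k(\cdot,\sigma))\,b_k\,P_{k+1}(\varphi_{k+1}(\cdot,\sigma))
\]
is pointwise Pettis integrable with $\int_{\Sigma} F\,d\rho = (I^{\boldsymbol{P}}\varphi)[b]$, so Theorem~\ref{thm.Spinteg} gives
\[
\big\|(I^{\boldsymbol{P}}\varphi)[b]\big\|_{\cS_p} \leq \underline{\int_{\Sigma}}\|F\|_{\cS_p}\,d\rho.
\]
For the integrand I would bound $\|F(\sigma)\|_{\cS_p}$ pointwise: each factor $P_j(\varphi_j(\cdot,\sigma))$ lies in $\cS_{\infty} = B(H_j)$ with operator norm equal to $\|\varphi_j(\cdot,\sigma)\|_{L^{\infty}(P_j)}$ (the norm-preservation property of $P_j$-integration recorded in the introduction), while $b_j \in \cS_{p_j}$. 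Since $\frac{1}{p} = \frac{1}{p_1}+\cdots+\frac{1}{p_k}$ and the $k+1$ operator-norm factors contribute exponent $0$, H\"older's inequality for the Schatten norms (Theorem~\ref{thm.Schatten}.\ref{item.SchattenHolder}) applied to this alternating product of $k+1$ factors in $\cS_{\infty}$ and $k$ factors in $\cS_{p_1},\ldots,\cS_{p_k}$ yields
\[
\|F(\sigma)\|_{\cS_p} \leq \Bigg(\prod_{j=1}^{k+1}\|\varphi_j(\cdot,\sigma)\|_{L^{\infty}(P_j)}\Bigg)\prod_{j=1}^k\|b_j\|_{\cS_{p_j}}.
\]

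Combining the two displays with monotonicity and the (easily verified) positive homogeneity of the lower integral, together with $\underline{\int_{\Sigma}} \leq \overline{\int_{\Sigma}}$ (Proposition~\ref{prop.nonmeasinteg}), I obtain
\[
\big\|(I^{\boldsymbol{P}}\varphi)[b]\big\|_{\cS_p} \leq \Bigg(\prod_{j=1}^k\|b_j\|_{\cS_{p_j}}\Bigg)\overline{\int_{\Sigma}}\prod_{j=1}^{k+1}\|\varphi_j(\cdot,\sigma)\|_{L^{\infty}(P_j)}\,\rho(d\sigma).
\]
Here is the crux of the argument: the left-hand side is independent of the chosen $L_P^{\infty}$-IPD by Theorem~\ref{thm.MOIwelldef}, whereas the upper integral on the right is exactly one of the quantities whose infimum over all IPDs defines $\|\varphi\|_{L^{\infty}(P_1) \iotimes \cdots \iotimes L^{\infty}(P_{k+1})}$. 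Taking the infimum over all IPDs of $\varphi$ thus produces the desired estimate. The ``in particular'' clause is then immediate: $k$-linearity of $b \mapsto (I^{\boldsymbol{P}}\varphi)[b]$ was already noted after Notation~\ref{nota.MOIdepondecomp}, and the just-proven bound shows that its restriction to $\cS_{p_1}(H_2;H_1) \times \cdots \times \cS_{p_k}(H_{k+1};H_k)$ is bounded.

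I do not expect a genuine obstacle here: all the difficult work --- the non-separable Schatten Minkowski inequality of Theorem~\ref{thm.Spinteg} and the representation-independent well-definition of the MOI --- has been carried out upstream, so the estimate assembles cleanly from them. The only points demanding care are the H\"older bookkeeping (correctly interleaving the $k+1$ factors in $\cS_{\infty}$ with the $k$ Schatten factors and checking that the exponents sum as required) and the deliberate switch from the lower integral supplied by Theorem~\ref{thm.Spinteg} to the upper integral appearing in the definition of the IPTP norm, which must be made before passing to the infimum.
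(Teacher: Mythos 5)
Your proposal is correct and follows essentially the same route as the paper's proof: apply Theorem~\ref{thm.Spinteg} to the integrand, bound it pointwise via H\"older's inequality for Schatten norms (Theorem~\ref{thm.Schatten}.\ref{item.SchattenHolder}), pass from the lower to the upper integral, and take the infimum over all $L_P^{\infty}$-IPDs. Your extra remarks---explicitly invoking Theorem~\ref{thm.MOIwelldef} to justify that the left-hand side is decomposition-independent before taking the infimum, and citing the $k$-linearity noted after Notation~\ref{nota.MOIdepondecomp} for the ``in particular'' clause---are exactly the ingredients the paper uses implicitly.
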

\begin{proof}
Let $(\Sigma,\rho,\varphi_1,\ldots,\varphi_{k+1})$ be a $L^{\infty}_P$-IPD of $\varphi$.
By definition, Theorem \ref{thm.Spinteg}, and H\"{o}lder's inequality,
\begin{align*}
    \big\|(I^{\boldsymbol{P}}\varphi)[b]\big\|_{\cS_p} & = \Bigg\|\int_{\Sigma}P_1(\varphi_1(\cdot,\sigma))\,b_1 \cdots P_k(\varphi_k(\cdot,\sigma)) \, b_k \, P_{k+1}(\varphi_{k+1}(\cdot,\sigma)) \, \rho(d\sigma)\Bigg\|_{\cS_p} \\
    & \leq \underline{\int_{\Sigma}}\big\|P_1(\varphi_1(\cdot,\sigma))\,b_1 \cdots P_k(\varphi_k(\cdot,\sigma)) \, b_k \, P_{k+1}(\varphi_{k+1}(\cdot,\sigma))\big\|_{\cS_p} \, \rho(d\sigma) \\
    & \leq \|b_1\|_{\cS_{p_1}}\cdots\|b_k\|_{\cS_{p_k}}\underline{\int_{\Sigma}}\prod_{j=1}^{k+1}\|\varphi_j(\cdot,\sigma)\|_{L^{\infty}(P_j)}\,\rho(d\sigma).
\end{align*}
Using $\underline{\int_{\Sigma}} \leq \overline{\int_{\Sigma}}$ and then taking the infimum over all $L_P^{\infty}$-IPDs $(\Sigma,\rho,\varphi_1,\ldots,\varphi_{k+1})$ gives the desired result.
\end{proof}

By the same proof, using Theorem \ref{thm.Lpinteg} in place of Theorem \ref{thm.Spinteg} and noncommutative H\"{o}lder's inequality in place of H\"{o}lder's inequality for the Schatten norms, we get the following.

\begin{prop}[Noncommutative $L^p$ estimates on MOIs]\label{prop.MOILpestim}
Suppose that $H_1 = \cdots = H_{k+1} = K$, $(\cM \subseteq B(K),\tau)$ is a semifinite von Neumann algebra, and $P_j$ takes values in $\cM$ for all $j \in \{1,\ldots,k+1\}$.
If $\varphi \in L^{\infty}(\Om_1,P_1) \iotimes \cdots \iotimes L^{\infty}(\Om_{k+1},P_{k+1})$ and $p,p_1,\ldots,p_k \in [1,\infty]$ are such that $\frac{1}{p} = \frac{1}{p_1}+\cdots+\frac{1}{p_k}$, then
\[
\big\|(I^{\boldsymbol{P}}\varphi)[b]\big\|_{L^p(\tau)} \leq \|\varphi\|_{L^{\infty}(P_1) \iotimes \cdots \iotimes L^{\infty}(P_{k+1})}\|b_1\|_{L^{p_1}(\tau)}\cdots\|b_k\|_{L^{p_k}(\tau)},
\]
for all $b = (b_1,\ldots,b_k) \in \cM^k$.
(As usual, $0 \cdot \infty \coloneqq 0$.)
In particular, $I^{\boldsymbol{P}}\varphi$ extends to a bounded $k$-linear map $L^{p_1}(\tau) \times \cdots \times L^{p_k}(\tau) \to L^p(\tau)$ with operator norm at most $\|\varphi\|_{L^{\infty}(P_1) \iotimes \cdots \iotimes L^{\infty}(P_{k+1})}$.
\end{prop}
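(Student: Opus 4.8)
The plan is to mimic the proof of Proposition \ref{prop.MOISchattenestim} essentially verbatim, replacing the Schatten Minkowski inequality (Theorem \ref{thm.Spinteg}) by its noncommutative analogue (Theorem \ref{thm.Lpinteg}) and Hölder for Schatten norms by Noncommutative Hölder (Theorem \ref{thm.Lp}.\ref{item.LpHolder}). First I would fix a $L^\infty_P$-IPD $(\Sigma,\rho,\varphi_1,\ldots,\varphi_{k+1})$ of $\varphi$ and set $F(\sigma) \coloneqq P_1(\varphi_1(\cdot,\sigma))\,b_1\cdots P_k(\varphi_k(\cdot,\sigma))\,b_k\,P_{k+1}(\varphi_{k+1}(\cdot,\sigma))$ for $b \in \cM^k$. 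The crucial preliminary observation is that $F$ actually takes values in $\cM$ and is weak$^*$ integrable there: this is exactly Theorem \ref{thm.MOIsinM} (whose proof rests on Lemma \ref{lem.PintinM}, guaranteeing $P_j(\varphi_j(\cdot,\sigma)) \in \cM$). Consequently $\int_\Sigma F\,d\rho = (I^{\boldsymbol{P}}\varphi)[b] \in \cM$ is a weak$^*$ integral, so the hypothesis of Theorem \ref{thm.Lpinteg} is met.

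Next I would chain three inequalities. Theorem \ref{thm.Lpinteg} moves the $L^p(\tau)$-norm inside the integral, giving $\|(I^{\boldsymbol{P}}\varphi)[b]\|_{L^p(\tau)} \le \underline{\int_\Sigma} \|F(\sigma)\|_{L^p(\tau)}\,\rho(d\sigma)$. Then I would apply Noncommutative Hölder to the $(2k+1)$-fold product defining $F(\sigma)$, assigning the exponent $\infty$ to each of the $k+1$ factors $P_m(\varphi_m(\cdot,\sigma))$ and the exponent $p_j$ to each $b_j$; since $\tfrac{1}{\infty}=0$, the reciprocals sum to $\tfrac{1}{p_1}+\cdots+\tfrac{1}{p_k}=\tfrac{1}{p}$ as required. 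Using $\|P_m(\varphi_m(\cdot,\sigma))\|_{L^\infty(\tau)} = \|P_m(\varphi_m(\cdot,\sigma))\| = \|\varphi_m(\cdot,\sigma)\|_{L^\infty(P_m)}$ (the operator norm of a $P_m$-integral equals the corresponding $L^\infty(P_m)$-norm), this yields $\|F(\sigma)\|_{L^p(\tau)} \le \|b_1\|_{L^{p_1}(\tau)}\cdots\|b_k\|_{L^{p_k}(\tau)}\prod_{m=1}^{k+1}\|\varphi_m(\cdot,\sigma)\|_{L^\infty(P_m)}$. Integrating, using $\underline{\int_\Sigma} \le \overline{\int_\Sigma}$, and taking the infimum over all $L^\infty_P$-IPDs of $\varphi$ produces the factor $\|\varphi\|_{L^\infty(P_1) \iotimes \cdots \iotimes L^\infty(P_{k+1})}$ and completes the norm estimate.

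For the ``in particular'' assertion I would restrict the (already-defined on $\cM^k$) map $I^{\boldsymbol{P}}\varphi$ to $\mathcal{L}^{p_1}(\tau) \times \cdots \times \mathcal{L}^{p_k}(\tau)$. The estimate just proved shows this restriction is $k$-linear and bounded with respect to the $\|\cdot\|_{L^{p_j}(\tau)}$-norms, with operator norm at most $\|\varphi\|_{L^\infty(P_1) \iotimes \cdots \iotimes L^\infty(P_{k+1})}$. Since $\mathcal{L}^{p_j}(\tau) = L^{p_j}(\tau)\cap\cM$ is dense in $L^{p_j}(\tau)$ by definition of the completion (Theorem \ref{thm.Lp}), a standard density argument extends the map uniquely and boundedly to all of $L^{p_1}(\tau) \times \cdots \times L^{p_k}(\tau)$, preserving the norm bound.

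As for difficulty, the argument is genuinely routine once the infrastructure is in place, so I do not anticipate a serious obstacle. The one point demanding care — rather than the mechanical chaining of inequalities — is confirming the hypotheses of Theorem \ref{thm.Lpinteg}, namely that $F$ is $\cM$-valued and weak$^*$ integrable \emph{in $\cM$} rather than merely in $B(K)$; but this is supplied wholesale by Theorem \ref{thm.MOIsinM}, so no new work is needed.
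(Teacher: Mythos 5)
Your proposal is correct and is essentially the paper's own proof: the paper proves this proposition precisely by repeating the argument of Proposition \ref{prop.MOISchattenestim} with Theorem \ref{thm.Lpinteg} in place of Theorem \ref{thm.Spinteg} and Noncommutative H\"{o}lder's Inequality in place of the Schatten-norm H\"{o}lder inequality, with the weak$^*$ integrability of $F$ in $\cM$ supplied by Theorem \ref{thm.MOIsinM}, exactly as you note.
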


\subsection{Relation to other definitions}\label{sec.otherdefs}

For completeness, we now review one frequently used alternative definition --- due to Pavlov and Birman--Solomyak --- of \eqref{eq.formalMOI} and prove that it agrees with our definition from the previous section when both definitions apply.
This alternative definition requires the construction of a certain vector measure.
(Please see Section \ref{sec.proofofPavlov} for the terminology in the following statement.)

\begin{thm}[Pavlov, Birman--Solomyak]\label{thm.Pavlov}
If $b=(b_1,\ldots,b_k) \in \cS_2(H_2;H_1) \times \cdots \times \cS_2(H_{k+1};H_k)$, then there exists a unique vector measure $P \sh b \colon \sF \to \cS_2(H_{k+1};H_1)$ with bounded semivariation such that
\[
(P\sh b)(G_1 \times \cdots \times G_{k+1}) = P_1(G_1)\,b_1\cdots P_k(G_k)\,b_k\,P_{k+1}(G_{k+1})
\]
for all $G_1 \in \sF_1,\ldots,G_{k+1} \in \sF_{k+1}$.
The semivariation $\|P \sh b\|_{\operatorname{svar}}$ of $P\sh b$ is at most $\|b_1\|_{\cS_2}\cdots\|b_k\|_{\cS_2}$, and $P \sh b \ll P$ in the sense that $\{G \in \sF : P(G) = 0\} \subseteq \{G \in \sF : (P\sh b)(\tilde{G}) = 0$ whenever $\sF \ni \tilde{G} \subseteq G\}$.
\end{thm}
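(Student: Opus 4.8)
The plan is to construct $P \sh b$ by hand on the semiring of measurable rectangles, to prove a bounded-semivariation estimate powered by the Hilbert--Schmidt structure, and then to extend to $\sF$; uniqueness will be essentially free. \emph{Uniqueness.} The rectangles $\{G_1 \times \cdots \times G_{k+1} : G_j \in \sF_j\}$ form a $\pi$-system generating $\sF$. If two vector measures into $\cS_2(H_{k+1};H_1)$ agree on rectangles, then for each $c \in \cS_2(H_1;H_{k+1}) \cong \cS_2(H_{k+1};H_1)^*$ (Theorem \ref{thm.Schatten}.\ref{item.Schattendual}) the complex measures obtained by composing with $\Tr((\,\cdot\,)c)$ agree on this $\pi$-system, hence on all of $\sF$; since $\cS_2(H_{k+1};H_1)^*$ separates points, the two vector measures coincide. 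So only existence is at issue.

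\emph{The case $k=1$ as a model.} Here everything follows from Birman--Solomyak. Identifying $\cS_2(H_2;H_1) \cong H_1 \hotimes \overline{H_2}$ isometrically (with $\overline{H_2}$ the conjugate Hilbert space and $h_1\la\cdot,h_2\ra_{H_2} \mapsto h_1 \otimes \overline{h_2}$), and letting $P_2^{\mathrm c}(G) := \overline{P_2(G)}$ be the resulting projection-valued measure on $\overline{H_2}$, the sandwiching $X \mapsto P_1(G_1)\,X\,P_2(G_2)$ becomes $P_1(G_1) \otimes P_2^{\mathrm c}(G_2)$. Thus the tensor PVM $\Pi := P_1 \otimes P_2^{\mathrm c}$ on $(\Om,\sF)$ exists by Theorem \ref{thm.tensprodPVM}, and I set $(P\sh b)(E) := \Pi(E)\,b_1$. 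Countable additivity in $\cS_2$ is automatic since the ranges of the $\Pi(E_n)$ are orthogonal; the semivariation bound is $\|\Pi(\psi)b_1\|_{\cS_2} \le \|\psi\|_{\ell^\infty}\|b_1\|_{\cS_2}$ for simple $\psi$; and $P_2^{\mathrm c}$ and $P_2$ have identical null sets, so $P(E)=0 \Rightarrow \Pi(E)=0 \Rightarrow (P\sh b)(E)=0$, giving $P\sh b \ll P$.

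\emph{General $k$.} The clean one-contraction picture fails, because each interior factor $P_j(G_j)$ ($2 \le j \le k$) sits between two \emph{fixed} Hilbert--Schmidt operators and cannot be absorbed into a tensor of commuting projection-valued measures. I would therefore extend directly. The rectangle set function $\lambda(G_1 \times \cdots \times G_{k+1}) := P_1(G_1)\,b_1 \cdots P_{k+1}(G_{k+1})$ is separately additive (each $P_j(G_j)$ appears once), so it extends additively to finite disjoint unions of rectangles. The crux is the estimate
\[
\Bigg\| \sum_{\boldsymbol{i}} \theta_{\boldsymbol{i}}\, P_1\big(G^{(1)}_{i_1}\big)\,b_1 \cdots b_k\, P_{k+1}\big(G^{(k+1)}_{i_{k+1}}\big) \Bigg\|_{\cS_2} \le \prod_{j=1}^k \|b_j\|_{\cS_2},
\]
valid for finite measurable partitions $\{G^{(j)}_i\}_i$ of each $\Om_j$ and scalars $|\theta_{\boldsymbol{i}}| \le 1$, whence $\|P\sh b\|_{\operatorname{svar}} \le \prod_j \|b_j\|_{\cS_2}$. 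I would prove it by induction on $k$, peeling off the outermost $P_{k+1}$: using the orthogonality identity $\big\|\sum_m A_m p_m\big\|_{\cS_2}^2 = \sum_m \|A_m p_m\|_{\cS_2}^2$ for the orthogonal projections $p_m = P_{k+1}(G^{(k+1)}_m)$, together with the fact that $\|b_k p_m\|_{\cS_2}^2$ sums to $\|b_k\|_{\cS_2}^2$ and the Hölder containment $\cS_2 \cdot \cS_2 \subseteq \cS_1$ (Theorem \ref{thm.Schatten}.\ref{item.SchattenHolder}); the base case is the $k=1$ bound above.

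\emph{Assembling the vector measure and $\ll P$.} For each $c \in \cS_2(H_1;H_{k+1})$ the scalar rectangle function $\Tr(\lambda(\cdot)\,c)$ is a bounded-variation multimeasure (the crux gives $\sum_{\boldsymbol i}|\Tr(\lambda(R_{\boldsymbol i})c)| \le \|c\|_{\cS_2}\prod_j\|b_j\|_{\cS_2}$ after choosing phases $\theta_{\boldsymbol i}$ and using $\cS_2$-duality), so it extends uniquely to a complex measure $\nu_c$ on $\sF$ of total variation $\le \|c\|_{\cS_2}\prod_j\|b_j\|_{\cS_2}$. Self-duality of $\cS_2$ turns the uniformly bounded family $c \mapsto \nu_c(E)$ into an element $(P\sh b)(E) \in \cS_2(H_{k+1};H_1)$ with $\Tr((P\sh b)(E)\,c) = \nu_c(E)$; each $\nu_c$ being countably additive makes $P\sh b$ weakly, hence (Orlicz--Pettis) norm, countably additive, with $\|P\sh b\|_{\operatorname{svar}} = \sup_{\|c\|_{\cS_2}\le 1}|\nu_c|(\Om) \le \prod_j\|b_j\|_{\cS_2}$. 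Finally, since each $\nu_c$ vanishes on any rectangle having a null spectral factor, a Dynkin/Multiplicative-System argument (in the spirit of Lemma \ref{lem.PintinM}) shows that $\nu_c$ annihilates the $\sigma$-ideal of $P$-null sets, giving $P\sh b \ll P$. I expect the main obstacle to be precisely the general-$k$ semivariation estimate together with the multimeasure extension in the non-separable setting, where no single scalar measure dominates $P$ and one must argue through the separating family $\{\nu_c\}$ and Orlicz--Pettis rather than through Bochner integration or separability/completion techniques.
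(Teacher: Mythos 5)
Your uniqueness argument, your $k=1$ construction via $\cS_2(H_2;H_1) \cong H_1 \hotimes \overline{H_2}$, and your crux estimate are all essentially sound. Indeed, by $\cS_2$-duality (pair the chain with $c$ and use cyclicity of the trace) your grid inequality is equivalent to the estimate the paper proves in Step 2 of its proof, and your peeling induction can be made to close, with one repair: you must induct on chains that \emph{end in a Hilbert--Schmidt factor} (so that the localized quantity $\|b_k p_m\|_{\cS_2}$, not a global norm, is what gets squared and summed over $m$) and use $\|xy\|_{\cS_2} \leq \|x\|_{\cS_2}\|y\|$ inside the induction, rather than the both-ends-projection form with base case $k=1$ that you state, for which the step $k-1 \Rightarrow k$ does not literally go through. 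Done this way your argument even avoids the even/odd case split in the paper's Cauchy--Schwarz computation. You also silently use that arbitrary finite partitions of $\Om$ by rectangles can be refined to grid partitions (the paper's Lemma \ref{lem.comb}); that is true, but it belongs in the proof.

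The genuine gap is the word ``so'' in ``\,is a bounded-variation multimeasure \ldots so it extends uniquely to a complex measure $\nu_c$ on $\sF$.'' Bounded variation of a finitely additive set function on the algebra generated by rectangles does \emph{not} imply countable additivity, and separate countable additivity in each variable does not rescue this for free: the claim that a polymeasure of bounded (Vitali) variation extends to a measure on the product $\sigma$-algebra is itself a nontrivial theorem, which you neither prove nor cite. This is precisely the step where Pavlov's original proof erred and where the paper does its real work (Step 4): it identifies $\nu_c$, for rank-one $b_1,\ldots,b_k,c$, as the restriction to rectangles of the scalar measure $\la P(\cdot)(k_1 \otimes \cdots \otimes k_{k+1}), h_0 \otimes \cdots \otimes h_k \ra$ --- a matrix element of the tensor-product projection-valued measure of Theorem \ref{thm.tensprodPVM}, hence countably additive --- and then passes to finite-rank data by multilinearity and to general Hilbert--Schmidt data by approximation, using the variation bound to control the differences. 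The same missing idea sinks your absolute-continuity step: a Dynkin/MST argument from ``$\nu_c$ vanishes on rectangles with a null spectral factor'' cannot reach all $P$-null sets, because the $\sigma$-ideal generated by null-factor rectangles is strictly smaller than the $P$-null $\sigma$-ideal. (Take $P_1 = P_2 =$ the spectral measure of multiplication by $x$ on $L^2[0,1]$: the diagonal is $P_1 \otimes P_2$-null, yet it cannot be covered by countably many null-factor rectangles, since otherwise $[0,1]$ would be a countable union of Lebesgue-null sets.) Both gaps are repaired simultaneously by the rank-one identification above, after which your Orlicz--Pettis assembly of $P \sh b$ from the family $\{\nu_c\}$ is a clean and correct alternative to the paper's appeal to the Carath\'{e}odory--Hahn--Kluv\'{a}nek theorem.
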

\begin{rem}\label{rem.morehash}
The notation for the vector measure in Theorem \ref{thm.Pavlov} is not standard.
It is inspired by the $\#$ operation discussed in Remark \ref{rem.hash}.
As the notation suggests, morally speaking, ``$P \sh b$ is the projection valued measure $P = P_1 \otimes \cdots \otimes P_{k+1}$ acting on $b = b_1 \otimes \cdots \otimes b_k$ via $\#$."
Indeed, the condition uniquely characterizing $P \sh b$ can be rewritten genuinely as $(P \sh b)(G) = P(G) \sh b$ for all $G = G_1\times \cdots \times G_{k+1}$ with $G_1 \in \sF_1,\ldots,G_{k+1} \in \sF_{k+1}$ because in this case $P(G) \in B(H_1) \otimes \cdots \otimes B(H_{k+1}) \subseteq B(H)$.
Therefore, morally speaking, integrating a function $\varphi$ with respect to this vector measure may also be viewed as ``$\int_{\Om} \varphi \, dP$ acting on $b$ via $\#$," which also matches the interpretation discussed in Remark \ref{rem.hash}.
\end{rem}

The proof Pavlov originally provided in \cite{pavlov} that this construction is possible has an error.
Birman--Solomyak pointed out and sketched a correction of this error in \cite{birmansolomyakTensProd}.
For the benefit of the reader, we provide a complete proof, including the necessary background in vector measure theory in Section \ref{sec.proofofPavlov}.
In any case, following Pavlov, Theorem \ref{thm.Pavlov} allows us to define \eqref{eq.formalMOI} as
\[
\int_{\Om} \varphi \, d(P \sh b) \in \cS_2(H_{k+1};H_1)
\]
for \textit{all} $\varphi \in L^{\infty}(\Om,P)$, where $\Om = \Om_1 \times \cdots \times \Om_{k+1}$, but only $b \in \cS_2(H_2;H_1) \times \cdots \times \cS_2(H_{k+1};H_k)$.
(Please see pages 5--6 in \cite{diesteluhl} for the definition of the integral above.)
In this case,
\[
\Bigg\|\int_{\Om}\varphi \, d(P\sh b)\Bigg\|_{\cS_2} \leq \|\varphi\|_{L^{\infty}(P\sh b)}\|P\sh b\|_{\mathrm{svar}} \leq \|\varphi\|_{L^{\infty}(P)}\|b_1\|_{\cS_2}\cdots\|b_k\|_{\cS_2}. \numberthis\label{eq.Pavlovbound}
\]
We now show this definition agrees with the one we developed in Section \ref{sec.welldef} when they both apply.

\begin{thm}[Agreement with Pavlov MOI]\label{thm.pavlovMOIagree}
If $\varphi \in L^{\infty}(\Om_1,P_1) \iotimes \cdots \iotimes L^{\infty}(\Om_{k+1},P_{k+1})$, then
\[
\big(I^{\boldsymbol{P}}\varphi\big)[b] = \int_{\Om}\varphi \, d(P \sh b),
\]
for all $b \in \cS_2(H_2;H_1) \times \cdots \times \cS_2(H_{k+1};H_k)$.
\end{thm}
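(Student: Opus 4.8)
The plan is to reduce to the case where every $b_j$ has rank one and then identify the two operators by pairing them against vectors, recycling the computation already carried out in the proof of Theorem~\ref{thm.MOIwelldef}. First I would fix $\varphi$ and observe that both maps
\[
b \mapsto \big(I^{\boldsymbol P}\varphi\big)[b], \qquad b \mapsto \int_{\Om}\varphi\,d(P\sh b)
\]
are $k$-linear on $\cS_2(H_2;H_1)\times\cdots\times\cS_2(H_{k+1};H_k)$ with values in $\cS_2(H_{k+1};H_1)$: the first is $k$-linear by construction (Notation~\ref{nota.MOIdepondecomp}) and $\cS_2$-valued by Proposition~\ref{prop.MOISchattenestim}, while the second is $k$-linear because $b\mapsto P\sh b$ is (this is immediate from the uniqueness clause of Theorem~\ref{thm.Pavlov} and the $k$-linearity of the right-hand side of its characterizing identity). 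The crucial point is that each map is \emph{separately} $\cS_2$-continuous: for the MOI, Proposition~\ref{prop.MOISchattenestim} with Hölder exponents $p_j=2$ on the factor being varied and $p_i=\infty$ on the others gives $\|(I^{\boldsymbol P}\varphi)[b]\|_{\cS_2}\le\|\varphi\|_{L^{\infty}(P_1) \iotimes \cdots \iotimes L^{\infty}(P_{k+1})}\,\|b_j\|_{\cS_2}\prod_{i\ne j}\|b_i\|$; for the Pavlov integral, \eqref{eq.Pavlovbound} together with Proposition~\ref{prop.IPTPspecial} gives $\|\int_\Om\varphi\,d(P\sh b)\|_{\cS_2}\le\|\varphi\|_{L^{\infty}(P_1) \iotimes \cdots \iotimes L^{\infty}(P_{k+1})}\prod_j\|b_j\|_{\cS_2}$. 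Since finite-rank operators are $\cS_2$-dense (part (i) of Theorem~\ref{thm.Schatten}) and are spanned by rank-one operators, a factor-by-factor approximation then reduces the claim to the case in which every $b_j$ has rank one.

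In that case I would write $b_j=\la\,\cdot\,,h_j\ra_{H_{j+1}}k_j$ with $k_j\in H_j$, $h_j\in H_{j+1}$ for $1\le j\le k$, fix arbitrary $k_{k+1}\in H_{k+1}$ and $h_0\in H_1$, and test both operators against the functional $\ell\coloneqq\la\,(\cdot)\,k_{k+1},h_0\ra_{H_1}$, which is bounded on $\cS_2(H_{k+1};H_1)$ since $|\ell(A)|\le\|A\|\,\|k_{k+1}\|\,\|h_0\|$. On the MOI side, the computation in the proof of Theorem~\ref{thm.MOIwelldef} already shows
\[
\ell\big((I^{\boldsymbol P}\varphi)[b]\big)=\int_{\Om}\varphi\,d\nu,\qquad \nu\coloneqq(P_1)_{k_1,h_0}\otimes\cdots\otimes(P_{k+1})_{k_{k+1},h_k}.
\]
On the Pavlov side, because $\ell$ is continuous it commutes with the vector-measure integral (pages~5--6 of \cite{diesteluhl}), so $\ell\big(\int_\Om\varphi\,d(P\sh b)\big)=\int_\Om\varphi\,d\mu$ with $\mu\coloneqq\ell\circ(P\sh b)$ a complex measure of bounded variation. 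Evaluating $\mu$ on a measurable rectangle via the characterizing identity of Theorem~\ref{thm.Pavlov} and telescoping the rank-one actions yields
\[
\mu(G_1\times\cdots\times G_{k+1})=\prod_{j=1}^{k+1}\la P_j(G_j)k_j,h_{j-1}\ra_{H_j}=\prod_{j=1}^{k+1}(P_j)_{k_j,h_{j-1}}(G_j)=\nu(G_1\times\cdots\times G_{k+1}).
\]
Thus $\mu$ and $\nu$ are finite complex measures agreeing on the $\pi$-system of measurable rectangles, which generates $\sF=\sF_1\otimes\cdots\otimes\sF_{k+1}$, so $\mu=\nu$ and hence $\int_\Om\varphi\,d\mu=\int_\Om\varphi\,d\nu$. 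Therefore $\ell$ takes the same value on both operators for every $k_{k+1}$ and $h_0$, which forces the two operators to coincide; undoing the reduction finishes the proof.

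The routine parts are the multilinear density reduction and the telescoping of the rank-one product, both of which follow the template already in the paper. I expect the main obstacle to be getting the \emph{continuity} needed for the reduction exactly right: the MOI lands a priori only in $\cS_{2/k}$, so one cannot use the exponents $(2,\ldots,2)$ (illegitimate once $k\ge2$, since then $2/k<1$ lies outside the range of Proposition~\ref{prop.MOISchattenestim}) and must instead invoke the mixed exponents $(2,\infty,\ldots,\infty)$ to obtain separate $\cS_2$-continuity; one must also record explicitly that a bounded linear functional passes through the Pavlov vector integral. The conceptual crux is the identification $\mu=\nu$ on rectangles, but this is short because $\nu$ is exactly the product measure already produced in the proof of Theorem~\ref{thm.MOIwelldef}.
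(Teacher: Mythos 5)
Your proposal is correct and follows essentially the same route as the paper's own proof: reduce via $k$-linearity, the $\cS_2$-boundedness of both maps, and density of finite-rank operators to the case of rank-one $b_j$'s, then identify both sides against rank-one functionals (your $\ell = \la(\cdot)k_{k+1},h_0\ra_{H_1}$ is exactly the paper's $\Tr(\cdot\,b_{k+1})$ with rank-one $b_{k+1}$), using the calculation from the proof of Theorem~\ref{thm.MOIwelldef} on the MOI side and the agreement of the complex measures $\ell\circ(P\sh b)$ and $\nu$ on the generating $\pi$-system of rectangles on the Pavlov side. Your explicit use of the mixed H\"{o}lder exponents $(2,\infty,\ldots,\infty)$ in Proposition~\ref{prop.MOISchattenestim} correctly pins down a detail the paper invokes only implicitly.
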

\begin{proof}
First, note that
\[
\cS_2(H_2;H_1) \times \cdots \times \cS_2(H_{k+1};H_k) \ni b \mapsto \int_{\Om} \varphi \, d(P\sh b) \in \cS_2(H_{k+1};H_1)
\]
is a bounded $k$-linear map with operator norm at most $\|\varphi\|_{L^{\infty}(P)}$ by \eqref{eq.Pavlovbound} and the $k$-linearity of the condition uniquely characterizing $P \sh b$.
By Proposition \ref{prop.MOISchattenestim},
\[
\cS_2(H_2;H_1) \times \cdots \times \cS_2(H_{k+1};H_k) \ni b \mapsto \big(I^{\boldsymbol{P}}\varphi\big)[b] \in \cS_2(H_{k+1};H_1)
\]
is a bounded $k$-linear map with operator norm at most $\|\varphi\|_{L^{\infty}(P_1) \iotimes \cdots \iotimes L^{\infty}(P_{k+1})}$.
Since finite rank operators are dense in $\cS_2$, it therefore suffices to prove
\[
\big(I^{\boldsymbol{P}}\varphi\big)[b] = \int_{\Om}\varphi \, d(P \sh b),
\]
for all $b = (b_1,\ldots,b_k) \in \cS_2(H_2;H_1) \times \cdots \times \cS_2(H_{k+1};H_k)$ such that $b_1,\ldots,b_k$ all have rank at most one.
\pagebreak

Now, recall $\cS_2(H_1;H_{k+1}) \cong \cS_2(H_{k+1};H_1)^*$ via the map $B \mapsto (A \mapsto \Tr(AB))$.
Therefore, $\int_{\Om}\varphi \, d(P\sh b)$ is determined by the requirement that
\[
\Tr\Bigg(\int_{\Om}\varphi \, d(P\sh b)\,b_{k+1}\Bigg) = \int_{\Om} \varphi(\boldsymbol{\om}) \, \Tr((P \sh b)(d\boldsymbol{\om})\,b_{k+1}), \numberthis\label{eq.Pavlovdet}
\]
for all $b_{k+1} \in \cS_2(H_1;H_{k+1})$.
Once again, since finite rank operators are dense in $\cS_2$ and the above equation is bounded linear in $b_{k+1}$, $\int_{\Om} \varphi \, d(P \sh b)$ is determined by \eqref{eq.Pavlovdet} for all $b_{k+1} \colon H_1 \to H_{k+1}$ with rank at most one.
It therefore suffices to prove that
\[
\Tr\big(\big(I^{\boldsymbol{P}}\varphi\big)[b]\,b_{k+1}\big) = \int_{\Om} \varphi(\boldsymbol{\om}) \, \Tr((P \sh b)(d\boldsymbol{\om})\,b_{k+1}),
\]
for all $b = (b_1,\ldots,b_k) \in \cS_2(H_2;H_1) \times \cdots \times \cS_2(H_{k+1};H_k)$ and $b_{k+1} \in \cS_2(H_1;H_{k+1})$ such that $b_1,\ldots,b_{k+1}$ all have rank at most one.
Now, write $m \coloneqq k+1$, $T \coloneqq (I^{\boldsymbol{P}}\varphi)[b]$, $b_j \coloneqq \la \cdot, h_j \ra_{H_{j+1}}k_j$ for $1 \leq j \leq m-1$, and $b_m \coloneqq \la \cdot , h_0\ra_{H_1}k_m$.
Then, by the calculation done in the proof of Theorem \ref{thm.MOIwelldef},
\[
\Tr\big(\big(I^{\boldsymbol{P}}\varphi\big)[b]\,b_m\big) = \Tr(T \circ (\la \cdot, h_0 \ra_{H_1}k_m)) = \la Tk_m,h_0 \ra_{H_1} = \int_{\Om}\varphi \, d\nu,
\]
where $\nu = P_{k_1 \otimes \cdots \otimes k_m, h_0 \otimes \cdots \otimes h_{m-1}} = (P_1)_{k_1,h_0} \otimes (P_2)_{k_2,h_1} \otimes \cdots \otimes (P_m)_{k_m,h_{m-1}}$.
But now, by definition of the vector measure $P \sh b$, if $G = G_1 \times \cdots \times G_{k+1} \in \sF$ with $G_1 \in \sF_1,\ldots,G_{k+1} \in \sF_{k+1}$, then
\begin{align*}
    \Tr((P \sh b)(G)\,b_{k+1}) & = \Tr(P_1(G_1)\,b_1\cdots P_{k+1}(G_{k+1})\,b_{k+1}) \\
    & =  \prod_{j=1}^m \big\la P_j(G_j)k_j, h_{j-1} \big\ra_{H_j} = \prod_{j=1}^m ( P_j)_{k_j,h_{j-1}}(G_j) \\
    & = ((P_1)_{k_1,h_0} \otimes (P_2)_{k_2,h_1} \otimes \cdots \otimes (P_m)_{k_m,h_{m-1}})(G) = \nu(G).
\end{align*}
(This is a special case of the calculation resulting in \eqref{eq.firstexp} and \eqref{eq.secondexp} from the proof of Theorem \ref{thm.MOIwelldef}.)
It follows that $\Tr((P \sh b)(\cdot)\,b_m) = \nu$ as complex measures on $(\Om,\sF)$.
This completes the proof.
\end{proof}

We end this section by relating this to Birman--Solomyak's original definition of DOIs, i.e., the case $k=1$.
Before doing so, however, we make an observation.
Redefine $H \coloneqq H_1$ and $K \coloneqq H_2$.
It is a standard fact that $H \hotimes K^* \cong \cS_2(K;H)$ isometrically via the bounded linear map determined by $h \otimes \ell \mapsto \ell(\cdot) h$.
This identification gives us a natural isometric isomorphism $\# \colon B(H \hotimes K^*) \to B(\cS_2(K;H))$ that is a homeomorphism with respect to all the usual topologies --- in particular, the WOT.
Viewing $B(H) \otimes B(K^*)$ as a subset of $B(H \hotimes K^*)$, one can show this map is the unique WOT-continuous linear extension of the linear map determined by
\[
B(H) \otimes B(K^*) \ni a \otimes b^{\operatorname{t}} \mapsto (c \mapsto acb) \in B(\cS_2(K;H)),
\]
where for $b \in B(K)$ the \textbf{transpose} $b^{\operatorname{t}} \in B(K^*)$ is defined by $\ell \mapsto \ell \circ b$, i.e., the adjoint of $b$ without identifying $K^*$ with $K$ via the Riesz representation theorem.
(What is being said here is that the operation $\#$ from Remark \ref{rem.hash} \textit{does} extend to the von Neumann algebra tensor product $B(H) \wotimes B(K^*) = B(H \hotimes K^*)$ when the codomain is taken to be $B(\cS_2(K;H))$.)
Noting that both the transpose $P_2^{\operatorname{t}} \colon \sF_2 \to B(K^*)$ and the composition $\tilde{P} \coloneqq \#(P_1 \otimes P_2^{\operatorname{t}}) \colon \sF_1 \otimes \sF_2 \to B(\cS_2(K;H))$ are still projection valued measures, we can therefore define, following Birman--Solomyak \cite{birmansolomyakDSOI1},
\[
T_{\varphi}^{P_1,P_2}(b) \coloneqq \#\Bigg(\int_{\Om_1 \times \Om_2} \varphi \, d(P_1 \otimes P_2^{\operatorname{t}})\Bigg)b = \tilde{P}(\varphi) b \in \cS_2(K;H),
\]
for all $\varphi \in L^{\infty}(\Om_1 \times \Om_1,P_1 \otimes P_2) = L^{\infty}(\Om_1 \times \Om_2,P_1 \otimes P_2^{\operatorname{t}}) = L^{\infty}(\Om_1 \times \Om_2, \tilde{P})$ and $b \in \cS_2(K;H)$.
One can show (e.g., starting with finite rank $b$ and then approximating in $\cS_2$) that
\[
T_{\varphi}^{P_1,P_2}(b) = \int_{\Om_1 \times \Om_2} \varphi \, d((P_1 \otimes P_2)\sh b),
\]
i.e., this agrees with Pavlov's definition.
\pagebreak

Now, Birman--Solomyak defined $T_{\varphi}^{P_1,P_2}(b)$ for $b \in B(K;H)$ as follows. Recall $B(H;K) \cong \cS_1(K;H)^*$ isometrically via the map $B \mapsto (A \mapsto \Tr(AB))$.
Thus $B(K;H)$ is isometrically conjugate isomorphic to $\cS_1(K;H)^*$ via the map $C \mapsto (A \mapsto \Tr(AC^*))$.
Therefore, if $T \colon \cS_1(K;H) \to \cS_1(K;H)$ is a bounded linear map, then we may speak of its adjoint $T^* \colon B(K;H) \to B(K;H)$, which is characterized by
\[
\Tr(T(A)\,C^*) = \Tr(A\,T^*(C)^*),
\]
for all $A \in \cS_1(K;H)$ and $C \in B(K;H)$.
Now, if $\varphi \in L^{\infty}(\Om_1 \times \Om_2, P_1 \otimes P_2)$ satisfies
\[
T_{\varphi}^{P_1,P_2}(\cS_1(K;H)) \subseteq \cS_1(K;H) \subseteq \cS_2(K;H)
\]
and if $T_{\varphi}^{P_1,P_2}|_{\cS_1(K;H)} \colon \cS_1(K;H) \to \cS_1(K;H)$ is bounded --- as is the case when $\varphi \in L^{\infty}(\Om_1,P_1) \iotimes L^{\infty}(\Om_2,P_2)$ by Theorem \ref{thm.pavlovMOIagree} and Proposition \ref{prop.MOISchattenestim} --- then it is easy to show that
\[
T_{\overline{\varphi}}^{P_1,P_2}(\cS_1(K;H)) \subseteq \cS_1(K;H) \; \text{ and } \; \big\|T_{\overline{\varphi}}^{P_1,P_2}\big\|_{B(\cS_1(K;H))} = \big\|T_{\varphi}^{P_1,P_2}\big\|_{B(\cS_1(K;H))} < \infty.
\]
In this situation, Birman--Solomyak define, for all $b \in B(K;H)$,
\[
T_{\varphi}^{P_1,P_2}(b) \coloneqq \big(T_{\overline{\varphi}}^{P_1,P_2}\big|_{\cS_1(K;H)}\big)^*(b) \in B(K;H).
\]

Now, let $\varphi \in L^{\infty}(\Om_1,P_1) \iotimes L^{\infty}(\Om_2,P_2)$.
By Corollary \ref{cor.uwcont}, if $b_1 \in B(K;H)$ and $b_2 \in \cS_1(K;H)$, then
\begin{align*}
    \Tr\Bigg(\int_{\Om_2}\int_{\Om_1}\varphi(\om_1,\om_2)\,P_1(d\om_1) \, b_1 \, P_2(d\om_2)\, b_2^*\Bigg) & = \Tr\Bigg(\int_{\Om_1}\int_{\Om_2}\varphi(\om_1,\om_2)\,P_2(d\om_2) \, b_2^* \, P_1(d\om_1) \,b_1\Bigg) \\
    & = \Tr\Bigg(b_1\Bigg(\int_{\Om_2}\int_{\Om_1}\overline{\varphi(\om_1,\om_2)}\,P_1(d\om_1) \, b_2 \, P_2(d\om_2)\Bigg)^* \Bigg).
\end{align*}
This says precisely that 
\[
\big(I^{P_1,P_2}\overline{\varphi}|_{\cS_1(K;H)}\big)^* = I^{P_1,P_2}\varphi.
\]
Since we already know our definition of MOIs agrees with that of Pavlov when they both apply, and therefore $(I^{P_1,P_2}\varphi)[b] = T_{\varphi}^{P_1,P_2}(b)$ when $b \in \cS_2(K;H)$, we obtain the following theorem.

\begin{thm}[Agreement $\,$with $\,$Birman--Solomyak$\,$ DOI]\label{thm.BSagree}
If $\varphi \,\in\, L^{\infty}(\Om_1,\,P_1) \,\iotimes\, L^{\infty}(\Om_2,\,P_2)$, then $I^{P_1,P_2}\varphi = T_{\varphi}^{P_1,P_2}$ on all of $B(K;H)$. \qed
\end{thm}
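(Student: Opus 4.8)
The plan is to deduce the theorem by assembling two facts already available: the agreement of the two constructions on the Schatten class $\cS_2(K;H)$, and the adjoint identity $\big(I^{P_1,P_2}\overline{\varphi}|_{\cS_1(K;H)}\big)^* = I^{P_1,P_2}\varphi$ derived in the discussion immediately preceding the statement. The genuine content is only to reconcile the two \emph{extensions} of $T_{\varphi}^{P_1,P_2}$ and of $I^{P_1,P_2}\varphi$ from the Schatten classes to all of $B(K;H)$; everything else is bookkeeping.

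First I would note that the two maps already coincide on $\cS_2(K;H)$, hence on $\cS_1(K;H) \subseteq \cS_2(K;H)$: for $b \in \cS_2(K;H)$ one has $T_{\varphi}^{P_1,P_2}(b) = \int_{\Om} \varphi \, d((P_1 \otimes P_2)\sh b) = \big(I^{P_1,P_2}\varphi\big)[b]$ by Theorem \ref{thm.pavlovMOIagree}. Applying this to $\overline{\varphi}$ in place of $\varphi$ and restricting to $\cS_1(K;H)$ gives $T_{\overline{\varphi}}^{P_1,P_2}|_{\cS_1(K;H)} = I^{P_1,P_2}\overline{\varphi}|_{\cS_1(K;H)}$ as bounded operators $\cS_1(K;H) \to \cS_1(K;H)$, the required $\cS_1$-boundedness being furnished by Proposition \ref{prop.MOISchattenestim} (with $p = p_1 = 1$), which is exactly what permits the Birman-Solomyak extension to be formed.

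Next I would unwind definitions. By construction the Birman-Solomyak map on $B(K;H)$ is the adjoint $T_{\varphi}^{P_1,P_2} = \big(T_{\overline{\varphi}}^{P_1,P_2}|_{\cS_1(K;H)}\big)^*$ with respect to the conjugate-linear duality $B(K;H) \cong \cS_1(K;H)^*$ given by $C \mapsto (A \mapsto \Tr(AC^*))$. Since the operators being adjointed agree by the previous step, so do their adjoints, whence $T_{\varphi}^{P_1,P_2} = \big(I^{P_1,P_2}\overline{\varphi}|_{\cS_1(K;H)}\big)^*$ on all of $B(K;H)$. The adjoint identity recorded just before the theorem now reads $\big(I^{P_1,P_2}\overline{\varphi}|_{\cS_1(K;H)}\big)^* = I^{P_1,P_2}\varphi$, and combining these two equalities yields $T_{\varphi}^{P_1,P_2} = I^{P_1,P_2}\varphi$ on $B(K;H)$, which is the assertion.

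The step I expect to require the most care is the adjoint identity itself, and I would keep the conjugation and the swapping of arguments explicit. Its proof uses the trace-cyclicity formula of Corollary \ref{cor.uwcont} --- legitimate precisely because one of the factors is trace class --- in the form $\Tr\big(\big(I^{P_1,P_2}\varphi\big)[b_1]\,b_2^*\big) = \Tr\big(b_1\,(\big(I^{P_1,P_2}\overline{\varphi}\big)[b_2])^*\big)$ for $b_1 \in B(K;H)$ and $b_2 \in \cS_1(K;H)$. Conjugating this identity and applying $\overline{\Tr(X)} = \Tr(X^*)$ together with $(XY)^* = Y^*X^*$ converts it into the defining relation $\Tr\big(\big(I^{P_1,P_2}\overline{\varphi}\big)[b_2]\,b_1^*\big) = \Tr\big(b_2\,(\big(I^{P_1,P_2}\varphi\big)[b_1])^*\big)$ of the adjoint $\big(I^{P_1,P_2}\overline{\varphi}|_{\cS_1(K;H)}\big)^*$ evaluated at $b_1$. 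Since the trace pairing of Theorem \ref{thm.Schatten}.\ref{item.Schattendual} is nondegenerate, this forces $\big(I^{P_1,P_2}\varphi\big)[b_1] = \big(I^{P_1,P_2}\overline{\varphi}|_{\cS_1(K;H)}\big)^*(b_1)$, completing the identity. The only real pitfall is losing track of which map is being adjointed and of the conjugate-linearity of the duality, so I would write the duality out in the form $C \mapsto (A \mapsto \Tr(AC^*))$ at the outset and carry it consistently.
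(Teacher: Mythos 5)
Your proposal is correct and follows essentially the same route as the paper: agreement on $\cS_2(K;H)$ (hence $\cS_1(K;H)$) via Theorem \ref{thm.pavlovMOIagree} with $\cS_1$-boundedness from Proposition \ref{prop.MOISchattenestim}, then the adjoint identity $\big(I^{P_1,P_2}\overline{\varphi}|_{\cS_1(K;H)}\big)^* = I^{P_1,P_2}\varphi$ obtained from the cyclic trace formula of Corollary \ref{cor.uwcont}, and finally equality of adjoints of equal operators. Your explicit conjugation step, matching the trace identity to the defining relation $\Tr(T(A)\,C^*) = \Tr(A\,T^*(C)^*)$ of the adjoint under the conjugate-linear duality, is exactly what the paper's phrase ``this says precisely that'' leaves implicit.
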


\appendix
\section{Proofs of auxiliary MOI results}\label{app.otherMOI}

\subsection{Proof of Theorem \ref{thm.tensprodPVM}}\label{sec.tensprodPVM}

In this section, we present the proof of Theorem \ref{thm.tensprodPVM}.
As this result is analogous to the construction of product measures, it should be no surprise that the proof goes through an extension theorem (Theorem \ref{thm.extendpremeas}) for projection valued measures.

\begin{defi}
Let $\mathscr{E}$ be a collection of subsets of a set $\Om$ such that $\emptyset,\Om \in \mathscr{E}$, $H$ be a complex Hilbert space, and $P^0 \colon \mathscr{E} \to B(H)$ be a function.
We call $P^0$
\begin{enumerate}[label=(\roman*),font=\normalfont,leftmargin=2\parindent]
    \item \textbf{projection valued} if $P^0(\Om) = \id_H = 1$ and $P^0(G)^2=P^0(G)=P^0(G)^*$ for all $G \in \mathscr{E}$;
    \item a \textbf{projection valued protomeasure} if $\mathscr{E}$ is an elementary family and $P^0$ is projection valued and countably additive (on $\mathscr{E}$) in the WOT; and
    \item a \textbf{projection valued premeasure} if $\mathscr{E}$ is an algebra and $P^0$ is projection valued and countably additive (on $\mathscr{E}$) in the WOT.
\end{enumerate}
\end{defi}
\begin{rem}
It can be shown (as in the proof of Theorem 1 in Section 5.1.1 of \cite{birmansolomyakBook}) that if $\mathscr{E}$ is a ring of sets and $P^0$ is projection valued and finitely additive, then $P^0(G_1 \cap G_2) = P^0(G_1) \, P^0(G_2)$ for all $G_1,G_2 \in \mathscr{E}$.
\end{rem}

As in classical measure theory, one can extend a protomeasure in a unique way to a measure.
\pagebreak

\begin{lem}\label{lem.extendprotomeas}
Let $\mathscr{E}$ be an elementary family of subsets of a set $\Om$, $H$ be a complex Hilbert space, and $P^{00} \colon \mathscr{E} \to B(H)$ be a projection valued protomeasure such that $P^{00}(G_1) \, P^{00}(G_2) = 0$ whenever $G_1,G_2 \in \mathscr{E}$ and $G_1 \cap G_2 = \emptyset$.
Then $P^{00}$ extends uniquely to a projection valued premeasure $P^0 \colon \alpha(\mathscr{E}) \to B(H)$, where $\alpha(\mathscr{E}) \subseteq 2^{\Om}$ is the algebra generated by $\mathscr{E}$.
\end{lem}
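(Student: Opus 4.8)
The plan is to mimic the classical construction of a premeasure from a protomeasure on an elementary family. First I would invoke the standard structural fact (Proposition 1.7 of \cite{folland}) that the algebra $\alpha(\mathscr{E})$ generated by an elementary family $\mathscr{E}$ consists exactly of the finite \emph{disjoint} unions $A = \bigsqcup_{i=1}^n G_i$ of members $G_i \in \mathscr{E}$. This forces the only possible definition, $P^0(A) \coloneqq \sum_{i=1}^n P^{00}(G_i)$. Before anything else I would record two preliminary facts about $P^{00}$. Since $\emptyset \in \mathscr{E}$ and $\emptyset \cap \emptyset = \emptyset$, the orthogonality hypothesis gives $P^{00}(\emptyset)^2 = 0$; as $P^{00}(\emptyset)$ is a projection, $P^{00}(\emptyset) = 0$. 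Consequently countable additivity of the protomeasure (padding a finite disjoint union with copies of $\emptyset$) yields finite additivity of $P^{00}$ on $\mathscr{E}$ whenever the union again lies in $\mathscr{E}$.

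Well-definition of $P^0$ is the first substantive check. Given two disjoint $\mathscr{E}$-representations $A = \bigsqcup_i G_i = \bigsqcup_j G_j'$, I would use that $\mathscr{E}$ is closed under intersection to write $G_i = \bigsqcup_j (G_i \cap G_j')$ and apply finite additivity, obtaining $\sum_i P^{00}(G_i) = \sum_{i,j} P^{00}(G_i \cap G_j') = \sum_j P^{00}(G_j')$; note this step needs only finite additivity, not orthogonality. That $P^0$ is finitely additive on $\alpha(\mathscr{E})$, and that it is the \emph{unique} extension, then follow immediately: any premeasure agreeing with $P^{00}$ on $\mathscr{E}$ is finitely additive and hence forced to equal $\sum_i P^{00}(G_i)$ on $A$. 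The orthogonality hypothesis enters precisely when verifying that $P^0(A)$ is a projection: self-adjointness is clear, and for idempotency I would compute $P^0(A)^2 = \sum_{i,j} P^{00}(G_i)\,P^{00}(G_j)$, where the off-diagonal terms vanish by orthogonality (the $G_i$ are pairwise disjoint) and the diagonal terms satisfy $P^{00}(G_i)^2 = P^{00}(G_i)$, leaving $P^0(A)^2 = P^0(A)$. Together with $P^0(\Om) = P^{00}(\Om) = 1$, this makes $P^0$ projection-valued.

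The main obstacle is countable additivity of $P^0$ in the WOT, where I must control a genuinely infinite regrouping of operators. Suppose $A = \bigsqcup_{n} A_n$ in $\alpha(\mathscr{E})$; writing $A = \bigsqcup_{m=1}^M F_m$ and $A_n = \bigsqcup_l G_{n,l}$ with all $F_m, G_{n,l} \in \mathscr{E}$, one has $F_m = \bigsqcup_{n,l}(F_m \cap G_{n,l})$ and $G_{n,l} = \bigsqcup_m (F_m \cap G_{n,l})$, with every piece in $\mathscr{E}$. Applying countable additivity of $P^{00}$ to each $F_m$ and finite additivity to each $G_{n,l}$ expresses both $P^0(A) = \sum_m P^{00}(F_m)$ and $\sum_n P^0(A_n) = \sum_{n,l} P^{00}(G_{n,l})$ as sums of the \emph{same} countable family $\{P^{00}(F_m \cap G_{n,l})\}_{(m,n,l)}$. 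The crucial observation that legitimizes the regrouping is that the sets $F_m \cap G_{n,l}$ are pairwise disjoint (the $F_m$ are disjoint, and all the $G_{n,l}$ are disjoint across both indices), so by the orthogonality hypothesis the projections $P^{00}(F_m \cap G_{n,l})$ are mutually orthogonal. A countable family of mutually orthogonal projections has increasing, norm-bounded partial sums, so by Vigier's Theorem its sum converges in the SOT (a fortiori the WOT) to a projection, independently of the order or grouping of the terms. Hence both $P^0(A)$ and $\sum_n P^0(A_n)$ equal this common unordered sum, giving countable additivity in the WOT and completing the proof.
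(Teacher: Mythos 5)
Your proposal is correct and follows essentially the same route as the paper, whose own proof is only a sketch: define $P^0$ on finite disjoint unions (the only possible choice), deduce well-definition and finite additivity from finite additivity of $P^{00}$, deduce WOT countable additivity from that of $P^{00}$, and obtain the projection property from the orthogonality hypothesis. The one place you go beyond the paper's sketch is the regrouping step in countable additivity, which you justify at the operator level via mutual orthogonality of the pieces and Vigier's Theorem; this is an equally valid alternative to the classical ``standard argument'' of reducing to the nonnegative scalar measures $\la P^{00}(\cdot)h,h\ra_H$ and regrouping nonnegative series, and it has the small advantage of yielding the convergence in the SOT rather than just the WOT.
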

\begin{proof}[Sketch \hspace{-0.15mm}of\hspace{-0.15mm} proof]\hspace{-1mm}
Recall \hspace{-0.15mm}$\alpha(\mathscr{E})$\hspace{-0.15mm} is \hspace{-0.15mm}the\hspace{-0.15mm} set \hspace{-0.15mm}of\hspace{-0.15mm} finite \hspace{-0.15mm}disjoint\hspace{-0.15mm} unions \hspace{-0.15mm}of\hspace{-0.15mm} elements \hspace{-0.15mm}of\hspace{-0.15mm} $\mathscr{E}$.
\hspace{-0.15mm}Therefore,\hspace{-0.15mm} if $G_1,\ldots,G_m \hspace{-0.15mm}\in\hspace{-0.15mm} \mathscr{E}$ are pairwise disjoint and $G = \bigcup_{n=1}^m G_n \in \alpha(\mathscr{E})$, then we must have $P^0(G) \coloneqq \sum_{n=1}^m P^{00}(G_n)$.
By a standard argument, $P^0$ is well-defined and finitely additive because $P^{00}$ is finitely additive on $\mathscr{E}$, and $P^0$ is WOT-countably additive on $\alpha(\mathscr{E})$ because $P^{00}$ is WOT-countably additive on $\mathscr{E}$.
Finally, $P^0$ is projection valued because $P^0(G)$ is, by assumption on $P^{00}$, a sum of pairwise orthogonal projections. 
\end{proof}

\begin{thm}\label{thm.extendpremeas}
Let $\sA$ be an algebra of subsets of $\Om$, $H$ be a complex Hilbert space, and $P^0 \colon \mathscr{A} \to B(H)$ be a projection valued premeasure.
Then $P^0$ extends uniquely to a projection valued measure $P \colon \sigma(\sA) \to B(H)$, where $\sigma(\sA) \subseteq 2^{\Om}$ is the $\sigma$-algebra generated by $\sA$.
\end{thm}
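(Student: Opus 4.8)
The plan is to reduce the operator-valued extension problem to the classical scalar extension theorem, one vector at a time, and then reassemble the resulting scalar measures into an operator-valued measure by polarization and the Riesz representation of bounded sesquilinear forms. Concretely, fix $h \in H$ and consider $\mu_h^0 \colon \sA \to [0,\infty)$, $\mu_h^0(G) \coloneqq \langle P^0(G)h,h\rangle_H = \|P^0(G)h\|_H^2$. Since $P^0$ is projection-valued and WOT-countably additive, $\mu_h^0$ is a finite positive premeasure on the algebra $\sA$ with $\mu_h^0(\Om) = \|h\|_H^2$; by the classical Hahn--Kolmogorov extension theorem it extends uniquely to a finite positive measure $\mu_h$ on $\sigma(\sA)$. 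First I would record the uniqueness lemma I will use repeatedly: two complex measures on $\sigma(\sA)$ that agree on $\sA$ are equal, because the collection of sets on which they agree is a $\lambda$-system (using countable additivity for disjoint unions and finiteness for complements) containing the $\pi$-system $\sA$, so the Dynkin $\pi$-$\lambda$ theorem forces agreement on $\sigma(\sA) = \lambda(\sA)$.

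Next I would define, for $h,k \in H$, the complex measure $\mu_{h,k} \coloneqq \frac14\sum_{j=0}^3 i^j\,\mu_{h+i^jk}$ on $\sigma(\sA)$. On $\sA$ the polarization identity gives $\mu_{h,k}(G) = \langle P^0(G)h,k\rangle_H$, so by the uniqueness lemma every sesquilinear relation valid on $\sA$ (additivity and homogeneity in $h$, conjugate-additivity and conjugate-homogeneity in $k$, and Hermitian symmetry $\mu_{h,k} = \overline{\mu_{k,h}}$) persists on all of $\sigma(\sA)$; moreover $\mu_{h,h} = \mu_h$ is a positive measure. Thus for each fixed $G \in \sigma(\sA)$ the form $B_G(h,k) \coloneqq \mu_{h,k}(G)$ is a positive semidefinite Hermitian sesquilinear form, and the Cauchy--Schwarz inequality together with monotonicity of $\mu_h$ gives $|B_G(h,k)| \le \mu_h(G)^{1/2}\mu_k(G)^{1/2} \le \|h\|_H\|k\|_H$. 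By the representation theorem for bounded sesquilinear forms there is a unique $P(G) \in B(H)$ with $\langle P(G)h,k\rangle_H = \mu_{h,k}(G)$ and $\|P(G)\| \le 1$. Because each $\mu_{h,k}$ is countably additive, $P$ is automatically WOT-countably additive; and $\langle P(\Om)h,k\rangle_H = \langle P^0(\Om)h,k\rangle_H = \langle h,k\rangle_H$ gives $P(\Om) = 1$, while Hermitian symmetry gives $P(G) = P(G)^*$.

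The remaining axiom --- that each $P(G)$ is idempotent --- I would obtain from multiplicativity $P(G_1 \cap G_2) = P(G_1)\,P(G_2)$ for all $G_1,G_2 \in \sigma(\sA)$, applied with $G_1 = G_2 = G$. I would prove multiplicativity in two $\pi$-$\lambda$ stages. For fixed $G_1 \in \sA$, the family $\mathcal{D}_{G_1} \coloneqq \{G_2 \in \sigma(\sA) : P(G_1 \cap G_2) = P(G_1)P(G_2)\}$ contains $\sA$ (there $P = P^0$ and $P^0(G_1 \cap G_2) = P^0(G_1)P^0(G_2)$ by the Remark following the definition of projection-valued premeasure) and is a $\lambda$-system: it contains $\Om$; it is closed under complements because $P(G_1\cap G_2^c) = P(G_1) - P(G_1\cap G_2) = P(G_1)(1 - P(G_2)) = P(G_1)P(G_2^c)$; and it is closed under countable disjoint unions because left multiplication by the fixed operator $P(G_1)$ is WOT-continuous and $P$ is WOT-countably additive. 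Hence $\mathcal{D}_{G_1} \supseteq \sigma(\sA)$. Fixing now $G_2 \in \sigma(\sA)$ and running the identical argument in the first variable (using WOT-continuity of right multiplication by $P(G_2)$) upgrades this to all $G_1 \in \sigma(\sA)$, giving full multiplicativity; with self-adjointness this shows each $P(G)$ is an orthogonal projection, so $P$ is a projection-valued measure. Finally, uniqueness of $P$ follows from the same uniqueness lemma: if $\tilde P$ is another extension, then $\langle P(\cdot)h,k\rangle_H$ and $\langle \tilde P(\cdot)h,k\rangle_H$ are complex measures agreeing on $\sA$, hence on $\sigma(\sA)$, so $P = \tilde P$.

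I expect the main obstacle to be the idempotency step. The scalar extension and the polarization/representation bookkeeping are routine, but turning ``$P^0(G)$ is a projection on the algebra'' into ``$P(G)$ is a projection on $\sigma(\sA)$'' genuinely requires the multiplicativity argument above, whose two delicate points are verifying the $\lambda$-system closure properties (especially that one-sided multiplication by a fixed bounded operator is WOT-continuous, so that WOT-countable additivity passes through a product) and being careful that the $\pi$-$\lambda$ theorem must be applied twice, since $\mathcal{D}_{G_1}$ is only known to be a $\lambda$-system when $G_1$ ranges over the $\pi$-system $\sA$.
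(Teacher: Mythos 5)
Your proof is correct, but it takes a genuinely different route from the one the paper relies on. The paper does not prove Theorem \ref{thm.extendpremeas} in-house: it cites Section 5.2 of \cite{birmansolomyakBook}, where the extension is obtained by running Carath\'{e}odory's construction at the operator level, using the projection-valued outer measure $P^*(G) \coloneqq \inf\{P^0(G_1) : G_1 \supseteq G, \, G_1 \in \sA\}$ and transferring the scalar Carath\'{e}odory statement for the outer measures $\mu_{h,h}^*$ into a statement about $P^*$. You instead use scalar extension as a black box: you extend each diagonal premeasure $\la P^0(\cdot)h,h \ra_H$ by Hahn--Kolmogorov, polarize to obtain the complex measures $\mu_{h,k}$, propagate sesquilinearity and Hermitian symmetry from $\sA$ to $\sigma(\sA)$ with the Dynkin uniqueness lemma, and recover $P(G)$ from the bounded positive Hermitian form $B_G$ via the Riesz representation of bounded sesquilinear forms. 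The price of this route is that projection-valuedness is not built into the construction and must be recovered afterwards; your two-stage $\pi$-$\lambda$ bootstrap of the multiplicativity $P(G_1 \cap G_2) = P(G_1)\,P(G_2)$ does exactly this, and it is sound: the base case is the multiplicativity of $P^0$ on the algebra (supplied by the Remark preceding the theorem), and the $\lambda$-system closures rest correctly on finite additivity of $P$ and the WOT-continuity of one-sided multiplication by a fixed bounded operator. (One tiny imprecision in your closing commentary: $\mathcal{D}_{G_1}$ is in fact a $\lambda$-system for \emph{every} $G_1 \in \sigma(\sA)$; what initially requires $G_1 \in \sA$ is the base case $\sA \subseteq \mathcal{D}_{G_1}$. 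This does not affect your argument, whose two-stage structure is set up correctly.) What your approach buys is a self-contained proof from standard textbook ingredients --- Hahn--Kolmogorov, Dynkin's theorem, Riesz representation --- with no need to develop any operator-valued Carath\'{e}odory theory; what the cited Birman--Solomyak route buys is that the extension takes projection values by construction, so the idempotency that costs you the double $\pi$-$\lambda$ argument comes for free there.
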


For a proof, please see Section 5.2 (specifically, Theorems 3 and 4.(2)) of \cite{birmansolomyakBook}.
It proceeds as in classical measure theory, using a projection valued analog of Carath\'{e}odory's theorem, which concerns itself with the \textbf{projection valued outer measure}
\[
P^*(G) \coloneqq \inf\big\{P^0(G_1) : G_1 \supseteq G, \, G_1 \in \sA\big\}.
\]
In fact, the whole proof amounts to transferring the result of Carath\'{e}odory's theorem for the outer measures $\mu_{h,h}^*(G) \coloneqq \inf\big\{\big\la P^0(G_1)h,h \big\ra_H: G_1 \supseteq G, \, G_1 \in \sA\big\}$ to a result about $P^*$.

\begin{proof}[Proof of Theorem \ref{thm.tensprodPVM}]
Write $m \coloneqq k+1$, $\Om \coloneqq \Om_1 \times \cdots \times \Om_m$, and define
\[
\mathscr{E} \coloneqq \{G_1 \times \cdots \times G_m \subseteq \Om : G_1 \in \sF_1, \ldots, G_m \in \sF_m\}
\]
to be the set of measurable rectangles.
Now, define
\[
P^{00}(G_1 \times \cdots \times G_m) \coloneqq P_1(G_1) \otimes \cdots \otimes P_m(G_m) \in B(H_1 \hotimes \cdots \hotimes H_m),
\]
for all $G_1 \times \cdots \times G_m \in \mathscr{E}$.
Recall $\mathscr{E}$ is an elementary family.
We claim $P^{00}$ is a projection valued protomeasure such that $P^{00}(G \cap \tilde{G}) = P^{00}(G) \, P^{00}(\tilde{G})$ for $G,\tilde{G} \in \mathscr{E}$.
If so, then an appeal to Lemma \ref{lem.extendprotomeas} and Theorem \ref{thm.extendpremeas} completes the proof because $\sigma(\alpha(\mathscr{E})) = \sigma(\mathscr{E}) = \sF_1 \otimes \cdots \otimes \sF_m$.

If $G \coloneqq G_1 \times \cdots \times G_m, \tilde{G} \coloneqq \tilde{G}_1 \times \cdots \times \tilde{G}_m \in \mathscr{E}$, then
\begin{align*}
    P^{00}(G \cap \tilde{G}) & = P^{00}((G_1 \cap \tilde{G}_1) \times \cdots \times (G_m \cap \tilde{G}_m)) \\
    & = P_1(G_1 \cap \tilde{G}_1) \otimes \cdots \otimes P_m(G_m \cap \tilde{G}_m) \\
    & = (P_1(G_1) \, P_1(\tilde{G}_1)) \otimes \cdots \otimes (P_m(G_m) \, P_m(\tilde{G}_m)) \\
    & = (P_1(G_1) \otimes \cdots \otimes P_m(G_m))(P_1(\tilde{G}_1) \otimes \cdots \otimes P_m(\tilde{G}_m)) \\
    & = P^{00}(G) \, P^{00}(\tilde{G}).
\end{align*}
Also,
\begin{align*}
    P^{00}(G)^* & = (P_1(G_1) \otimes \cdots \otimes P_m(G_m))^* = P_1(G_1)^* \otimes \cdots \otimes P_m(G_m)^* \\
    & = P_1(G_1) \otimes \cdots \otimes P_m(G_m) = P^{00}(G).
\end{align*}
Since clearly $P^{00}(\emptyset) = 0$ and $P^{00}(\Om) = 1$, we only have WOT-countable additivity left to prove.

To this end, write $\la \cdot ,\cdot \ra \coloneqq \la \cdot, \cdot \ra_{H_1 \otimes \cdots \otimes H_m}$ for the tensor inner product.
By definition, we need to show that the assignment
\[
\mathscr{E} \ni G \mapsto P_{h,k}^{00}(G) \coloneqq \big\la P^{00}(G)h, k \big\ra \in \C
\]
is countably additive for all $h,k \in H_1 \hotimes \cdots \hotimes H_m$.
Choosing first pure tensors $h = h_1 \otimes \cdots \otimes h_m$, $k = k_1 \otimes \cdots \otimes k_m$ for $h_1,k_1 \in H_1,\ldots,h_m,k_m \in H_m$, we have
\begin{align*}
    P_{h,k}^{00}(G_1 \times \cdots \times G_m) & = \la (P_1(G_1) \otimes \cdots \otimes P_m(G_m))(h_1 \otimes \cdots \otimes h_m), k_1 \otimes \cdots \otimes k_m \ra \\
    & = \la P_1(G_1) h_1,k_1 \ra_{H_1}\cdots \la P_m(G_m) h_m,k_m \ra_{H_m} \\
    & = (P_1)_{h_1,k_1}(G_1) \cdots (P_m)_{h_m,k_m}(G_m) \\
    & = ((P_1)_{h_1,k_1}\otimes \cdots \otimes (P_m)_{h_m,k_m})(G).\displaybreak
\end{align*}
We conclude that $P^{00}_{h,k}$ is countably additive for pure tensors $h,k$ and therefore also for $h,k \in H_1 \otimes \cdots \otimes H_m$.
Now, let $(G_n)_{n \in \N} \in \mathscr{E}^{\N}$ be a pairwise disjoint sequence such that $\bigcup_{n \in \N}G_n \in \mathscr{E}$ and $h,k \in H_1 \hotimes \cdots \hotimes H_m$ be arbitrary.
First, we show $\sum_{n=1}^{\infty}\la P^{00}(G_n)h,k\ra$ is absolutely convergent.
Indeed, $(P^{00}(G_n))_{n \in \N}$ is a sequence of pairwise orthogonal projections, so Bessel's inequality implies
\[
\Bigg(\sum_{n=1}^{\infty}\big\|P^{00}(G_n)h\big\|^2\Bigg)^{\frac{1}{2}} \leq \|h\|.
\]
Therefore, by the Cauchy--Schwarz inequality (twice),
\begin{align*}
    \sum_{n=1}^{\infty}\big|\big\la P^{00}(G_n)h,k\big\ra\big| & = \sum_{n=1}^{\infty}\big|\big\la P^{00}(G_n)h,P^{00}(G_n)k\big\ra\big| \leq \sum_{n=1}^{\infty}\big\|P^{00}(G_n)h\big\|\,\big\|P^{00}(G_n)k\big\| \\
    & \leq \Bigg(\sum_{n=1}^{\infty}\big\|P^{00}(G_n)h\big\|^2 \Bigg)^{\frac{1}{2}}\Bigg(\sum_{n=1}^{\infty}\big\|P^{00}(G_n)k\big\|^2\Bigg)^{\frac{1}{2}}  \leq \|h\| \, \|k\|.
\end{align*}
Next, choose sequences $(h_j)_{j \in \N},(k_j)_{j \in \N}$ in $H_1 \otimes \cdots \otimes H_m$ such that $h_j \to h$ and $k_j \to k$ in $H_1 \hotimes \cdots \hotimes H_m$ as $j \to \infty$.
Then
\begin{align*}
    \Bigg|\big\la P^{00}(G)h_j,k_j\big\ra -  \sum_{n=1}^{\infty}\big\la P^{00}(G_n)h,k\big\ra\Bigg| & = \Bigg|\sum_{n=1}^{\infty}\big\la P^{00}(G_n)h_j,k_j\big\ra - \sum_{n=1}^{\infty}\big\la P^{00}(G_n)h,k\big\ra\Bigg| \\
    & = \Bigg|\sum_{n=1}^{\infty}\big(\big\la P^{00}(G_n)(h_j-h),k_j\big\ra+
\big\la P^{00}(G_n)h,k_j-k\big\ra\big)\Bigg| \\
    & \leq \|h_j-h\| \, \|k_j\| + \|h\| \, \|k_j-k\| \to 0
\end{align*}
as $j \to \infty$.
But we know $\la P^{00}(G)h_j,k_j\ra \to \la P^{00}(G)h,k\ra$ as $j \to \infty$ as well.
We therefore conclude that $\la P^{00}(G)h,k\ra = \sum_{n=1}^{\infty}\la P^{00}(G_n)h,k\ra$, as desired.
\end{proof}

\subsection{Proof of Theorem \ref{thm.Pavlov}}\label{sec.proofofPavlov}

In this section, we prove Theorem \ref{thm.Pavlov} using the approach of Pavlov and Birman--Solomyak from the beginning of \cite{pavlov} and the end of \cite{birmansolomyakTensProd}, respectively.
The construction is similar in spirit to that of the tensor product of projection valued measures.
One uses a Carath\'{e}odory-type theorem to extend a certain measure from the measurable rectangles to the whole product $\sigma$-algebra.
However, the difference is that building tensor products of projection valued measures requires only the (relatively easy) projection valued Carath\'{e}odory's theorem, while the construction in Theorem \ref{thm.Pavlov} requires (the much more difficult) Carath\'{e}odory's theorem for vector measures.
Therefore, before we launch into the proof of Theorem \ref{thm.Pavlov}, we must review --- following some of Chapter I of \cite{diesteluhl} --- some vector measure theory.

\begin{defi}
Let $\mathscr{E}$ be collection of subsets of a set $\Om$ that contains both $\emptyset$ and $\Om$, $V$ be a Banach space, $\mu \colon \mathscr{E} \to V$ be a function, and $(G_n)_{n \in \N} \in \mathscr{E}^{\N}$ be a pairwise disjoint sequence.
We call $\mu$
\begin{enumerate}[label=(\roman*),font=\normalfont,leftmargin=2\parindent]
    \item \textbf{finitely additive} if $\mu\big(\bigcup_{k=1}^n G_k \big) = \sum_{k=1}^n\mu(G_k)$ whenever $\bigcup_{k=1}^nG_k \in \mathscr{E}$;
    \item \textbf{weakly countably additive} (respectively, \textbf{countably additive}) if $\mu\big(\bigcup_{n \in \N} G_n \big) = \sum_{n=1}^{\infty}\mu(G_n)$ in the weak (respectively, norm) topology whenever $\bigcup_{n \in \N}G_n \in \mathscr{E}$;
    \item \textbf{strongly additive} if $\sum_{n=1}^{\infty}\mu(G_n)$ always exists in the norm topology;
    \item a \textbf{finitely additive vector measure} if $\mathscr{E}$ is an algebra and $\mu$ is finitely additive;
    and
    \item a \textbf{vector measure} if $\mathscr{E}$ is a $\sigma$-algebra and $\mu$ is countably additive.
\end{enumerate}
\end{defi}

Of course, if $\mu \colon \mathscr{E} \to V$ is a (finitely additive) vector measure, then $\ell \circ \mu$ is a (finitely additive) complex measure for all $\ell \in V^*$.
Also, we write $|\nu|$ for the total variation of a (finitely additive) complex measure $\nu$.

\begin{defi}
Let $\sA$ be an algebra of subsets of a set $\Om$, $V$ be a Banach space, and $\mu \colon \sA \to V$ be a finitely additive vector measure.
Write
\[
\|\mu\|(G) \coloneqq \sup\{|\ell \circ \mu|(G) : \ell \in V^*, \; \|\ell\|_{V^*} \leq 1\} \in [0,\infty],
\]
for all $G \in \sA$, and $\|\mu\|_{\mathrm{svar}} \coloneqq \|\mu\|(\Om)$.
The function $\|\mu\|$ is called the \textbf{semivariation} of $\mu$.
If $\|\mu\|_{\mathrm{svar}} < \infty$, then we say $\mu$ is of \textbf{bounded semivariation}.
\end{defi}

\begin{nota}
For a Banach space $V$ and a measurable space $(\Om,\sF)$, let $M(\Om,\sF;V)$ be the set of $V$-valued vector measures on $(\Om,\sF)$ of bounded semivariation.
\end{nota}

We now record some results about extending vector measures.
The following lemma has essentially the same proof as Lemma \ref{lem.extendprotomeas}, so we omit it.

\begin{lem}\label{lem.extendvecprotomeas}
Let $\mathscr{E}$ be an elementary family of subsets of a set $\Om$, $V$ be a Banach space, and $\mu^{00} \colon \mathscr{E} \to V$ be finitely (respectively, (weakly) countably) additive.
Then $\mu^{00}$ extends uniquely to a finitely (respectively, (weakly) countably) additive function $\mu^0 \colon \alpha(\mathscr{E}) \to V$, where $\alpha(\mathscr{E})$ is the algebra generated by $\mathscr{E}$.
\end{lem}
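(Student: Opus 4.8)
The plan is to mirror the proof of Lemma \ref{lem.extendprotomeas} almost verbatim, the one structural change being that we no longer have the ``sum of pairwise orthogonal projections'' mechanism and must instead carry the assumed (weak) countable additivity across by hand. I would begin by recalling the standard fact (e.g.\ Proposition 1.7 of \cite{folland}) that, for an elementary family $\mathscr{E}$, the generated algebra $\alpha(\mathscr{E})$ is \emph{exactly} the collection of finite disjoint unions of members of $\mathscr{E}$. Consequently, given $G \in \alpha(\mathscr{E})$ with $G = \bigsqcup_{n=1}^m G_n$ for pairwise disjoint $G_1,\dots,G_m \in \mathscr{E}$, any additive extension is forced to satisfy $\mu^0(G) = \sum_{n=1}^m \mu^{00}(G_n)$; I would simply \emph{define} $\mu^0$ by this formula, which instantly yields the uniqueness assertion. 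Everything else is the verification that the formula is well-defined and additive.

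For well-definedness, suppose $G = \bigsqcup_{n=1}^m G_n = \bigsqcup_{p=1}^{\ell} F_p$ are two decompositions into members of $\mathscr{E}$. Since $\mathscr{E}$ is closed under finite intersection, each $G_n \cap F_p \in \mathscr{E}$, and for fixed $n$ we have $G_n = \bigsqcup_{p=1}^{\ell} (G_n \cap F_p)$ with the union equal to $G_n \in \mathscr{E}$; finite additivity of $\mu^{00}$ thus gives $\mu^{00}(G_n) = \sum_p \mu^{00}(G_n \cap F_p)$, and symmetrically $\mu^{00}(F_p) = \sum_n \mu^{00}(G_n \cap F_p)$. Summing the first identity over $n$ and the second over $p$ shows that both candidate values equal the common double sum $\sum_{n,p} \mu^{00}(G_n \cap F_p)$, so $\mu^0(G)$ is unambiguous. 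Finite additivity of $\mu^0$ on $\alpha(\mathscr{E})$ is then immediate: to add two disjoint members of $\alpha(\mathscr{E})$, I would decompose each into $\mathscr{E}$-pieces and concatenate the two decompositions, using that their union again lies in $\alpha(\mathscr{E})$ because $\alpha(\mathscr{E})$ is an algebra.

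The only genuine work is transporting (weak) countable additivity from $\mathscr{E}$ to $\alpha(\mathscr{E})$. Let $(A_j)_{j \in \N}$ be a pairwise disjoint sequence in $\alpha(\mathscr{E})$ with $A \coloneqq \bigsqcup_{j} A_j \in \alpha(\mathscr{E})$. Writing $A = \bigsqcup_{r=1}^{R} E_r$ and $A_j = \bigsqcup_{s} F_{j,s}$ with all $E_r, F_{j,s} \in \mathscr{E}$ and intersecting, I get for each fixed $r$ that $E_r = \bigsqcup_{j,s} (E_r \cap F_{j,s})$, a countable disjoint union of $\mathscr{E}$-members whose union $E_r$ again lies in $\mathscr{E}$. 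The hypothesized (weak) countable additivity of $\mu^{00}$ therefore applies to this union, and summing the resulting identities over the finitely many $r$ produces $\mu^0(A) = \sum_j \mu^0(A_j)$.

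I expect the main obstacle to be precisely the rearrangement bookkeeping in this last step. For the \emph{weak} case the cleanest route is to compose with an arbitrary $\ell \in V^*$: then $\ell \circ \mu^{00}$ is an honest countably additive scalar premeasure on $\mathscr{E}$, the double sum over $(j,s)$ rearranges by the absolute-convergence bound $\sum_{j,s} |\ell(\mu^{00}(E_r \cap F_{j,s}))| \le |\ell \circ \mu^{00}|(E_r) < \infty$, and one obtains $\ell(\mu^0(A)) = \sum_j \ell(\mu^0(A_j))$; since $\ell$ is arbitrary, this is weak countable additivity. The norm-countably-additive case runs through the same refinement, but replaces the scalar absolute-convergence bound with the fact that norm-convergence of each $\sum_s \mu^{00}(E_r \cap F_{j,s})$ (supplied by countable additivity of $\mu^{00}$ on $\mathscr{E}$) legitimizes regrouping the doubly-indexed family over $(j,s)$, after which the finite sum over $r$ closes the argument.
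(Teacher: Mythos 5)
Your overall route is exactly the paper's: the paper omits the proof of Lemma \ref{lem.extendvecprotomeas}, remarking that it has ``essentially the same proof'' as Lemma \ref{lem.extendprotomeas}, and that sketch is precisely your argument --- define $\mu^0$ on $\alpha(\mathscr{E})$ by the forced formula on finite disjoint unions of $\mathscr{E}$-sets, check well-definedness and finite additivity by common refinement, and transfer (weak) countable additivity by refining a countable disjoint union in $\alpha(\mathscr{E})$ down to $\mathscr{E}$-pieces. Your well-definedness and finite-additivity steps are correct as written.

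One justification in the countable-additivity step does not hold up as stated, though the step itself is fine. In the weak case you justify rearranging the double sum by the bound $\sum_{j,s}|\ell(\mu^{00}(E_r\cap F_{j,s}))| \le |\ell\circ\mu^{00}|(E_r) < \infty$; but finiteness of the total variation of a countably additive complex set function is a theorem about $\sigma$-algebras (its standard proof builds countable unions that need not stay in $\mathscr{E}$ or $\alpha(\mathscr{E})$), so on an elementary family it is not available for free. Likewise, in the norm case the phrase ``norm-convergence of each $\sum_s \mu^{00}(E_r\cap F_{j,s})$'' is off, since each $A_j$ decomposes into \emph{finitely} many $F_{j,s}$, so those sums are finite. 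The clean fix for both cases needs neither a variation bound nor unconditional convergence: the refined family $\{E_r \cap F_{j,s}\}$ is a countable disjoint family of $\mathscr{E}$-sets with union $E_r \in \mathscr{E}$, so the hypothesis applies to \emph{every} enumeration of it; choose the enumeration listing the pairs in blocks of constant $j$. The series then converges (weakly, respectively in norm) to $\mu^{00}(E_r)$ in that order, and grouping consecutive terms of a convergent series is always legitimate, because the grouped partial sums form a subsequence of the original ones; this yields $\sum_j \sum_s \mu^{00}(E_r \cap F_{j,s}) = \mu^{00}(E_r)$ directly, and summing over the finitely many $r$ and interchanging with the convergent sum over $j$ finishes the proof. (Alternatively, since every rearrangement of the scalar series converges to the same limit, Riemann's rearrangement theorem does give the absolute convergence you wanted --- but the total-variation bound is not the reason it holds.)
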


\begin{thm}[Carath\'{e}odory--Hahn--Kluv\'{a}nek]\label{thm.vecCaratheodory}
Let $\sA$ be an algebra of subsets of a set $\Om$, $V$ be a Banach space, and $\mu^0 \colon \sA \to V$ be weakly countably additive and of bounded semivariation.
If $\mu^0$ is strongly additive, then $\mu^0$ extends uniquely to a vector measure $\mu \colon \sigma(\sA) \to V$ with $\|\mu\|_{\mathrm{svar}} = \|\mu^0\|_{\mathrm{svar}}$.
\end{thm}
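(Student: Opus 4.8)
The plan is to reduce the vector-valued extension problem to the classical scalar Carath\'eodory--Hahn extension theorem by composing with functionals, and then to reassemble the scalar extensions into a single $V$-valued set function with the help of a control measure. First I would record a simplification. By hypothesis each $\nu_\ell \coloneqq \ell \circ \mu^0$ ($\ell \in V^*$) is a countably additive complex measure on $\sA$ (this is precisely weak countable additivity), while strong additivity says $\sum_n \mu^0(G_n)$ converges in norm for every pairwise disjoint $(G_n)_{n\in\N} \in \sA^{\N}$. When $\bigcup_n G_n \in \sA$, the Orlicz--Pettis theorem forces the norm sum to agree with the weak sum $\mu^0(\bigcup_n G_n)$, so $\mu^0$ is in fact \emph{norm} countably additive on $\sA$. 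Since $\mu^0$ has bounded semivariation, $|\nu_\ell|(\Om) \le \|\ell\|_{V^*}\,\|\mu^0\|_{\mathrm{svar}} < \infty$, so by the classical Carath\'eodory--Hahn theorem each $\nu_\ell$ extends uniquely to a countably additive complex measure $\tilde\nu_\ell$ on $\sigma(\sA)$ with $|\tilde\nu_\ell|(\Om) = |\nu_\ell|(\Om)$.

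Next I would produce a control measure. Strong additivity is equivalent (Vitali--Hahn--Saks--Nikod\'ym theory, Chapter I of \cite{diesteluhl}) to uniform strong additivity of the bounded family $\{\nu_\ell : \|\ell\|_{V^*}\le 1\}$, and from this one extracts a finite nonnegative countably additive measure $\lambda$ on $\sA$ that controls $\mu^0$ uniformly: for every $\e>0$ there is $\delta>0$ with $\sup_{\lambda(E)\le\delta}\|\mu^0(E)\|_V < \e$. Extend $\lambda$ to a finite measure $\bar\lambda$ on $\sigma(\sA)$ by classical Carath\'eodory. Given $E \in \sigma(\sA)$, choose $E_n \in \sA$ with $\bar\lambda(E \triangle E_n) \to 0$ (density of an algebra in its generated $\sigma$-algebra under a finite measure); then finite additivity gives $\|\mu^0(E_n) - \mu^0(E_m)\|_V = \|\mu^0(E_n\setminus E_m) - \mu^0(E_m\setminus E_n)\|_V$, which is small once $\lambda(E_n \triangle E_m)$ is small, so $(\mu^0(E_n))_{n\in\N}$ is Cauchy in $V$. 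Define $\mu(E) \coloneqq \lim_n \mu^0(E_n) \in V$; uniform $\lambda$-continuity makes this independent of the approximating sequence and equal to $\mu^0(E)$ when $E \in \sA$.

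It then remains to verify that $\mu$ is the desired vector measure. For each $\ell$, $\bar\lambda$-continuity of $\tilde\nu_\ell$ yields $\ell(\mu(E)) = \lim_n \nu_\ell(E_n) = \tilde\nu_\ell(E)$, so $\ell \circ \mu = \tilde\nu_\ell$ is countably additive for every $\ell$; thus $\mu$ is weakly countably additive on $\sigma(\sA)$, and by Orlicz--Pettis it is norm countably additive, i.e.\ a genuine vector measure. The semivariation identity is now immediate, $\|\mu\|_{\mathrm{svar}} = \sup_{\|\ell\|\le 1}|\ell\circ\mu|(\Om) = \sup_{\|\ell\|\le 1}|\tilde\nu_\ell|(\Om) = \sup_{\|\ell\|\le 1}|\nu_\ell|(\Om) = \|\mu^0\|_{\mathrm{svar}}$, using the scalar total-variation preservation from the first paragraph. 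Uniqueness is clean: any two countably additive extensions $\mu_1,\mu_2$ satisfy $\ell\circ\mu_1 = \ell\circ\mu_2$ on $\sigma(\sA)$ for every $\ell$ by scalar uniqueness, and $V^*$ separates points, so $\mu_1 = \mu_2$.

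I expect the genuine obstacle to be the control-measure step: extracting a single finite countably additive $\lambda$ that uniformly dominates the entire unit-ball family $\{\nu_\ell\}$. This is the analytic heart of Carath\'eodory--Hahn--Kluv\'anek and is exactly where strong additivity (rather than mere countable additivity) is indispensable. Everything downstream is a routine completion-of-an-algebra argument, structurally parallel to the classical scalar case and to the projection-valued extension already carried out in Theorem \ref{thm.extendpremeas}.
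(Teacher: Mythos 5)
The first thing to say is that the paper does not prove this theorem at all: it is stated as background, and the sentence immediately following it reads ``For a proof, please see Theorem 2 in Section I.5 of \cite{diesteluhl},'' together with the remark that the proof requires a ``control measure.'' So there is no in-paper argument to compare against; what you have written is a reconstruction of exactly that cited proof, and its skeleton is the standard Kluv\'anek one: scalar Carath\'eodory--Hahn extensions of the measures $\ell \circ \mu^0$, a countably additive control measure $\lambda$ on $\sA$, a Cauchy/density argument in the pseudometric $\bar\lambda(\cdot \,\triangle\, \cdot)$ to define $\mu$ on $\sigma(\sA)$, weak countable additivity of $\mu$ via the scalar extensions, and Orlicz--Pettis to upgrade to norm countable additivity. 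Your downstream steps all check out: the Cauchy estimate via $\mu^0(E_n)-\mu^0(E_m) = \mu^0(E_n\setminus E_m)-\mu^0(E_m\setminus E_n)$, the identification $\ell \circ \mu = \tilde\nu_\ell$, the semivariation identity, and the uniqueness argument are correct. (One minor point: in your first paragraph Orlicz--Pettis is overkill --- norm convergence implies weak convergence, so uniqueness of weak limits already identifies the sum with $\mu^0\big(\bigcup_n G_n\big)$ --- whereas in your third paragraph, where you pass from weak to norm countable additivity on the $\sigma$-algebra, it is genuinely needed.)

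The one caveat concerns the step you yourself flag as the analytic heart: the extraction of the control measure. As stated, it does not follow from ``Vitali--Hahn--Saks--Nikod\'ym theory'' in any routine way. What is actually required is, first, the equivalence of strong additivity of $\mu^0$ with uniform strong additivity of the family $\{\ell \circ \mu^0 : \ell \in V^*,\ \|\ell\|_{V^*} \leq 1\}$ (this equivalence is in Section I.1 of \cite{diesteluhl}), and second, a Rosenthal/Drewnowski-type extraction producing countably many functionals $\ell_n$ in the unit ball such that $\lambda \coloneqq \sum_{n=1}^{\infty} 2^{-n}\,|\ell_n \circ \mu^0|$ controls $\mu^0$ uniformly; countable additivity of $\lambda$ on $\sA$ is then inherited from the weak countable additivity of $\mu^0$, since each $|\ell_n \circ \mu^0|$ is a countably additive finite measure on $\sA$ --- it is not free, and a generic finitely additive control measure would not do, since its Carath\'eodory extension need not exist. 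This extraction is essentially item (ii) of the very theorem being proven in \cite{diesteluhl}, so as a self-contained proof your proposal is incomplete at precisely the point where the paper also chooses to cite rather than prove. As a faithful outline of the cited argument, with the hard step correctly located and everything built on top of it sound, it is accurate.
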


For a proof, please see Theorem 2 in Section I.5 of \cite{diesteluhl}.
Already, this takes some machinery (a ``control measure," the object in item (ii) of the aforementioned Theorem 2) to prove.
It often takes additional machinery to verify strong additivity of a given finitely additive measure;
for instance, here is a useful characterization of the situation in which you \textit{never} have to check strong additivity.

\begin{thm}[Diestel--Faires]
Write $c_0 \coloneqq \{(a_n)_{n \in \N} \in \ell^{\infty}(\N) : a_n \to 0$ as $n \to \infty\}$, and let $V$ be a Banach space.
The following are equivalent:
\begin{enumerate}[label=(\roman*),font=\normalfont,leftmargin=2\parindent]
    \item $V$ contains a copy of $c_0$ (i.e., there is a linear map $T \colon c_0 \to V$ and constants $\e,C > 0$ such that $\e\|a\|_{\ell^{\infty}(\N)} \leq \|Ta\|_V \leq C\|a\|_{\ell^{\infty}(\N)}$ for all $a \in c_0$); and
    \item there exists a finitely additive vector measure $\mu^0 \colon \sA \to V$, where $\sA$ is an algebra of sets, of bounded semivariation that is not strongly additive.
\end{enumerate}
\end{thm}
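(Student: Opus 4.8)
The plan is to prove the two implications separately, the one genuinely nontrivial input being the classical Bessaga--Pe\l{}czy\'nski dichotomy (e.g., via \cite{diesteluhl}): a \emph{weakly unconditionally Cauchy} (wuc) series $\sum_n v_n$ in a Banach space $V$ --- one satisfying $\sum_n |\ell(v_n)| < \infty$ for every $\ell \in V^*$ --- that fails to converge in norm admits a subsequence equivalent to the unit vector basis of $c_0$, so that $V$ then contains an isomorphic copy of $c_0$. First I would record this, together with the standard fact that wuc-ness of $\sum_n v_n$ is equivalent to $\sup_{\|\ell\|_{V^*} \le 1}\sum_n|\ell(v_n)| < \infty$.

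For (ii) $\Rightarrow$ (i): let $\mu^0 \colon \sA \to V$ be finitely additive of bounded semivariation but not strongly additive, so that there is a pairwise disjoint sequence $(E_n)_{n\in\N} \in \sA^{\N}$ for which $\sum_n \mu^0(E_n)$ does not converge in norm. The key step is to observe that $\sum_n \mu^0(E_n)$ is wuc: for $\ell \in V^*$ and any $N$, the sets $E_1,\ldots,E_N,\,\Om\setminus\bigcup_{n\le N}E_n$ form a finite $\sA$-partition of $\Om$, so $\sum_{n=1}^N |\ell(\mu^0(E_n))| = \sum_{n=1}^N |(\ell\circ\mu^0)(E_n)| \le |\ell\circ\mu^0|(\Om) \le \|\ell\|_{V^*}\,\|\mu^0\|_{\mathrm{svar}}$, and letting $N\to\infty$ gives $\sum_n|\ell(\mu^0(E_n))| < \infty$. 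Since $\sum_n \mu^0(E_n)$ does not converge in norm it is a fortiori not unconditionally convergent, so Bessaga--Pe\l{}czy\'nski produces the desired copy of $c_0$ in $V$.

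For (i) $\Rightarrow$ (ii): suppose $T \colon c_0 \to V$ satisfies $\e\|a\|_{\ell^\infty(\N)} \le \|Ta\|_V \le C\|a\|_{\ell^\infty(\N)}$, and let $(e_n)$ be the unit vector basis of $c_0$. I would take $\Om \coloneqq \N$ and $\sA$ the algebra of finite and cofinite subsets of $\N$, and define $\mu^0(F) \coloneqq \sum_{n\in F} Te_n$ for finite $F$ and $\mu^0(\N\setminus F)\coloneqq -\sum_{n\in F}Te_n$ for finite $F$; a short case analysis (using that two cofinite sets always meet, so any finite $\sA$-partition of $\N$ has at most one cofinite block) shows $\mu^0$ is finitely additive. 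Since $\sum_n e_n$ is wuc in $c_0$ (for $\ell\in c_0^* = \ell^1(\N)$ one has $\sum_n|\ell(e_n)| = \|\ell\|_{\ell^1(\N)}$) and $T$ is bounded, $\sum_n Te_n$ is wuc, and estimating the variation of $\ell\circ\mu^0$ over such partitions yields $|\ell\circ\mu^0|(\N) \le 2\sum_n|\ell(Te_n)| = 2\|T^*\ell\|_{\ell^1(\N)} \le 2\|T\|\,\|\ell\|_{V^*}$, so $\|\mu^0\|_{\mathrm{svar}} \le 2\|T\| < \infty$. Finally $\mu^0$ is not strongly additive: for the disjoint sequence of singletons, $\sum_n\mu^0(\{n\}) = \sum_n Te_n$, whose partial sums are not Cauchy because the partial sums of $\sum_n e_n$ are not Cauchy in $c_0$ and $T$ is bounded below.

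The main obstacle is the Bessaga--Pe\l{}czy\'nski theorem, which is the only substantive ingredient; everything else is bookkeeping with the semivariation and the finite/cofinite algebra. In a self-contained write-up I would either invoke it directly or, since the paper already relies heavily on \cite{diesteluhl}, cite the full Diestel--Faires theorem from there. The only points requiring care are confirming finite additivity on the algebra in (i) $\Rightarrow$ (ii) (handled by the disjointness observation) and keeping the factor of two in the semivariation bound honest.
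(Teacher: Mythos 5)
Your proof is correct, but it is worth noting that the paper itself does not prove this theorem: it cites Theorem 2 in Section I.4 of \cite{diesteluhl} (where the hard implication is obtained via Rosenthal's lemma and basic-sequence arguments) together with the example that follows it. Your (ii) $\Rightarrow$ (i) is a clean reduction to the Bessaga--Pe\l{}czy\'nski $c_0$-theorem: the weak unconditional Cauchy estimate $\sum_{n=1}^N|\ell(\mu^0(E_n))| \le |\ell\circ\mu^0|(\Om) \le \|\ell\|_{V^*}\,\|\mu^0\|_{\mathrm{svar}}$, obtained from the finite $\sA$-partition $E_1,\ldots,E_N,\,\Om\setminus\bigcup_{n\le N}E_n$, is exactly right, and a norm-divergent series is a fortiori not unconditionally convergent, so the dichotomy applies. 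Your (i) $\Rightarrow$ (ii) is complete and self-contained, and is essentially the same example given in \cite{diesteluhl}: finite additivity on the finite/cofinite algebra (using that any finite $\sA$-partition of $\N$ has exactly one cofinite block), the semivariation bound $\|\mu^0\|_{\mathrm{svar}} \le 2C$, and the failure of strong additivity from $\big\|\sum_{n=N}^M Te_n\big\|_V \ge \e$ all check out. The one caveat: Bessaga--Pe\l{}czy\'nski is a theorem of essentially the same depth as Diestel--Faires (the two are near-equivalent formulations of the same dichotomy), so your argument trades one citation for another rather than giving a from-scratch proof; what it buys is that the reduction and all the measure-theoretic bookkeeping are made explicit and verifiable, whereas the paper outsources the entire statement.
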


For a proof, please see Theorem 2 and the example that follows it in Section I.4 of \cite{diesteluhl}.

\begin{cor}\label{cor.strongaddcrit}
If $V$ is a weakly sequentially complete Banach space, then every finitely additive $V$-valued vector measure of bounded semivariation is strongly additive.
In particular, this is true for reflexive $V$.
\end{cor}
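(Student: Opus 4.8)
The plan is to deduce the corollary from the Diestel-Faires Theorem by contraposition, so that the only real work is to show a weakly sequentially complete Banach space cannot contain an isomorphic copy of $c_0$. Taking the contrapositive of the Diestel-Faires Theorem, if $V$ contains no isomorphic copy of $c_0$, then every finitely additive $V$-valued vector measure of bounded semivariation is automatically strongly additive. Hence it suffices to prove that weak sequential completeness of $V$ forbids an isomorphic copy of $c_0$, and then to observe separately that reflexivity implies weak sequential completeness.

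First I would record the classical fact that $c_0$ is itself \emph{not} weakly sequentially complete. The witness is the sequence of partial sums $x_n \coloneqq (\underbrace{1,\ldots,1}_{n},0,0,\ldots) \in c_0$: since $c_0^* \cong \ell^1(\N)$, for $\ell = (a_j)_{j \in \N} \in \ell^1(\N)$ we have $\ell(x_n) = \sum_{j=1}^n a_j \to \sum_{j=1}^{\infty} a_j$, so $(x_n)_{n \in \N}$ is weakly Cauchy; but any weak limit $x$ would satisfy $\ell(x) = \sum_j a_j$ for all $\ell \in \ell^1(\N)$, forcing $x = (1,1,1,\ldots) \notin c_0$, a contradiction.

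Next I would check that weak sequential completeness is inherited by isomorphic copies of closed subspaces, which transfers the failure above into $V$. If $T \colon c_0 \to V$ were an isomorphism onto its image, then $W \coloneqq T(c_0)$ would be a closed subspace of $V$ (as $T$ is bounded below, $W$ is complete, hence closed). A closed subspace of a weakly sequentially complete space is again weakly sequentially complete: a weakly Cauchy sequence $(w_n)$ in $W$ is weakly Cauchy in $V$ (restrict each $\psi \in V^*$ to $W$), hence weakly converges to some $v \in V$; and $v \in W$ because $W$, being closed and convex, is weakly closed by Mazur's Theorem; finally $w_n \to v$ weakly \emph{in} $W$ by extending functionals on $W$ to $V$ via Hahn-Banach. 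Since weak sequential completeness transfers across the isomorphism $T$ (pull functionals back through $T^{-1}$), the space $c_0 \cong W$ would be weakly sequentially complete, contradicting the previous paragraph. This shows $V$ contains no copy of $c_0$, and the first assertion follows.

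For the ``in particular'' clause I would show that a reflexive $V$ is weakly sequentially complete. Given a weakly Cauchy sequence $(x_n)$ in $V$, the Uniform Boundedness Principle gives $\sup_n \|x_n\| < \infty$, so $F(\ell) \coloneqq \lim_n \ell(x_n)$ defines an element $F \in V^{**}$ with $\|F\|_{V^{**}} \leq \sup_n \|x_n\|$; by reflexivity $F = \ev(x)$ for some $x \in V$, which means precisely that $x_n \to x$ weakly. The main (and essentially only) obstacle is the structural input about $c_0$ together with the passage of weak sequential completeness to subspaces; once the Diestel-Faires Theorem is invoked, the remainder is routine functional analysis.
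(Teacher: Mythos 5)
Your proposal is correct and follows essentially the same route as the paper: invoke the Diestel--Faires Theorem, reduce to showing that a weakly sequentially complete space cannot contain an isomorphic copy of $c_0$ (via the weak sequential completeness of closed subspaces and the weakly Cauchy but non-convergent partial-sum sequence in $c_0$), and then verify that reflexivity implies weak sequential completeness by the Uniform Boundedness Principle. The only difference is one of detail: you spell out the Mazur/Hahn--Banach arguments and the $c_0^* \cong \ell^1$ computation that the paper leaves as exercises.
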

\begin{proof}[Sketch of proof]
By the Diestel--Faires theorem, it suffices to show $V$ cannot contain a copy of $c_0$.
But notice that any closed linear subspace of $V$ is weakly sequentially complete, so it suffices to show $c_0$ is not weakly sequentially complete.
\hspace{-0.15mm}To\hspace{-0.15mm} this \hspace{-0.15mm}end,\hspace{-0.15mm} let $e_n \hspace{-0.15mm}\in\hspace{-0.15mm} c_0$ be the $n^{\text{th}}$ standard basis vector and $s_n \hspace{-0.15mm}\coloneqq\hspace{-0.15mm} \sum_{j=1}^ne_j \hspace{-0.15mm}\in\hspace{-0.15mm} c_0$.
It is an easy exercise to show $(s_n)_{n \in \N}$ is weakly Cauchy but not weakly convergent.

It remains to prove that if $V$ is reflexive, then $V$ is weakly sequentially complete.
Indeed, let $(v_n)_{n \in \N}$ be a weakly Cauchy sequence and $\ev \colon V \to V^{**}$ be the natural embedding.
By the completeness of $\C$, if $\ell \in V^*$, then there exists $\eta(\ell) \in \C$ such that $\ell(v_n) \to \eta(\ell)$ as $n \to \infty$.
Clearly the map $\eta \colon V^* \to \C$ is linear.
By the principle of uniform boundedness, $\eta$ is bounded.
Since $V$ is reflexive, $\eta = \ev(v)$ for some $v \in V$, and by definition $v_n \to v$ weakly as $n \to \infty$.
\end{proof}

\begin{cor}\label{cor.vecCaratheodory}
Let $\sA$ be an algebra of subsets of a set $\Om$, $V$ be a Banach space, and $\mu^0 \colon \sA \to V$ be weakly countably additive and of bounded semivariation.
If $V$ is weakly sequentially complete, then $\mu^0$ extends uniquely to a vector measure $\mu \colon \sigma(\sA) \to V$ with $\|\mu\|_{\mathrm{svar}} = \|\mu^0\|_{\mathrm{svar}}$.
\end{cor}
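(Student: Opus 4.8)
The plan is to obtain this statement as an immediate consequence of the Carath\'{e}odory-Hahn-Kluv\'{a}nek Theorem (Theorem \ref{thm.vecCaratheodory}) together with Corollary \ref{cor.strongaddcrit}, so that essentially all the genuine work has already been carried out in those two cited results. The only hypothesis of Theorem \ref{thm.vecCaratheodory} that is not explicitly assumed in the corollary is strong additivity of $\mu^0$, and the entire task reduces to supplying it.

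First I would observe that $\mu^0 \colon \sA \to V$ is in particular a \emph{finitely additive} vector measure in the norm sense: weak countable additivity forces weak finite additivity, but a finite sum is a single algebraic expression, so its weak convergence to $\mu^0\big(\bigcup_{k=1}^n G_k\big)$ is the same as its norm convergence. Thus $\mu^0$ is finitely additive. Combined with the standing assumption that $\mu^0$ has bounded semivariation, this places $\mu^0$ squarely within the scope of Corollary \ref{cor.strongaddcrit}.

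Next, since $V$ is assumed weakly sequentially complete, Corollary \ref{cor.strongaddcrit} applies directly and yields that $\mu^0$ is strongly additive. At this point every hypothesis of Theorem \ref{thm.vecCaratheodory} is in hand: $\mu^0$ is weakly countably additive, of bounded semivariation, and strongly additive. Applying that theorem produces a unique vector measure $\mu \colon \sigma(\sA) \to V$ extending $\mu^0$ and satisfying $\|\mu\|_{\mathrm{svar}} = \|\mu^0\|_{\mathrm{svar}}$, which is exactly the assertion. I do not anticipate any real obstacle here: the argument is a bookkeeping combination of the two cited results, with the Diestel-Faires dichotomy (invoked inside Corollary \ref{cor.strongaddcrit}) doing the only nontrivial work, namely ruling out an isomorphic copy of $c_0$ in a weakly sequentially complete space.
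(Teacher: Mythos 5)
Your proposal is correct and follows exactly the paper's own proof: apply Corollary \ref{cor.strongaddcrit} (via the Diestel--Faires dichotomy) to upgrade $\mu^0$ to strong additivity, then invoke the Carath\'{e}odory--Hahn--Kluv\'{a}nek Theorem \ref{thm.vecCaratheodory}. Your extra remark that weak finite additivity coincides with norm finite additivity is a fine (if routine) clarification, but the argument is the same two-step reduction the paper gives.
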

\begin{proof}
By Corollary \ref{cor.strongaddcrit}, $\mu^0$ is strongly additive.
Then the Carath\'{e}odory--Hahn--Kluv\'{a}nek theorem gives the desired extension.
\end{proof}

We are now almost ready to prove Theorem \ref{thm.Pavlov}.
Before beginning in earnest, we take care of a combinatorial detail that arises in the proof.

\begin{lem}\label{lem.comb}
Let $\sA_j$ be an algebra of subsets of $\Om_j$, for $1 \leq j \leq k+1$, and
\[
\mathscr{E} \coloneqq \{G_1 \times \cdots \times G_{k+1} : G_1 \in \sA_1,\ldots,G_{k+1} \in \sA_{k+1}\}
\]
be the elementary family of \textbf{rectangles} in $\Om \coloneqq \Om_1 \times \cdots \times \Om_{k+1}$.
Suppose $\nu \colon \mathscr{E} \to \C$ is finitely additive.
If $R_1,\ldots,R_n \in \mathscr{E}$ is a partition of $\Om$ by rectangles and we write $[m] \coloneqq \{1,\ldots,m\}$, then there is a partition
\[
\big\{G^{\boldsymbol{\ell}} \coloneqq G_1^{\ell_1} \times \cdots \times G_{k+1}^{\ell_{k+1}} : \boldsymbol{\ell} = (\ell_1,\ldots,\ell_{k+1}) \in [n_1] \times \cdots \times [n_{k+1}] =: [\boldsymbol{n}] \big\}
\]
of $\Om$, where $G_j^1,\ldots,G_j^{n_j} \in \sA_j$ is a partition of $\Om_j$ for all $j \in [k+1]$, such that
\[
\sum_{m=1}^n|\nu(R_m)| \leq \sum_{\boldsymbol{\ell} \in [\boldsymbol{n}]} \big|\nu\big(G^{\boldsymbol{\ell}}\big)\big|.
\]
\end{lem}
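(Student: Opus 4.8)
The plan is to produce the required grid partition as the common refinement, taken coordinate by coordinate, of the given rectangular partition $R_1,\ldots,R_n$, and then to combine finite additivity of $\nu$ with the triangle inequality. This is the standard device for comparing the ``variation'' of a rectangle function over an arbitrary rectangular partition with its variation over a product (grid) partition.

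First I would write $R_m = R_{m,1} \times \cdots \times R_{m,k+1}$ with $R_{m,j} \in \sA_j$ for each $m \in \{1,\ldots,n\}$. For each coordinate $j \in \{1,\ldots,k+1\}$, the finite collection $\{R_{1,j},\ldots,R_{n,j}\} \subseteq \sA_j$ generates a finite subalgebra of $\sA_j$; let $G_j^1,\ldots,G_j^{n_j} \in \sA_j$ denote its (nonempty) atoms. These form a partition of $\Om_j$ into members of $\sA_j$, and crucially each $R_{m,j}$ is a union of such atoms. Setting $G^{\boldsymbol{\ell}} \coloneqq G_1^{\ell_1} \times \cdots \times G_{k+1}^{\ell_{k+1}}$ for $\boldsymbol{\ell} \in [\boldsymbol{n}]$, the cells $G^{\boldsymbol{\ell}}$ are nonempty (being Cartesian products of nonempty sets), pairwise disjoint, and cover $\Om$, so $\{G^{\boldsymbol{\ell}}\}$ is a partition of $\Om$ of exactly the product form demanded by the statement.

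The key structural step is to observe that this grid refines the partition $R_1,\ldots,R_n$. Since each $R_{m,j}$ is a union of the atoms $G_j^{\ell_j}$, the rectangle $R_m$ is a union of grid cells; hence every grid cell is either contained in $R_m$ or disjoint from it. Because the $R_m$ cover $\Om$ disjointly, each grid cell lies in exactly one $R_m$. Finite additivity of $\nu$ on $\mathscr{E}$ — applied to the pairwise disjoint rectangles $\{G^{\boldsymbol{\ell}} : G^{\boldsymbol{\ell}} \subseteq R_m\}$, whose union is the rectangle $R_m \in \mathscr{E}$ — then gives $\nu(R_m) = \sum_{\boldsymbol{\ell}\,:\,G^{\boldsymbol{\ell}} \subseteq R_m} \nu(G^{\boldsymbol{\ell}})$, so by the triangle inequality $|\nu(R_m)| \leq \sum_{\boldsymbol{\ell}\,:\,G^{\boldsymbol{\ell}} \subseteq R_m} |\nu(G^{\boldsymbol{\ell}})|$. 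Summing over $m$ and using that each grid cell is counted exactly once (it sits in a unique $R_m$) yields
\[
\sum_{m=1}^n |\nu(R_m)| \leq \sum_{m=1}^n \sum_{\boldsymbol{\ell}\,:\,G^{\boldsymbol{\ell}} \subseteq R_m} |\nu(G^{\boldsymbol{\ell}})| = \sum_{\boldsymbol{\ell} \in [\boldsymbol{n}]} |\nu(G^{\boldsymbol{\ell}})|,
\]
which is the assertion.

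The only genuinely delicate point — the ``hard part,'' though it is really careful bookkeeping rather than a deep difficulty — is justifying the refinement property: that the atoms in each factor are honest members of $\sA_j$, that each $R_{m,j}$ decomposes into atoms, and consequently that every grid cell meets precisely one $R_m$. Once that is in place, the additivity-plus-triangle-inequality estimate is immediate. I would also note in passing that $\nu(\emptyset) = 0$ follows from finite additivity, so empty intersections are harmless; with nonempty atoms every grid cell is in fact nonempty, and no term on the right-hand side is double-counted.
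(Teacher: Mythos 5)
Your proof is correct and follows essentially the same route as the paper: build a coordinate-wise grid partition refining $R_1,\ldots,R_n$, decompose each $\nu(R_m)$ by finite additivity over the grid cells it contains, and finish with the triangle inequality plus the observation that each nonempty grid cell lies in exactly one $R_m$. The only cosmetic difference is that you obtain the coordinate partitions as the atoms of the finite subalgebra of $\sA_j$ generated by the projections $R_{1,j},\ldots,R_{n,j}$, whereas the paper constructs an equivalent refinement by an explicit induction on the number of covering sets; both devices yield the same refinement property, and the rest of the argument is identical.
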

\begin{proof}
The key observation is that if $\tilde{G}^1,\ldots,\tilde{G}^n \in \sA_1$ is a cover (\textit{not} necessarily a partition) of $\Om_1$, then there is a partition $G^1,\ldots,G^N \in \sA_1$ of $\Om_1$ such that for all $m \in [n]$, $\tilde{G}^m$ is a disjoint union of some of the $G$'s.
We prove this by induction on $n$, the case $n=1$ being trivial.
Assume the result for all sets, all algebras, and all covers of length less than $n$.
Then, given a cover $\tilde{G}^1,\ldots,\tilde{G}^n \in \sA_1$ of $\Om_1$, we get a partition $G_0^1,\ldots,G_0^{N_0} \in \sA_1$ of $\bigcup_{m=2}^n \tilde{G}^m$ with the property that $\tilde{G}^m$ is a disjoint union of $G_0$'s for all $m \in \{2,\ldots,n\}$.
Let $\mathscr{P} \coloneqq \{G_0^1,\ldots,G_0^{N_0}\}$.
Then the desired partition of $\Om_1$ is
\[
\{P \in \mathscr{P} : \tilde{G}^1 \cap P = \emptyset\} \cup \{\tilde{G}^1 \cap P : P \in \mathscr{P} \text{ and } \tilde{G}^1 \cap P \neq \emptyset\} \cup \Bigg\{\tilde{G}^1 \setminus \Bigg(\bigcup_{P \in \mathscr{P}}P\Bigg) \Bigg\}.
\]
Enumera\hspace{-0.15mm}ting \hspace{-0.4mm}t\hspace{-0.15mm}he\hspace{-0.4mm} (\hspace{-0.4mm}nonempty\hspace{-0.4mm}) \hspace{-0.4mm}sets\hspace{-0.4mm} in \hspace{-0.4mm}the\hspace{-0.4mm} above \hspace{-0.4mm}family\hspace{-0.4mm} $G^1\hspace{-0.6mm},\ldots,\hspace{-0.4mm}G^N \hspace{-0.55mm}\in\hspace{-0.55mm} \sA_1$, \hspace{-0.4mm}we\hspace{-0.4mm} are \hspace{-0.4mm}done\hspace{-0.4mm} proving \hspace{-0.4mm}this\hspace{-0.35mm} initial \hspace{-0.4mm}observation.

Now,\hspace{-0.2mm} writing \hspace{-0.2mm}$R_m \hspace{-0.2mm}= \hspace{-0.2mm}\tilde{G}_1^m \hspace{-0.2mm}\times\hspace{-0.2mm} \cdots \hspace{-0.2mm}\times\hspace{-0.2mm} \tilde{G}_{k+1}^m$\hspace{-0.2mm}, apply \hspace{-0.2mm}the\hspace{-0.2mm} observation from \hspace{-0.2mm}the\hspace{-0.2mm} previous \hspace{-0.2mm}paragraph\hspace{-0.2mm} to $\tilde{G}_j^1,\ldots,\tilde{G}_j^n\hspace{-0.2mm} \in\hspace{-0.2mm} \sA_j$ to obtain a partition $G_j^1,\ldots,G_j^{n_j} \in \sA_j$ of $\Om_j$ such that for all $m \in [n]$, $\tilde{G}_j^m$ is a disjoint union of some $G_j$'s.
By the finite additivity assumption, we then get
\[
\nu(R_m) = \sum_{\ell_1 : G_1^{\ell_1} \subseteq \tilde{G}_1^m} \cdots \sum_{\ell_{k+1} : G_{k+1}^{\ell_{k+1}} \subseteq \tilde{G}_{k+1}^m} \nu\big(G_1^{\ell_1} \times \cdots \times G_{k+1}^{\ell_{k+1}} \big).
\]
Because $R_1,\ldots,R_n$ are pairwise disjoint, if $(\ell_1,\ldots,\ell_{k+1}) \in [n_1] \times \cdots \times [n_{k+1}]$ is such that
\[
G_1^{\ell_1} \subseteq \tilde{G}_1^m,\ldots,G_{k+1}^{\ell_{k+1}} \subseteq \tilde{G}_{k+1}^m,
\]
then it cannot be that $G_1^{\ell_1} \subseteq \tilde{G}_1^{m_0},\ldots,G_{k+1}^{\ell_{k+1}} \subseteq \tilde{G}_{k+1}^{m_0}$ for some $m_0 \neq m$ unless $G_1^{\ell_1} \times \cdots \times G_{k+1}^{\ell_{k+1}}$ is empty --- in which case $\nu\big(G_1^{\ell_1} \times \cdots \times G_{k+1}^{\ell_{k+1}} \big) = 0$.
This ``no double counting" observation and the above identity imply
\begin{align*}
    \sum_{m=1}^n|\nu(R_m)| & \leq \sum_{m=1}^n\sum_{\ell_1 : G_1^{\ell_1} \subseteq \tilde{G}_1^m} \cdots \sum_{\ell_{k+1} : G_{k+1}^{\ell_{k+1}} \subseteq \tilde{G}_{k+1}^m} \big|\nu\big(G_1^{\ell_1} \times \cdots \times G_{k+1}^{\ell_{k+1}} \big)\big| \leq \sum_{\boldsymbol{\ell} \in [\boldsymbol{n}]} \big|\nu\big(G^{\boldsymbol{\ell}}\big)\big|,
\end{align*}
as desired.
\end{proof}

\begin{proof}[Proof of Theorem \ref{thm.Pavlov}]
Let
\begin{align*}
    b & = (b_1,\ldots,b_k) \in \cS_2(H_2;H_1) \times \cdots \times \cS_2(H_{k+1};H_k) \; \text{ and} \\
    \mathscr{E} & \coloneqq \{G_1 \times \cdots \times G_{k+1} : G_1 \in \sF_1, \ldots, G_{k+1} \in \sF_{k+1}\}.
\end{align*}
For $G_1\times \cdots \times G_{k+1} \in \mathscr{E}$, define
\begin{align*}
    \mu_b^{00}(G_1 \times \cdots \times G_{k+1}) & \coloneqq P_1(G_1)\,b_1\cdots P_k(G_k)\,b_k\,P_{k+1}(G_{k+1}) \in \cS_2(H_{k+1};H_1)\\
    & = (P_1(G_1) \otimes  \cdots  \otimes P_{k+1}(G_{k+1})) \sh [b_1 \otimes \cdots \otimes b_k]
\end{align*}
in the notation of Remark \ref{rem.hash} (of which we shall make some use).
We break up the proof into five steps.
\begin{enumerate}[label=Step \arabic*., leftmargin=3.36\parindent]
    \item Prove that $\mu_b^{00}$ is finitely additive and therefore, by Lemma \ref{lem.extendvecprotomeas}, extends to a finitely additive vector measure $\mu_b^0 \colon \alpha(\mathscr{E}) \to \cS_2(H_{k+1};H_1)$.\makeatletter\def\@currentlabel{Step 1}\makeatother\label{step1}
    \item Prove that for any partition $G_j^1,\ldots,G_j^{n_j} \in \sF_j$ of $\Om_j$ (for each $j \in \{1,\ldots,k+1\}$) and any $b_{k+1} \in \cS_2(H_1;H_{k+1})$, we have that\makeatletter\def\@currentlabel{Step 2}\makeatother\label{step2}
    \[
    \sum_{(\ell_1,\ldots,\ell_m) \in [n_1]\times \cdots \times [n_m]} \big|\Tr\big(\mu_b^{00}\big(G_1^{\ell_1} \times \cdots \times G_{k+1}^{\ell_{k+1}}\big)\,b_{k+1}\big)\big| \leq \|b_1\|_{\cS_2} \cdots \|b_{k+1}\|_{\cS_2}.
    \]
    \item Conclude $\big\|\mu_b^0\big\|_{\mathrm{svar}} \leq \|b_1\|_{\cS_2} \cdots \|b_k\|_{\cS_2}$.\makeatletter\def\@currentlabel{Step 3}\makeatother\label{step3}
    \item Prove that $\mu_b^{00}$ is weakly countably additive, which, again by Lemma \ref{lem.extendvecprotomeas}, means that $\mu_b^0$ is also weakly countably additive.
    Then apply Corollary \ref{cor.vecCaratheodory} to get $P \sh b$ from $\mu_b^0$.\makeatletter\def\@currentlabel{Step 4}\makeatother\label{step4}
    \item Prove $P \sh b \ll P$.\makeatletter\def\@currentlabel{Step 5}\makeatother\label{step5}
\end{enumerate}
Let us begin.
We shall use $(\Om,\sF,H,P)$ as shorthand for the (tensor) product $(\Om_1 \times \cdots \times \Om_{k+1},\sF_1 \otimes \cdots \otimes \sF_{k+1},$

\noindent $H_1 \hotimes \cdots \hotimes H_{k+1}, P_1 \otimes \cdots \otimes P_{k+1})$.

\ref{step1}.
There are a number of direct ways to see $\mu_b^{00}$ is finitely additive.
We provide a cute proof using Theorem \ref{thm.tensprodPVM} and the $\#$ operation from Remark \ref{rem.hash}.
By definition of $P$, if $G \in \mathscr{E}$, then we have $P(G) \in B(H_1) \otimes \cdots \otimes B(H_{k+1}) \subseteq B(H)$.
By definition, $\mu_b^{00}(G) = P(G)\sh [b_1 \otimes \cdots \otimes b_k]$ for $G \in \mathscr{E}$.
But $P$ is finitely additive on $\mathscr{E}$ because $P \colon \sigma(\mathscr{E})= \sF \to B(H)$ is a projection valued measure.
Thus $\mu_b^{00}$ is finitely additive by the linearity of $\#$.
Moreover, the finitely additive extension $\mu_b^0 \colon \alpha(\mathscr{E}) \to \cS_2(H_{k+1};H_1)$ is also given by the formula
\[
\mu_b^0(G) = P(G) \sh [b_1 \otimes \cdots \otimes b_k]
\]
for $G \in \alpha(\mathscr{E})$.
This formula makes sense because $\alpha(\mathscr{E})$ is the set of \textit{finite} disjoint unions of elements of $\mathscr{E}$, so $P(G)$ is a \textit{finite} sum of pure tensors and thus lies in $B(H_1) \otimes \cdots \otimes B(H_{k+1})$ when $G \in \alpha(\mathscr{E})$.

\ref{step2}.
Let
\[
\Delta \coloneqq \big\{G^{\boldsymbol{\ell}} \coloneqq G_1^{\ell_1} \times \cdots \times G_{k+1}^{\ell_{k+1}} : \boldsymbol{\ell} = (\ell_1,\ldots,\ell_{k+1}) \in [n_1] \times \cdots \times [n_{k+1}] =: [\boldsymbol{n}] \big\}
\]
be the partition of $\Om = \Om_1 \times \cdots \times \Om_{k+1}$ obtained from the fixed partitions of $\Om_1,\ldots,\Om_{k+1}$.
For ease of notation, write
\[
T^{\boldsymbol{\ell}} \coloneqq \Tr\big(\mu_b^{00}\big(G^{\boldsymbol{\ell}}\big)\,b_{k+1}\big) \; \text{ and } \; |\Delta| \coloneqq \sum_{\boldsymbol{\ell} \in [\boldsymbol{n}]} \big|T^{\boldsymbol{\ell}}\big|.
\]
Then the estimate $|\Delta| \leq \|b_1\|_{\cS_2} \cdots \|b_{k+1}\|_{\cS_2}$ is the goal of this step.

To begin, note that if $\boldsymbol{\ell} \in [\boldsymbol{n}]$, then
\begin{align*}
    T^{\boldsymbol{\ell}} & = \Tr\Big(P_1\big(G_1^{\ell_1}\big) \, b_1 \cdots P_k\big(G_k^{\ell_k}\big) \, b_k \,P_{k+1}\big(G_{k+1}^{\ell_{k+1}}\big) \, b_{k+1} \Big) \\
    & = \Tr\Big(P_1\big(G_1^{\ell_1}\big)^2 b_1 \cdots P_k\big(G_k^{\ell_k}\big)^2 b_k P_{k+1}\big(G_{k+1}^{\ell_{k+1}}\big)^2 b_{k+1} \Big) \\
    & = \Tr\Big(\Big[P_1\big(G_1^{\ell_1}\big) \, b_1 \, P_2\big(G_2^{\ell_2}\big)\Big] \cdots \Big[P_k\big(G_k^{\ell_k}\big) \, b_k\, P_{k+1}\big(G_{k+1}^{\ell_{k+1}}\big)\Big]\Big[ P_{k+1}\big(G_{k+1}^{\ell_{k+1}}\big)\,b_{k+1} \, P_1\big(G_1^{\ell_1}\big)\Big]\Big).
\end{align*}
If $j \in [k+1]$, $\ell_j \leq n_j$, and $\ell_{j+1} \leq n_{j+1}$ (adding mod $k+1$), then we define $\Pi_j^{\ell_j,\ell_{j+1}} \in B(\cS_2(H_{j+1};H_j))$ by
\[
\Pi_j^{\ell_j,\ell_{j+1}}(c) \coloneqq P_j\big(G_j^{\ell_j}\big) \, c \, P_{j+1}\big(G_{j+1}^{\ell_{j+1}}\big) = \big( P_j\big(G_j^{\ell_j}\big) \otimes P_{j+1} \big(G_{j+1}^{\ell_{j+1}}\big)\big)\sh c,
\]
for $c \in \cS_2(H_{j+1};H_j)$.
Then $T^{\boldsymbol{\ell}} = \Tr\big(\Pi_1^{\ell_1,\ell_2}(b_1) \cdots \Pi_k^{\ell_k,\ell_{k+1}}(b_k)\,\Pi_{k+1}^{\ell_{k+1},\ell_1}(b_{k+1})\big)$, so
\[
\big|T^{\boldsymbol{\ell}}\big| \leq \big\|\Pi_1^{\ell_1,\ell_2}(b_1) \cdots \Pi_k^{\ell_k,\ell_{k+1}}(b_k)\,\Pi_{k+1}^{\ell_{k+1},\ell_1}(b_{k+1})\big\|_{\cS_1} \leq \prod_{j=1}^{k+1} \big\|\Pi_j^{\ell_j,\ell_{j+1}}(b_j)\big\|_{\cS_2}
\]
(remembering to reduce mod $k+1$), for all $\boldsymbol{\ell} \in [\boldsymbol{n}]$.

Next, since
\[
\big\{G_j^{\ell_j} \times G_{j+1}^{\ell_{j+1}} : \ell_j \in [n_j],\,\ell_{j+1} \in [n_{j+1}]\big\}
\]
is a partition of $\Om_j \times \Om_{j+1}$ by rectangles, it is easy to see
\[
\big\{\Pi_j^{\ell_j,\ell_{j+1}} : \ell_j \in [n_j],\,\ell_{j+1} \in [n_{j+1}]\big\}\pagebreak
\]
is a collection of mutually orthogonal projections in $B(\cS_2(H_{j+1};H_j))$ such that
\begin{align*}
    \sum_{\ell_j=1}^{n_j}\sum_{\ell_{j+1} = 1}^{n_{j+1}} \Pi_j^{\ell_j,\ell_{j+1}}(c) & =\Bigg(\sum_{\ell_j=1}^{n_j}\sum_{\ell_{j+1} = 1}^{n_{j+1}}P_j\big(G_j^{\ell_j}\big) \otimes P_{j+1} \big(G_{j+1}^{\ell_{j+1}}\big) \Bigg)\sh c \\
    & = \Bigg(\Bigg(\sum_{\ell_j=1}^{n_j}P_j\big(G_j^{\ell_j}\big)\Bigg) \otimes \Bigg( \sum_{\ell_{j+1} = 1}^{n_{j+1}}P_{j+1} \big(G_{j+1}^{\ell_{j+1}}\big)\Bigg)\Bigg) \sh c \\
    & = (P_j(\Om_j) \otimes P_{j+1}(\Om_{j+1}))\sh c = c,
\end{align*}
so that whenever $c \in \cS_2(H_{j+1};H_j)$, we have
\[
\sum_{\ell_j=1}^{n_j}\sum_{\ell_{j+1} = 1}^{n_{j+1}} \big\|\Pi_j^{\ell_j,\ell_{j+1}}(c)\big\|_{\cS_2}^2 = \|c\|_{\cS_2}^2.
\]
Therefore, if $k+1$ is even, then by the Cauchy--Schwarz inequality,
\begin{align*}
    |\Delta| & \leq \sum_{\boldsymbol{\ell} \in [\boldsymbol{n}]}\prod_{j=1}^{k+1} \big\|\Pi_j^{\ell_j,\ell_{j+1}}(b_j)\big\|_{\cS_2} \\
    & = \sum_{\boldsymbol{\ell} \in [\boldsymbol{n}]} \prod_{p=1}^{(k+1)/2} \big\|\Pi_{2p-1}^{\ell_{2p-1},\ell_{2p}}(b_{2p-1})\big\|_{\cS_2}\prod_{q=1}^{(k+1)/2} \big\|\Pi_{2q}^{\ell_{2q},\ell_{2q+1}}(b_{2q})\big\|_{\cS_2} \\
    & \leq \Bigg(\sum_{\boldsymbol{\ell} \in [\boldsymbol{n}]} \prod_{p=1}^{(k+1)/2} \big\|\Pi_{2p-1}^{\ell_{2p-1},\ell_{2p}}(b_{2p-1})\big\|_{\cS_2}^2 \Bigg)^{\frac{1}{2}}\Bigg(\sum_{\boldsymbol{\ell} \in [\boldsymbol{n}]}\prod_{q=1}^{(k+1)/2} \big\|\Pi_{2q}^{\ell_{2q},\ell_{2q+1}}(b_{2q})\big\|_{\cS_2}^2\Bigg)^{\frac{1}{2}} \\
    & = \Bigg(\prod_{p=1}^{(k+1)/2}\|b_{2p-1}\|_{\cS_2}^2\Bigg)^{\frac{1}{2}}\Bigg( \prod_{q=1}^{(k+1)/2}\|b_{2q}\|_{\cS_2}^2\Bigg)^{\frac{1}{2}} = \|b_1\|_{\cS_2} \cdots \|b_{k+1}\|_{\cS_2},
\end{align*}
as desired.

If $k+1$ is odd, then we must estimate the expression in a slightly different way.
Just as we write\vspace{-0.25mm}
\[
\boldsymbol{\ell} = (\ell_1,\ldots,\ell_{k+1}) \in [n_1] \times \cdots \times [n_{k+1}] = [\boldsymbol{n}],\vspace{-0.25mm}
\]
we shall use the shorthand\vspace{-0.25mm}
\[
\boldsymbol{\tilde{\ell}} = (\ell_2,\ldots,\ell_k) \in [n_2] \times \cdots \times [n_k] = [\boldsymbol{\tilde{n}}].\vspace{-0.25mm}
\]
By the Cauchy--Schwarz inequality and above,
\begin{align*}
    |\Delta| & \leq \sum_{\boldsymbol{\ell} \in [\boldsymbol{n}]}\prod_{j=1}^{k+1} \big\|\Pi_j^{\ell_j,\ell_{j+1}}(b_j)\big\|_{\cS_2} = \sum_{\ell_1=1}^{n_1}\sum_{\ell_{k+1} = 1}^{n_{k+1}}\big\|\Pi_{k+1}^{\ell_{k+1},\ell_1}(b_{k+1})\big\|_{\cS_2}\sum_{\boldsymbol{\tilde{\ell}} \in [\boldsymbol{\tilde{n}}]} \prod_{j=1}^k \big\|\Pi_j^{\ell_j,\ell_{j+1}}(b_j)\big\|_{\cS_2} \\
    & \leq \Bigg(\sum_{\ell_1=1}^{n_1}\sum_{\ell_{k+1} = 1}^{n_{k+1}}\big\|\Pi_{k+1}^{\ell_{k+1},\ell_1}(b_{k+1})\big\|_{\cS_2}^2\Bigg)^{\frac{1}{2}}\Bigg(  \sum_{\ell_1=1}^{n_1}\sum_{\ell_{k+1} = 1}^{n_{k+1}}\Bigg(\sum_{\boldsymbol{\tilde{\ell}} \in [\boldsymbol{\tilde{n}}]} \prod_{j=1}^k \big\|\Pi_j^{\ell_j,\ell_{j+1}}(b_j)\big\|_{\cS_2}\Bigg)^2 \Bigg)^{\frac{1}{2}} \\
    & = \|b_{k+1}\|_{\cS_2}\Bigg(  \sum_{\ell_1=1}^{n_1}\sum_{\ell_{k+1} = 1}^{n_{k+1}}\Bigg(\underbrace{\sum_{\boldsymbol{\tilde{\ell}} \in [\boldsymbol{\tilde{n}}]} \prod_{j=1}^k \big\|\Pi_j^{\ell_j,\ell_{j+1}}(b_j)\big\|_{\cS_2}}_{=: \, S^{\ell_1,\ell_{k+1}}}\Bigg)^2 \Bigg)^{\frac{1}{2}}.
\end{align*}
Since $k\hspace{-0.3mm}+\hspace{-0.3mm}1$ is odd, \hspace{-0.3mm}$k$\hspace{-0.3mm} is even.
We \hspace{-0.3mm}can\hspace{-0.3mm} therefore \hspace{-0.3mm}estimate\hspace{-0.3mm} as \hspace{-0.3mm}in\hspace{-0.3mm} the \hspace{-0.3mm}even\hspace{-0.3mm} case.
If $\ell_1 \hspace{-0.3mm}\in\hspace{-0.3mm} [n_1\hspace{-0.3mm}]$ and $\ell_{k+1} \hspace{-0.3mm}\in\hspace{-0.3mm} [n_{k+1}\hspace{-0.3mm}]$, then
\begin{align*}
    S^{\ell_1,\ell_{k+1}} & = \sum_{\boldsymbol{\tilde{\ell}} \in [\boldsymbol{\tilde{n}}]} \prod_{p=1}^{k/2} \big\|\Pi_{2p-1}^{\ell_{2p-1},\ell_{2p}}(b_{2p-1})\big\|_{\cS_2}\prod_{q=1}^{k/2} \big\|\Pi_{2q}^{\ell_{2q},\ell_{2q+1}}(b_{2q})\big\|_{\cS_2} \\
    & \leq \Bigg(\sum_{\boldsymbol{\tilde{\ell}} \in [\boldsymbol{\tilde{n}}]} \prod_{p=1}^{k/2} \big\|\Pi_{2p-1}^{\ell_{2p-1},\ell_{2p}}(b_{2p-1})\big\|_{\cS_2}^2 \Bigg)^{\frac{1}{2}}\Bigg(\sum_{\boldsymbol{\tilde{\ell}} \in [\boldsymbol{\tilde{n}}]} \prod_{q=1}^{k/2} \big\|\Pi_{2q}^{\ell_{2q},\ell_{2q+1}}(b_{2q})\big\|_{\cS_2}^2\Bigg)^{\frac{1}{2}}.\displaybreak
\end{align*}
Also,
\begin{align*}
    \sum_{\boldsymbol{\tilde{\ell}} \in [\boldsymbol{\tilde{n}}]} \prod_{p=1}^{k/2} \big\|\Pi_{2p-1}^{\ell_{2p-1},\ell_{2p}}(b_{2p-1})\big\|_{\cS_2}^2 & = \sum_{\ell_2=1}^{n_2}\big\|\Pi_1^{\ell_1,\ell_2}(b_1)\big\|_{\cS_2}^2\prod_{p=2}^{k/2}\|b_{2p-1}\|_{\cS_2}^2 \; \text{ and}\\
    \sum_{\boldsymbol{\tilde{\ell}} \in [\boldsymbol{\tilde{n}}]} \prod_{q=1}^{k/2} \big\|\Pi_{2q}^{\ell_{2q},\ell_{2q+1}}(b_{2q})\big\|_{\cS_2}^2 & = \prod_{q=1}^{(k-1)/2}\|b_{2q}\|_{\cS_2}\sum_{\ell_k=1}^{n_k}\big\|\Pi_k^{\ell_k,\ell_{k+1}}(b_k)\big\|_{\cS_2}^2.
\end{align*}
Therefore,
\[
S^{\ell_1,\ell_{k+1}} \leq \|b_2\|_{\cS_2} \cdots \|b_{k-1}\|_{\cS_2}\Bigg( \sum_{\ell_2=1}^{n_2}\big\|\Pi_1^{\ell_1,\ell_2}(b_1)\big\|_{\cS_2}^2 \Bigg)^{\frac{1}{2}}\Bigg( \sum_{\ell_k=1}^{n_k}\big\|\Pi_k^{\ell_k,\ell_{k+1}}(b_k)\big\|_{\cS_2}^2 \Bigg)^{\frac{1}{2}},
\]
whence it follows that
\begin{align*}
    |\Delta| & \leq \|b_{k+1}\|_{\cS_2}\prod_{j=2}^{k-1}\|b_j\|_{\cS_2}\Bigg(\sum_{\ell_1=1}^{n_1}\sum_{\ell_2=1}^{n_2}\big\|\Pi_1^{\ell_1,\ell_2}(b_1)\big\|_{\cS_2}^2\sum_{\ell_k=1}^{n_k}\sum_{\ell_{k+1} = 1}^{n_{k+1}}\big\|\Pi_k^{\ell_k,\ell_{k+1}}(b_k)\big\|_{\cS_2}^2 \Bigg)^{\frac{1}{2}} \\
    & = \|b_{k+1}\|_{\cS_2}\Bigg(\prod_{j=2}^{k-1}\|b_j\|_{\cS_2}\Bigg)\|b_1\|_{\cS_2}\|b_k\|_{\cS_2} = \|b_1\|_{\cS_2} \cdots \|b_{k+1}\|_{\cS_2},
\end{align*}
as desired.
This completes \ref{step2}.

\ref{step3}.
By the Riesz representation theorem and the definition of the inner product on $\cS_2(H_{k+1};H_1)$, for all $\ell \in \cS_2(H_{k+1};H_1)^*$, there exists unique $B \in \cS_2(H_{k+1};H_1)$ such that $\ell(A) = \la A,B \ra_{\cS_2} = \Tr(B^*A) = \Tr(AB^*)$ for all $A \in \cS_2(H_{k+1};H_1)$.
Writing $b_{k+1} \coloneqq B^* \in \cS_2(H_1;H_{k+1})$ and
\[
\nu_{b,b_{k+1}}(G) = \nu_{b_1,\ldots,b_{k+1}}(G) \coloneqq \Tr\big(\mu_b^0\big(G\big)\,b_{k+1}\big)
\]
for $G \in \alpha(\mathscr{E})$, this tells us the goal of this step is to prove $\|\nu_{b,b_{k+1}}\|_{\text{var}} = |\nu_{b,b_{k+1}}|(\Om) \leq \|b_1\|_{\cS_2} \cdots \|b_{k+1}\|_{\cS_2}$.

To begin, we make the simple observation that if $\nu \colon \sA \to \C$ is a finitely additive complex measure and $\mathscr{E}_0 \subseteq \sA$ is an elementary family generating $\sA$ as an algebra, then
\[
|\nu|(G) = \sup\Bigg\{\sum_{m=1}^n |\nu(R_m)| : R_1,\ldots,R_n \in \mathscr{E}_0 \text{ is a partition of } G\Bigg\}
\]
for all $G \in \alpha(\mathscr{E})$.
Applying this observation to our case, we have
\[
\big|\nu_{b_1,\ldots,b_{k+1}}\big|(G) = \sup\Bigg\{\sum_{m=1}^n \big|\Tr\big(\mu_b^{00}\big(R_m\big)\,b_{k+1}\big)\big| : R_1,\ldots,R_n \in \mathscr{E} \text{ is a partition of } G\Bigg\},
\]
for all $G \in \alpha(\mathscr{E})$. Therefore, by Lemma \ref{lem.comb}, we have
\[
\big\|\nu_{b_1,\ldots,b_{k+1}}\big\|_{\text{var}} = \sup\Bigg\{\sum_{\boldsymbol{\ell} \in [\boldsymbol{n}]} \big|\Tr\big(\mu_b^{00}\big(G^{\boldsymbol{\ell}}\big)\,b_{k+1}\big)\big| : \Delta = \big\{G^{\boldsymbol{\ell}} : \boldsymbol{\ell} \in [\boldsymbol{n}]\big\} \text{ as in \ref{step2}} \Bigg\}.
\]
It then follows from \ref{step2} that $\|\nu_{b_1,\ldots,b_{k+1}}\|_{\text{var}} \leq \|b_1\|_{\cS_2} \cdots \|b_{k+1}\|_{\cS_2}$.
This completes \ref{step3}.

\ref{step4}.
According to the comments at the beginning of and notation in \ref{step3}, the goal of this step is to prove that $\nu_{b_1,\ldots,b_{k+1}}$ is countably additive for all $b_j \in \cS_2(H_{j+1};H_j)$, $j \in [k]$, and $b_{k+1} \in \cS_2(H_1;H_{k+1})$.
As mentioned in the outline of the proof, Lemma \ref{lem.extendvecprotomeas} tells us we only need to check countable additivity of $\nu_{b_1,\ldots,b_{k+1}}$ on $\mathscr{E}$.
From now on, write $m \coloneqq k+1$.

First, suppose $b_j = \la \cdot , h_j \ra_{H_{j+1}} k_j$, where $k_j \in H_j$ and $h_j \in H_{j+1}$, for all $j \in [m-1]$, and $b_m = \la \cdot, h_0 \ra_{H_1}k_m$, where $h_0 \in H_1$ and $k_m \in H_m$.
If $G = G_1 \times \cdots \times G_m \in \mathscr{E}$, then
\[
\mu_b^{00}(G)\,b_m = \Bigg(\prod_{j=2}^m \la P_j(G_j)k_j,h_{j-1} \ra_{H_j} \Bigg) \la \cdot, h_0 \ra_{H_1} P_1(G_1)k_1,
\]
so that
\begin{align*}
    \nu_{b_1,\ldots,b_m}(G) & = \Tr\big(\mu_b^{00}\big(G\big)\,b_m\big) = \la P_1(G_1)k_1,h_0 \ra_{H_1}\prod_{j=2}^m \la P_j(G_j)k_j,h_{j-1} \ra_{H_j} \\
    & = ((P_1)_{k_1,h_0} \otimes (P_2)_{k_2,h_1} \otimes \cdots \otimes (P_m)_{k_m,h_{m-1}})(G) \\
    & = P_{k_1 \otimes \cdots \otimes k_m, h_0 \otimes \cdots \otimes h_{m-1}}(G). \numberthis\label{eq.nuexp}
\end{align*}
Since this formula is the restriction to $\mathscr{E} \subseteq \sF$ of a complex measure, we get that $\nu_{b_1,\ldots,b_m}$ is countably additive.
Since $\nu_{b_1,\ldots,b_m}$ is clearly $m$-linear in $(b_1,\ldots,b_m)$, we then conclude that $\nu_{b_1,\ldots,b_m}$ is countably additive for all finite rank operators $b_1 \in \cS_2(H_2;H_1),\ldots,b_{m-1} \in \cS_2(H_m;H_{m-1})$ and $b_m \in \cS_2(H_1;H_m)$.
To finish this step, we approximate arbitrary $b$'s by finite rank ones.

Let $b_1 \in \cS_2(H_2;H_1),\ldots,b_{m-1} \in \cS_2(H_m;H_{m-1})$ and $b_m \in \cS_2(H_1;H_m)$ be arbitrary.
If $(G_p)_{p \in \N} \in \mathscr{E}^{\N}$ is a pairwise disjoint sequence with $G \coloneqq \bigcup_{p \in \N}G_p \in \mathscr{E}$, then we must show
\[
\delta_N \coloneqq \Bigg|\nu_{b_1,\ldots,b_m}(G) - \sum_{p=1}^N\nu_{b_1,\ldots,b_m}(G_p)\Bigg| \to 0
\]
as $N \to \infty$.
To this end, let $(b_1^n)_{n \in \N}, \ldots, (b_m^n)_{n \in \N}$ be sequences of finite rank operators such that $b_j^n \to b_j$ in $\cS_2$ as $n \to \infty$ for all $j \in [m]$.
Then, by the previous paragraph,
\begin{align*}
    \delta_N & = \Bigg|\nu_{b_1,\ldots,b_m}(G) - \nu_{b_1^n,\ldots,b_m^n}(G) + \sum_{p=1}^{\infty}\nu_{b_1^n,\ldots,b_m^n}(G_p) - \sum_{p=1}^N\nu_{b_1,\ldots,b_m}(G_p)\Bigg|\\
    & \leq |\nu_{b_1,\ldots,b_m}(G) - \nu_{b_1^n,\ldots,b_m^n}(G)| + \sum_{p=1}^N|\nu_{b_1,\ldots,b_m}(G_p) - \nu_{b_1^n,\ldots,b_m^n}(G_p)| +\sum_{p > N}|\nu_{b_1^n,\ldots,b_m^n}(G_p)|
\end{align*}
for all $n,N \in \N$, where the last term --- for fixed $n \in \N$ --- goes to zero as $N \to \infty$.
But now, notice that $m$-linearity gives us that
\[
\nu_{b_1,\ldots,b_m} - \nu_{b_1^n,\ldots,b_m^n} = \sum_{j=1}^m\nu_{b_1^n,\ldots,b_{j-1}^n,b_j-b_j^n,b_{j+1},\ldots,b_m} \numberthis\label{eq.multilin}
\]
for all $n \in \N$.
This observation and \ref{step3} then imply
\begin{align*}
    \limsup_{N \to \infty}\delta_N & \leq \sum_{j=1}^m\Bigg(\big|\nu_{b_1^n,\ldots,b_{j-1}^n,b_j-b_j^n,b_{j+1},\ldots,b_m}(G)\big| +\sum_{p=1}^{\infty}\big|\nu_{b_1^n,\ldots,b_{j-1}^n,b_j-b_j^n,b_{j+1},\ldots,b_m}(G_p)\big| \Bigg)\\
    & \leq 2\sum_{j=1}^m\big|\nu_{b_1^n,\ldots,b_{j-1}^n,b_j-b_j^n,b_{j+1},\ldots,b_m}\big|(G) \leq 2\sum_{j=1}^m\|\nu_{b_1^n,\ldots,b_{j-1}^n,b_j-b_j^n,b_{j+1},\ldots,b_m}\|_{\mathrm{var}} \\
    & \leq 2\sum_{j=1}^m\|b_j^n\|_{\cS_2} \cdots \|b_{j-1}^n\|_{\cS_2} \|b_j-b_j^n\|_{\cS_2}\|b_{j+1}\|_{\cS_2} \cdots \|b_m\|_{\cS_2} \to 0
\end{align*}
as $n \to \infty$.
We conclude $\lim_{N \to \infty} \delta_N = 0$.
Thus $\mu_b^0$ is weakly countably additive.

Since $\cS_2(H_{k+1};H_1)$ is a Hilbert space and therefore reflexive, \ref{step3}, what we just proved, and Corollary \ref{cor.vecCaratheodory} imply $\mu_b^0$ extends uniquely to a $\cS_2(H_{k+1};H_1)$-valued vector measure $\mu_b = P \sh b$ on $\sigma(\mathscr{E}) = \sF$ with $\|P \sh b\|_{\mathrm{svar}} = \|\mu_b^0\|_{\mathrm{svar}} \leq \|b_1\|_{\cS_2} \cdots \|b_k\|_{\cS_2}$.
This completes \ref{step4} and the construction of $P \sh b$.

\ref{step5}.
We use the approximation argument from \ref{step4}.
Suppose $G \in \sF$ is such that $P(G) = 0$ (which implies $P(\tilde{G}) = 0$ when $\sF \ni \tilde{G} \subseteq G$ because $P$ is a projection valued measure).
If $\sF \ni \tilde{G} \subseteq G$ and $b_1,\ldots,b_k$ and $b_{k+1} \in \cS_2(H_1;H_{k+1})$ have rank one, then \eqref{eq.nuexp} implies that $\Tr((P\sh b)(\tilde{G}) \, b_{k+1}) = 0$.
By multilinearity, this implies $\Tr((P\sh b)(\tilde{G}) \, b_{k+1}) = 0$ for finite rank $b_1,\ldots,b_{k+1}$.
Now, approximating in $\cS_2$ arbitrary $b_1,\ldots,b_{k+1}$ by finite rank operators gives, using \eqref{eq.multilin} and the semivariation bound, that $\Tr((P\sh b)(\tilde{G}) \, b_{k+1}) = 0$.
We conclude $(P\sh b)(\tilde{G}) = 0$, as claimed.
This completes the proof.
\end{proof}

\begin{ack}
\phantomsection
\addcontentsline{toc}{section}{Acknowledgements}
I acknowledge support from NSF grant DGE 2038238 and partial support from NSF grants DMS 1253402 and DMS 1800733. Some of the arguments from this paper appear (less cleanly) in the first version of my paper \cite{nikitopoulosNCk} on noncommutative $C^k$ functions. I am grateful to Edward McDonald for informing me that these arguments are of independent interest and for encouraging me to publish them on their own. I am also indebted to Bruce Driver for helpful conversations and guidance about writing.
\end{ack}
\small


\begin{thebibliography}{1}
\addcontentsline{toc}{section}{References}
\setlength\itemsep{0em}
\bibitem{aleksandrovSmTrForm}
A. B. Aleksandrov and V. V. Peller.
``Trace formulae for perturbations of class $\boldsymbol{S}_m$."
\textit{Journal of Spectral Theory} \textbf{1} (2011), 1--26.

\bibitem{aleksandrovOL}
A. B. Aleksandrov and V. V. Peller.
``Operator Lipschitz functions" (Russian).
\textit{Uspekhi Matematicheskikh Nauk} \textbf{71} (2016), 3--106.
English translation: \textit{Russian Mathematical Surveys} \textbf{71} (2016), 605--702.

\bibitem{azamovetal}
N. A. Azamov, A. L. Carey, P. G. Dodds, and F. A. Sukochev. 
``Operator Integrals, Spectral Shift, and
Spectral Flow."
\textit{Canadian Journal of Mathematics} \textbf{61} (2009), 241--263.

\bibitem{birmansolomyakDSOI1}
M. S. Birman and M. Z. Solomyak.
``Stieltjes Double-Integral Operators" (Russian).
\textit{Problems of Mathematical Physics, No. I: Spectral Theory and Wave Processes} (M. S. Birman, ed.), pp. 33--67.
Izdat. Leningrad. Univ., Leningrad, 1966.
English translation: \textit{Spectral Theory and Wave Processes} (M. S. Birman, ed.), pp. 25--54.
Topics in Mathematical Physics, vol. 1.
Consultants Bureau, New York, NY, 1967.

\bibitem{birmansolomyakBook}
M. S. Birman and M. Z. Solomyak.
\textit{Spectral Theory of Self-Adjoint Operators in Hilbert Space} (Russian).
Izdat. Leningrad. Univ., Leningrad, 1980.
English translation: Mathematics and its Applications (Soviet Series), vol. 5.
D. Reidel Publishing Company, Dordrecht, 1987.

\bibitem{birmansolomyakTensProd}
M. S. Birman and M. Z. Solomyak.
``Tensor product of a finite number of spectral measures is always a spectral measure."
\textit{Integral Equations and Operator Theory} \textbf{24} (1996), 179--187.

\bibitem{brownozawa}
N. P. Brown and N. Ozawa.
\textit{$C^*$-Algebras and Finite-Dimensional Approximations}.
Graduate Studies in Mathematics, vol. 88.
American Mathematical Society, Providence, RI, 2008.

\bibitem{castaing}
C. Castaing and M. Valadier.
\textit{Convex Analysis and Measurable Multifunctions}.
Lecture Notes in Mathematics, vol. 580.
Springer-Verlag, Berlin, Heidelberg, 1977.

\bibitem{conwayfunc}
J. B. Conway.
\textit{A Course in Functional Analysis}, 2nd ed.
Graduate Texts in Mathematics, vol. 96.
Springer-Verlag, New York, NY, 1990.

\bibitem{conwayop}
J. B. Conway.
\textit{A Course in Operator Theory}.
Graduate Studies in Mathematics, vol. 21.
American Mathematical Society, Providence, RI, 2000.

\bibitem{crauel}
H. Crauel.
\textit{Random Probability Measures on Polish Spaces}.
Taylor \& Francis, London, 2002.

\bibitem{daletskiikrein}
Yu. L. Daletskii and S. G. Krein.
``Integration and differentiation of functions of Hermitian operators and applications to the theory of perturbations" (Russian).
\textit{Voronezh. Gos. Univ. Trudy Sem. Funktsional. Anal.} \textbf{1} (1956), 81--105.
English translation: \textit{Thirteen Papers on Functional Analysis and Partial Differential Equations}, pp. 1--30.
American Mathematical Society Translations: Series 2, vol. 47.
American Mathematical Society, Providence, RI, 1965.

\bibitem{dasilva}
R. C. da Silva.
``Lecture Notes on Noncommutative $L_p$-Spaces." 
Preprint,  \href{https://arxiv.org/abs/1803.02390}{arXiv:1803.02390} [math.OA] (2018).

\bibitem{depagtersukochev}
B. de Pagter and F. A. Sukochev.
``Differentiation of operator functions in non-commutative $L_p$-spaces."
\textit{Journal of Functional Analysis} \textbf{212} (2004), 28--75.

\bibitem{diesteluhl}
J. Diestel and J. J. Uhl, Jr.
\textit{Vector Measures}.
Mathematical Surveys, vol. 15.
American Mathematical Society, Providence, RI, 1977.

\bibitem{dixmierLp}
J. Dixmier.
``Formes lin\'{e}aires sur un anneau d'op\'{e}rateurs" (French).
\textit{Bulletin de la Soci\'{e}t\'{e} Math\'{e}matique de France} \textbf{81} (1953), 9--39.

\bibitem{dixmier}
J. Dixmier.
\textit{Von Neumann Algebras}.
North-Holland Publishing Company, Amsterdam, 1981.

\bibitem{doddssub2}
P. G. Dodds, T. K. Dodds, F. A. Sukochev, and D. Zanin.
``Arithmetic--Geometric Mean and Related Submajorisation and Norm Inequalities for $\tau$-Measurable Operators: Part II."
\textit{Integral Equations and Operator Theory} \textbf{92} (2020), Article 32.

\bibitem{driver}
B. K. Driver.
\textit{Probability Tools with Examples}.
Notes, available at \url{https://mathweb.ucsd.edu/~bdriver/280_18-19_prob/Lecture_Notes/2018-19-Lecture%20Notes.pdf}.

\bibitem{dykemaskripka}
K. Dykema and A. Skripka.
``Perturbation Formulas for Traces on Normed Ideals."
\textit{Communications in Mathematical Physics} \textbf{325} (2014), 1107--1138.

\bibitem{fackkosaki}
T. Fack and H. Kosaki.
``Generalized $s$-numbers of $\tau$-measurable operators."
\textit{Pacific Journal of Mathematics} \textbf{123} (1986), 269--300.

\bibitem{folland}
G. B. Folland.
\textit{Real Analysis: Modern Techniques and Their Applications}, 2nd ed.
Pure and Applied Mathematics.
John Wiley \& Sons, New York, NY, 1999.
\pagebreak

\bibitem{janson}
S. Janson.
\textit{Gaussian Hilbert Spaces}.
Cambridge Tracts in Mathematics, vol. 129.
Cambridge University Press, Cambridge, 1997.

\bibitem{kadisonringrose1}
R. V. Kadison and J. R. Ringrose.
\textit{Fundamentals of Operator Algebras, Volume I: Elementary Theory}.
Graduate Studies in Mathematics, vol. 15.
American Mathematical Society, Providence, RI, 1997.

\bibitem{lemerdyskripka}
C. Le Merdy and A. Skripka.
``Higher order differentiability of operator functions in Schatten norms."
\textit{Journal of the Institute of Mathematics of Jussieu} \textbf{19} (2020), 1993--2016.

\bibitem{nelson}
E. Nelson.
``Notes on Non-commutative Integration."
\textit{Journal of Functional Analysis} \textbf{15} (1974), 103--116.

\bibitem{nikitopoulosNCk}
E. A. Nikitopoulos.
``Noncommutative $C^k$ functions and Fr\'{e}chet derivatives of operator functions."
\textit{Expositiones Mathematicae} \textbf{41} (2023), 115--163.

\bibitem{nikitopoulosDiff}
E. A. Nikitopoulos.
``Higher derivatives of operator functions in ideals of von Neumann algebras."
\textit{Journal of Mathematical Analysis and Applications} \textbf{519} (2023), Article 126705.

\bibitem{pavlov}
B. S. Pavlov.
``Multidimensional Operator Integrals" (Russian). 
\textit{Problems of Mathematical Analysis, No. 2: Linear Operators and Operator Equations} (V. I. Smirnov, ed.), pp. 99--122.
Izdat. Leningrad. Univ., Leningrad, 1969.
English translation: \textit{Linear Operators and Operator Equations} (V. I. Smirnov, ed.), pp. 81--97.
Problems in Mathematical Analysis, vol. 2.
Consultants Bureau, New York, NY, 1971.

\bibitem{pellerMOIOpDer}
V. V. Peller.
``Multiple operator integrals and higher operator derivatives."
\textit{Journal of Functional Analysis} \textbf{233} (2006), 515--544.

\bibitem{pellerMOIPert}
V. V. Peller.
``Multiple operator integrals in perturbation theory."
\textit{Bulletin of Mathematical Sciences} \textbf{6} (2016), 15--88.

\bibitem{potapovsukochev}
D. Potapov and F. A. Sukochev.
``Double Operator Integrals and Submajorization."
\textit{Mathematical Modelling of Natural Phenomena} \textbf{5} (2010), 317--339.

\bibitem{potapovetal}
D. Potapov, F. A. Sukochev, A. Usachev, and D. Zanin.
``Singular traces and perturbation formulae of higher order."
\textit{Journal of Functional Analysis} \textbf{269} (2015), 1441--1481.

\bibitem{ringrose} 
J. R. Ringrose.
\textit{Compact Non-Self-Adjoint Operators}.
Van Nostrand Reinhold Mathematical Studies, vol. 35.
Van Nostrand Reinhold Company, London, 1971.

\bibitem{ryan} 
R. A. Ryan.
\textit{Introduction to Tensor Products of Banach Spaces}.
Springer Monographs in Mathematics.
Springer-Verlag, London, 2002.

\bibitem{skripkaTaylor}
A. Skripka.
``Taylor asymptotics of spectral action functionals."
\textit{Journal of Operator Theory} \textbf{80} (2018), 113--124.

\bibitem{skripka}
A. Skripka and A. Tomskova.
\textit{Multilinear Operator Integrals: Theory and Applications}.
Lecture Notes in Mathematics, vol. 2250.
Springer, Cham, 2019.

\bibitem{takesaki}
M. Takesaki.
\textit{Theory of Operator Algebras I}.
Springer-Verlag, New York, NY, 1979.

\bibitem{terp}
M. Terp.
``$L^p$ spaces associated with von Neumann algebras."
Notes, Matematisk Institute, Copenhagen University, Copenhagen, 1981.

\bibitem{yeadon}
F. J. Yeadon.
``Non-commutative $L_p$-spaces."
\textit{Mathematical Proceedings of the Cambridge Philosophical Society} \textbf{77} (1975), 91--102.
\end{thebibliography}
\end{document}